\definecolor{ejc}{RGB}{255,0,0}
\definecolor{yxc}{RGB}{0,0,200}
\definecolor{ps}{RGB}{0,150,0}
\newcommand{\ejc}[1]{\textcolor{ejc}{[EJC: #1]}}
\newcommand{\yxc}[1]{\textcolor{yxc}{[YC: #1]}}
\newcommand{\ps}[1]{\textcolor{ps}{PS: #1}}
\date{June  2017}
\title{The Likelihood Ratio Test in High-Dimensional Logistic Regression Is Asymptotically a \emph{Rescaled} Chi-Square}
\author{Pragya Sur\thanks{Department of Statistics, Stanford
    University, Stanford, CA 94305, U.S.A.} \and 
  Yuxin Chen\thanks{Department of Electrical Engineering, Princeton
    University, Princeton, NJ 08544, U.S.A.}  \and Emmanuel J. Cand\`es\footnotemark[1] \thanks{Department of Mathematics, Stanford
    University, Stanford, CA 94305, U.S.A.} }
\theoremstyle{plain}\newtheorem{lemma}{\textbf{Lemma}}\newtheorem{theorem}{\textbf{Theorem}}\newtheorem{corollary}{\textbf{Corollary}}\newtheorem{definition}{\textbf{Definition}}\newtheorem{proposition}{\textbf{Proposition}}
\theoremstyle{definition}
\theoremstyle{definition}\newtheorem{remark}{\textbf{Remark}}
\newcommand{\bbeta}{\bm{\beta}}
\newcommand{\hbbeta}{\hat{\bm{\beta}}}
\newcommand{\tbbeta}{\tilde{\bm{\beta}}}
\newcommand{\tbbetaimin}{\tilde{\bm{\beta}}_{[-i]}}
\newcommand{\hbbetaimin}{\hat{\bm{\beta}}_{[-i]}}
\newcommand{\hbbetaonemin}{\hat{\bm{\beta}}_{[-1]}}
\newcommand{\hbbetatwomin}{\hat{\bm{\beta}}_{[-2]}}
\newcommand{\hbbetaonetwomin}{\hat{\bm{\beta}}_{[-12]}}
\newcommand{\bX}{\bm{X}}
\newcommand{\tbX}{\tilde{\bm{X}}}
\newcommand{\bzero}{\bm{0}}
\newcommand{\dber}{{\mathsf{Bernoulli}}}
\newcommand{\dunif}{{\mathsf{Unif}}}
\newcommand{\dnorm}{{\mathcal{N}}}
\newcommand{\tp}{\top}
\newcommand{\iid}{\stackrel{ \text{i.i.d.} }{\sim} }
\newcommand{\bd}{\bm{d}}
\newcommand{\bu}{\bm{u}}
\newcommand{\bv}{\bm{v}}
\newcommand{\bM}{\bm{M}}
\newcommand{\bO}{\bm{Q}}
\newcommand{\R}{\mathbb{R}}
\newcommand{\spS}{\mathbb{S}}
\newcommand{\orO}{\mathbb{O}}
\newcommand{\eqd}{\stackrel{\mathrm{d}}{=}}
\newcommand{\Id}{\bm{I}}
\newcommand{\hbpi}{\hat{\bm{\pi}}}
\newcommand{\bs}{b_{\ast}}
\newcommand{\taus}{\tau_{\ast}}
\newcommand{\tb}{\tilde{b}}
\newcommand{\Nu}{\mathcal{V}}
\newcommand{\tNu}{\tilde{\mathcal{V}}}
\newcommand{\prox}{\mathsf{prox}}
\newcommand{\by}{\bm{y}}
\newcommand{\bx}{\bm{x}}
\newcommand{\btheta}{\bm{\theta}}
\newcommand{\etab}{\bm{\eta}}
\newcommand{\bnu}{\bm{\nu}}
\newcommand{\bS}{\bm{s}}
\newcommand{\bA}{\bm{A}}
\newcommand{\bB}{\bm{B}}
\newcommand{\bh}{\bm{h}}
\newcommand{\bb}{\bm{b}}
\newcommand{\mb}{\bm{m}}
\newcommand{\bq}{\bm{q}}
\newcommand{\bW}{\bm{Z}}
\newcommand{\Lcal}{\ell}
\newcommand{\bZ}{\bm{Z}}
\newcommand{\bD}{\bm{D}}
\newcommand{\bG}{\bm{G}}
\newcommand{\A}{\mathcal{A}}
\newcommand{\br}{\bm{r}}
\newcommand{\brt}{\tilde{\bm{r}}}
\newcommand{\bw}{\bm{w}}
\newcommand{\Xcone}{\bX_{\cdot 1}}
\newcommand{\Dtbeta}{\bD_{\tilde{\bbeta}}}
\newcommand{\Dhbeta}{\bD_{\hat{\bbeta}}}
\newcommand{\tbG}{\tilde{\bm{G}}}
\newcommand{\tbGi}{\tilde{\bm{G}}_{(i)}}
\newcommand{\tbGimin}{\tilde{\bm{G}}_{[-i]}}
\newcommand{\bGimin}{\bm{G}_{[-i]}}
\newcommand{\bGonetwomin}{\bm{G}_{[-12]}}
\newcommand{\bH}{\bm{H}}
\newcommand{\Dinterm}{\bD_{{\hbbeta}, {\tbb}} }
\newcommand{\Ginterm}{\bm{G}_{\hbbeta,\tbb}}
\newcommand{\Ltilde}{\tilde{\Lcal}}
\newcommand{\tr}{\mathrm{Tr}}
\newcommand{\talpha}{\tilde{\alpha}}
\newcommand{\convd}{\stackrel{\mathrm{d}}{\rightarrow}}
\newcommand{\bV}{\bm{V}}
\newcommand{\bU}{\bm{U}}
\newcommand{\bSigma}{\bm{\Sigma}}
\newcommand{\hbb}{\hat{\bm{b}}}
\newcommand{\hbbonemin}{\hat{\bm{b}}_{[-1]}}
\newcommand{\alphas}{\alpha_{\ast}}
\newcommand{\Lcalimin}{\Lcal_{[-i]}}
\newcommand{\convP}{\stackrel{\opP}{\rightarrow}}
\newcommand{\convLtwo}{\stackrel{\mathrm{L}_2}{\rightarrow}}
\newcommand{\tq}{\tilde{q}}
\newcommand{\tdelta}{\tilde{\delta}}
\newcommand{\bone}{\bm{1}}
\newcommand{\bXonetwomin}{\bX_{[-12]}}
\newcommand{\ty}{\tilde{y}}
\newcommand{\be}{\bm{e}}
\newcommand{\bt}{\bm{t}}
\newcommand{\ba}{\boldsymbol{a}}
\newcommand{\tbb}{\tilde{\bm{b}}}
\newcommand{\opP}{\mathbb{P}}
\renewenvironment{proof}[1][\proofname] {
	\par\pushQED{\qed}\normalfont
	\topsep6\p@\@plus6\p@\relax
	\trivlist\item[\hskip\labelsep\bfseries#1\@addpunct{:}]
 	\ignorespaces
} {
	\popQED\endtrivlist\@endpefalse
}
\DeclareMathOperator{\E}{\mathbb{E}}
\begin{document}
\maketitle

\begin{abstract}
   Logistic regression is used thousands of times a day to fit data,
  predict future outcomes, and assess the statistical significance of
  explanatory variables. When used for the purpose of statistical
  inference, logistic models produce p-values for the regression
  coefficients by using an approximation to the distribution of the
  likelihood-ratio test. Indeed, Wilks' theorem asserts that whenever
  we have a fixed number $p$ of variables, twice the
  log-likelihood ratio (LLR) $2 \Lambda$ is distributed as a
  $\chi^2_k$ variable in the limit of large sample sizes $n$; here,
  $\chi^2_k$ is a chi-square with $k$ degrees of freedom and $k$ 
  the number of variables being tested.  In this paper, we prove that
  when $p$ is not negligible compared to $n$, Wilks' theorem does not
  hold and that the chi-square approximation is grossly incorrect; in
  fact, this approximation produces p-values that are far too small
  (under the null hypothesis).

  Assume that $n$ and $p$ grow large in such a way that
  $p/n \rightarrow \kappa$ for some constant $\kappa < 1/2$. We prove that for
  a class of logistic models, the LLR converges to a {\em rescaled}
  chi-square, namely, $2\Lambda ~\stackrel{\mathrm{d}}{\rightarrow}~ \alpha(\kappa) \chi_k^2$,
  where the scaling factor $\alpha(\kappa)$ is greater than one as
  soon as the dimensionality ratio $\kappa$ is positive. Hence, the
  LLR is larger than classically assumed. For instance, when
  $\kappa = 0.3$, $\alpha(\kappa) \approx 1.5$. In general, we show
  how to compute the scaling factor by solving a nonlinear system of
  two equations with two unknowns.  Our mathematical arguments are
  involved and use techniques from approximate message passing
  theory, from non-asymptotic random matrix theory and from convex
  geometry. We also complement our mathematical study by showing that
  the new limiting distribution is accurate for finite sample sizes.

  Finally, all the results from this paper extend to some other regression
  models such as the probit regression model.

  \smallskip
  \noindent \textbf{Keywords.} Logistic regression, likelihood-ratio
  tests, Wilks' theorem, high-dimensionality, goodness of fit,
  approximate message passing, concentration inequalities, convex
  geometry, leave-one-out analysis
\end{abstract}


\section{Introduction}\label{sec: introduction}

Logistic regression is by far the most widely used tool for relating a
binary response to a family of explanatory variables.  This model is
used to infer the importance of variables and nearly all standard
statistical softwares have inbuilt packages for obtaining p-values for
assessing the significance of their coefficients. For instance, one
can use the snippet of $\mathrm{R}$ code below to fit a logistic
regression model from a vector $\verb|y|$ of binary responses and a
matrix \verb|X| of covariates:
\begin{verbatim}
    fitted <- glm(y ~ X+0, family = `binomial')
    pvals  <- summary(fitted)$coefficients[,4]
\end{verbatim}
The vector \verb|pvals| stores p-values for testing whether a variable
belongs to a model or not, and it is well known that the underlying
calculations used to produce these p-values can also be used to
construct confidence intervals for the regression coefficients. Since
logistic models are used hundreds of times every day for inference
purposes, it is important to know whether these calculations---e.g.~these p-values---are accurate and can be trusted.

\subsection{Binary regression}\label{subsec: probset}

Imagine we have $n$
samples of the form $(y_i,\bX_i)$, where $y_i \in \{0,1 \}$ and
$\bX_i \in \R^p$. In a generalized linear model, one postulates the
existence of a link function $\mu(\cdot)$ relating the conditional
mean of the response variable to the linear predictor
$\bm{X}_i^{\top}\bm{\beta}$, 
\begin{equation}
	 \E[y_i|\bX_i] = \mu(\bX_i^{\tp} \bbeta), 
\end{equation}
where
$\bbeta = [\beta_1, \beta_2, \ldots, \beta_p]^{\top} \in \mathbb{R}^{p}$ is
an unknown vector of parameters. We focus here on the two most
commonly used binary regression models, namely, the logistic and the
probit models for which
\begin{equation}
	\mu(t):=\begin{cases}
e^t/(1+e^t) \quad & \text{in the logistic model},\\
\Phi(t) & \text{in the probit model};
\end{cases}
\end{equation}
here, $\Phi$ is the cumulative distribution function (CDF) of a
standard normal random variable.
In both cases, the {\em Symmetry Condition}
\begin{equation}
\label{eq: symmetry}
\mu(t) + \mu(-t) =1
\end{equation}
holds, which says that the two types $y_i = 0$ and $y_i=1$ are treated
in a symmetric fashion.
Assuming that the observations are independent, 
the negative log-likelihood function is given by
\cite[Section 4.1.2]{agresti2011categorical}
\begin{equation*}
\ell\left(\bm{\beta}\right):= - \sum_{i=1}^{n}  \left\{ y_{i} \log \left(\frac{\mu_i}{1- \mu_i} \right)  + \log \left(1-\mu_i \right)  \right\}, \qquad \mu_i := \mu(\bX_i^{\tp} \bbeta). 
\end{equation*}
Invoking the symmetry condition, a little algebra reveals an equivalent
expression
\begin{equation}
	\ell\left(\bm{\beta}\right):=  \sum\nolimits_{i=1}^{n}  \rho\big( -\tilde{y}_i \bm{X}_i^{\top} \bm{\beta}  \big),
	 \label{eq:log-likelihood-alternative}
\end{equation}
where
%
\begin{equation}
	\tilde{y}_i :=\begin{cases}
1\quad & \text{if }y_i=1,\\
-1 & \text{if }y_i=0, 
\end{cases}
	\qquad \text{and} \qquad
\rho(t):=\begin{cases}
\log\left(1+e^{t}\right)\quad & \text{in the logistic case},\\
-\log\Phi\left(-t\right) & \text{in the probit case}.
\end{cases}
	\label{eq:link-function-both-models}
\end{equation}
Throughout we refer to this function $\rho$ as the {\em effective
  link}.

\subsection{The likelihood-ratio test and  Wilks' phenomenon}

Researchers often wish to
 determine which covariates are of importance, or more precisely,  to test whether the $j{\text{th}}$ variable belongs to the model or not:  formally, we wish to test the hypothesis 
\begin{equation}\label{eq: singlecoeff}
	H_j: \quad \beta_j = 0 \quad \text{versus} \quad \beta_j \neq 0. 
\end{equation}
Arguably, one of the most commonly deployed techniques for testing $H_j$ 
is the likelihood-ratio test (LRT), 
which is based on the log-likelihood ratio (LLR) statistic
\begin{equation}
	\label{eq:LLR-stat}
	\Lambda_j := \ell\big( \hbbeta_{(-j)} \big) - \ell \big( \hbbeta \big).
\end{equation}
Here, $\hbbeta$ and $\hbbeta_{(-j)}$ denote respectively the maximum likelihood estimates (MLEs) under the full model and the reduced model on dropping the $j{\text{th}}$ predictor; that is, 
\[
	\hbbeta = \arg \min _{\bbeta \in \R^p}\Lcal(\bbeta) \qquad \text{and} \qquad \hbbeta_{(-j)} = \arg \min _{\bbeta \in \R^p, \beta_j=0}\Lcal(\bbeta).
\]
%
%
Inference based on such log-likelihood ratio statistics has been
studied extensively in prior literature
\cite{wilks1938large,chernoff1954distribution,mccullagh1989generalized}. Arguably,
one of the most celebrated results in the large-sample regime is the
Wilks' theorem.


 To describe the Wilk's phenomenon,
imagine we have a sequence of observations $(y_i, \bX_i)$ where
$y_i \in \{0,1 \}$, $\bX_i \in \R^p$ with $p$ fixed. Since we are
interested in the limit of large samples, we may want to assume that
the covariates are i.i.d.~drawn from some population with
non-degenerate covariance matrix so that the problem is fully
$p$-dimensional. As before, we assume a conditional logistic model for the
response.
{In this setting, Wilks' theorem \cite{wilks1938large} calculates the
  asymptotic distribution of $\Lambda_j(n)$ when $n$ grows to
  infinity:}
{
  \setlist{rightmargin=\leftmargin}
\begin{itemize}
\item[] (Wilks' phenomenon) {\em Under suitable regularity conditions which, for instance, guarantee that the MLE exists and is unique,\footnote{Such conditions would also typically imply asymptotic normality of the MLE.}
    the LLR statistic for testing $H_j : \beta_j=0$ vs.~$\beta_j \neq 0$ has asymptotic
    distribution under the null given by
		\begin{equation}\label{eq:Wilks-phenomenon} 
			2 \Lambda_j(n) ~\convd~ \chi^2_1,  \qquad\quad \text{as } n\rightarrow \infty.
		\end{equation}
		}
\end{itemize}
}
\noindent This fixed-$p$ large-$n$ asymptotic result, which is a
consequence of asymptotic normality properties of the MLE\cite[Theorem
5.14]{van2000asymptotic}, applies to a much broader class of testing
problems in parametric models; for instance, it applies to the probit
model as well. We refer the readers to \cite[Chapter
12]{lehmann2006testing} and \cite[Chapter 16]{van2000asymptotic} for a
thorough exposition and details on the regularity conditions under
which Wilks' theorem holds. Finally, there is a well-known extension
which states that if we were to drop $k$ variables from the model,
then the LLR would converge to a chi-square distribution with $k$
degrees of freedom under the hypothesis that the reduced model is
correct.

\subsection{Inadequacy of Wilks' theorem in high dimensions}

The chi-square approximation to the distribution of the LLR statistic
is used in standard statistical softwares to provide p-values for the
single or multiple coefficient likelihood ratio tests.
Here, we perform a simple experiment on synthetic data to study the
accuracy of the chi-square approximation when $p$ and $n $ are both
decently large. Specifically, we set $\bbeta = \bzero$ and test
$\beta_1=0$ vs.~$\beta_1 \neq 0$ using the LRT in a setting where
$p=1200$. In each trial, $n=4000$ observations are produced with $y_i \iid \dber(1/2)$, and $\bm{X}:=[\bX_1,\cdots,\bX_n]^{\top}\in \mathbb{R}^{n\times p}$ is obtained by generating a random
matrix composed of i.i.d.~$\mathcal{N}(0,1)$ entries.  
We fit a logistic regression of $\by$ on $\bX$ using R, and extract the p-values for each coefficient.   Figure \ref{fig: classical} plots the pooled histogram that aggregates $4.8 \times 10^5$ p-values in total (400 trials with $1200$ p-values obtained in each trial).

\begin{figure}
\begin{center}
	\begin{tabular}{ccc}
	\includegraphics[scale=0.28,keepaspectratio]{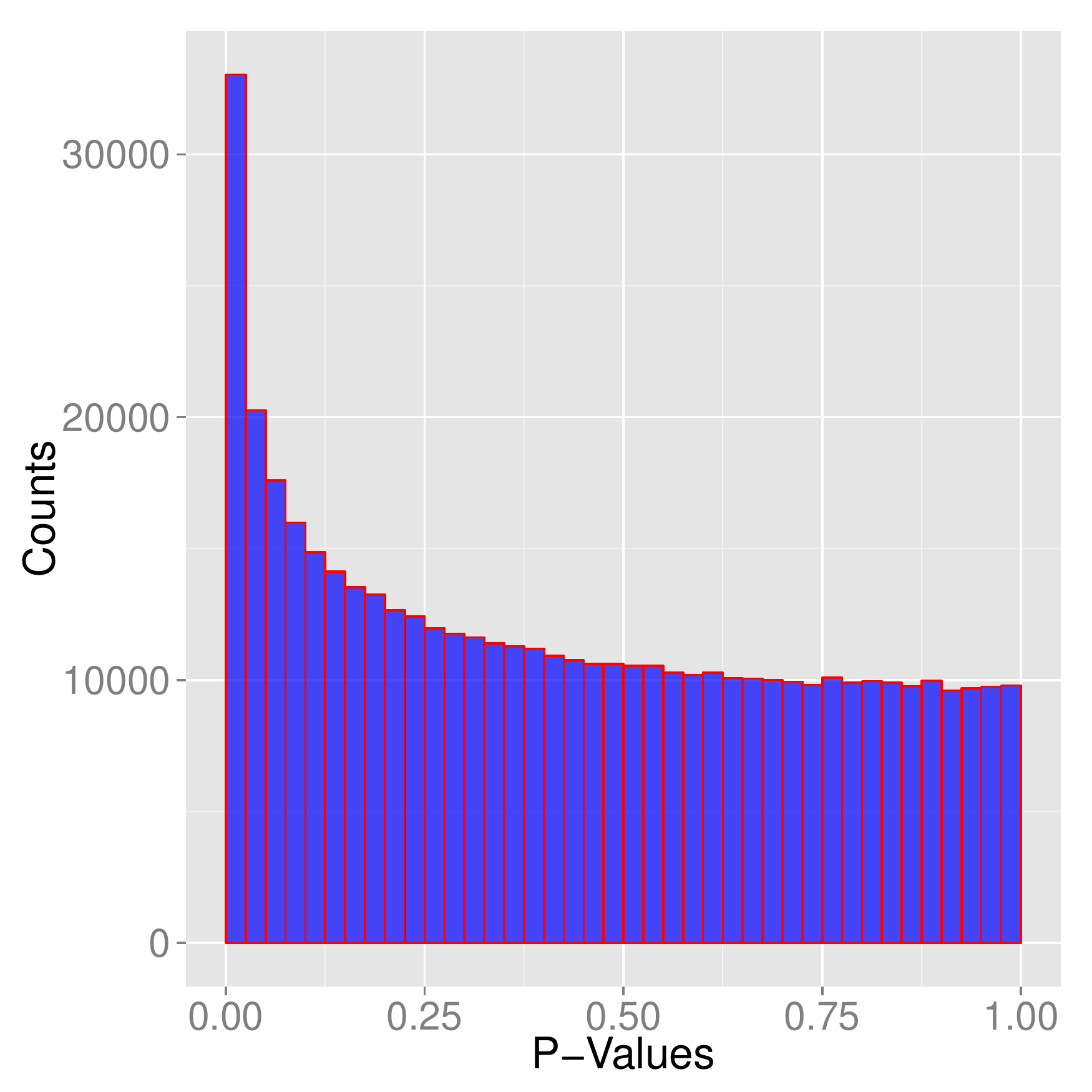}  
		& \includegraphics[scale=0.28,keepaspectratio]{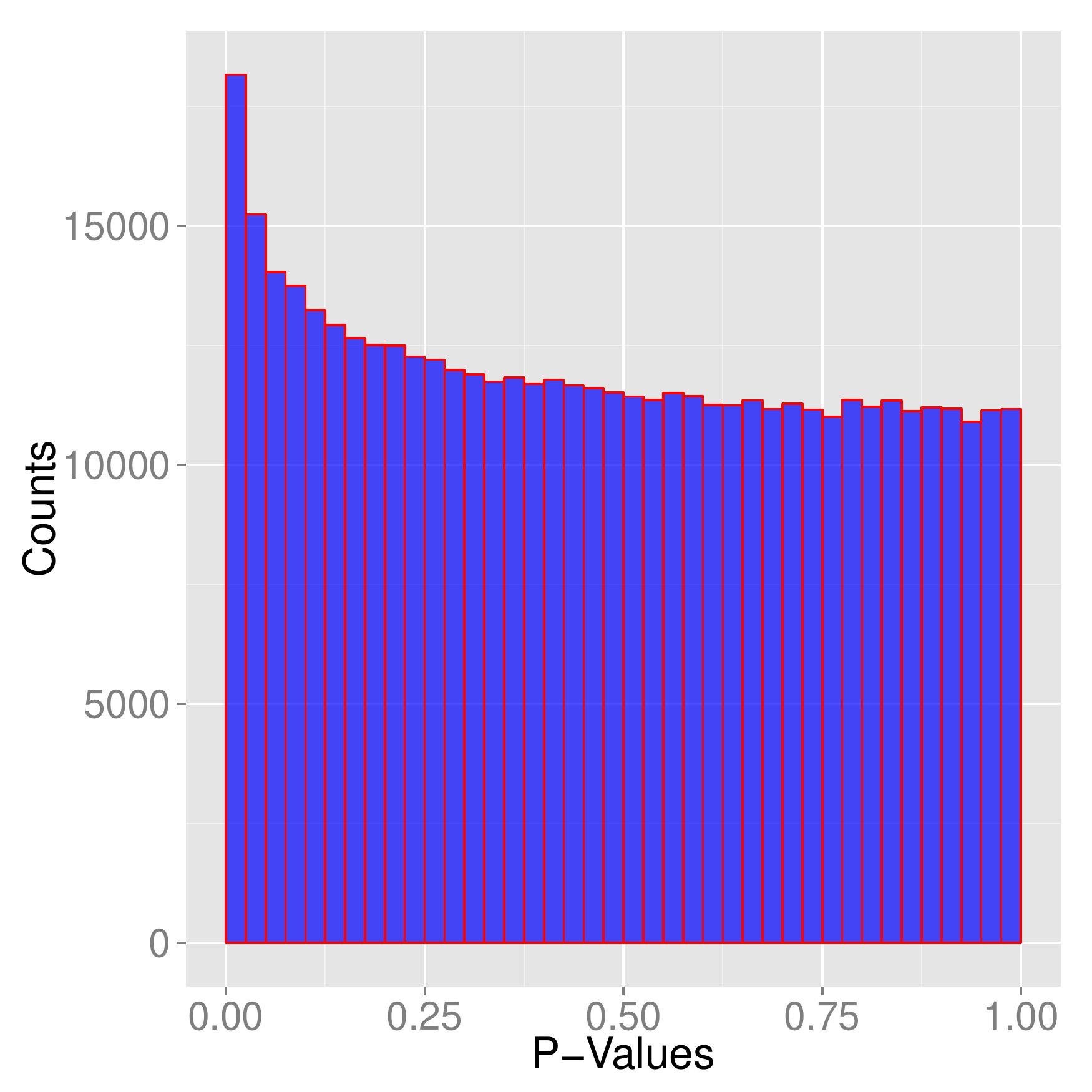}
		& \includegraphics[scale=0.28,keepaspectratio]{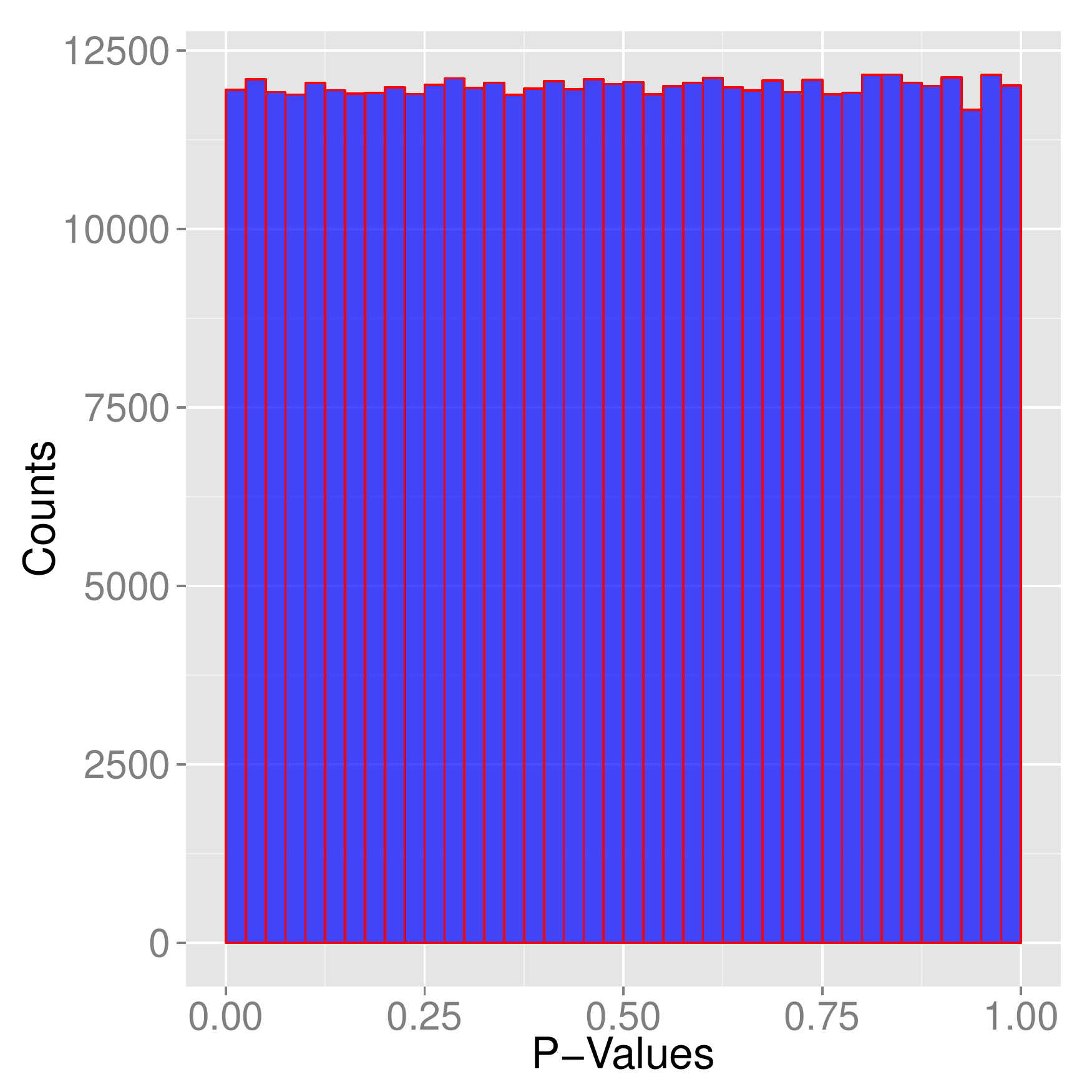} \tabularnewline
		(a) & (b) & (c) \tabularnewline
	\end{tabular}
\end{center}
\caption{Histogram of p-values for logistic regression under i.i.d.~Gaussian design, when $\bbeta=\bzero$, $n=4000$, $p=1200$, and
	$\kappa = 0.3$: (a) classically computed p-values; (b) Bartlett-corrected p-values; (c) adjusted p-values. 
  }\label{fig: classical}
\end{figure}



If the $\chi^2_1$ approximation were true,
then we would expect to observe uniformly distributed p-values. The
histrogram from Fig.~\ref{fig: classical} is, however, far from
uniform. This is an indication of the inadequacy of Wilks' theorem
when $p$ and $n$ are both large. The same issue was also 
reported in \cite{candes2016panning}, where the authors observed that
this discrepancy is highly problematic since the distribution is
skewed towards smaller values. Hence, such p-values cannot be trusted
to construct level-$\alpha$ tests and the problem is increasingly severe when we turn attention to smaller p-values as in
large-scale multiple testing applications.

\subsection{The Bartlett correction?}

A natural question that arises immediately is whether the observed
discrepancy could be an outcome of a finite-sample effect. It has been
repeatedly observed that the chi-square approximation does not yield
accurate results with finite sample size.  One correction to the LRT that is
widely used in finite samples is the Bartlett correction, which dates
back to Bartlett \cite{bartlett1937properties} and has been
extensively studied over the past few decades
(e.g.~\cite{box1949general,lawley1956general,bickel1990decomposition,cordeiro1995bartlett,cordeiro1983improved}). In
the context of testing for a single coefficient in the logistic
model, this correction can be described as follows 
\cite{moulton1993bartlett}: compute the expectation of
the LLR statistic up to terms of order $1/n^2$; that is, compute
a parameter $\alpha$ such that
\[
 \mathbb{E}[2\Lambda_j] = {1+\frac{\alpha}{n} + O\left(\frac{1}{n^2}\right)},\]
which suggests a corrected LLR statistic
\begin{equation}
	\label{eq:Bartlett-corrected}
	\frac{2 \Lambda_j}{1+\frac{\alpha_n}{n}}
\end{equation}
with  $\alpha_n$ being an estimator of $\alpha$.  With a proper choice of $\alpha_n$, one can ensure 
\[\E\left[\frac{2 \Lambda_j}{1+\frac{\alpha_n}{n}}\right] = 1+O\left(\frac{1}{n^2}\right)  \] 
in the classical setting where $p$ is fixed and $n$ diverges. 
In expectation, this corrected statistic is closer to a $\chi_1^2$ distribution than the original LLR for finite samples. Notably, the correction factor may in general be a function of the unknown $\bbeta$ and, in that case, must be estimated from the null model via maximum likelihood estimation.

In the context of GLMs, Cordeiro \cite{cordeiro1983improved} derived a
general formula for the Bartlett corrected LLR statistic, see
\cite{cribari1996bartlett,cordeiro2014introduction} for a detailed
survey.
In the case where there is no signal ($\bbeta = \bzero$), one can
compute $\alpha_n$ for the logistic regression model following
\cite{cordeiro1983improved} and\cite{moulton1993bartlett}, which
yields
\begin{equation}
	\label{eq:alphan}
	\alpha_n = \frac{n}{2} \left[ \tr\left(\bD_p^2\right) - \tr\left(\bD_{p-1}^2\right)  \right]. 
\end{equation}
Here, $\bD_p$ is the diagonal part of
$\bX(\bX^{\tp}\bX)^{-1}\bX^{\tp}$ and $\bD_{p-1}$ is that of
$\bX_{(-j)} \big( \bX_{(-j)}^{\tp}\bX_{(-j)}
\big)^{-1}\bX_{(-j)}^{\tp}$ in which $\bX_{(-j)}$ is the design matrix
$\bm{X}$ with the $j{\text{th}}$ column removed.
Comparing the adjusted LLRs to a $\chi^2_1$ distribution yields
adjusted p-values. In the setting of Fig.~\ref{fig: classical}(a), the
histogram of Bartlett corrected p-values is shown in Fig.~\ref{fig:
  classical}(b). As we see, these p-values are still far from
uniform.


If the mismatch is not due to finite sample-size effects, what is the
distribution of the LLR in high dimensions? Our main contribution is
to provide a very precise answer to this question; below, we derive
the high-dimensional asymptotic distribution of the log-likelihood
ratios, i.e.~in situations where the dimension $p$ is not necessarily
negligible compared to the sample size $n$.

\section{Main results}
\subsection{Modelling assumptions}\label{subsec: assumptions}
In this paper, we focus on the high-dimensional regime where the sample size is not
much larger than the number of parameters to be estimated---a setting
which has attracted a flurry of activity in recent years. In
particular, we assume that the number $p(n)$ of covariates grows
proportionally with the number $n$ of observations; that is,
\begin{equation}
  \label{defn:kappa}
  \lim_{n \rightarrow \infty} \, \frac{p(n)}{n} = \kappa,
\end{equation}
where $\kappa >0 $ is a fixed constant independent of $n$ and $p(n)$. In fact, we shall also assume $\kappa < 1/2$ for both the logistic and the probit models, as the MLE is otherwise at $\infty$; see Section \ref{sec:separation}.  

To formalize the notion of high-dimensional asymptotics when both $n$ and $p(n)$ diverge, we consider a sequence of instances $\{\bX(n),\by(n) \}_{n \geq 0}$ such that for any $n$, 
\begin{itemize}
\item $\bX(n) \in \R^{n \times p(n)}$ has i.i.d.~rows
	$\bX_i(n) \sim \dnorm(0,\bSigma)$, where $\bSigma \in \mathbb{R}^{p(n)\times p(n)}$ is 
  positive definite; 
  
  \item $y_i(n) \, | \, \bX(n) \sim y_i(n) \, | \, \bX_i(n) \stackrel{\mathrm{ind.}}{\sim} \dber\left(\mu(\bX_i(n)^{\tp}\bbeta(n))\right)$, where $\mu$ satisfies the Symmetry Condition;
  \item we further assume $\bbeta(n) = \bzero$. From the Symmetry
    Condition it follows that $\mu(0)=1/2$, which directly implies
    that $\by(n)$ is a vector with i.i.d~$\dber(1/2)$ entries.
\end{itemize}
The MLE is denoted by $\hbbeta(n)$ and there are $p(n)$ LLR statistics
$\Lambda_j(n)$ ($1\leq j\leq p(n)$), one for each of the $p(n)$ regression coefficients. In
the sequel, the dependency on $n$ shall be suppressed whenever it is
clear from the context.


\subsection{When does the  MLE exist?}
\label{sec:separation}
Even though we are operating in the regime where $n>p$, the existence
of the MLE cannot be guaranteed for all $p$ and $n$. Interestingly,
the norm of the MLE undergoes a sharp phase transition in the sense
that 
$$\|\hat{\bm{\beta}}\|= \infty \quad \text{if }\kappa > 1/2 \qquad \text{and} \qquad \|\hat{\bm{\beta}}\| < \infty \quad \text{if } \kappa < 1/2.$$
%

Here, we develop some understanding about this phenomenon. 
Given that $\rho(t)\geq\rho(-\infty)=0$ for both the logistic and
probit models, each summand in (\ref{eq:log-likelihood-alternative})
is minimized if
$\tilde{y}_i \bm{X}_{i}^{\top}\bm{\beta} = \infty$, which
occurs when
$\text{sign}(\bm{X}_{i}^{\top}\bm{\beta}) = \text{sign}(\tilde{y}_i)$
and $\|\bm{\beta}\| = \infty$.  As a result, if there exists a
nontrivial ray $\bm{\beta}$ such that
\begin{equation}
  \bm{X}_{i}^{\top}\bm{\beta} >0 \quad \text{ if }~ \tilde{y}_i = 1  \qquad\quad \text{and} \qquad\quad \bm{X}_{i}^{\top}\bm{\beta} <0 \quad \text{ if } ~\tilde{y}_i = -1  \label{eq:beta-sign}
\end{equation}
for any $1\leq i\leq n$, then pushing $\|\bm{\beta}\|$ to infinity
leads to an optimizer of (\ref{eq:log-likelihood-alternative}).  In
other words, the solution to (\ref{eq:log-likelihood-alternative})
becomes unbounded (the MLE is at $\infty$) whenever there is a
hyperplane perfectly separating the two sets of samples
$\{ i \mid \tilde{y}_i = 1 \}$ and $\{ i \mid \tilde{y}_i = -1 \}$.

Under the assumptions from Section \ref{subsec: assumptions},
$\tilde{y}_i$ is independent of $\bm{X}$ and the distribution of $\bX$
is symmetric. Hence, to calculate the chance that there exists a 
separating hyperplane, we can assume $\tilde{y}_i = 1$
$(1\leq i\leq n)$ without loss of generality. 
In this case, the event
(\ref{eq:beta-sign}) becomes
\begin{equation}
 \left\{ \bm{X}\bm{\beta}\mid\bm{\beta}\in\mathbb{R}^{p}\right\} \cap\mathbb{R}_{++}^{n}\neq\emptyset,
 \label{eq:Xbeta-R++}
\end{equation}
 where $\mathbb{R}_{++}^{n}$ is the positive orthant. Write
 $\bX = \bZ\bSigma^{1/2}$ so that $\bZ$ is an $n\times p$ matrix with
 i.i.d.~standard Gaussian entries, and $\btheta =
 \bSigma^{1/2}\bbeta$. Then the event (\ref{eq:Xbeta-R++}) is
 equivalent to 
\begin{equation}
\left\{ \bZ\btheta \mid\btheta\in\mathbb{R}^{p}\right\}
\cap\mathbb{R}_{++}^{n}\neq\emptyset.  
  \label{eq:Ztheta-R++}
\end{equation}
Now the probability that (\ref{eq:Ztheta-R++}) occurs is the same as
that
\begin{equation}
\left\{ \bZ\btheta \mid\btheta\in\mathbb{R}^{p}\right\} \cap\mathbb{R}_{+}^{n}\neq\left\{ \bm{0}\right\} 
  \label{eq:Ztheta-R+}
\end{equation}
occurs, where $\mathbb{R}_{+}^{n}$ denotes the non-negative orthant.
From the approximate kinematic formula \cite[Theorem
I]{amelunxen2014living} in the literature on convex geometry, the event
(\ref{eq:Ztheta-R+}) happens with high probability if and only if the
total statistical dimension of the two closed convex cones exceeds the
ambient dimension, i.e.
\begin{equation}
\delta\left(\left\{ \bZ\btheta\mid\btheta\in\mathbb{R}^{p}\right\} \right)+\delta\left(\mathbb{R}_{+}^{n}\right)>n+o(n).\label{eq:stat-dim-PT}
\end{equation}
Here, the statistical dimension of a closed convex cone $\mathcal{K}$ is
defined as 
\begin{equation}
\delta(\mathcal{K}):=\mathbb{E}_{\bm{g}\sim\mathcal{N}(\bm{0},\bm{I})}\left[\|\Pi_{\mathcal{K}}\left(\bm{g}\right)\|^{2}\right]\label{eq:defn-statistical-dimension}
\end{equation}
with $\Pi_{\mathcal{K}}\left(\bm{g}\right):=\arg\min_{\bm{z}\in\mathcal{K}}\|\bm{g}-\bm{z}\|$
the Euclidean projection. Recognizing that\cite[Proposition 2.4]{amelunxen2014living}
{
\[
\delta\left(\left\{ \bZ\btheta\mid\btheta\in\mathbb{R}^{p}\right\} \right)=p\quad\text{and}\quad\delta(\mathbb{R}_{+}^{n})=n/2,
\]
}
we reduce the condition (\ref{eq:stat-dim-PT}) to 
\[
p+n/2>n+o(n)\qquad\text{or}\qquad p/n>1/2+o(1),
\]
thus indicating that $\|\hat{\bm{\beta}}\| = \infty$
when $\kappa = \lim p/n>1/2$. 

This argument only reveals that  $\| \hat{\bm{\beta}} \| = \infty$ in
the regime where $\kappa >1/2$. If $\kappa=p/n<1/2$, then $\| \bm{\Sigma}^{1/2} \hat{\bm{\beta}} \| = O(1)$
with high probability, a fact we shall
prove in Section \ref{sec: MLEbounded}. 
In light of these observations
we work with the additional condition 
\begin{equation}
\label{eq: kappacondition}
	 \kappa < 1/2. 
\end{equation}

\subsection{The high-dimensional limiting distribution of the LLR}

In contrast to the
classical Wilks' result, our findings reveal that the LLR statistic
follows a {\em rescaled} chi-square distribution with a rescaling
factor that can be explicitly pinned down through the solution to a
system of equations.

\subsubsection{A system of equations}
We start by setting up the crucial system of equations. Before proceeding, 
we first recall the proximal operator
\begin{equation}\label{eq:prox}
	\prox_{b\rho}(z) := \arg \min_{x \in \R} \left\{ b\rho(x) + \frac{1}{2}(x-z)^2 \right\}
\end{equation}
defined for any $b>0$ and convex function $\rho(\cdot)$. As in
\cite{donoho2013high}, we introduce the operator
\begin{equation}
  \label{eq:defn-psi}
  \Psi(z;b) := b \rho'(\prox_{b\rho}(z)),
\end{equation}
which is simply the proximal operator of the conjugate $(b\rho)^*$ of $b\rho$.\footnote{The conjugate $f^*$ of a function $f$ is defined as $f^{*} (x) = \sup_{u \in \mathrm{dom}(f)} \{\langle u, x \rangle - f(u) \}$.
} 
To see this, we note that $\Psi$ satisfies the relation \cite[Proposition 6.4]{donoho2013high} 
\begin{equation}\label{eq:proxrelations}
  \Psi(z;b) + \prox_{b\rho}(z) = z.
\end{equation}
The claim that $\Psi(\cdot;b)=\prox_{(b\rho)^*}(\cdot)$ then follows from the Moreau
decomposition
\begin{equation}
\prox_f(z) + \prox_{f^{*}}(z) = z, \qquad \forall z, 
\end{equation}
which holds for a closed convex function $f$ \cite[Section
2.5]{parikh2014proximal}.
Interested readers are referred to \cite[Appendix 1]{donoho2013high} for more properties of $\prox_{b\rho}$ and $\Psi$.

We are now in position to present the system of equations that plays a crucial role in determining the distribution of the LLR statistic in high dimensions:
\begin{align}
\tau^{2} & = \frac{1}{\kappa}\mathbb{E}\left[\left(\Psi\left(\tau Z; \hspace{0.2em} b\right)\right)^{2}\right] ,  \label{eq: sysofeq-tau}\\
\kappa &  = \mathbb{E}\big[\Psi'\left(\tau Z; \hspace{0.2em} b\right)\big] , \label{eq: sysofeq-b}
\end{align}
where $Z \sim \dnorm(0,1)$, and $\Psi'\left(\cdot,\cdot\right)$
denotes differentiation with respect to the first variable.  The fact that this
system of equations would admit a unique solution in $\mathbb{R}_+^2$
is not obvious {\em a priori}. We shall establish this for the logistic and the probit models later in Section \ref{sec:
  AMP}. 
  

\subsubsection{Main result}\label{subsec: mainres}


\begin{theorem}\label{thm: thm1}
  Consider a logistic or probit regression model under the assumptions
  from Section \ref{subsec: assumptions}.  If $\kappa \in (0,1/2)$,
  then for any $1\leq j\leq p$, the log-likelihood ratio statistic
  $\Lambda_j$ as defined in (\ref{eq:LLR-stat}) obeys
\begin{equation}
	2\Lambda_j ~\convd~ \frac{\taus^2}{\bs}\chi_1^2,\qquad \text{as } n\rightarrow \infty,
\end{equation}
where $(\taus, \bs)\in \mathbb{R}_+^2$ is the unique solution to the
system of equations \eqref{eq: sysofeq-tau} and \eqref{eq:
  sysofeq-b}. Furthermore, the LLR statistic obtained by dropping $k$
variables for any fixed $k$ converges to $({\taus^2}/{\bs})\chi_k^2$.
Finally, these results extend to all binary regression models with
links obeying the assumptions listed in Section \ref{sec:rho}. 
\end{theorem}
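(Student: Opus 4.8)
The plan is to reduce the LLR statistic to a quantity controlled by the asymptotic geometry of the MLE, and then to pin down that geometry using approximate message passing (AMP). First I would establish that the unpenalized logistic/probit MLE $\hbbeta$ behaves, coordinatewise and in bulk, like the solution of a low-dimensional ``state evolution'' fixed point: writing $\btheta = \bSigma^{1/2}\bbeta = \bzero$, the rotational invariance of the Gaussian design lets us assume $\bSigma = \Id$ without loss of generality, so that the problem becomes $\min_{\bbeta} \sum_i \rho(-\ty_i \bX_i^\top \bbeta)$ with $\bX$ having i.i.d.\ $\dnorm(0,1)$ entries and $\ty_i$ i.i.d.\ signs independent of $\bX$. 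The key structural input is that the M-estimation problem with a strictly convex smooth loss and Gaussian design admits an AMP characterization (as in \cite{donoho2013high, el2013robust}): there exist scalars $(\taus,\bs)$, the unique solution of \eqref{eq: sysofeq-tau}--\eqref{eq: sysofeq-b}, such that $\|\hbbeta\|^2/p \to \taus^2$ and, crucially, for each fixed coordinate $j$, $\hbbeta_j$ is asymptotically $\dnorm(0,\taus^2/p)$ after rescaling, with $\taus^2$ and $\bs$ capturing respectively the size of the estimate and the effective regularization induced by the curvature of $\rho$ at the fitted values.

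Next I would connect the LLR $2\Lambda_j = 2\bigl(\ell(\hbbeta_{(-j)}) - \ell(\hbbeta)\bigr)$ to this geometry. The natural route is a second-order Taylor expansion of $\ell$ around $\hbbeta$: since $\nabla \ell(\hbbeta) = \bzero$, we get $2\Lambda_j \approx (\hbbeta - \hbbeta_{(-j)})^\top \nabla^2\ell(\hbbeta)(\hbbeta - \hbbeta_{(-j)})$, and the difference $\hbbeta - \hbbeta_{(-j)}$ is, to leading order, the projection of $\hbbeta$ onto the $j$th coordinate direction in the metric induced by the Hessian $\bH = \nabla^2\ell(\hbbeta) = \sum_i \rho''(\bX_i^\top\hbbeta)\,\bX_i\bX_i^\top$. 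A block-inverse / Schur-complement computation then yields $2\Lambda_j \approx \hbbeta_j^2 / [\bH^{-1}]_{jj}$. One therefore needs two facts: (i) $\hbbeta_j^2$ concentrates around $\taus^2/p$ (times a $\chi_1^2$ fluctuation, because $\sqrt{p}\,\hbbeta_j \convd \dnorm(0,\taus^2)$ by the coordinatewise CLT from AMP), and (ii) $p\,[\bH^{-1}]_{jj} \to \bs$, i.e.\ the diagonal of the inverse Hessian concentrates at the value $\bs/p$. Fact (ii) is where the definition of $\bs$ via \eqref{eq: sysofeq-b} earns its keep: $\bs$ is precisely the limiting ``effective degrees of freedom per coordinate'' quantity, and $\mathbb{E}[\Psi'(\taus Z;\bs)] = \kappa$ is the state-evolution identity that makes $[\bH^{-1}]_{jj}$ predictable. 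Combining (i) and (ii) gives $2\Lambda_j \convd (\taus^2/\bs)\,\chi_1^2$.

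The extension to dropping $k$ coordinates is then essentially the same Schur-complement argument on a $k\times k$ block: $2\Lambda$ for the $k$-variable test is $\approx \hbbeta_S^\top \bigl([\bH^{-1}]_{SS}\bigr)^{-1}\hbbeta_S$ where $S$ is the set of dropped indices, and since $[\bH^{-1}]_{SS} \to (\bs/p)\Id_k$ while $\sqrt{p}\,\hbbeta_S \convd \dnorm(\bzero,\taus^2\Id_k)$ (the finitely many coordinates being asymptotically independent), the quadratic form converges to $(\taus^2/\bs)\chi_k^2$. The final sentence of the theorem — that everything goes through for any link $\rho$ satisfying the regularity conditions of Section~\ref{sec:rho} — requires only that those conditions be exactly the hypotheses under which the two pillars hold: strong convexity/smoothness of $\rho$ ensuring the M-estimator is well-defined and the AMP state evolution converges, and enough integrability of $\rho',\rho''$ for the expectations in \eqref{eq: sysofeq-tau}--\eqref{eq: sysofeq-b} and the Hessian concentration to make sense; the probit effective link $-\log\Phi(-t)$ is checked to satisfy these, so it is subsumed.

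The main obstacle I anticipate is making the Taylor-expansion step rigorous, i.e.\ controlling the remainder in $2\Lambda_j = \hbbeta_j^2/[\bH^{-1}]_{jj} + o_{\mathbb{P}}(1)$ uniformly enough. Because $\hbbeta - \hbbeta_{(-j)}$ is itself an MLE difference and not a fixed vector, and because the Hessian $\bH$ is evaluated at the random point $\hbbeta$, one cannot just quote a deterministic Taylor bound; instead I would control the operator-norm fluctuation of $\bH$ around its ``deterministic equivalent'' using non-asymptotic random matrix theory, and control $\|\hbbeta - \hbbeta_{(-j)}\|$ via a leave-one-predictor-out (leave-one-out) stability argument showing this difference is $O_{\mathbb{P}}(1/\sqrt n)$ and well-aligned with $\be_j$. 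Establishing the coordinatewise CLT for $\hbbeta_j$ and the concentration $p[\bH^{-1}]_{jj}\to\bs$ — which together constitute the real content — will likewise need a careful AMP construction whose iterates provably converge to $\hbbeta$, the part of the argument that borrows most heavily from approximate message passing theory and convex-geometric bounds on the MLE norm from Section~\ref{sec: MLEbounded}.
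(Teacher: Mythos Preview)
Your architecture matches the paper's: reduce to $\bSigma=\Id$, use AMP to pin down $\|\hbbeta\|$, derive a coordinatewise limit law for $\hat\beta_j$, Taylor-expand the LLR to a quadratic form, and identify the effective denominator with $\bs$. Two technical points deserve correction. First, the paper obtains $\|\hbbeta\|^2\to\taus^2$ (not $\|\hbbeta\|^2/p$); the coordinatewise law $\sqrt{p}\,\hat\beta_j\convd\dnorm(0,\taus^2)$ then follows not from AMP per se but from the elementary observation that, under the global null with Gaussian design, $\hbbeta/\|\hbbeta\|$ is uniform on the sphere and independent of $\|\hbbeta\|$ (Corollary~\ref{cor: marginal}). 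Second, your remark that the Section~\ref{sec:rho} conditions amount to ``strong convexity/smoothness of $\rho$'' is precisely backwards: the logistic and probit effective links have $\rho''\to 0$ at $\pm\infty$, so strong convexity fails, and the paper's main technical novelties---the Norm Bound Condition (Theorem~\ref{thm: normbound-MLE}) via conic geometry and the Likelihood Curvature Condition (Lemma~\ref{lem:min-eigenvalue-bound})---exist exactly to replace the strong-convexity hypothesis used in \cite{donoho2013high,el2015impact}. You do cite the norm bound at the end, so this may be a slip, but it is the crux of why the existing AMP/leave-one-out literature does not apply off the shelf.

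On the LLR step, your Schur-complement heuristic $2\Lambda_j\approx\hat\beta_j^2/[\bH^{-1}]_{jj}$ with $p[\bH^{-1}]_{jj}\to\bs$ is equivalent to what the paper proves, but the paper executes it differently: rather than analyze $[\bH^{-1}]_{jj}$ directly (where $\bH=\nabla^2\ell(\hbbeta)$ depends on $\Xcone$ through $\hbbeta$, destroying independence), it constructs an explicit surrogate $\tbb$ for $\hbbeta$ built from the reduced-model MLE $\tbbeta$, and shows $2\Lambda_1 - p\hat\beta_1^2/\talpha\convP 0$ with $\talpha=\frac{1}{n}\tr(\tbG^{-1})$ and $\tbG=\frac{1}{n}\tbX^\top\bD_{\tbbeta}\tbX$. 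The point of routing through $\tbbeta$ is that $\Xcone$ is independent of $\{\tbX,\tbbeta\}$, which is what makes Hanson--Wright and the leave-one-out bounds tractable; the convergence $\talpha\convP\bs$ is then a separate argument (Theorem~\ref{thm:mainthm}(b)) tying the empirical Stieltjes-type trace back to the fixed-point equation~\eqref{eq: sysofeq-b}. Your Schur-complement route would require the same detour once you try to make it rigorous.
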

Hence, the limiting distribution is a rescaled chi-square with a rescaling factor ${\taus^2}/{\bs}$ that only depends on the aspect ratio $\kappa$. 
Fig.~\ref{fig: rescale_cst_logistic} 
illustrates the dependence of the rescaling factor on the limiting
aspect ratio $\kappa$ for logistic regression. The figures for the
probit model are similar as the rescaling constants actually differ by
very small values.

\begin{figure}
\centering
\begin{subfigure}{.5\textwidth}
  \centering
  \includegraphics[scale=.4,keepaspectratio]{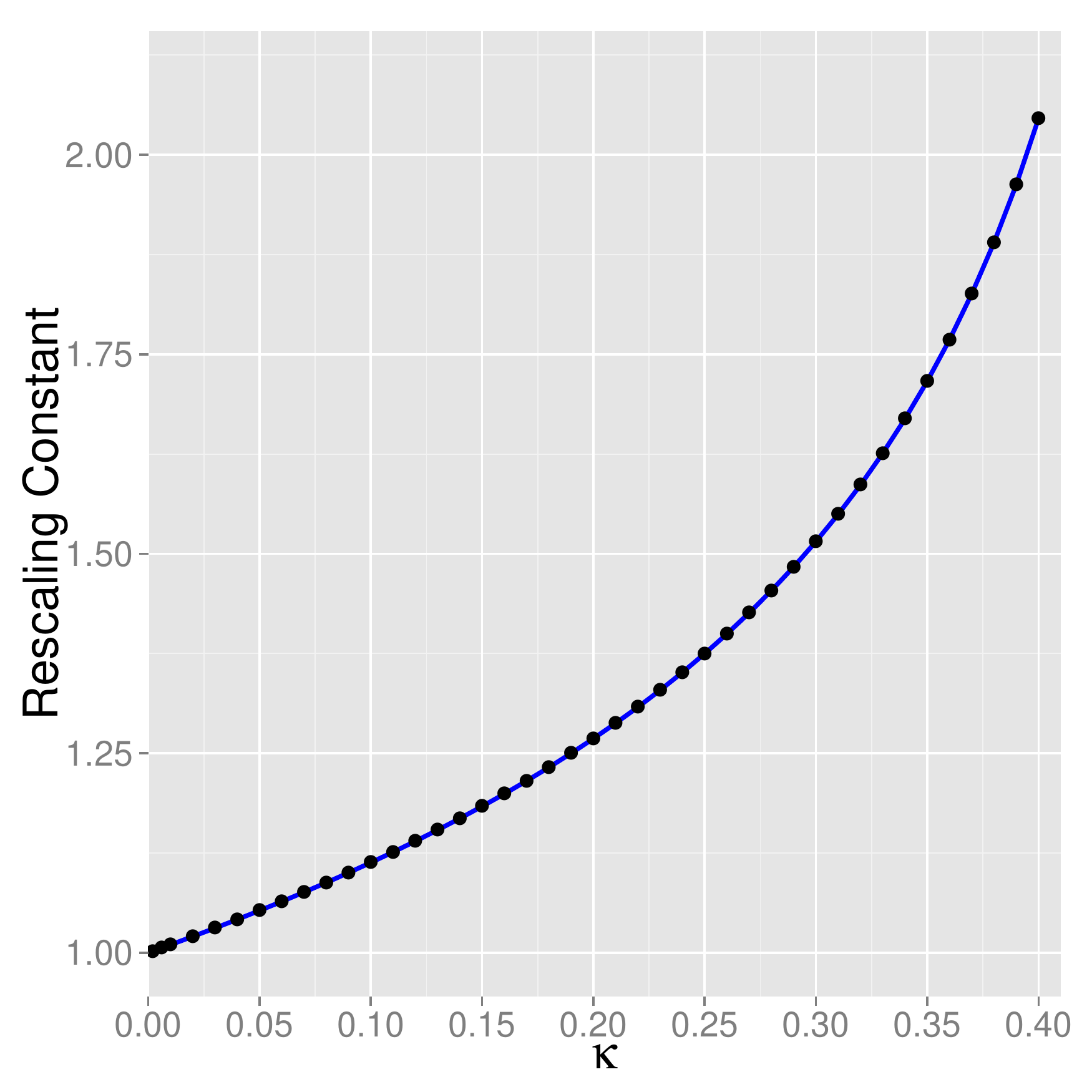}
\end{subfigure}%
\begin{subfigure}{.5\textwidth}
  \centering
  \includegraphics[scale=0.4,keepaspectratio]{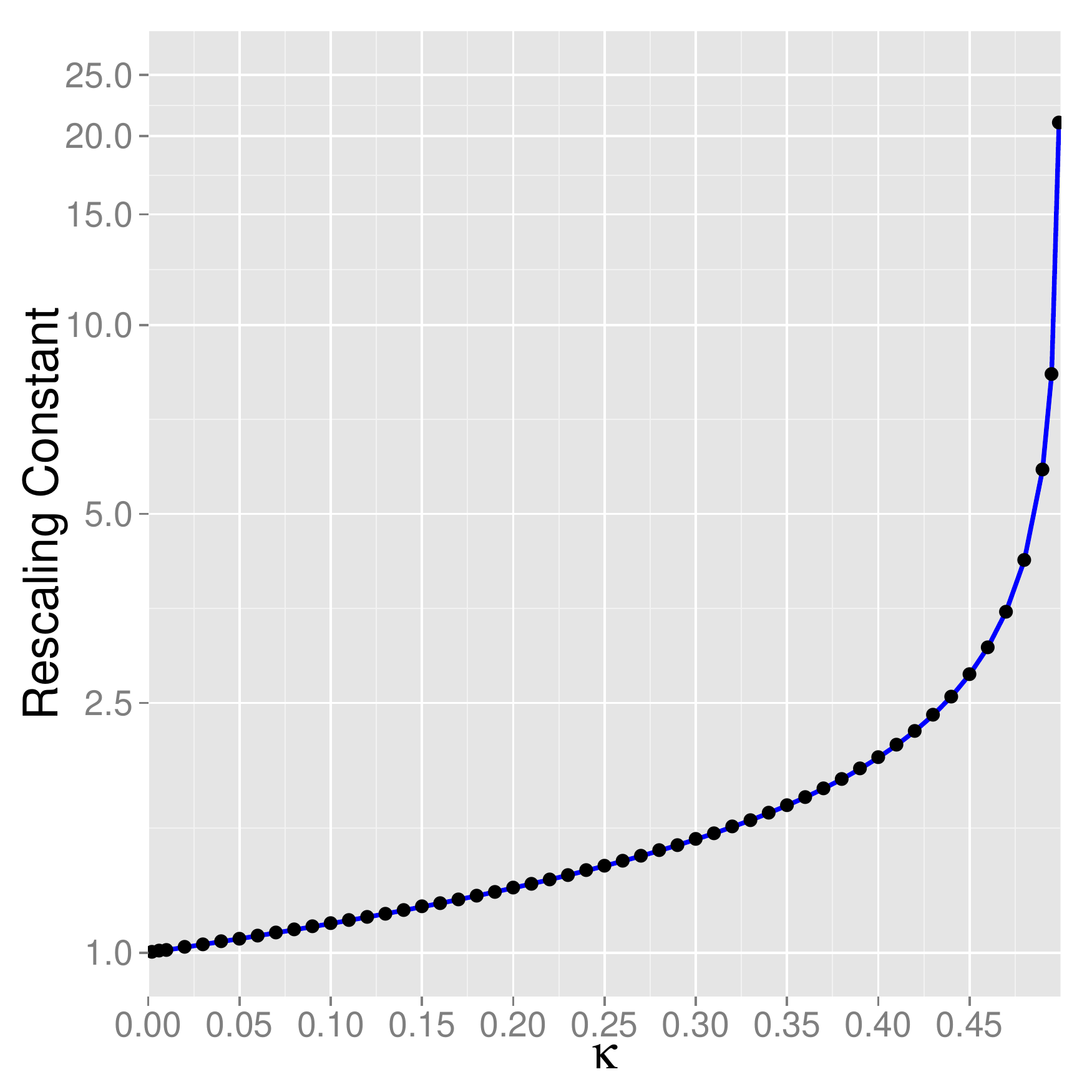}
\end{subfigure}
\caption{Rescaling constant $\taus^2/\bs$ as a function of $\kappa $
  for the logistic model. Note the logarithmic scale in the right
  panel. The curves for the probit model are nearly identical.}
\label{fig: rescale_cst_logistic}
\end{figure}


To study the quality of approximation for finite samples, we
repeat the same numerical experiments as before but now obtain the
p-values by comparing the LLR statistic with the rescaled chi-square
suggested by Theorem \ref{thm: thm1}. For a particular run of the
experiment ($n=4000, p=1200, \kappa =0.3$), we compute the adjusted
LLR statistic $({2\bs}/{\taus^2})\Lambda_j$ for each coefficient 
and obtain the p-values based on the $\chi^2_1$
distribution. The pooled histogram that aggregates $4.8 \times 10^5$ p-values in total is  
shown in
Fig.~\ref{fig: classical}(c).

As we clearly see, the p-values are much closer to a uniform
distribution now. One can compute the chi-square goodness of fit
statistic to test the closeness of the above distribution to
uniformity. To this end, we divide the interval $[0,1]$ into $20$
equally spaced bins of width $0.05$ each. For each bin we compute the
observed number of times a p-value falls in the bin out of the
$4.8 \times 10^5$ values.  Then a chi-square goodness of fit statistic
is computed, noting that the expected frequency is $24000$ for each
bin. The chi-square statistic in this case is $16.049$, which gives a
p-value of $0.654$ in comparison with a $\chi^2_{19}$ variable. The
same test when performed with the Bartlett corrected p-values
(Fig.~\ref{fig: classical}(b)) yields a chi-square statistic $5599$
with a p-value of $0$. \footnote{ Note that the p-values obtained at
  each trial are not exactly independent. However, they are
  exchangeable, and weakly dependent (see the proof of Corollary
  \ref{cor: marginal} for a formal justification of this fact).
  Therefore, we expect the goodness of fit test to be an approximately
  valid procedure in this setting.} Thus, our correction gives the
desired uniformity in the p-values when the true signal
$\bbeta = \bzero$.


Practitioners would be concerned about the validity of p-values when
they are small---again, think about multiple testing applications. In
order to study whether our correction yields valid results for small
p-values, we compute the proportion of times the p-values (in all the
three cases) lie below $5\%, 1\% , 0.5\% , 0.1\%$ out of the $4.8 \times 10^5$
times. The results are summarized in Table \ref{tab:pvalfits}. This
further illustrates the deviation from uniformity for the classical
and Bartlett corrected p-values, whereas the ``adjusted'' p-values
obtained invoking Theorem \ref{thm: thm1} are still valid.

\begin{table}[h!]
\centering
    \begin{tabular} {c|c|c|c}
    \hline
     & Classical & Bartlett-corrected & Adjusted \\ \hline
    $\opP\{\text{p-value}  \leq 5 \% \}$ & $11.1044 \% (0.0668 \%)$  & $6.9592 \% (0.0534 \%)$ &  $5.0110 \% (0.0453 \%)$\\ 
    \hline
        $\opP\{\text{p-value} \leq 1 \% \}$ & $3.6383\% (0.038 \%)$  & $1.6975 \% (0.0261 \%)$  & $0.9944 \% ( 0.0186 \%)$    \\ 
        \hline
        $\opP\{\text{p-value}  \leq 0.5 \% \}$& $2.2477 \% (0.0292 \%)$  & $0.9242 \% (0.0178 \%)$   &  $0.4952 \% (0.0116 \%)$ \\ 
        \hline
        $\opP\{\text{p-value}  \leq 0.1 \% \}$ &  $0.7519 \% (0.0155 \%)$ & $0.2306\%(0.0078\%)$ & $0.1008 \% (0.0051\%)$  \\ \hline
        $\opP\{\text{p-value}  \leq 0.05 \% \}$ &  $0.4669 \% (0.0112 \%)$ & $0.124\%(0.0056\%)$ & $0.0542\% (0.0036\%)$  \\ \hline
        $\opP\{\text{p-value}  \leq 0.01 \% \}$ &  $0.1575 \% (0.0064 \%)$ & $ 0.0342 \%(0.0027\%)$ & $0.0104 \% (0.0014\%)$  \\ \hline
    \end{tabular} \\ 
    \caption{Estimates of p-value probabilities with estimated Monte Carlo standard errors in parentheses under i.i.d.~Gaussian design.}
    \label{tab:pvalfits}   
\end{table}


\subsubsection{Extensions}
\label{sec:rho}

As noted in Section \ref{subsec: probset}, the Symmetry Condition
(\ref{eq: symmetry}) allows to express the negative log-likelihood in
the form \eqref{eq:log-likelihood-alternative}, which makes use of the
effective link $\rho(\cdot)$. Theorem \ref{thm: thm1} applies to any
$\rho(\cdot)$ obeying the following properties: 
\begin{enumerate}
\item $\rho $ is non-negative, has up to three derivatives, and obeys
  $\rho(t) \geq t$.
\item $\rho'$ may be unbounded but it should grow sufficiently slowly, in particular, we assume $|\rho'(t)| = O(|t|)$ and $\rho'(\prox_{c \rho}(Z))$ is a sub-Gaussian random variable for any constant $c>0$ and any $Z \sim \dnorm(0,\sigma^2)$ for some finite $\sigma>0$. 
\item $\rho''(t)>0$ for any $t$ which implies that $\rho$ is convex,
  and $\sup_t \rho''(t) < \infty$.
\item $\sup_t |\rho'''(t)| < \infty$. 
\item Given any $\tau > 0$ ,the equation \eqref{eq: sysofeq-b} has a unique solution in $b$.  
\item The map $\Nu(\tau^2)$ as defined in \eqref{eq: varmapone} has a fixed point. 
\end{enumerate}
It can be checked that the effective links for both the logistic and
the probit models \eqref{eq:link-function-both-models} obey {\em all}
of the above.  The last two conditions are assumed to ensure existence
of a unique solution to the system of equations \eqref{eq:
  sysofeq-tau} and \eqref{eq: sysofeq-b} as will be seen in Section
\ref{sec: AMP}; we shall justify these two conditions for the logistic
and the probit models in Section \ref{subsec: SE}.


{\subsection{Reduction to independent covariates} 

  In order to derive the asymptotic distribution of the LLR
  statistics, it in fact suffices to consider the special case
  $\bSigma = \Id_p$.
\begin{lemma}
  Let $\Lambda_j(\bX)$ be the LLR statistic based on the design matrix $\bX$, where the rows of $\bX$ are
  i.i.d.~$\dnorm({\bm 0}, \bSigma)$ and $\Lambda_j(\bZ)$ that
  where the rows are i.i.d.~$\dnorm({\bm 0}, \Id_p)$. Then
\[
  \Lambda_j(\bX) ~\eqd~ \Lambda_j(\bZ). 
\]
\end{lemma}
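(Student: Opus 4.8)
The plan is to show that the entire LLR statistic is invariant under the change of variables that turns a $\dnorm(\bm 0,\bSigma)$ design into a $\dnorm(\bm 0,\Id_p)$ design, exploiting the fact that under the null $\bbeta(n) = \bzero$, so the response vector $\by$ has i.i.d.\ $\dber(1/2)$ entries and is completely independent of the design matrix. Concretely, write $\bSigma = \bSigma^{1/2}\bSigma^{1/2}$ with $\bSigma^{1/2}$ the symmetric positive-definite square root, and set $\bZ := \bX\bSigma^{-1/2}$, so that $\bZ$ has i.i.d.\ rows $\dnorm(\bm 0, \Id_p)$ and $\bX = \bZ\bSigma^{1/2}$. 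The key identity is that for every $\bbeta \in \R^p$,
\[
  \Lcal(\bbeta; \bX) \;=\; \sum\nolimits_{i=1}^n \rho\big(-\tilde y_i \, \bX_i^\tp \bbeta\big) \;=\; \sum\nolimits_{i=1}^n \rho\big(-\tilde y_i \,(\bZ\bSigma^{1/2})_i^\tp \bbeta\big) \;=\; \Lcal(\bSigma^{1/2}\bbeta; \bZ),
\]
where the same $\tilde y$ (equivalently the same $\by$) appears on both sides because $\by$ does not depend on the design. Thus the reparametrization $\btheta = \bSigma^{1/2}\bbeta$ is a linear bijection of $\R^p$ that carries the objective $\Lcal(\cdot;\bX)$ onto $\Lcal(\cdot;\bZ)$.

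Next I would push this identity through the two optimization problems defining $\Lambda_j$. Since $\btheta \mapsto \bSigma^{-1/2}\btheta$ is a bijection of $\R^p$, minimizing $\Lcal(\bbeta;\bX)$ over $\bbeta \in \R^p$ is the same as minimizing $\Lcal(\bSigma^{-1/2}\btheta; \bX) = \Lcal(\btheta; \bZ)$ over $\btheta \in \R^p$, so the unconstrained minima agree: $\Lcal(\hbbeta(\bX);\bX) = \Lcal(\hbbeta(\bZ);\bZ)$. For the reduced model one must be a little more careful, since the constraint set $\{\bbeta : \beta_j = 0\}$ is not simply the image of $\{\btheta : \theta_j = 0\}$ under $\bSigma^{1/2}$ when $\bSigma$ is not diagonal. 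The clean way around this is to observe that the reduced-model LLR is itself a full-model LLR in one lower dimension: $\ell(\hbbeta_{(-j)}; \bX)$ equals the minimum of $\Lcal$ over the design matrix $\bX_{(-j)}$ obtained by deleting column $j$, whose rows are i.i.d.\ $\dnorm(\bm 0, \bSigma_{(-j)})$ with $\bSigma_{(-j)}$ the corresponding $(p-1)\times(p-1)$ principal submatrix of $\bSigma$ — again positive definite. Applying the same square-root reparametrization in dimension $p-1$ gives $\ell(\hbbeta_{(-j)}; \bX) = \ell(\hbbeta_{(-j)}; \bZ)$ in distribution, where on the right side $\bZ_{(-j)}$ has i.i.d.\ $\dnorm(\bm 0,\Id_{p-1})$ rows, which is exactly the deletion of column $j$ from a $\dnorm(\bm 0,\Id_p)$ design. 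Subtracting, $\Lambda_j(\bX) = \Lambda_j(\bZ)$; and since this holds for the realized pair $(\bX,\by)$ with $\by$ having the same law in both cases, it holds as an equality in distribution. (In fact, one gets a cleaner pointwise statement: conditional on $\by$, $\Lambda_j$ is a deterministic function of $\bX\bSigma^{-1/2}$, which has the same law as the identity-covariance design.)

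The main obstacle — really the only subtlety — is handling the reduced model correctly, i.e.\ making sure one does not incorrectly claim that the constraint $\beta_j = 0$ is preserved by the linear map $\bSigma^{1/2}$; the fix is the ``LLR-in-lower-dimension'' reformulation together with the fact that principal submatrices of a positive-definite matrix are positive-definite, so the hypotheses needed to apply the argument recursively are intact. A secondary point worth a line is that one should record that the minima in question are attained (finite and unique) precisely under the $\kappa < 1/2$ regime invoked by the lemma's standing assumptions, so that $\hbbeta$ and $\hbbeta_{(-j)}$, hence $\Lambda_j$, are well-defined random variables on both sides of the identity. Everything else is a routine change of variables.
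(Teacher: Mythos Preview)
Your argument has a genuine gap at the ``subtracting'' step. You correctly establish that the full-model minimum satisfies $\min_{\bbeta}\Lcal(\bbeta;\bX)=\min_{\btheta}\Lcal(\btheta;\bZ)$ \emph{pointwise} with $\bZ=\bX\bSigma^{-1/2}$, and that the reduced-model minimum with design $\bX$ has the same \emph{marginal} law as the reduced-model minimum with an identity-covariance design. But $\Lambda_j$ is the difference of these two minima, which are coupled through the common design matrix, and marginal equality in law of each term does not give equality in law of the difference. Concretely, your lower-dimensional square-root trick writes the reduced minimum as a function of $\bX_{(-j)}\bSigma_{(-j)}^{-1/2}$, whereas the full minimum is a function of $\bX\bSigma^{-1/2}$; the joint law of this pair is \emph{not} the same as the joint law of $(\bZ,\bZ_{(-j)})$ when $\bSigma$ is not diagonal, so you cannot simply subtract. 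Your parenthetical --- that $\Lambda_j(\bX)$ is a deterministic function of $\bZ=\bX\bSigma^{-1/2}$ --- is true, but that function depends on $\bSigma$ (through the constraint hyperplane $\{\btheta:(\bSigma^{-1/2}\btheta)_j=0\}$), so it is a different function from the one defining $\Lambda_j$ for the identity design.

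The missing ingredient is a rotation in $\btheta$-space, and this is exactly what the paper supplies. After the change of variables $\btheta'=\bSigma^{1/2}\bbeta$, the constraint $\beta_j=0$ becomes $\ba_j^\top\btheta'=0$ for some nonzero $\ba_j$. One then picks an orthogonal $\bO$ with $\bO\ba_j=\|\ba_j\|\be_j$, sets $\btheta=\bO\btheta'$ and $\bW=\bZ\bO^\top$ (which still has i.i.d.\ $\dnorm(\bm 0,\Id_p)$ rows by rotational invariance), and observes that the constraint becomes $\theta_j=0$. Now both the full and reduced minima are expressed \emph{simultaneously} as the corresponding minima for the single identity-covariance design $\bW$, so the difference $\Lambda_j$ matches pointwise and hence in distribution. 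Your lower-dimensional reduction, by contrast, decouples the two minima and loses the joint structure; it can be repaired, but only by reintroducing precisely this rotational-invariance step.
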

\begin{proof}
Recall from \eqref{eq:log-likelihood-alternative} that the LLR statistic for testing the $j$th coefficient can be expressed as
\[
\Lambda_j(\bX) = \min_{\bbeta}\sum_{i=1}^n
\rho(-\ty_i \be_i^\top \bX \bbeta) - \min_{\bbeta: \beta_j =0}\sum_{i=1}^n
 \rho(-\ty_i \be_i^\top \bX\bbeta).  
\]
 Write $\bW' = \bX \bSigma^{-1/2}$ so that the rows of $\bW'$ are
 i.i.d.~$\dnorm({\bm 0}, \Id_p)$ and set
 $ \btheta' = \bSigma^{1/2}\bbeta$. With this reparameterization,
 we observe that the constraint $\beta_j = 0$ is equivalent to $\ba_j^\top \btheta' = 0$
 for some non-zero vector $\ba_j \in \R^p$. This gives
\[
\Lambda_j(\bX) = \min_{\btheta'}\sum_{i=1}^n
\rho(-\ty_i \be_i^\top \bW' \btheta') - \min_{\btheta': \ba_j^{\tp}\btheta' =0}\sum_{i=1}^n
 \rho(-\ty_i \be_i^\top \bW'\btheta'). 
\]
Now let $\bO$ be an orthogonal matrix mapping $\ba_j \in \R^p$ into
the vector $\|\ba_j \|\be_j \in \R^p$,
i.e.~$\bO \ba_j =\|\ba_j \| \be_j$. Additionally, set $\bW = \bW' \bO$ (the rows of
$\bW$ are still i.i.d.~$\dnorm({\bm 0}, \Id_p)$) and 
$\btheta = \bO \btheta'$. Since $\ba_j^\top \btheta' = 0$ occurs if and only
if $\theta_j = 0$, we obtain
\[
\Lambda_j(\bX) = \min_{\btheta}\sum_{i=1}^n
\rho(-\ty_i \be_i^\top \bW \btheta) - \min_{\btheta: \theta_j =0}\sum_{i=1}^n
 \rho(-\ty_i \be_i^\top \bW\btheta) = \Lambda_j(\bZ),  
\]
which proves the lemma.
 \end{proof} 
 In the remainder of the paper we, therefore, assume
 $\bSigma = \Id_p$.

\subsection{Proof architecture}

This section presents the main steps for proving Theorem \ref{thm:
  thm1}. We will only prove the theorem for $\{\Lambda_j\}$, the LLR
statistic obtained by dropping a single variable. The analysis for the
LLR statistic obtained on dropping $k$ variables (for some fixed $k$)
follows very similar steps and is hence omitted for the sake of
conciceness.  As discussed before, we are free to work with any
configuration of the $y_i$'s. For the two steps below, we will adopt
two different configurations for convenience of presentation.

\subsubsection{Step 1: characterizing the asymptotic distributions of $\hat{\beta}_j$}
\label{sub:AMP-step}

Without loss of generality, we assume here that $y_i=1$ (and hence
$\tilde{y}_i=1$) for all $1\leq i\leq n$ and, therefore, the MLE problem
reduces to
\[
  \text{minimize}_{\bm{\beta}\in \mathbb{R}^p} \quad \sum\nolimits_{i=1}^n \rho(-\bm{X}_i^{\top}\bm{\beta}).
\]
We would first like to characterize the marginal distribution of $\hat{\bm{\beta}}$, which is crucial in understanding the LLR statistic. To this end, our analysis follows by a reduction to the setup of \cite{el2013robust,donoho2013high,karoui2013asymptotic,el2015impact}, with certain modifications that are called for due to the specific choices of $\rho(\cdot)$ we deal with here. 
Specifically, consider the linear model
\begin{equation}
	\bm{y} = \bm{X} \bm{\beta} + \bm{w},	\label{eq:linear-regression}
\end{equation}
and prior work \cite{el2013robust,donoho2013high,karoui2013asymptotic,el2015impact} investigating the associated M-estimator
\begin{equation}
	\text{minimize}_{\bm{\beta}\in \mathbb{R}^p} \quad \sum\nolimits_{i=1}^n \rho(y_i - \bm{X}_i^{\top} \bm{\beta}). 	\label{eq:M-estimation}
\end{equation}
Our problem  reduces to (\ref{eq:M-estimation})  on setting $\bm{y}=\bm{w}=\bm{0}$  in (\ref{eq:M-estimation}). When $\rho(\cdot)$ satisfies certain assumptions (e.g.~strong convexity), the asymptotic distribution of $\| \hat{\bm{\beta}} \|$ has been studied in a series of works \cite{el2013robust,karoui2013asymptotic,el2015impact} using a leave-one-out analysis and independently in \cite{donoho2013high} using approximate message passing (AMP) machinery. An outline of their main results is described in Section \ref{subsec: priorart}. 
However, the function $\rho(\cdot)$ in our cases has vanishing
curvature and, therefore, lacks the essential strong convexity
assumption that was utilized in both the aforementioned lines of work. To
circumvent this issue, we propose to invoke the AMP machinery as in
\cite{donoho2013high}, in conjunction with the following critical
additional ingredients:
\begin{itemize}
	\item (\textit{Norm Bound Condition}) We utilize results from the conic geometry literature (e.g.~\cite{amelunxen2014living}) to establish that $$\| \hat{\bm{\beta}} \| = O(1)$$ with high probability as long as $\kappa < 1/2$. This will be elaborated in Theorem \ref{thm: normbound-MLE}.  
  \item (\textit{Likelihood Curvature Condition}) We establish some regularity conditions on the Hessian of the log-likelihood function, generalizing the strong convexity condition, which will be detailed in Lemma \ref{lem:min-eigenvalue-bound}. 
  \item (\textit{Uniqueness of the Solution to \eqref{eq: sysofeq-tau} and \eqref{eq: sysofeq-b}}) We establish that for both the logistic and the probit case, the system of equations \eqref{eq: sysofeq-tau} and \eqref{eq: sysofeq-b} admits a unique solution. 
\end{itemize}
We emphasize that these elements are not straightforward, require significant effort and a number of novel ideas,  
which form our primary technical contributions for this step. 

These ingredients enable the use of the AMP machinery even in the absence of strong convexity on $\rho(\cdot)$, finally leading to the following theorem: 
\begin{theorem}\label{thm: MLE-norm}
Under the conditions of Theorem \ref{thm: thm1}, 
\begin{equation}
\lim_{n\rightarrow\infty} \|\hat{\bm{\beta}}\|^{2}=_{\mathrm{a.s.}} {\tau_{*}^{2}}.
\end{equation}
\end{theorem}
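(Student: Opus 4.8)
The plan is to follow the approximate message passing (AMP) route of \cite{donoho2013high}. By the reduction of the previous subsection we may take $\bSigma=\Id_p$, and by the rotational symmetry exploited in Section \ref{sub:AMP-step} we may assume $\tilde y_i=1$ for all $i$, so that $\hbbeta=\arg\min_{\bbeta\in\R^p}\sum_{i=1}^n\rho(-\bX_i^\top\bbeta)$ --- exactly the M-estimator \eqref{eq:M-estimation} with $\by=\bw=\bzero$. I would then introduce an AMP recursion $\{\bbeta^t\}_{t\ge 0}$ built from the proximal maps $\prox_{b\rho}$ and $\Psi(\cdot;b)$ of \eqref{eq:prox}--\eqref{eq:defn-psi}, whose ``state evolution'' is governed by a pair of scalar sequences $(\tau_t,b_t)$. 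Two facts then have to be established: (i) the state-evolution recursion converges, $\tau_t\to\taus$ and $b_t\to\bs$, where $(\taus,\bs)$ is the fixed point of \eqref{eq: sysofeq-tau}--\eqref{eq: sysofeq-b}; and (ii) the AMP iterates track the MLE, in the sense that $\lim_{t\to\infty}\limsup_{n\to\infty}\|\bbeta^t-\hbbeta\|=0$ almost surely. Granting both, the AMP guarantee that $\|\bbeta^t\|^2\to\tau_t^2$ almost surely, combined with (i) and (ii), yields $\|\hbbeta\|^2\to_{\mathrm{a.s.}}\taus^2$.

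For step (i), one first needs that the system \eqref{eq: sysofeq-tau}--\eqref{eq: sysofeq-b} has a unique solution in $\R_+^2$ and that the iteration defined by the variance map $\Nu(\cdot)$ of \eqref{eq: varmapone} converges to its unique fixed point $\taus^2$; conditions~5 and~6 of Section \ref{sec:rho}, verified for the logistic and probit links in Section \ref{subsec: SE}, together with the monotonicity/concavity properties of $\Nu$ developed in Section \ref{sec: AMP}, are precisely what make this work. The validity of the state-evolution description itself --- that the empirical law of the coordinates of $\bbeta^t$ is asymptotically $\dnorm(0,\tau_t^2)$ --- follows from the general AMP theory once the growth/Lipschitz hypotheses on the nonlinearities are checked; the delicate point here is that $\rho'$ is unbounded, which is absorbed by the assumption (condition~2 of Section \ref{sec:rho}) that $\rho'(\prox_{c\rho}(Z))$ is sub-Gaussian.

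Step (ii) is the crux and the principal obstacle. The argument of \cite{donoho2013high} identifying the AMP limit with the M-estimator rests on strong convexity of $\rho$, i.e.\ a uniform lower bound on $\rho''$; for the logistic and probit effective links $\rho''(t)\to 0$ as $t\to+\infty$, so that input is simply not available. The remedy is to run the comparison on a favorable bounded event: by the Norm Bound Condition (Theorem \ref{thm: normbound-MLE}) one has $\|\hbbeta\|=O(1)$ with high probability, and the same conic-geometry estimate controls $\|\bbeta^t\|$; on the resulting compact region the Likelihood Curvature Condition (Lemma \ref{lem:min-eigenvalue-bound}) provides a high-probability lower bound on the pertinent eigenvalues of the Hessian $\nabla^2\ell$, a localized surrogate for strong convexity. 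With this one can re-run the standard bookkeeping, estimating $\|\bbeta^{t+1}-\hbbeta\|$ in terms of $\|\bbeta^t-\hbbeta\|$ and $\nabla\ell(\bbeta^t)$ --- which the AMP construction drives to $\bzero$ --- to conclude $\bbeta^t\to\hbbeta$ as $t\to\infty$.

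Finally, the in-probability statements are promoted to almost-sure convergence using concentration of Lipschitz functions of the Gaussian matrix $\bX$ together with a Borel--Cantelli argument, since $\taus^2$ is a deterministic constant. I expect the single hardest ingredient to be Lemma \ref{lem:min-eigenvalue-bound}: establishing a curvature lower bound that is uniform over the relevant random, $O(1)$-sized family of directions requires a genuine non-asymptotic random-matrix / $\varepsilon$-net argument and does not reduce to routine strong-convexity estimates; coupling that bound correctly with the AMP contraction so that the lack of global strong convexity never bites is the part that demands the most care.
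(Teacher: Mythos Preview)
Your outline is essentially the paper's AMP route, but there is one substantive deviation worth flagging. In your step~(i) you propose to prove that the state-evolution iterates $\tau_t\to\taus$ by appealing to ``monotonicity/concavity properties of $\Nu$ developed in Section~\ref{sec: AMP}.'' The paper does \emph{not} establish any such global dynamical property of the variance map; Lemma~\ref{lemma: varmap} only shows $\Nu(0)>0$ and $\Nu(\tau^2)<\tau^2$ for large $\tau$, which gives existence of a fixed point via the intermediate value theorem but says nothing about convergence of the iteration. In fact, uniqueness of the fixed point is obtained only \emph{a posteriori}, as a corollary of the main theorem, so you cannot invoke it as an input. The paper sidesteps this entirely by a simple device: it \emph{initializes} the AMP algorithm with $\|\hbbeta^0\|^2\to\taus^2$, so that $\tau_t\equiv\taus$ and $b_t\equiv\bs$ for all $t$ by construction. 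This removes any need to study the dynamics of $\Nu$.

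Two smaller points. First, the bound on $\|\hbbeta^t\|$ in step~(ii) does not come from conic geometry; it comes for free from the state-evolution guarantee $\lim_n\|\hbbeta^t\|^2=\tau_t^2\equiv\taus^2$. The conic-geometry norm bound (Theorem~\ref{thm: normbound-MLE}) is used only for the MLE $\hbbeta$. Second, the comparison between $\hbbeta^t$ and $\hbbeta$ is not an iterated contraction: the paper bounds $\|\hbbeta-\hbbeta^t\|$ directly by a constant times $\tfrac1n\|\nabla\ell(\hbbeta^t)\|$ via the curvature condition, and then controls the gradient by the Cauchy increments $\|\hbbeta^t-\hbbeta^{t-1}\|$ and $\tfrac1{\sqrt n}\|\bm\eta^t-\bm\eta^{t-1}\|$, which vanish by \cite[Lemma~6.9]{donoho2013high}. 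Your description is close enough in spirit, but the mechanism is a one-step bound rather than a recursion.
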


This theorem immediately implies that the marginal distribution of $\hat{{\beta}}_j$ is normal. 

\begin{corollary}\label{cor: marginal}
Under the conditions of Theorem \ref{thm: thm1}, for every $1\leq j\leq p$, it holds that 
\begin{equation}
  \sqrt{p}\hat{\beta}_j ~\convd~ \dnorm(0,\taus^2), \qquad \text{ as } n\rightarrow \infty.
\end{equation}
\end{corollary}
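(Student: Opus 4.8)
The plan is to deduce the corollary from Theorem~\ref{thm: MLE-norm} together with the orthogonal invariance of the (reduced, $\bSigma=\Id_p$) design. First I would observe that, under the null, the signs $\ty_i$ are independent of $\bX$ and $\bX$ has i.i.d.\ $\dnorm(\bzero,\Id_p)$ rows, so $(\bX\bO,\by)\eqd(\bX,\by)$ for every orthogonal $\bO\in\R^{p\times p}$. Since the negative log-likelihood obeys $\ell_{\bX\bO}(\bbeta)=\sum_{i=1}^n\rho(-\ty_i\be_i^\top\bX\bO\bbeta)=\ell_{\bX}(\bO\bbeta)$, the minimizers satisfy $\hbbeta(\bX\bO)=\bO^\top\hbbeta(\bX)$, whence $\hbbeta\eqd\bO^\top\hbbeta$ for all orthogonal $\bO$; that is, the law of $\hbbeta$ is orthogonally invariant.

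Next I would record the standard consequence: on the event that the MLE exists and $\hbbeta\neq\bzero$ --- which has probability tending to one by $\kappa<1/2$ and Theorem~\ref{thm: normbound-MLE} --- write $\hbbeta=\|\hbbeta\|\,\bu$ with $\bu:=\hbbeta/\|\hbbeta\|\in\spS^{p-1}$; orthogonal invariance forces $\bu$ to be Haar-uniform on $\spS^{p-1}$ and independent of $\|\hbbeta\|$. Thus $\hat\beta_j=\|\hbbeta\|\,\bu_j$, and it suffices to control each factor. For the direction, represent $\bu\eqd\bm{g}/\|\bm{g}\|$ with $\bm{g}\sim\dnorm(\bzero,\Id_p)$; then $\sqrt{p}\,\bu_j\eqd\bm{g}_j/(\|\bm{g}\|/\sqrt{p})\convd\dnorm(0,1)$ by the strong law ($\|\bm{g}\|^2/p\to 1$ a.s.) and Slutsky. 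For the magnitude, Theorem~\ref{thm: MLE-norm} gives $\|\hbbeta\|\convP\taus$. A final Slutsky step yields $\sqrt{p}\,\hat\beta_j=\|\hbbeta\|\cdot\sqrt{p}\,\bu_j\convd\taus\,\dnorm(0,1)=\dnorm(0,\taus^2)$. Running the same computation on any fixed set of $k$ coordinates gives $(\sqrt{p}\,\hat\beta_{j_1},\dots,\sqrt{p}\,\hat\beta_{j_k})\convd\dnorm(\bzero,\taus^2\Id_k)$; combined with the fact that permutations are orthogonal, this also supplies the exchangeability and asymptotic independence of the $\hat\beta_j$ (hence of the associated p-values) referenced in the footnote in Section~\ref{subsec: mainres}.

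The main obstacle is not in this argument at all but upstream, in Theorem~\ref{thm: MLE-norm} and its ingredients (the norm bound of Theorem~\ref{thm: normbound-MLE}, the likelihood-curvature estimates, and the AMP analysis); given that theorem, the corollary is essentially rotational-invariance bookkeeping. The only points needing a sentence of care are (i) that the event on which $\hbbeta$ is undefined or equal to $\bzero$ is asymptotically negligible and so does not affect the limiting law, and (ii) the elementary fact that orthogonal invariance of a random vector decomposes it as an independent product of a uniform direction and a magnitude, which is what licenses handling $\|\hbbeta\|$ and $\bu$ separately.
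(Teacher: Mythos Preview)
Your proposal is correct and follows essentially the same route as the paper: rotational invariance of the Gaussian design gives that $\hbbeta/\|\hbbeta\|$ is uniform on $\spS^{p-1}$ and independent of $\|\hbbeta\|$, then $\|\hbbeta\|\convP\taus$ from Theorem~\ref{thm: MLE-norm} plus the Gaussian representation of the uniform direction and Slutsky finish the job. Your write-up is more detailed (explicit invariance argument, care with the null event, the $k$-coordinate extension), but the substance matches the paper's proof.
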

\begin{proof} From the rotational invariance of our i.i.d.~Gaussian
  design, it can be easily verified that $\hbbeta / \|\hbbeta \|$ is
  uniformly distributed on the unit sphere $\spS^{p-1}$ and is independent
  of $\|\hbbeta \|$.  Therefore, $\hat{\beta}_j$ has the same
  distribution as $\|\hbbeta \| Z_j/\|\bZ \|$, where
  $\bZ=(Z_1, \hdots, Z_p) \sim \dnorm(\bm{0},\bm{I}_p)$ independent of
  $\|\hbbeta \|$. Since $ \sqrt{p} \|\hbbeta \|/\|\bZ \|$ converges in
  probability to $\taus$, we have, by Slutsky's theorem, that
  $\sqrt{p}\hat{\beta}_j$ converges to $\dnorm(0, \taus^2)$ in
  distribution.
\end{proof}

\subsubsection{Step 2: connecting $\Lambda_j$ with $\hat{\beta}_j$}
\label{sub:step2-LRT}

Now that we have derived the asymptotic distribution of $\hat{\beta}_j$,  the next step involves a reduction of the LLR statistic to a function of the relevant coordinate of the MLE. Before continuing, we note that the distribution of $\Lambda_j$  is the  same for all $1\leq j\leq p$ due to exchangeability. As a result, going forward we will only analyze $\Lambda_1$ without loss of generality.  In addition, we introduce the following convenient notations and assumptions:
\begin{itemize}
\item the design matrix on dropping the first column is written as
  $\tbX$ and the MLE in the corresponding reduced model as $\tbbeta$;
	\item write $\bX=[\bX_1, \cdots, \bX_n]^{\top} \in \mathbb{R}^{n\times p}$ and $\tbX=[\tbX_1, \cdots, \tbX_n]^{\top} \in \mathbb{R}^{n\times (p-1)}$;
	\item without loss of generality, assume that $\tilde{y}_i=-1$ for all $i$ in this subsection, and hence the MLEs under the full and the reduced models reduce to 
\begin{eqnarray}
	&\hat{\bm{\beta}} = \arg\min_{\bm{\beta}\in \mathbb{R}^p} &\ell(\bm{\beta}) := \sum\nolimits_{i=1}^n \rho(\bX_i^{\top}\bm{\beta}), \\
	&\tilde{\bm{\beta}} = \arg\min_{\bm{\beta}\in \mathbb{R}^{p-1}}  &\tilde{\ell}(\bm{\beta}) :=  \sum\nolimits_{i=1}^n \rho(\tbX_i^{\top}\bm{\beta}). \label{eq:defn-tbeta}
\end{eqnarray}
\end{itemize}
   With the above notations in place, the LLR statistic for testing $\beta_1=0$ vs.~$\beta_1 \neq 0$ can be expressed as
\begin{equation} 
\label{eq: LRT}
\Lambda_1 ~:=~ \tilde{\ell}(\tbbeta)- \ell(\hbbeta) 
 ~=~ \sum_{i=1}^n \left\{ \rho (\tbX_i^ \tp \tbbeta)-\rho(\bX_i^ \tp \hbbeta)  \right\} . 
\end{equation}
To analyze $\Lambda_1$, we invoke Taylor expansion to reach
\begin{multline}
\qquad\qquad\qquad \Lambda_1  ~=~ \underset{:=Q_{\mathrm{lin}}}{\underbrace{\sum_{i=1}^n\rho'\left(\bX_i ^ \tp \hbbeta\right) \left(\tbX_i^ \tp \tbbeta - \bX_i ^ \tp\hbbeta\right)}} +  \frac{1}{2} \sum_{i=1}^n\rho''\left(\bX_i  ^ \tp \hbbeta\right) \left(\tbX_i ^ \tp \tbbeta - \bX_i ^ \tp \hbbeta\right)^2 \\
 + \frac{1}{6} \sum_{i=1}^n \rho'''(\gamma_i) \left(\tbX_i ^ \tp \tbbeta - \bX_i ^ \tp \hbbeta \right)^3, \qquad\qquad\qquad\qquad
\label{eq:L-4term}
\end{multline}
where $\gamma_i$ lies between $\tbX^{\tp}_i\tbbeta$ and $\bX^{\tp}_i\hbbeta$.
A key observation is that the linear term $Q_{\mathrm{lin}}$ in the above equation vanishes. To see this, note that the first-order optimality conditions for the MLE $\hat{\bbeta}$ 
is given by
%
\begin{align}
\label{eq: betaesteq}
\sum\nolimits_{i=1}^n  \rho'(\bX_i ^ \tp \hat{\bbeta}) \bX_i = \bm{0}. 
\end{align}
Replacing $\tilde{\bm{X}}_{i}^{\top}\tilde{\bm{\beta}}$ with $\bm{X}_{i}^{\top}\left[\begin{array}{c}
0\\
\tilde{\bm{\beta}}
\end{array}\right]$ in $Q_{\text{lin}}$ and using the optimality condition, we obtain 
\begin{eqnarray*}
	Q_{\text{lin}} & = & \left(\sum\nolimits_{i=1}^{n}\rho'\left(\bm{X}_{i}^{\top}\hat{\bm{\beta}}\right)\bm{X}_{i}\right)^{\top} \left( \left[\begin{array}{c}
0\\
\tilde{\bm{\beta}}
	\end{array}\right]- \hat{\bm{\beta}} \right) ~=~ 0.
\end{eqnarray*}
Consequently, $\Lambda_1$ simplifies to the following form
\begin{equation}
 \Lambda_1
 =~\frac{1}{2} \sum_{i=1}^n\rho''(\bX_i ^ \tp \hbbeta) \left(\tbX_i ^ \tp \tbbeta - \bX_i ^ \tp \hbbeta \right)^2 + \frac{1}{6} \sum_{i=1}^n \rho'''(\gamma_i) \left(\tbX_i ^ \tp \tbbeta - \bX_i ^ \tp \hbbeta\right)^3.  
 \label{eq:LRT-simplified}
\end{equation}
Thus, computing the asymptotic distribution of
$\Lambda_1$ boils down to analyzing $\bX_i^{\tp}\hbbeta -
\tbX_i^{\tp}\tbbeta$. Our argument is inspired by the
leave-one-predictor-out approach developed in
\cite{karoui2013asymptotic,el2015impact}.

We re-emphasize that our setting is not covered by that of
\cite{karoui2013asymptotic,el2015impact}, due to the violation of
strong convexity and some other technical assumptions. We sidestep
this issue by utilizing the \textit{Norm Bound Condition} and the
\textit{Likelihood Curvature Condition}. In the end, our analysis
establishes the equivalence of $\Lambda_1$ and
$\hat{\beta}_1$ up to some explicit multiplicative factors modulo negligible
error terms. This is summarized as follows.

\begin{theorem}
	\label{thm: Lambda-marginal}
Under the assumptions of Theorem \ref{thm: thm1}, 
\begin{equation}
	2\Lambda_1 - \frac{p }{\bs} \hat{\beta}_1^2 ~\convP~ 0 , \qquad \text{ as } n\rightarrow \infty.
\end{equation}
\end{theorem}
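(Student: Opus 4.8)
The plan is to start from the simplified expansion \eqref{eq:LRT-simplified}, which writes $2\Lambda_1$ as a $\rho''$-weighted quadratic form plus a cubic remainder in the fitted-value perturbations $\Delta_i := \bX_i^{\top}\hbbeta - \tbX_i^{\top}\tbbeta$; the whole problem reduces to pinning down these $\Delta_i$. First I would carry out a leave-one-predictor-out comparison of the full and reduced stationarity conditions. Writing $\bX_i = (X_{i1},\tbX_i)$ and $\hbbeta = (\hat\beta_1,\bm g)$ with $\bm g\in\R^{p-1}$, one subtracts the reduced optimality condition $\sum_i \rho'(\tbX_i^{\top}\tbbeta)\tbX_i = \bzero$ from the last $p-1$ components of the full optimality condition \eqref{eq: betaesteq} and Taylor-expands $\rho'$ about $\tbX_i^{\top}\tbbeta$. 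This yields $\bm g = \tbbeta - \hat\beta_1\,\bm b + \bm\xi$, where $\bm b$ is the weighted least-squares coefficient of the first design column $\Xcone$ regressed on $\tbX$ with weights $\rho''(\tbX_i^{\top}\tbbeta)$ --- equivalently, $\bm b$ is characterized by $\sum_i \rho''(\tbX_i^{\top}\tbbeta)\,r_i\,\tbX_i = \bzero$ with $r_i := X_{i1} - \tbX_i^{\top}\bm b$ --- and $\bm\xi$ is a higher-order correction; consequently $\Delta_i = \hat\beta_1\,r_i + \tbX_i^{\top}\bm\xi$. The bulk of the work here is the bound $\|\bm\xi\| = o_{\opP}(n^{-1/2})$, for which one invokes the Norm Bound Condition (Theorem \ref{thm: MLE-norm}), the Likelihood Curvature Condition (Lemma \ref{lem:min-eigenvalue-bound}), concentration for $\|\bm b\| = O_{\opP}(1)$ and for the extreme singular values of $\tbX$, and the identity $\sum_i\rho''(\tbX_i^{\top}\tbbeta)r_i\tbX_i = \bzero$ to harvest cancellations.

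Next I would substitute $\Delta_i = \hat\beta_1 r_i + \tbX_i^{\top}\bm\xi$ into \eqref{eq:LRT-simplified} and discard the negligible pieces. Using $|\hat\beta_1| = O_{\opP}(p^{-1/2})$ (Corollary \ref{cor: marginal}), the bound $\sum_i|r_i|^3 = O_{\opP}(n)$ (the $r_i$ being sub-Gaussian with $O(1)$ variance since $\|\bm b\| = O_{\opP}(1)$), $\sup_t|\rho'''(t)|<\infty$, and $\sum_i(\tbX_i^{\top}\bm\xi)^2 \le \|\tbX\|^2\|\bm\xi\|^2 = o_{\opP}(1)$: the cubic remainder is $O_{\opP}\!\big(|\hat\beta_1|^3\sum_i|r_i|^3\big) + \cdots = o_{\opP}(1)$; the cross term $\hat\beta_1\sum_i \rho''(\bX_i^{\top}\hbbeta)\,r_i\,\tbX_i^{\top}\bm\xi$ is $o_{\opP}(1)$ once $\rho''(\bX_i^{\top}\hbbeta)$ is replaced by $\rho''(\tbX_i^{\top}\tbbeta)$ (at cost $o_{\opP}(1)$) and $\sum_i\rho''(\tbX_i^{\top}\tbbeta)r_i\tbX_i=\bzero$ is used; and the $(\tbX_i^{\top}\bm\xi)^2$ contribution is $o_{\opP}(1)$. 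Replacing $\rho''(\bX_i^{\top}\hbbeta)$ by $\rho''(\tbX_i^{\top}\tbbeta)$ in the leading quadratic costs a further $O_{\opP}\!\big(|\hat\beta_1|^3\sum_i|r_i|^3\big) = o_{\opP}(1)$. The upshot is $2\Lambda_1 = \hat\beta_1^2\sum_i \rho''(\tbX_i^{\top}\tbbeta)\,r_i^2 + o_{\opP}(1)$.

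Since $\hat\beta_1^2 = O_{\opP}(1/p)$, it then suffices to show $\sum_i \rho''(\tbX_i^{\top}\tbbeta)\,r_i^2 = p/\bs + o_{\opP}(p)$ (this quantity is also $1/[(\bX^{\top}\Dhbeta\bX)^{-1}]_{11}$ up to $o_{\opP}$ errors, so the step is the usual Wald/LRT equivalence). Write $\bm W := \mathrm{diag}\big(\rho''(\tbX_1^{\top}\tbbeta),\dots,\rho''(\tbX_n^{\top}\tbbeta)\big)$ and $\br := \Xcone - \tbX\bm b$, so that $\sum_i\rho''(\tbX_i^{\top}\tbbeta)r_i^2 = \br^{\top}\bm W\br = \Xcone^{\top}\bm W\Xcone - \Xcone^{\top}\bm W\tbX(\tbX^{\top}\bm W\tbX)^{-1}\tbX^{\top}\bm W\Xcone$. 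Conditioning on $\tbX$ (hence on $\bm W$), with $\Xcone\sim\dnorm(\bzero,\Id_n)$ independent, and using concentration, this equals $\tr(\bm W) - \tr\big((\tbX^{\top}\bm W\tbX)^{-1}\tbX^{\top}\bm W^2\tbX\big) + o_{\opP}(p) = \sum_i \rho''(\tbX_i^{\top}\tbbeta)\big(1 - \rho''(\tbX_i^{\top}\tbbeta)\,h_i\big) + o_{\opP}(p)$ with $h_i := \tbX_i^{\top}(\tbX^{\top}\bm W\tbX)^{-1}\tbX_i$. A Sherman--Morrison leave-one-sample-out argument gives $h_i = T + o_{\opP}(1)$ uniformly, where $T := \lim_n \tr\big((\tbX^{\top}\bm W\tbX)^{-1}\big)$; the identity $p-1 = \tr\big((\tbX^{\top}\bm W\tbX)^{-1}\tbX^{\top}\bm W\tbX\big) = \sum_i \rho''(\tbX_i^{\top}\tbbeta)h_i$ then forces $\frac1n\sum_i \frac{T\rho''(\tbX_i^{\top}\tbbeta)}{1+T\rho''(\tbX_i^{\top}\tbbeta)} \to \kappa$, and since $\prox_{\bs\rho}$ pushes $\taus Z$ ($Z\sim\dnorm(0,1)$) forward to the limiting empirical law of $\{\tbX_i^{\top}\tbbeta\}$ (the state-evolution conclusion of Step~1) while $\Psi'(z;b) = b\rho''(\prox_{b\rho}(z))/(1+b\rho''(\prox_{b\rho}(z)))$ (differentiate \eqref{eq:proxrelations}), the left side converges to $\E\big[T\rho''(\prox_{\bs\rho}(\taus Z))/(1+T\rho''(\prox_{\bs\rho}(\taus Z)))\big]$; strict monotonicity in $T$ of this expectation together with \eqref{eq: sysofeq-b} identifies $T = \bs$. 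Plugging $h_i = \bs + o_{\opP}(1)$ back in collapses the sum to $\sum_i \rho''(\tbX_i^{\top}\tbbeta)/(1+\bs\rho''(\tbX_i^{\top}\tbbeta)) + o_{\opP}(p)$, and using the same convergence of the empirical law of fitted values one more time gives $\frac1n\sum_i \frac{\bs\rho''(\tbX_i^{\top}\tbbeta)}{1+\bs\rho''(\tbX_i^{\top}\tbbeta)} \to \E[\Psi'(\taus Z;\bs)] = \kappa$ by \eqref{eq: sysofeq-b}, hence $\sum_i \rho''(\tbX_i^{\top}\tbbeta)r_i^2 = n\kappa/\bs + o_{\opP}(p) = p/\bs + o_{\opP}(p)$. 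Combined with the display from the previous paragraph, this yields $2\Lambda_1 - \frac{p}{\bs}\hat\beta_1^2 \convP 0$.

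I expect the main obstacle to be the leave-one-predictor-out step: establishing $\bm g = \tbbeta - \hat\beta_1\bm b + o_{\opP}(n^{-1/2})$ and the attendant deterministic-equivalent identities ($\tbX_i^{\top}(\tbX^{\top}\bm W\tbX)^{-1}\tbX_i \to \bs$ and $\tr\big((\tbX^{\top}\bm W\tbX)^{-1}\big) \to \bs$), in the absence of strong convexity of $\rho$ and with weights $\rho''(\tbX_i^{\top}\tbbeta)$ that are themselves functions of $\tbX$. This is exactly where the classical El~Karoui/Donoho--Montanari leave-one-out machinery does not apply verbatim, and it must be rebuilt on top of the Norm Bound and Likelihood Curvature conditions, supplemented by careful sub-Gaussian concentration inequalities to control the various coupling terms.
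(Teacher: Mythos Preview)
Your outline is correct and follows the paper's route: your $\bm b$, $r_i$, and $\sum_i\rho''(\tbX_i^{\top}\tbbeta)\,r_i^{2}$ are exactly the paper's $\tbG^{-1}\bw$, $X_{i1}-\tbX_i^{\top}\tbG^{-1}\bw$, and $n\xi=\Xcone^{\top}\Dtbeta^{1/2}\bH\Dtbeta^{1/2}\Xcone$ (Section~\ref{sub:analysis-LRT}), and the paper's surrogate $\tbb$ with its specially chosen first coordinate $\tb_1$ (equation~\eqref{eq: deftb1}, Lemma~\ref{lemma: nablal}, Theorem~\ref{thm: hbbetabdiff}) is precisely the device that makes your bound $\|\bm\xi\|=o_{\opP}(n^{-1/2})$ go through. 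Two small points. (i)~The claim ``$h_i=T+o_{\opP}(1)$ uniformly'' is not correct as written: Sherman--Morrison (equation~\eqref{eq: trace2}) gives $h_i=\tilde h_i/(1+\rho''(\tbX_i^{\top}\tbbeta)\,\tilde h_i)$ with the \emph{leave-one-out} leverage $\tilde h_i=\frac{1}{n}\tbX_i^{\top}\tbGi^{-1}\tbX_i$ concentrating at $\talpha$ (Proposition~\ref{prop: etai}); it is this relation, not $h_i\approx T$, that makes your next line $\frac{1}{n}\sum_i\frac{T\rho''}{1+T\rho''}\to\kappa$ and the subsequent collapse to $\sum_i\rho''/(1+\bs\rho'')$ valid. (ii)~The paper does \emph{not} read off the limiting empirical law of $\{\tbX_i^{\top}\tbbeta\}$ from Step~1's state evolution; instead it layers in a leave-one-\emph{observation}-out identity $\tbX_i^{\top}\tbbeta\approx\prox_{\tilde q_i\rho}(\tbX_i^{\top}\tbbetaimin)$ (Lemma~\ref{lemma: fitdiff1}) with $\tilde q_i\approx\talpha$ (Lemma~\ref{lemma: qialphaclose}) and uses the conditional Gaussianity of $\tbX_i^{\top}\tbbetaimin$ (variance $\|\tbbetaimin\|^{2}\to\taus^{2}$) to reduce your self-consistency equation to $\Delta(\talpha)\to 0$ for $\Delta$ in~\eqref{eq: Deltax}, whose unique zero is $\bs$ by Lemma~\ref{lemma: monotonicity}.
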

Theorem \ref{thm: Lambda-marginal} reveals a simple yet surprising connection between the LLR statistic $\Lambda_1$ and the MLE $\hbbeta$.  As we shall see in the proof of the theorem, 
the quadratic term in \eqref{eq:LRT-simplified} is $\frac{1}{2}\frac{p }{\bs} \hat{\beta}_1^2 + o(1)$, while the remaining third-order term of \eqref{eq:LRT-simplified} is vanishingly small. Finally, putting Corollary \ref{cor: marginal} and Theorem  \ref{thm: Lambda-marginal} together directly establishes Theorem \ref{thm: thm1}. 

\subsection{Comparisons with the classical regime}

We pause to shed some light on the interpretation of the correction 
factor $\taus^2/\bs$ in Theorem \ref{thm: thm1} and understand the
differences from classical results.  Classical theory (e.g.~\cite{huber1973robust,huber2011robust}) asserts that
when $p$ is fixed and $n$ diverges, the MLE for a fixed design $\bm{X}$ is asymptotically normal,
namely,
\begin{equation}
	\sqrt{n}(\hbbeta - \bbeta) ~\convd~ \dnorm(0,\mathcal{I}_{\bbeta}^{-1}), \label{eq:asymp-normality}
\end{equation}
where 
\begin{equation}
	\mathcal{I}_{\bbeta} = \frac{1}{n} \bX^{\tp} \bm{D}_{{\bbeta}}\bX \qquad \text{with}\quad
	\bm{D}_{{\bbeta}} := \left[\begin{array}{ccc}
\rho''\left(\bX_{1}^{\top}{\bbeta}\right)\\
 & \ddots\\
 &  & \rho''\left(\bX_{n}^{\top}{{\bbeta}}\right)
\end{array}\right]
\end{equation}
is the normalized Fisher information at the true value
$\bbeta$.  
In particular, under the global null and i.i.d.~Gaussian design, this
converges to 
\[
	\mathbb{E}_{\bm{X}}[\mathcal{I}_{\bbeta}] = \begin{cases} \frac{1}{4}\Id, \quad & \text{for the logistic model} \\  \frac{2}{\pi} \Id, & \text{for the probit model} 	\end{cases}
\]  
as $n$ tends to infinity \cite[Example 5.40]{van2000asymptotic}. 

The behavior in high dimensions is different. In particular,
Corollary \ref{cor: marginal} states that under the global null, we
have 
\begin{equation}
	\sqrt{p} (\hat{\beta}_j - \beta_j) ~\convd~ \dnorm(0, \taus^2).
\end{equation}
Comparing the variances in the logistic model, we have that
\[
	\lim_{n\rightarrow \infty} \mathsf{Var} \left( \sqrt{p} \hat \beta_j \right) =
	\begin{cases} 4\kappa, \qquad & \text{in classical large-sample theory}; \\
		\taus^2, & \text{in high dimensions}.
	\end{cases}
\]
Fig.~\ref{fig: taus_by_kappa} illustrates the behavior of the ratio
$\taus^2/\kappa $ as a function of $\kappa$. Two observations are
immediate:
\begin{itemize}
	\item First, in Fig.~\ref{fig: taus_by_kappa}(a) we have $\taus^2 \ge 4\kappa$ for all $\kappa \geq 0$. This indicates an inflation in variance
  or an ``extra Gaussian noise'' component that appears in high dimensions, as discussed in
  \cite{donoho2013high}. The variance of the ``extra Gaussian
  noise'' component increases as $\kappa$ grows. 
\item Second, as $\kappa \rightarrow 0$, we have
  $\taus^2/4\kappa \rightarrow 1$ in the logistic model, which indicates that classical
  theory becomes accurate in this case. In other words, our theory recovers the
  classical prediction in the regime where $p = o(n)$. 
\end{itemize}
 

Further, for the testing problem considered here, the LLR statistic in
the classical setup can be expressed, through Taylor expansion, as
\begin{equation}\label{eq: classicalLRT}
  2\Lambda_1 = n (\hbbeta-\tbbeta)^{\tp} \mathcal{I}_{\bbeta} (\hbbeta - \tbbeta) + o_P(1), 
\end{equation}
where $\tbbeta$ is defined in (\ref{eq:defn-tbeta}). 
In the high-dimensional setting, we will also establish a quadratic
approximation of the form 
\[
2\Lambda_1 = n (\hbbeta-\tbbeta)^{\tp} \bG 
(\hbbeta - \tbbeta) + o_P(1), \qquad \bG= \frac{1}{n} \bX^{\tp}\Dhbeta \bX. 
\]
In Theorem \ref{thm:mainthm}, we shall see that $\bs$ is the limit of
$\frac{1}{n}\tr(\bG^{-1})$, the Stieltjes transform of the empirical
spectral distribution of $\bG$ evaluated at $0$. Thus, this quantity
in some sense captures the spread in the eigenvalues of $\bG$ one
would expect to happen in high dimensions. 


\begin{figure}
\centering
\begin{subfigure}{.5\textwidth}
  \centering
  \includegraphics[scale=.4,keepaspectratio]{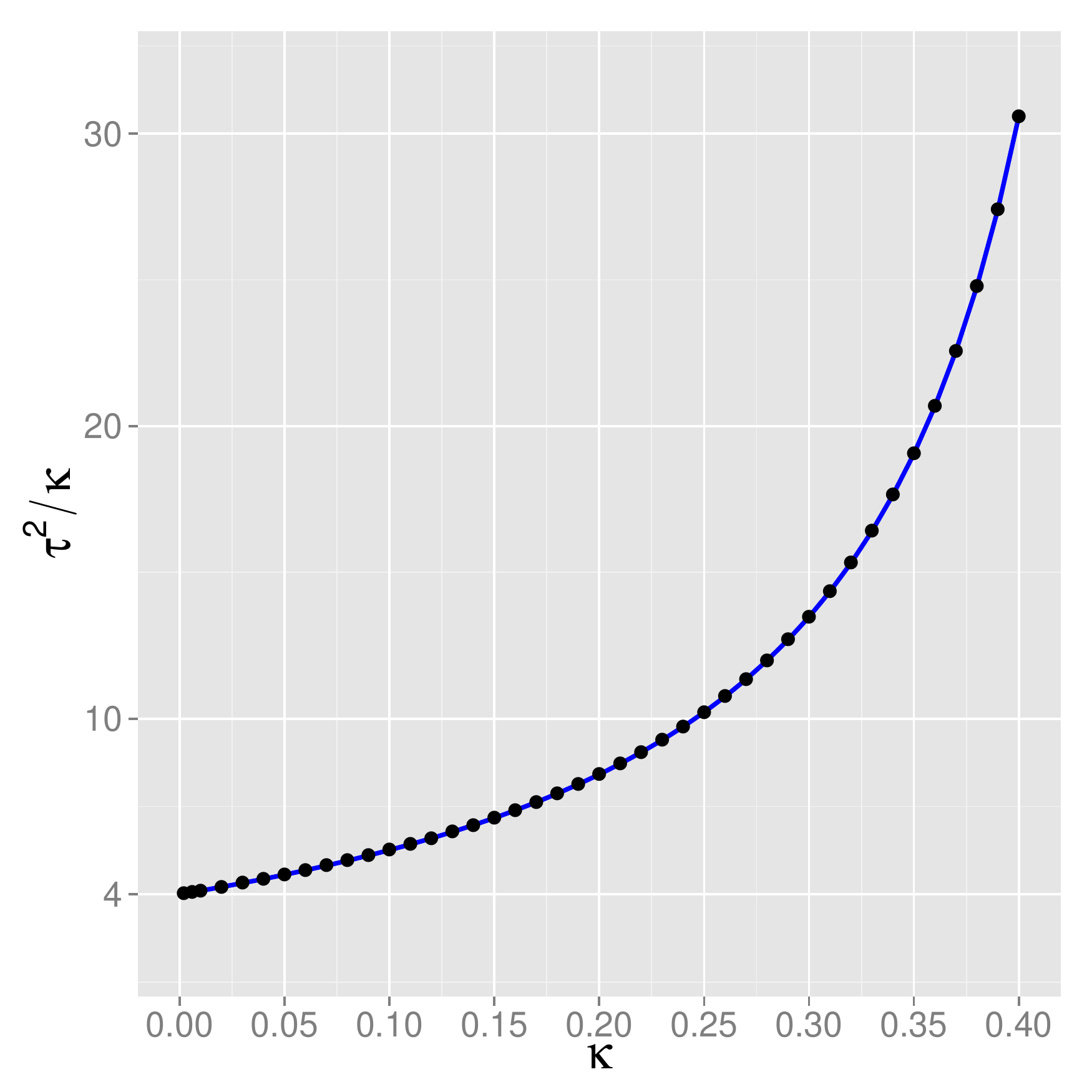}
  \caption{logistic regression}
\end{subfigure}%
\begin{subfigure}{.5\textwidth}
  \centering
  \includegraphics[scale=0.4,keepaspectratio]{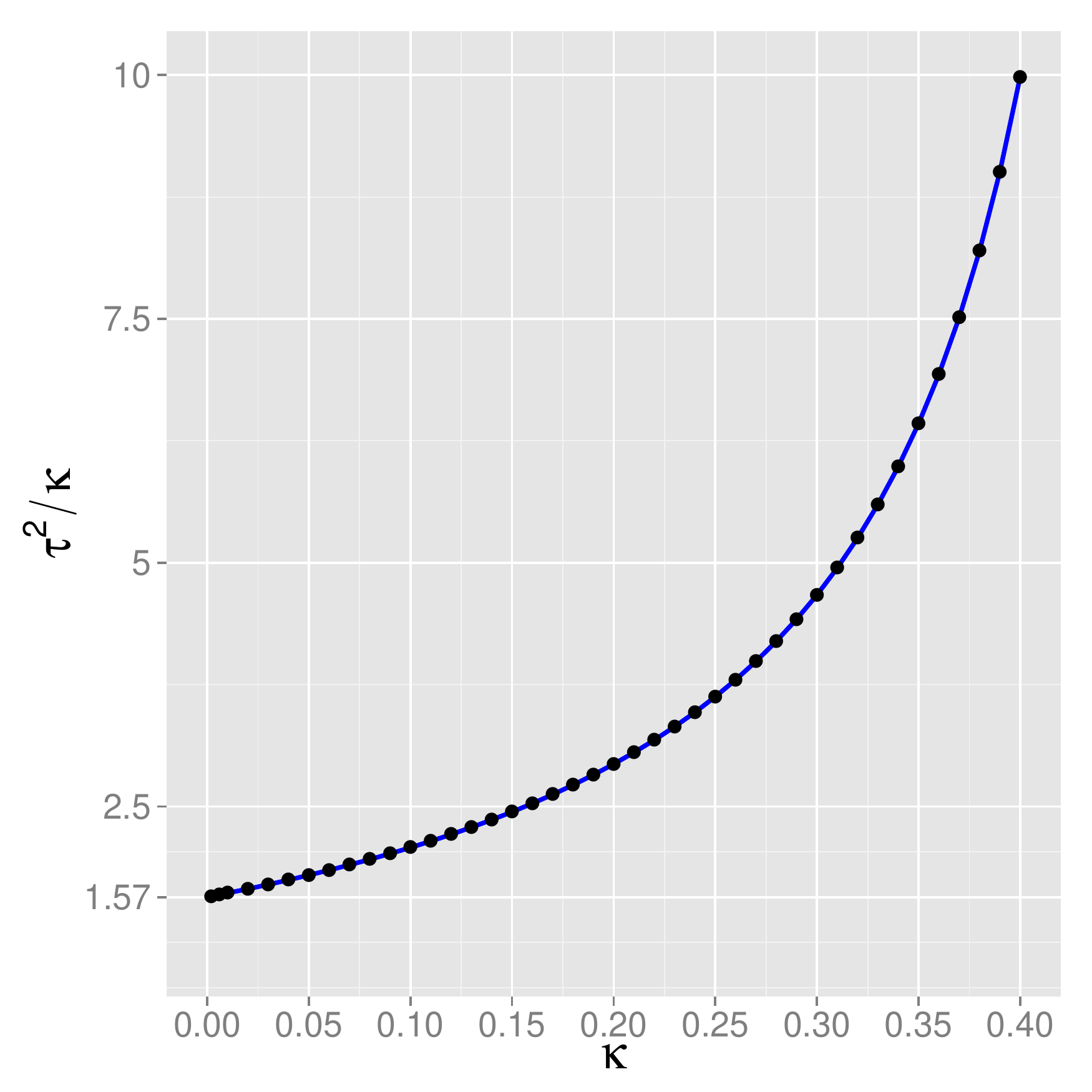}
 \caption{probit regression}
\end{subfigure}
\caption{Ratio of asymptotic variance and dimensionality factor
  $\kappa$ as a function of $\kappa$.}
\label{fig: taus_by_kappa}
\end{figure}


\subsection{Prior art}\label{subsec: priorart}

Wilks' type of phenomenon in the presence of a diverging dimension $p$
has received much attention in the past. For instance, Portnoy
\cite{portnoy1988asymptotic} investigated simple hypotheses in regular
exponential families, and established the asymptotic chi-square
approximation for the LLR test statistic as long as
$p^{3/2}/n \rightarrow 0$. This phenomenon was later extended in 
\cite{spokoiny2012penalized}  to accommodate the MLE with a
quadratic penalization, and in \cite{yan2012high} to account for
parametric models underlying several random graph models. Going beyond
parametric inference, Fan et
al.~\cite{fan2001generalized,fan2007nonparametric} explored extensions
to infinite-dimensional non-parametric inference problems, for which
the MLE might not even exist or might be difficult to derive. While
the classical Wilks' phenomenon fails to hold in such settings, Fan et
al.~\cite{fan2001generalized,fan2007nonparametric} proposed a
generalization of the likelihood ratio statistics based on suitable
non-parametric estimators and characterized the asymptotic
distributions. Such results have further motivated Boucheron and
Massart \cite{boucheron2011high} to investigate the non-asymptotic
Wilks' phenomenon or, more precisely, the concentration behavior of the
difference between the excess empirical risk and the true risk, from a
statistical learning theory perspective. The Wilks' phenomenon for
penalized empirical likelihood has also been established
\cite{tang2010penalized}. However, the precise asymptotic behavior of
the LLR statistic in the regime that permits $p$ to grow proportional
to $n$ is still beyond reach.


On the other hand, as demonstrated in Section \ref{sub:AMP-step}, the MLE here under the global null can be viewed as an M-estimator for a linear regression problem. Questions regarding the behavior of robust linear regression estimators in high dimensions---where $p$ is allowed to grow with $n$—--were raised in Huber \cite{huber1973robust}, and have been
 extensively studied in subsequent works,
 e.g.~\cite{portnoy1984asymptotic,portnoy1985asymptotic,portnoy1986asymptotic,mammen1989asymptotics}. When
 it comes to logistic regression, the behavior of the MLE was studied
 for a diverging number of parameters by \cite{he2000parameters},
 which characterized the squared estimation error of the MLE if $(p
 \log p)/n \rightarrow 0$. In addition, the asymptotic normality properties of
 the MLE and the penalized MLE for logistic regression have been established by \cite{liang2012maximum} and \cite{fan2011nonconcave}, respectively. 
A very recent paper by Fan et al.~\cite{fan2017nonuniform} studied the logistic model under the global null $\bm{\beta} = \bm{0}$, and investigated the classical asymptotic normality  as given in (\ref{eq:asymp-normality}). It was discovered in \cite{fan2017nonuniform} that the convergence property (\ref{eq:asymp-normality}) breaks down even in terms of the marginal distribution,  
namely, 
$$ \frac{ \sqrt{n}  \hat{{\beta}}_i }{ \big( \mathcal{I}_{\bm{\beta}} \big)^{-1/2}_{i,i} } ~\overset{\mathrm{d}}{\nrightarrow}~ \mathcal{N}\left({0}, 1 \right), \qquad  \mathcal{I}_{\bm{\beta}} = \frac{1}{4n}\bm{X}^{\top}\bm{X},  $$
as soon as $p$ grows at a rate exceeding $n^{2/3}$. 
This result, however, does not imply the asymptotic distribution of the likelihood-ratio statistic in this regime. In fact, our theorem implies that LLR statistic $2\Lambda_j$ goes to $\chi_1^2$ (and hence Wilks phenomenon remains valid) when $\kappa = p/n\rightarrow 0$. 

The line of work that is most relevant to the present paper was initially started by El Karoui et al.~\cite{el2013robust}. Focusing on the regime where $p$ is comparable to $n$, the authors uncovered, via a non-rigorous argument, that the asymptotic $\ell_2$ error of the MLE could be characterized by a system of nonlinear equations. This seminal result was later made rigorous independently by Donoho et al.~\cite{donoho2013high} under i.i.d.~Gaussian design and by El Karoui \cite{karoui2013asymptotic,el2015impact} under more general i.i.d.~random design as well as certain assumptions on the error distribution. Both approaches rely on strong convexity on the function $\rho(\cdot)$ that defines the M-estimator, which does not hold in the models considered herein.

\subsection{Notations}
%
We adopt the standard notation $f(n)= O \left(g(n)\right)$ or
    $f(n)\lesssim g(n)$ which  means that there exists a constant $c>0$ such
    that $\left|f(n)\right|\leq c|g(n)|$. Likewise, 
    $f(n)=\Omega\left(g(n)\right)$ or $f(n)\gtrsim g(n)$ means that
    there exists a constant $c>0$ such that
    $|f(n)|\geq c\left|g(n)\right|$, $f(n)\asymp g(n)$ means that
    there exist constants $c_{1},c_{2}>0$ such that
    $c_{1}|g(n)|\leq|f(n)|\leq c_{2}|g(n)|$, and $f(n)= o( g(n) )$
    means that $\lim_{n\rightarrow \infty} \frac{f(n)}{g(n)} = 0$.
    Any mention of $C$, $C_i$, $c$, $c_i$ for $i \in \mathbb{N}$
    refers to some positive universal constants whose value may change
    from line to line. For a square symmetric matrix $\bM$, the
    minimum eigenvalue is denoted by $\lambda_{\min}(\bM)$.
    Logarithms are base $e$.


\section{Numerics}\label{sec: numerics}


\begin{figure}
\begin{center}
	\begin{tabular}{ccc}
	\includegraphics[scale=0.28,keepaspectratio]{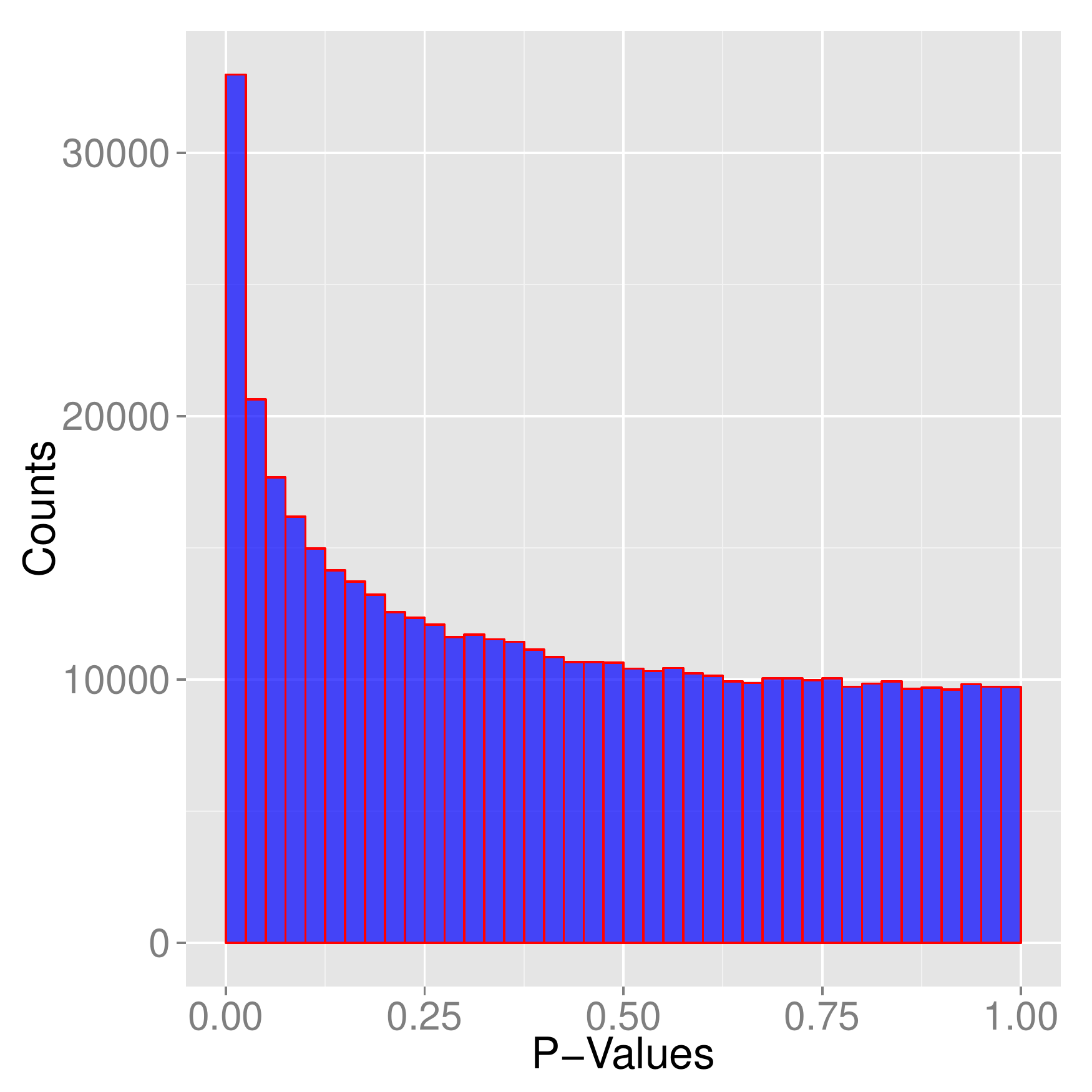}  
		& \includegraphics[scale=0.28,keepaspectratio]{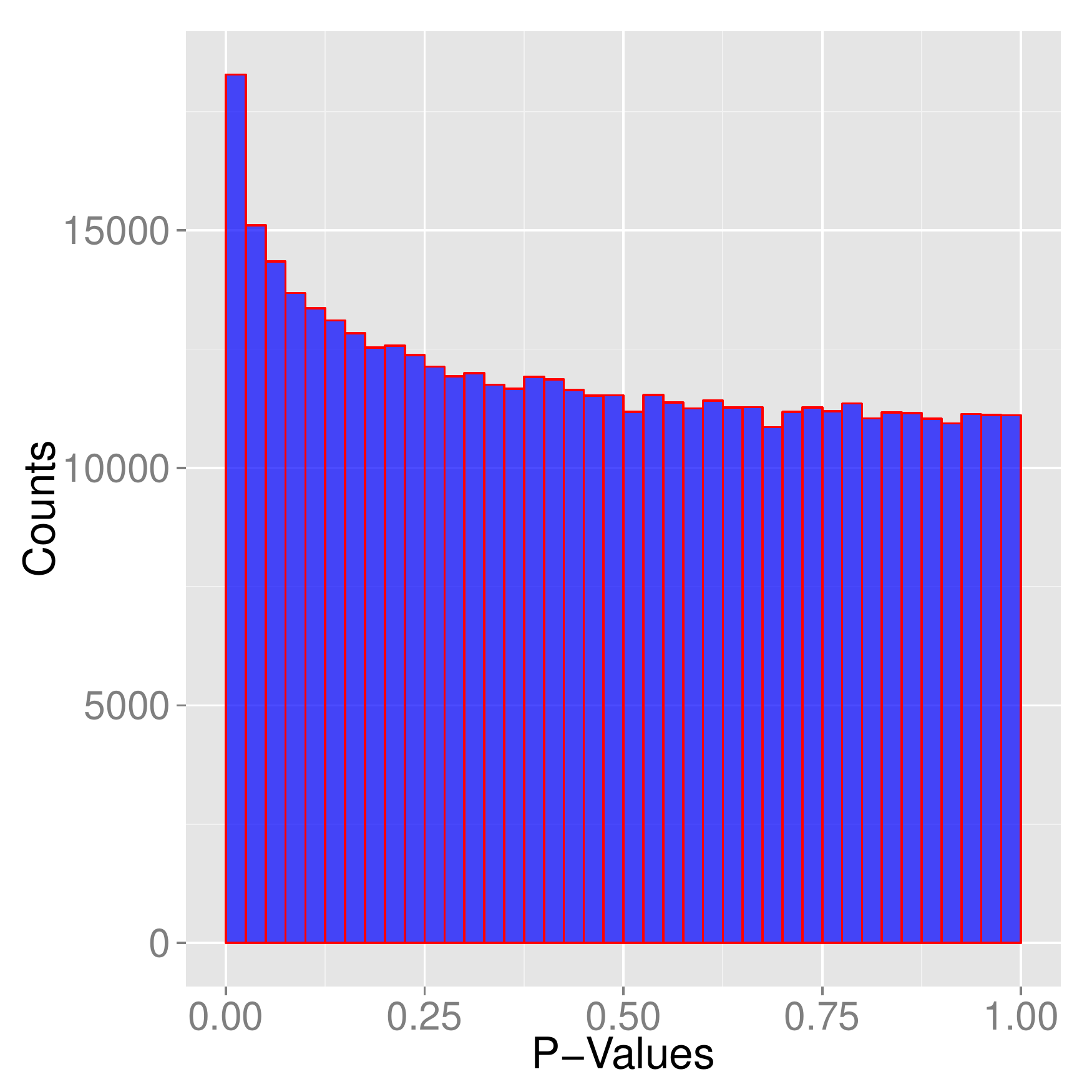}
		& \includegraphics[scale=0.28,keepaspectratio]{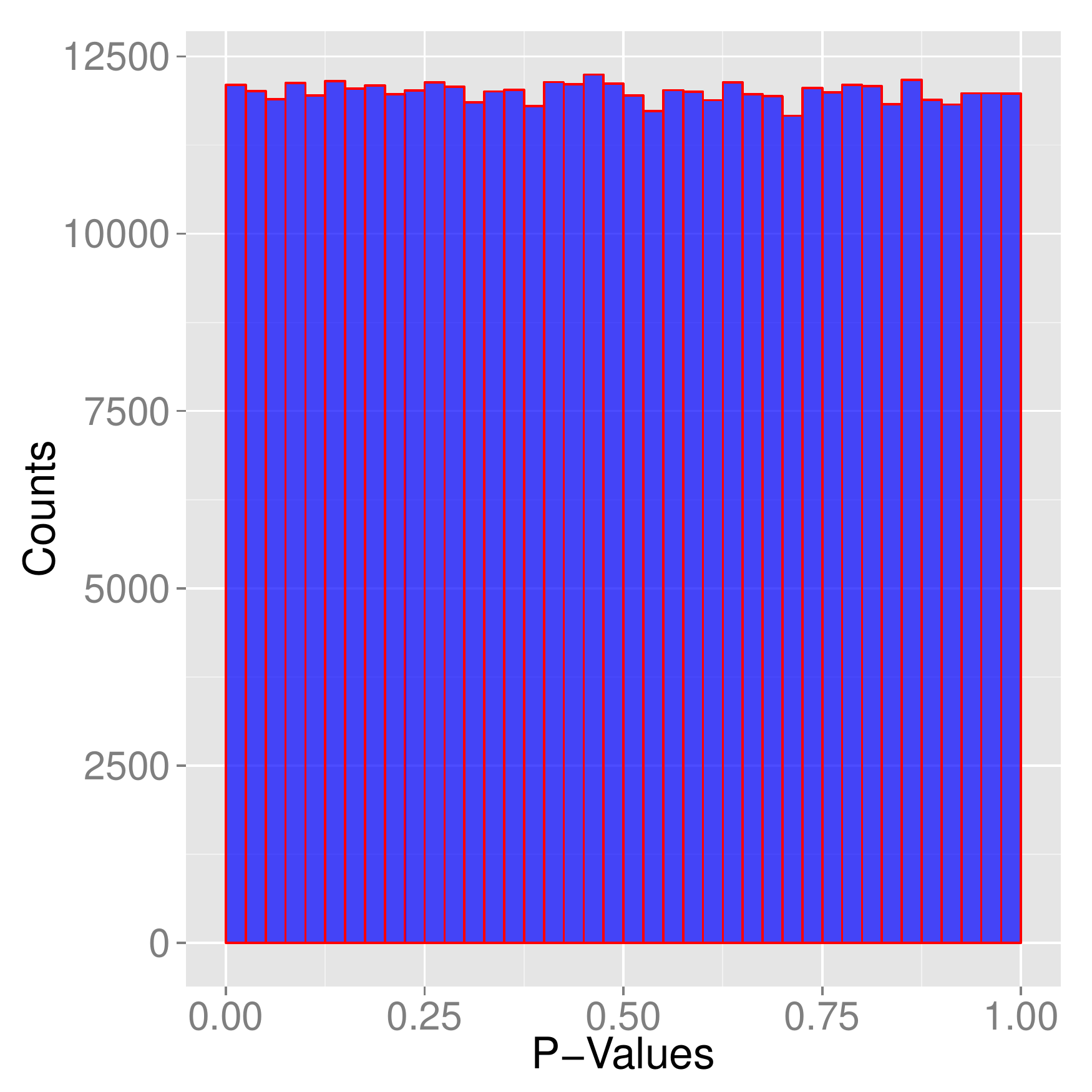} \tabularnewline
		(a) & (b) & (c) \tabularnewline
	\end{tabular}
\end{center}
\caption{Histogram of p-values for logistic regression under i.i.d.~Bernoulli design, when $\bbeta=\bzero$, $n=4000$, $p=1200$, and
	$\kappa = 0.3$: (a) classically computed p-values; (b) Bartlett corrected p-values; (c) adjusted p-values. 
  }\label{fig: bernoulli}
\end{figure}

\subsection{Non-Gaussian covariates}

In this section we first study the sensitivity of our result to the
Gaussianity assumption on the design matrix. To this end, we consider
a high dimensional binary regression set up with a Bernoulli design
matrix. We simulate $n=4000$ i.i.d.~observations $(y_i,\bX_i)$ with
$y_i \iid \dber(1/2)$, and $\bX_i$ generated independent of $y_i$,
such that each entry takes on values in $\{1,-1 \}$ w.p.~$1/2$. At
each trial, we fit a logistic regression model to the data and obtain
the classical, Bartlett corrected and adjusted p-values (using the
rescaling factor $\taus^2/\bs$).  Figure \ref{fig: bernoulli} plots
the histograms for the pooled p-values, obtained across $400$ trials.

It is instructive to compare the histograms to that obtained in the Gaussian case (Figure \ref{fig: classical}). The classical and Bartlett corrected p-values exhibit similar deviations from uniformity as in the Gaussian design case, whereas our adjusted p-values continue to have an approximate uniform distribution. We test for deviations from uniformity using a formal chi-squared goodness of fit test as in Section \ref{subsec: mainres}. For the Bartlett corrected p-values, 
 the chi-squared statistic turns out to be $5885$, with a p-value  $0$. For the adjusted p-values,the chi-squared statistic is $24.1024$, with a p-value  $0.1922$.\footnote{Recall our earlier footnote about the use of a $\chi^2$ test.}
 
 Once again, the Bartlett correction fails to provide valid p-values
 whereas the adjusted p-values are consistent with a uniform
 distribution. These findings indicate that the distribution of the
 LLR statistic under the i.i.d.~Bernoulli design is in agreement to
 the rescaled $\chi^2_1$ derived under the Gaussian design in Theorem
 \ref{thm: thm1}, suggesting that the distribution is not too
 sensitive to the Gaussianity assumption. Estimates of p-value probabilities for our method are provided in Table \ref{tab:pvalfits_bernoulli}. 
 
 \begin{table}[h!]
\centering
    \begin{tabular} {c|c}
    \hline
     & Adjusted \\ \hline
    $\opP\{\text{p-value}  \leq 5 \% \}$ &  $5.0222 \% (0.0412 \%)$\\ 
    \hline
        $\opP\{\text{p-value} \leq 1 \% \}$ &  $1.0048\% ( 0.0174 \%)$    \\ 
        \hline
        $\opP\{\text{p-value}  \leq 0.5 \% \}$&   $0.5123 \% (0.0119 \%)$ \\ 
        \hline
        $\opP\{\text{p-value}  \leq 0.1 \% \}$ &  $0.1108 \% (0.005\%)$  \\ \hline
        $\opP\{\text{p-value}  \leq 0.05 \% \}$ & $0.0521\% (0.0033\%)$  \\ \hline
        $\opP\{\text{p-value}  \leq 0.01 \% \}$ &   $0.0102 \% (0.0015\%)$  \\ \hline
    \end{tabular} \\ 
    \caption{Estimates of p-value probabilities with estimated Monte Carlo standard errors in parentheses under i.i.d.~Bernoulli design.}
    \label{tab:pvalfits_bernoulli}   
  \end{table}

  \subsection{Quality of approximations for finite sample sizes}
    
In the rest of this section, we report some numerical experiments which study the applicability of our theory in finite sample setups.
    
{\bf Validity of tail approximation} The first experiment explores the
efficacy of our correction for extremely small p-values.  This is
particularly important in the context of multiple comparisons, where
practitioners care about the validity of exceedingly small
p-values. To this end, the empirical cumulative distribution of the
adjusted p-values is estimated under a standard Gaussian design with
$n=4000$, $p=1200$ and $4.8 \times 10^5$ p-values. The range
$[0.1/p, 12/p]$ is divided into points which are equi-spaced with a
distance of $1/p$ between any two consecutive points. The estimated
empirical CDF at each of these points is represented in blue in Figure
\ref{fig: ecdf1}. The estimated CDF is in near-perfect agreement with
the diagonal, suggesting that the adjusted p-values computed using the
rescaled chi-square distribution are remarkably close to a uniform,
even when we zoom in at very small resolutions as would be the case
when applying Bonferroni-style corrections.

{\bf Moderate sample sizes} The final experiment studies the accuracy
of our asymptotic result for moderately large samples. This is
especially relevant for applications where the sample sizes are not
too large. We repeat our numerical experiments with $n=200$, $p=60$
for i.i.d.~Gaussian design, and $4.8 \times 10^5$ p-values. The
empirical CDF for these p-values are estimated and Figure \ref{fig:
  ecdf2} shows that the adjusted p-values are nearly uniformly
distributed even for moderate sample sizes such as $n=200$.

 \begin{figure}
\centering
  \includegraphics[scale=0.4,keepaspectratio]{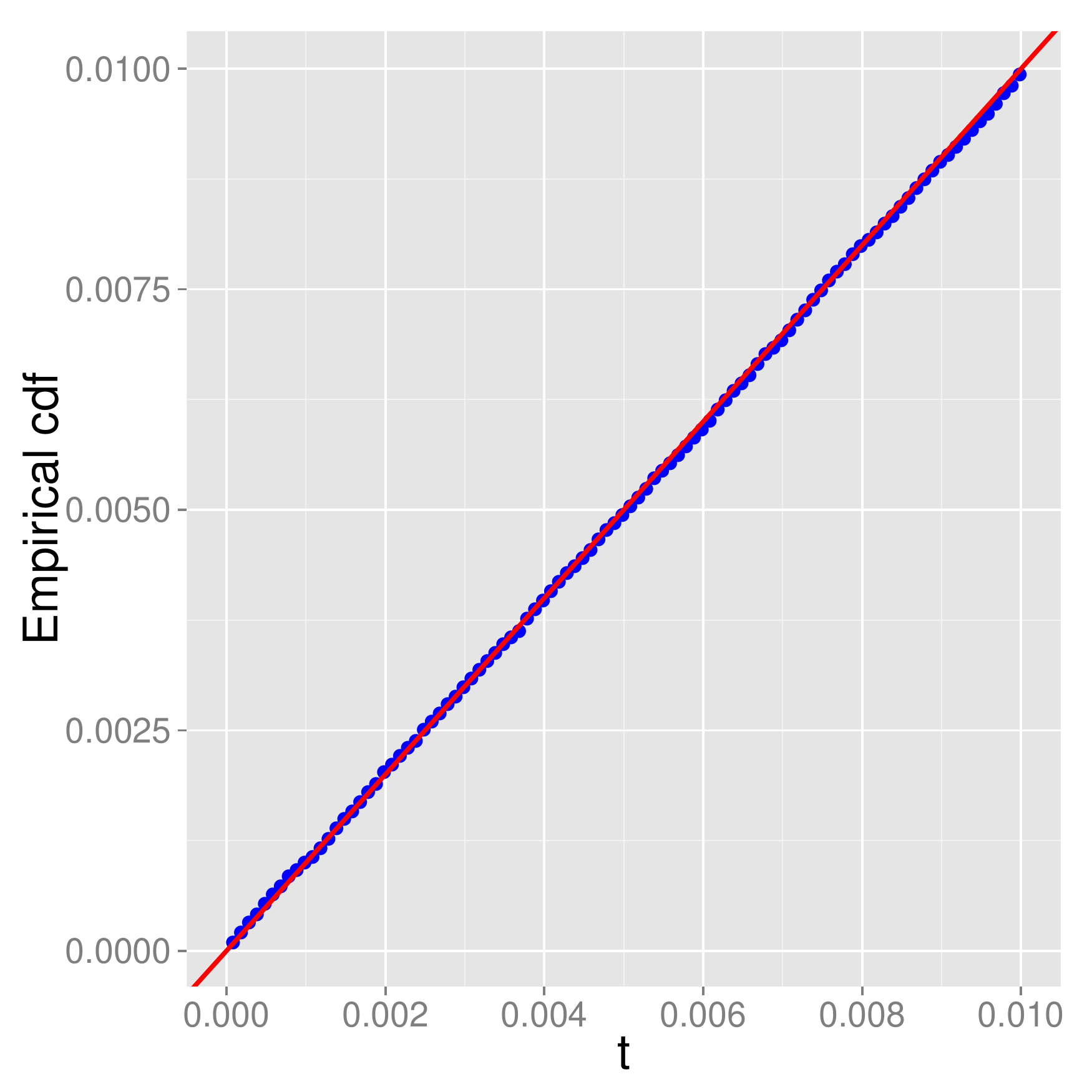}
	\caption{Empirical CDF of adjusted pvalues for logistic regression when $\bbeta=\bzero$, $n=4000$, $p=1200$. Here, the blue points represent the empirical CDF ($t$ vs.~the fraction of p-values below $t$), and the red line is the diagonal.  }
\label{fig: ecdf1}
\end{figure}

\begin{figure}
\centering
  \includegraphics[scale=0.4,keepaspectratio]{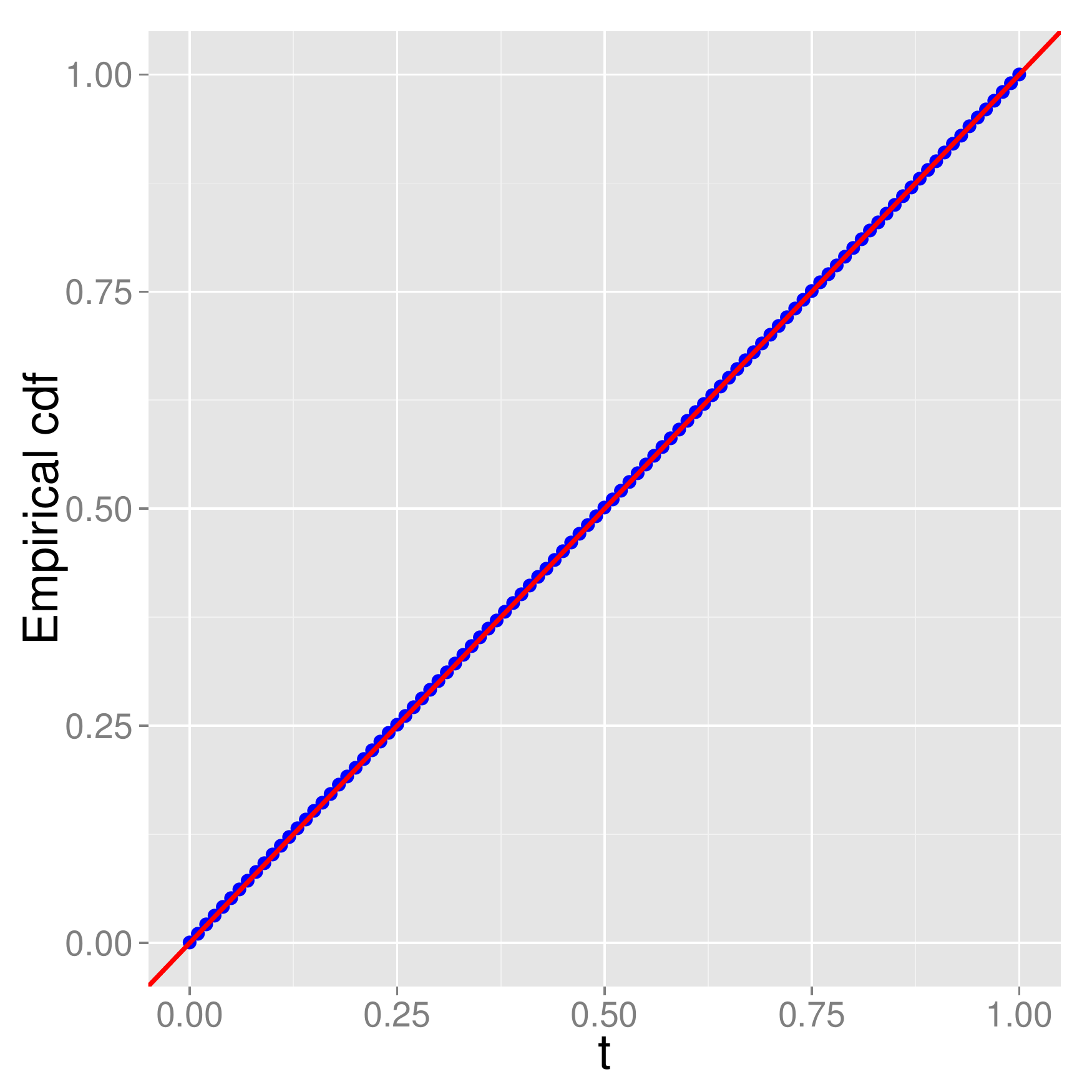}
	\caption{Empirical CDF of adjusted pvalues for logistic regression when $\bbeta=\bzero$, $n=200$, $p=60$. Here, the blue points represent the empirical CDF ($t$ vs.~the fraction of p-values below $t$), and the red line is the diagonal.  }
\label{fig: ecdf2}
\end{figure}

\section{Preliminaries} \label{sec: prelims}

This section gathers a few preliminary results that will be useful throughout the paper. 
%
%
%
We start by collecting some facts regarding i.i.d.~Gaussian random matrices. 

\begin{lemma}
	\label{lemma: singval}
	Let $\bX = [\bX_1, \bX_2, \hdots \bX_n]^{\top}$ be an $n
        \times p$ matrix with i.i.d.~standard Gaussian entries. Then
\begin{equation}
	\label{eq: singval}
	\opP \left( \| \bX^{\tp} \bX \| \leq 9n \right) \geq 1-2\exp(-n/2); 
\end{equation}
%
\begin{align}
	\label{eq: xnorm}
	\opP\left(\sup\nolimits_{1\leq i\leq n} \|\bX_i \| \leq 2\sqrt{p} \right) & \geq 1- 2n \exp (- (\sqrt{p}-1)^2/2).
\end{align}
\end{lemma}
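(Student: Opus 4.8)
The plan is to prove the two estimates separately, each time rewriting the quantity of interest as a $1$-Lipschitz function of a standard Gaussian object and invoking the Gaussian concentration inequality.

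For \eqref{eq: singval}, I would start from the identity $\|\bX^{\tp}\bX\| = \sigma_{\max}(\bX)^2$, where $\sigma_{\max}(\bX)$ denotes the largest singular value. The map $\bM \mapsto \sigma_{\max}(\bM)$ is $1$-Lipschitz with respect to the Frobenius norm (since $|\sigma_{\max}(\bA)-\sigma_{\max}(\bB)| \le \|\bA-\bB\|_{\mathrm{op}} \le \|\bA-\bB\|_F$), and $\bX$ has i.i.d.\ $\dnorm(0,1)$ entries, so Gaussian concentration gives $\opP\big(\sigma_{\max}(\bX) \ge \E[\sigma_{\max}(\bX)] + t\big) \le e^{-t^2/2}$ for all $t\ge 0$. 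I would then bound the expectation by the standard Gaussian comparison (Gordon/Slepian) estimate $\E[\sigma_{\max}(\bX)] \le \sqrt{n}+\sqrt{p}$ — equivalently the Davidson--Szarek bound. Since $p \le n$ in the regime under consideration, $\E[\sigma_{\max}(\bX)] \le 2\sqrt{n}$; taking $t=\sqrt{n}$ yields $\opP(\sigma_{\max}(\bX)\ge 3\sqrt{n}) \le e^{-n/2}$, and squaring the event inside the probability gives $\opP(\|\bX^{\tp}\bX\| \le 9n) \ge 1-e^{-n/2} \ge 1-2e^{-n/2}$. (One can alternatively cite the two-sided Davidson--Szarek bound to obtain the factor $2$ directly.)

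For \eqref{eq: xnorm}, fix $i$ and note $\bX_i \sim \dnorm(\bm{0},\Id_p)$. The map $\bv\mapsto\|\bv\|$ is $1$-Lipschitz, so Gaussian concentration gives $\opP(\|\bX_i\| \ge \E[\|\bX_i\|] + t) \le e^{-t^2/2}$, and $\E[\|\bX_i\|] \le \sqrt{\E[\|\bX_i\|^2]} = \sqrt{p}$. Choosing $t=\sqrt{p}$ and using the crude bound $2\sqrt{p}-\sqrt{p} = \sqrt{p} \ge \sqrt{p}-1$ gives $\opP(\|\bX_i\| \ge 2\sqrt{p}) \le e^{-(\sqrt{p}-1)^2/2}$. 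A union bound over $1\le i\le n$ then yields $\opP(\sup_{1\le i\le n}\|\bX_i\| \ge 2\sqrt{p}) \le n\,e^{-(\sqrt{p}-1)^2/2} \le 2n\,e^{-(\sqrt{p}-1)^2/2}$, which is the claim.

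There is no genuine obstacle here; both statements are textbook concentration facts, and the proof is essentially bookkeeping of constants. The one point worth a moment's care is that a naive $\varepsilon$-net discretization of the unit sphere is \emph{not} sharp enough to reach the constant $9$ uniformly over all $\kappa<1/2$ — the cardinality $\approx 9^{p}$ of a $\tfrac14$-net outweighs the per-point $\chi^2_n$ tail once $p$ is a sizeable fraction of $n$ — so the argument for \eqref{eq: singval} should route through the sharp expectation bound $\E[\sigma_{\max}(\bX)]\le\sqrt n+\sqrt p$ coming from a Gaussian comparison inequality rather than from a union bound over a net.
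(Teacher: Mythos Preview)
Your proof is correct and is essentially the same approach as the paper's: the paper simply cites \cite[Corollary~5.35]{vershynin2010introduction}---which packages precisely the Gordon/Davidson--Szarek expectation bound $\E[\sigma_{\max}(\bX)]\le\sqrt n+\sqrt p$ together with Gaussian Lipschitz concentration---and a union bound, while you unpack that same argument by hand. The only cosmetic difference is that for \eqref{eq: xnorm} the paper applies the cited corollary to each $\bX_i$ viewed as a $p\times 1$ Gaussian matrix (giving $\|\bX_i\|\le\sqrt p+1+t$ with $t=\sqrt p-1$), which produces the exponent $(\sqrt p-1)^2/2$ and the prefactor~$2$ directly, whereas you obtain the slightly stronger $e^{-p/2}$ and then relax it to match.
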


\begin{proof}
  This is a straighforward application of \cite[Corollary
  5.35]{vershynin2010introduction} and the union bound.
\end{proof}

\begin{lemma}
	\label{lem:eigen-min-S}
	Suppose $\bX$ is an $n \times p$ matrix with entries i.i.d $\dnorm(0,1)$, then there exists a constant $\epsilon_0$ such that whenever $0 \leq \epsilon \leq \epsilon_0$  and $0 \leq t \leq \sqrt{1-\epsilon} - \sqrt{p/n}$, 
\begin{equation}
\lambda_{\min}\left(\frac{1}{n}\sum_{i\in S}\bm{X}_{i}\bm{X}_{i}^{\top}\right)\geq\left(\sqrt{1-\epsilon}-\sqrt{\frac{p}{n}}-t\right)^{2},\quad\forall S\subseteq[n]\text{ with }|S|=(1-\epsilon)n
\end{equation}
with probability exceeding $1-2\exp\left(-\left(\frac{\left(1-\epsilon\right)t^{2}}{2} - H\left(\epsilon\right)\right)n\right)$. Here, 
	$H(\epsilon)=-\epsilon\log\epsilon -  (1-\epsilon)\log(1-\epsilon)$. 
\end{lemma}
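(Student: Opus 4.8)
The statement is a uniform (over all index subsets $S$ of a fixed size) lower bound on the smallest eigenvalue of a submatrix Gram operator $\frac{1}{n}\sum_{i\in S}\bm{X}_i\bm{X}_i^\top$. The natural strategy is: (i) fix one $S$ with $|S|=(1-\epsilon)n$ and apply a sharp non-asymptotic singular-value bound for the $(1-\epsilon)n \times p$ Gaussian matrix $\bm{X}_S := (\bm{X}_i)_{i\in S}$; (ii) take a union bound over all $\binom{n}{(1-\epsilon)n}$ choices of $S$. So first I would invoke \cite[Corollary 5.35]{vershynin2010introduction}, which states that for an $m \times p$ matrix $\bm{A}$ with i.i.d.\ standard Gaussian entries, $\sigma_{\min}(\bm{A}) \geq \sqrt{m} - \sqrt{p} - s$ with probability at least $1 - 2\exp(-s^2/2)$. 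Applying this with $m = (1-\epsilon)n$ gives
\[
\lambda_{\min}\Big(\tfrac{1}{n}\textstyle\sum_{i\in S}\bm{X}_i\bm{X}_i^\top\Big) = \tfrac{1}{n}\,\sigma_{\min}(\bm{X}_S)^2 \geq \Big(\sqrt{1-\epsilon} - \sqrt{p/n} - t\Big)^2
\]
on the event $\sigma_{\min}(\bm{X}_S) \geq \sqrt{(1-\epsilon)n} - \sqrt{p} - t\sqrt{n}$, which by the corollary (with $s = t\sqrt{n}$) fails with probability at most $2\exp(-t^2 n/2)$. Note this requires $\sqrt{1-\epsilon} - \sqrt{p/n} - t \geq 0$ for the squaring step to preserve the inequality, which is exactly the stated range $0 \leq t \leq \sqrt{1-\epsilon} - \sqrt{p/n}$.

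Next I would union bound over all subsets $S$ of size $(1-\epsilon)n$. The number of such subsets is $\binom{n}{(1-\epsilon)n} = \binom{n}{\epsilon n}$, and the standard binomial entropy estimate gives $\binom{n}{\epsilon n} \leq \exp(n H(\epsilon))$ with $H(\epsilon) = -\epsilon\log\epsilon - (1-\epsilon)\log(1-\epsilon)$ the binary entropy (in nats, consistent with the paper's convention that logarithms are base $e$). Hence the probability that the desired bound fails for \emph{some} admissible $S$ is at most
\[
\binom{n}{\epsilon n} \cdot 2\exp\!\big(-t^2 n/2\big) \;\leq\; 2\exp\!\Big(n H(\epsilon) - \tfrac{t^2 n}{2}\Big).
\]
To get the precise constant $\frac{(1-\epsilon)t^2}{2}$ appearing in the statement rather than $\frac{t^2}{2}$, one should be slightly more careful in step (i): writing the deviation parameter as $s = t\sqrt{(1-\epsilon)n}$ (the natural scale for an $m$-row matrix with $m=(1-\epsilon)n$) yields failure probability $2\exp(-s^2/2) = 2\exp(-(1-\epsilon)t^2 n/2)$ for the event $\sigma_{\min}(\bm{X}_S) \geq \sqrt{(1-\epsilon)n}\big(1 - \sqrt{p/((1-\epsilon)n)} - t\big) \cdot$ --- actually the cleanest route is to apply the corollary in the form $\sigma_{\min}(\bm{X}_S)/\sqrt{n} \geq \sqrt{(1-\epsilon)} - \sqrt{p/n} - t$ with the tail $2\exp(-(1-\epsilon)t^2 n /2)$, which is what \cite[Corollary 5.35]{vershynin2010introduction} gives upon rescaling the deviation variable by $\sqrt{(1-\epsilon)n}$. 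Combining with the union bound then produces exactly the claimed probability $1 - 2\exp\big(-\big(\frac{(1-\epsilon)t^2}{2} - H(\epsilon)\big)n\big)$.

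\textbf{Main obstacle.} The proof is essentially a one-line concentration bound plus a union bound, so there is no deep obstacle; the only thing requiring care is bookkeeping the deviation parameter so that the constant in front of $t^2$ comes out as $\frac{(1-\epsilon)t^2}{2}$, and tracking the role of $\epsilon_0$: one needs $H(\epsilon) < \frac{(1-\epsilon)t^2}{2}$ for the bound to be non-trivial, which for the relevant range of $t$ (bounded away from $0$, since $\sqrt{1-\epsilon} - \sqrt{p/n} = \sqrt{1-\epsilon} - \sqrt{\kappa}$ is bounded below when $\kappa < 1/2$) forces $\epsilon$ small enough that $H(\epsilon)$ is small — hence the hypothesis $0 \leq \epsilon \leq \epsilon_0$ for a suitably small absolute constant $\epsilon_0$. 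I would state the argument in the two steps above, citing \cite[Corollary 5.35]{vershynin2010introduction} for the single-set bound and the entropy estimate $\binom{n}{\epsilon n} \leq e^{nH(\epsilon)}$ for the union bound, and leave the elementary squaring/rescaling manipulations to the reader.
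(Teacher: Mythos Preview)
Your proposal is correct and follows essentially the same approach as the paper: apply \cite[Corollary 5.35]{vershynin2010introduction} to the $(1-\epsilon)n\times p$ Gaussian submatrix $\bm{X}_S$ for a fixed $S$, then union bound over all $\binom{n}{(1-\epsilon)n}\le e^{nH(\epsilon)}$ subsets. The paper writes the single-set failure probability directly as $2\exp(-t^2|S|/2)=2\exp(-(1-\epsilon)t^2 n/2)$ (which is the weaker of the two variants you discuss, and is all that is needed to match the stated bound), so your bookkeeping worries about the $(1-\epsilon)$ factor are resolved exactly as you suspected.
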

\begin{proof}
	See Appendix \ref{sec:Proof-eigen-min-S}.
\end{proof}

The above facts are useful in establishing an eigenvalue lower bound on the Hessian of the log-likelihood function. Specifically, recall that
\begin{equation}
	\nabla^2 \ell(\bm{\beta}) = \sum\nolimits_{i=1}^{n}\rho''\left(\bm{X}_{i}^{\top}\bm{\beta}\right)\bm{X}_{i}\bm{X}_{i}^{\top},
\end{equation}
and the result is this:

\begin{lemma}[Likelihood Curvature Condition]
\label{lem:min-eigenvalue-bound}
Suppose that $p/n < 1$ 
and that $\rho''(\cdot) \geq 0$. Then there exists a constant $\epsilon_0$ such that whenever $0 \leq \epsilon \leq \epsilon_0$, with probability at least
$1-2\exp\left(-n{H}\left(\epsilon\right)\right) 
-2\exp\left(-{n}/{2}\right)$, 
the matrix inequality 
\begin{equation}
	\frac{1}{n} \nabla^2 \ell(\bm{\beta})
	~\succeq~  \left( \inf_{z:|z|\le\frac{3\|\bbeta \| }{\sqrt{\epsilon}}}  \rho''\left(z\right) \right)    
	\left(\sqrt{1-\epsilon}-\sqrt{\frac{p}{n}}-2\sqrt{\frac{H(\epsilon)}{1-\epsilon}}\right)^{2}
	\bm{I}
\end{equation}
holds simultaneously for all $\bm{\beta}\in\mathbb{R}^{p}$.
\end{lemma}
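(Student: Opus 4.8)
The plan is to reduce the matrix inequality on $\nabla^2 \ell(\bbeta) = \sum_{i=1}^n \rho''(\bX_i^\top\bbeta)\bX_i\bX_i^\top$ to a lower bound on a partial sum $\sum_{i\in S}\bX_i\bX_i^\top$, to which Lemma~\ref{lem:eigen-min-S} directly applies. The key point is that although $\rho''(\bX_i^\top\bbeta)$ may be small when $|\bX_i^\top\bbeta|$ is large, it cannot be small for \emph{too many} indices $i$ simultaneously, because the covariates are well-spread. Concretely, fix $\bbeta$ and let $S = S(\bbeta) := \{\, i : |\bX_i^\top\bbeta| \le 3\|\bbeta\|/\sqrt{\epsilon} \,\}$. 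On the event $\{\sup_{1\le i\le n}\|\bX_i\| \le 2\sqrt p\} \cap \{\|\bX^\top\bX\|\le 9n\}$ — which by Lemma~\ref{lemma: singval} has probability at least $1 - 2\exp(-n/2)$ once $p/n<1$ (absorbing the $2n\exp(-(\sqrt p-1)^2/2)$ term, or more carefully bounding it) — I claim $|S^c| \le \epsilon n$. Indeed, $\sum_{i=1}^n (\bX_i^\top\bbeta)^2 = \bbeta^\top\bX^\top\bX\bbeta \le 9n\|\bbeta\|^2$, while each $i\in S^c$ contributes at least $9\|\bbeta\|^2/\epsilon$ to this sum; hence $|S^c|\cdot 9\|\bbeta\|^2/\epsilon \le 9n\|\bbeta\|^2$, giving $|S^c|\le \epsilon n$.

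Next, since $\rho''\ge 0$, we may drop the terms $i\in S^c$:
\[
\frac1n \nabla^2\ell(\bbeta) ~\succeq~ \frac1n \sum_{i\in S}\rho''(\bX_i^\top\bbeta)\bX_i\bX_i^\top ~\succeq~ \Big(\inf_{z:|z|\le 3\|\bbeta\|/\sqrt\epsilon}\rho''(z)\Big)\,\frac1n\sum_{i\in S}\bX_i\bX_i^\top.
\]
Now $|S|\ge (1-\epsilon)n$, so picking any subset $S'\subseteq S$ with $|S'| = \lceil(1-\epsilon)n\rceil$ and using $\sum_{i\in S}\bX_i\bX_i^\top \succeq \sum_{i\in S'}\bX_i\bX_i^\top$, Lemma~\ref{lem:eigen-min-S} (applied with the choice $t = 2\sqrt{H(\epsilon)/(1-\epsilon)}$, which is admissible for $\epsilon$ small enough and $p/n<1$) yields
\[
\lambda_{\min}\Big(\frac1n\sum_{i\in S'}\bX_i\bX_i^\top\Big) ~\ge~ \Big(\sqrt{1-\epsilon}-\sqrt{p/n}-2\sqrt{H(\epsilon)/(1-\epsilon)}\Big)^2
\]
uniformly over all such index sets, with probability at least $1 - 2\exp(-((1-\epsilon)t^2/2 - H(\epsilon))n) = 1 - 2\exp(-nH(\epsilon))$ for this choice of $t$. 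Intersecting the two events gives the stated probability $1 - 2\exp(-nH(\epsilon)) - 2\exp(-n/2)$.

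The one subtlety — and the main thing to get right — is \emph{uniformity over all $\bbeta\in\R^p$}: the set $S(\bbeta)$ depends on $\bbeta$, so I cannot fix $\bbeta$ and union-bound. This is exactly why Lemma~\ref{lem:eigen-min-S} is stated as a bound holding \emph{simultaneously for all} $S$ with $|S|=(1-\epsilon)n$; combined with the deterministic fact (valid on the norm event) that $S(\bbeta)$ always has cardinality at least $(1-\epsilon)n$ regardless of $\bbeta$, the argument goes through for every $\bbeta$ at once. I would also note the monotonicity subtlety that $\inf_{|z|\le R}\rho''(z)$ is nonincreasing in $R$, so the bound is genuinely a function of $\|\bbeta\|$ and degrades as $\|\bbeta\|$ grows — which is why the Norm Bound Condition (Theorem~\ref{thm: normbound-MLE}) is needed downstream to control $\|\hbbeta\|$. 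A minor bookkeeping point: one should check that for $\epsilon\le\epsilon_0$ small and $p/n<1$ the quantity $\sqrt{1-\epsilon}-\sqrt{p/n}-2\sqrt{H(\epsilon)/(1-\epsilon)}$ is the relevant (possibly the bound is only informative when $p/n$ is bounded away from $1$, but the statement holds regardless since a nonpositive lower bound is still a valid lower bound).
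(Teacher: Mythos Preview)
Your proof is correct and essentially identical to the paper's own argument: the same set $S(\bbeta)=\{i:|\bX_i^\top\bbeta|\le 3\|\bbeta\|/\sqrt\epsilon\}$, the same Markov/counting bound via $\|\bX\bbeta\|^2\le 9n\|\bbeta\|^2$ to get $|S^c|\le\epsilon n$, and the same choice $t=2\sqrt{H(\epsilon)/(1-\epsilon)}$ in Lemma~\ref{lem:eigen-min-S}, with uniformity in $\bbeta$ handled exactly as you describe. One cosmetic simplification: the event $\{\sup_i\|\bX_i\|\le 2\sqrt p\}$ is never used in your argument (only $\|\bX^\top\bX\|\le 9n$ is), so you can drop it and avoid the bookkeeping you flag in parentheses---the paper does precisely this, yielding the clean probability $1-2\exp(-nH(\epsilon))-2\exp(-n/2)$.
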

\begin{proof} See Appendix \ref{sec:proof-min-eigenvalue-bound}. 
\end{proof}

The message of Lemma \ref{lem:min-eigenvalue-bound} is this: take 
$\epsilon>0$ to be  a sufficiently small constant. Then  
  \[
    \frac{1}{n} \nabla^2 \ell(\bm{\beta}) ~\succeq~ \omega(\|\bm{\beta}\|) \hspace{0.2em} \bm{I}
  \]
  for some non-increasing and positive function $\omega(\cdot)$ independent of $n$. This is a generalization of the strong convexity condition. 



\section{When is the MLE bounded?} \label{sec: MLEbounded}

\subsection{Phase transition\label{subsec:PT-norm-MLE}}

In Section \ref{sec:separation}, we argued that the MLE is at infinity
if we have less than two observations per dimension or $\kappa >
1/2$. In fact, a stronger version of the phase transition phenemonon
occurs in the sense that $$\|\hat{\bm{\beta}}\|=O(1)$$ as soon as
$\kappa<1/2$. This is formalized in the following theorem.

\begin{theorem}[Norm Bound Condition]\label{thm: normbound-MLE} 
  Fix any small constant $\epsilon>0$, and let $\hat{\bm{\beta}}$ be
  the MLE for a model with effective link satisfying the conditions
  from Section \ref{sec:rho}.

\begin{itemize}

\item[(i)]If $p/n\geq1/2+\epsilon$, then 
\[
\|\hat{\bm{\beta}}\|=\infty
\]
with probability exceeding $1-4\exp\left(-\epsilon^{2}n/8\right)$.

\item[(ii)] There exist universal constants $c_{1},c_{2},C_{2}>0$
	such that if $p/n<1/2-c_{1}\epsilon^{3/4}$, then\footnote{When $\bm{X}_i\sim \mathcal{N}(\bm{0},\bSigma)$ for a general $\bm{\Sigma}\succ \bm{0}$, one has $\|\bm{\Sigma}^{1/2}\hat{\bm{\beta}}\| \lesssim {1}/{\epsilon^{2}}$ with high probability.}
\[
\|\hat{\bm{\beta}}\|<\frac{4\log2}{\epsilon^{2}}
\]
with probability at least $1-C_{2}\exp({-c_{2}\epsilon^{2}n})$.
\end{itemize}
These conclusions clearly continue to hold if $\hat{\bm{\beta}}$ is
replaced by $\tilde{\bm{\beta}}$ (the MLE under the restricted model
obtained on dropping the first predictor).
\end{theorem}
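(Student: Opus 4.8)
# Proof Proposal for Theorem \ref{thm: normbound-MLE} (Norm Bound Condition)

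The plan is to treat the two parts separately, since they require genuinely different tools. For part (i), the strategy is to sharpen the convex-geometry argument already sketched in Section \ref{sec:separation}. Recall that $\|\hat{\bm{\beta}}\| = \infty$ exactly when the two label classes are linearly separable, which (after conditioning on $\tilde y_i \equiv 1$, using independence of $\by$ from $\bX$ and the symmetry of the Gaussian design) is equivalent to the event $\{\bm{Z}\bm{\theta} : \bm{\theta}\in\R^p\}\cap\R_+^n \neq \{\bm 0\}$. The approximate kinematic formula of \cite{amelunxen2014living} gives that this event happens with probability tending to one once $\delta(\text{image of }\bm{Z}) + \delta(\R_+^n) > n + o(n)$, i.e.~once $p + n/2 > n$. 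To get the explicit exponential bound $1 - 4\exp(-\epsilon^2 n/8)$ rather than a bare $o(1)$, I would invoke the \emph{non-asymptotic} concentration version of the kinematic formula (the tail bounds on the intrinsic-volume/statistical-dimension concentration in \cite{amelunxen2014living}), which states that the separation probability is at least $1 - 4\exp(-(\text{gap})^2/(8n))$ where the gap is $\delta(\text{image}) + \delta(\R_+^n) - n$; when $p/n \ge 1/2+\epsilon$ this gap is at least $\epsilon n$, yielding the stated bound. The main point to verify carefully here is that $\delta(\{\bm Z\bm\theta\}) = p$ almost surely (true since $\bm Z$ has full column rank a.s.) and that the one-sided kinematic bound applies to cones that are not both fixed — but one of them ($\R_+^n$) is fixed and the other is a uniformly-random-orientation subspace, which is exactly the setting the kinematic formula covers.

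For part (ii), the plan is to show that when $p/n$ is bounded away from $1/2$, \emph{no} direction gives approximate separation, and then use convexity and the coercivity/growth of $\rho$ to convert non-separation into a quantitative norm bound. Concretely, since $\rho(t) \ge t$ and $\rho \ge 0$, for any unit vector $\bm u$ we have $\ell(R\bm u) = \sum_i \rho(R\, \bm X_i^\top \bm u) \ge R\sum_i (\bm X_i^\top \bm u)\,\mathbbm 1\{\bm X_i^\top \bm u > 0\}$ — or more robustly, $\ell(R\bm u) \ge R \sum_i (\bm X_i^\top\bm u)_+ - (\text{const})$, using $\rho(t)\ge t$ on the positive part and $\rho\ge 0$ on the negative part only after a small correction since $\rho(t)$ for $t<0$ is not exactly zero. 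I would lower bound $\inf_{\|\bm u\|=1}\frac1n\sum_i (\bm X_i^\top\bm u)_+$ uniformly over the sphere by a positive constant depending on how far $p/n$ is from $1/2$: this is a ``restricted'' one-sided width bound, provable via a Gaussian-width / Gordon-comparison argument together with an $\varepsilon$-net over $\spS^{p-1}$ and the singular-value control from Lemma \ref{lemma: singval}. Given such a uniform lower bound of the form $\frac1n\sum_i(\bm X_i^\top\bm u)_+ \ge c(\epsilon)$ with probability $\ge 1 - C\exp(-c\epsilon^2 n)$, it follows that $\ell(R\bm u) \ge c(\epsilon) n R - n\rho(0) \cdot(\text{something})$, which exceeds $\ell(\bm 0) = n\rho(0) = n\log 2$ once $R > 4\log 2/\epsilon^2$ (after tracking constants, matching the claimed bound); since $\ell$ is convex, its minimizer must then lie in the ball of that radius. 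The general-$\bm\Sigma$ footnote follows by the same reparameterization $\bm X = \bm Z\bm\Sigma^{1/2}$ used in the reduction lemma, applied to $\bm\Sigma^{1/2}\hat{\bm\beta}$.

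The statement for $\tilde{\bm\beta}$ is immediate: dropping a predictor replaces $\bX$ by a submatrix $\tbX$ with i.i.d.~Gaussian entries and one fewer column, so $p-1 < p$ and all the inequalities above hold verbatim (indeed with extra room), so I would simply remark that the same proof applies.

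I expect the \textbf{main obstacle} to be part (ii): obtaining the \emph{uniform-over-the-sphere} lower bound on $\frac1n\sum_i (\bm X_i^\top \bm u)_+$ with the right dependence on $\epsilon$, and then propagating the constants so that the threshold comes out as exactly $4\log2/\epsilon^2$ rather than some messier expression. The subtlety is that $(\bm X_i^\top\bm u)_+$ is not sub-Gaussian-symmetric and the naive net argument loses a factor; one has to either use the sharper Gordon/Chevet machinery or exploit the exact computation $\E(\bm X_i^\top\bm u)_+ = 1/\sqrt{2\pi}$ together with a careful concentration-plus-separation dichotomy tied to the $\kappa = 1/2$ phase transition. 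Relating this cleanly back to the ``half an observation short'' geometry — so that the constant degrades precisely as $p/n \to 1/2$ — is where the real work lies.
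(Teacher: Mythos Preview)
Your Part (i) is correct and is exactly the paper's argument: one simply invokes the non-asymptotic kinematic formula \cite[Theorem~I]{amelunxen2014living} with the gap $\delta(\{\bm X\bm\beta\})+\delta(\R_+^n)-n = p+n/2-n\ge\epsilon n$, which gives the probability bound $1-4\exp(-\epsilon^2 n/8)$ directly.

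For Part (ii) your opening move also matches the paper: use $\rho(t)\ge t$ and $\rho\ge 0$ to bound $\ell_0$ from below and then appeal to convexity. You also correctly identify the crux, namely the uniform-over-the-sphere lower bound on $\tfrac1n\sum_i(-\bm X_i^\top\bm u)_+$. The gap is that neither of the tools you name actually delivers this bound with a constant independent of $n$ for all $p/n<1/2$. A direct $\eta$-net plus sub-Gaussian concentration gives
\[
\inf_{\|\bm u\|=1}\frac1n\sum_i(-\bm X_i^\top\bm u)_+\;\ge\;\frac{1}{\sqrt{2\pi}}-C\sqrt{\frac{p}{n}\log\frac{1}{\eta}}-3\eta,
\]
which is only positive for $p/n$ below an absolute constant well under $1/2$. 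Gordon's comparison controls the $\ell_2$ distance $\min_{\|\bm u\|=1}\|(-\bm X\bm u)_+\|_2\gtrsim\sqrt{n/2}-\sqrt{p}$; converting to $\ell_1$ via $\|\cdot\|_1\ge\|\cdot\|_2$ gives only $\tfrac1n\sum_i(-\bm X_i^\top\bm u)_+\gtrsim(\sqrt{1/2}-\sqrt{p/n})/\sqrt n\to 0$, which fed into your coercivity step yields $\|\hat{\bm\beta}\|=O(\sqrt n)$ rather than $O(1)$. The underlying issue is that near the phase transition the worst direction has its negative part supported on very few coordinates, so the $\ell_1$--$\ell_2$ slack is maximal and neither tool sees it.

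The paper sidesteps this by \emph{not} proving the uniform lower bound directly. It argues contrapositively: if $\ell_0(\bm\beta)\le\ell_0(\bm 0)=n\log 2$ and $\|\bm\beta\|\ge 4\log 2/\epsilon^2$, then (using $\rho(t)\ge t$ and $\sigma_{\min}(\bm X)\ge\sqrt n/4$) one deduces that $\bm X\bm\beta$ lies in the nonconvex cone
\[
\mathcal A=\Big\{\bm u\in\R^n:\ \sum_{j}\max\{-u_j,0\}\le\epsilon^2\sqrt n\,\|\bm u\|\Big\}.
\]
Everything then reduces to showing $\{\bm X\bm\beta:\bm\beta\in\R^p\}\cap\mathcal A=\{\bm 0\}$ with high probability. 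Since $\mathcal A$ is not convex, the kinematic formula does not apply as-is; the paper's key idea is to cover $\mathcal A$ by $N=\exp(2\epsilon^2 p)$ random \emph{convex} cones $\mathcal B_i$ (half-spaces in well-spread random directions intersected with an $\ell_1$-negative-part constraint), prove $\delta(\mathcal B_i)\le(\tfrac12+O(\epsilon^{3/4}))n$ by a direct projection computation, apply the kinematic formula to each $\mathcal B_i$, and union-bound. This convex-cone covering of the ``almost-positive orthant'' is the missing ingredient in your plan and is what produces the $\epsilon^{3/4}$ in the hypothesis and the $4\log 2/\epsilon^2$ in the conclusion.
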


The rest of this section is devoted to proving this theorem.  As we
will see later, the fact that $\|\hat{\bm{\beta}}\|=O(1)$ is crucial
for utilizing the AMP machinery in the absence of strong convexity.

\subsection{Proof of Theorem \ref{thm: normbound-MLE}}

As in Section \ref{sub:AMP-step}, we assume $\tilde{y}_i \equiv 1$  throughout this section, and hence the MLE reduces to
\begin{equation}
	\text{minimize}_{\bm{\beta}\in \mathbb{R}^p} \quad \ell_0\left(\bm{\beta}\right):=  \sum\nolimits_{i=1}^{n}  \rho( - \bm{X}_i^{\top} \bm{\beta}  ).
	 \label{eq:LL-alternative-normbound}
\end{equation}

\subsubsection{Proof of Part (i)}

Invoking \cite[Theorem I]{amelunxen2014living} yields that if 
\[
\delta\left(\left\{ \bm{X}\bm{\beta}\mid\bm{\beta}\in\mathbb{R}^{p}\right\} \right)+\delta\left(\mathbb{R}_{+}^{n}\right)\geq\left(1+\epsilon\right)n,
\]
or equivalently, if $p/n\geq1/2+\epsilon,$ then
\[
\mathbb{P}\left\{ \left\{ \bm{X}\bm{\beta}\mid\bm{\beta}\in\mathbb{R}^{p}\right\} \cap\mathbb{R}_{+}^{n}\neq\left\{ \bm{0}\right\} \right\} \geq1-4\exp\left(-\epsilon^{2}n/8\right).
\]
As is seen in Section \ref{sec:separation}, $\|\hat{\bm{\beta}}\| = \infty$
when $\left\{ \bm{X}\bm{\beta}\mid\bm{\beta}\in\mathbb{R}^{p}\right\} \cap\mathbb{R}_{+}^{n}\neq\left\{ \bm{0}\right\} $,
establishing Part (i) of Theorem \ref{thm: normbound-MLE}.

\subsubsection{Proof of Part (ii)\label{subsec:Proof-of-Part-II-PT}}

We now turn to the regime in which $p/n\leq1/2-O(\epsilon^{3/4})$,
where $0<\epsilon<1$ is any fixed constant. Begin by observing that 
the least singular value of $\bm{X}$ obeys
\begin{equation}
\sigma_{\min}\left(\bm{X}\right)\geq\sqrt{n}/4\label{eq:sigma-min-X}
\end{equation}
 with probability at least $1-2\exp\big(-\frac{1}{2}\big(\frac{3}{4}-\frac{1}{\sqrt{2}}\big)^{2}n\big)$
 (this follows from Lemma \ref{lem:eigen-min-S} using $\epsilon =0$). Then 
 for any $\bm{\beta}\in\mathbb{R}^{p}$
obeying
\begin{equation}
\ell_{0}(\bm{\beta})=\sum\nolimits _{j=1}^{n}\rho\left(-\bm{X}_{j}^{\top}\bm{\beta}\right)\leq n\log2=\ell_{0}(\bm{0}) \label{eq:L-beta-UB}
\end{equation}
\begin{equation}
\text{and}\qquad\|\bm{\beta}\|\geq\frac{4\log2}{\epsilon^{2}},\label{eq:betanormbound}
\end{equation}
we must have 
\[
\sum_{j=1}^{n}\max\left\{ -\bm{X}_{j}^{\top}\bm{\beta},\text{
  }0\right\}   =  \sum_{j:\text{
  }\bm{X}_{j}^{\top}\bm{\beta}<0}\left(-\bm{X}_{j}^{\top}\bm{\beta}\right)\overset{(\text{a})}{\leq}\sum_{j:\text{
  }\bm{X}_{j}^{\top}\bm{\beta}<0}\rho\left(-\bm{X}_{j}^{\top}\bm{\beta}\right)\overset{(\text{b})}{\leq}n\log2; 
\]
(a) follows since $t\leq\rho(t)$ and (b) is a
consequence of (\ref{eq:L-beta-UB}). Continuing,  
(\ref{eq:sigma-min-X}) and (\ref{eq:betanormbound}) give 
\[
n\log2 \le
4\sqrt{n}\frac{\|\bm{X}\bm{\beta}\|}{\|\bm{\beta}\|}\log2\leq\epsilon^{2}\sqrt{n}\|\bm{X}\bm{\beta}\|. 
\]
This implies the following proposition: if the solution
$\hat{\bm{\beta}}$---which necessarily satisfies
$\ell_{0}(\hat{\bm{\beta}})\leq\ell_{0}(\bm{0})$---has norm exceeding
$\|\hat{\bm{\beta}}\|\geq\frac{4\log2}{\epsilon^{2}}$, then
$\bm{X}\hat{\bm{\beta}}$ must fall within the cone
\begin{equation}
\mathcal{A}:=\left\{ \bm{u}\in\mathbb{R}^{n}\left|\text{ }\sum\nolimits _{j=1}^{n}\max\left\{ -u_{j},0\right\} \leq\epsilon^{2}\sqrt{n}\|\bm{u}\|\right.\right\} .\label{eq:defn-event-A}
\end{equation}
Therefore, if one wishes to rule out the possibility of having $\|\hat{\bm{\beta}}\|\geq\frac{4\log2}{\epsilon^{2}}$,
it suffices to show that with high probability, 
\begin{equation}
\left\{ \bm{X}\bm{\beta}\mid\bm{\beta}\in\mathbb{R}^{p}\right\} \cap\mathcal{A}=\left\{ \bm{0}\right\}.\label{eq:Xbeta-A-intersection}
\end{equation}
This is the content of the remaining proof. 

We would like to utilize tools from conic geometry
\cite{amelunxen2014living} to analyze the probability of the
event (\ref{eq:Xbeta-A-intersection}).  Note, however,
that $\mathcal{A}$ is not convex, while the theory developed in
\cite{amelunxen2014living} applies only to convex cones.  To bypass
the non-convexity issue, we proceed in the following three steps:
\begin{enumerate}
\item Generate a set of $N=\exp\left(2\epsilon^{2}p\right)$ closed \emph{convex}
cones $\left\{ \mathcal{B}_{i}\mid1\leq i\leq N\right\} $ such that
		it forms a cover of $\mathcal{A}$ with probability exceeding $1- \exp\left(-\Omega(\epsilon^{2}p)\right)$. 
\item Show that if $p<\left(\frac{1}{2}-2\sqrt{2}\epsilon^{\frac{3}{4}}-2H(2\sqrt{\epsilon})\right)n$
and if $n$ is sufficiently large, then 
\[
\mathbb{P}\left\{ \left\{ \bm{X}\bm{\beta}\mid\bm{\beta}\in\mathbb{R}^{p}\right\} \cap\mathcal{B}_{i}\neq\left\{ \bm{0}\right\} \right\} \leq4\exp\left\{ -\frac{1}{8}\left(\frac{1}{2}-2\sqrt{2}\epsilon^{\frac{3}{4}}-10H(2\sqrt{\epsilon})-\frac{p}{n}\right)^{2}n\right\} 
\]
for each $1\leq i\leq N$.
\item Invoke the union bound to reach 
\begin{eqnarray*}
\mathbb{P}\left\{ \left\{ \bm{X}\bm{\beta}\mid\bm{\beta}\in\mathbb{R}^{p}\right\} \cap\mathcal{A}\neq\left\{ \bm{0}\right\} \right\}  & \leq & \mathbb{P}\left\{ \left\{ \mathcal{B}_{i}\mid1\leq i\leq N\right\} \text{ does not form a cover of }\mathcal{A}\right\} \\
 &  & \quad+\sum_{i=1}^{N}\mathbb{P}\left\{ \left\{ \bm{X}\bm{\beta}\mid\bm{\beta}\in\mathbb{R}^{p}\right\} \cap\mathcal{B}_{i}\neq\left\{ \bm{0}\right\} \right\} \\
	& \leq & \exp\left(- \Omega(\epsilon^{2}p)\right),
\end{eqnarray*}
where we have used the fact that 
\begin{eqnarray*}
\sum_{i=1}^{N}\mathbb{P}\left\{ \left\{ \bm{X}\bm{\beta}\mid\bm{\beta}\in\mathbb{R}^{p}\right\} \cap\mathcal{B}_{i}\neq\left\{ \bm{0}\right\} \right\}  & \leq & 4N\exp\left\{ -\frac{1}{8}\left(\frac{1}{2}-2\sqrt{2}\epsilon^{\frac{3}{4}}-10H(2\sqrt{\epsilon})-\frac{p}{n}\right)^{2}n\right\} \\
 & < & 4\exp\left\{ -\left(\frac{1}{8}\left(\frac{1}{2}-2\sqrt{2}\epsilon^{\frac{3}{4}}-10H(2\sqrt{\epsilon})-\frac{p}{n}\right)^{2}-2\epsilon^{2}\right)n\right\} \\
 & < & 4\exp\left\{ -\epsilon^{2}n\right\} .
\end{eqnarray*}
Here, the last inequality holds if $\left(\frac{1}{2}-2\sqrt{2}\epsilon^{\frac{3}{4}}-10H(2\sqrt{\epsilon})-\frac{p}{n}\right)^{2}>24\epsilon^{2}$,
or equivalently, $\frac{p}{n}<\frac{1}{2}-2\sqrt{2}\epsilon^{\frac{3}{4}}-10H(2\sqrt{\epsilon})-\sqrt{24}\epsilon$. 
\end{enumerate}
Taken collectively, these steps establish the following claim: if
$\frac{p}{n}<\frac{1}{2}-2\sqrt{2}\epsilon^{\frac{3}{4}}-10H(2\sqrt{\epsilon})-\sqrt{24}\epsilon$,
then
\begin{eqnarray*}
	\mathbb{P}\left\{ \|\hbbeta\|>\frac{4\log2}{\epsilon^{2}}\right\}  & < & \exp\left\{ - \Omega(\epsilon^{2}n) \right\} ,
\end{eqnarray*}
thus establishing Part (ii) of Theorem \ref{thm: normbound-MLE}.
We defer the complete details of the preceding steps to Appendix \ref{sub:Proof-Theorem-normbound}.

\section{Asymptotic $\ell_2$ error of the
MLE}\label{sec: AMP}

%

This section aims to establish Theorem \ref{thm: MLE-norm}, which characterizes precisely the asymptotic squared error  of the MLE $\hat{\bm{\beta}}$ under the global null $\bm{\beta}=\bm{0}$. As described in Section \ref{sub:AMP-step}, it suffices to assume that $\hat{\bm{\beta}}$ is the solution to the following problem
\begin{equation}
  \label{eq:MLE-simpler}
  \text{minimize}_{\bm{\beta}\in \mathbb{R}^p} \quad \sum\nolimits_{i=1}^n \rho(-\bm{X}_i^{\top}\bm{\beta}).
\end{equation}
In what follows, we derive the asymptotic convergence of
$\|\hat{\bm{\beta}}\|$ under the assumptions from our main theorem. 

\begin{theorem}
\label{theorem:AMP}
Under the assumptions of Theorem \ref{thm: thm1}, 
the solution $\hat{\bm{\beta}}$ to (\ref{eq:MLE-simpler}) obeys
\begin{equation}
  \lim_{n\rightarrow\infty} \|\hat{\bm{\beta}}\|^{2}=_{\mathrm{a.s.}} {\tau_{*}^{2}}.
\end{equation}
 \end{theorem}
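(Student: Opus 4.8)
The plan is to follow the approximate message passing (AMP) strategy of \cite{donoho2013high}, the crucial new obstacle being that our effective link $\rho$ is not strongly convex, so the frameworks of \cite{donoho2013high,karoui2013asymptotic,el2015impact} cannot be applied as black boxes. First I would recast the problem: since $\tilde y_i\equiv 1$ and $\bbeta=\bzero$, the MLE is the minimizer in \eqref{eq:MLE-simpler}, and its stationarity condition \eqref{eq: betaesteq}, $\sum_i\rho'(\bX_i^{\tp}\hbbeta)\bX_i=\bzero$, is precisely the fixed-point relation of an AMP recursion $\{\bbeta^t\}_{t\ge 0}$ whose nonlinearities are assembled from the proximal operator $\prox_{b\rho}$ and the map $\Psi(\cdot\,;b)$ of \eqref{eq:defn-psi}. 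The aim is then to show $\bbeta^t\to\hbbeta$ (as $t\to\infty$ after $n\to\infty$) and to identify $\|\hbbeta\|^2$ through the associated state evolution.

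Next I would set up and analyze the state evolution. The AMP machinery attaches to the iteration deterministic scalars $\tau_t$ and $b_t$, where $b_t$ is the unique solution of $\kappa=\E[\Psi'(\tau_t Z;b_t)]$ guaranteed by Condition 5 of Section \ref{sec:rho}, such that $\|\bbeta^t\|^2$ concentrates on $\tau_t^2$ and $\tau_{t+1}^2=\Nu(\tau_t^2)$ for the variance map $\Nu$ of \eqref{eq: varmapone}. I would then show that the scalar recursion $\tau_{t+1}^2=\Nu(\tau_t^2)$ is monotone, and that, by Condition 6 together with the monotonicity/uniqueness arguments deferred to Section \ref{subsec: SE}, it converges to the unique fixed point $\taus^2$, with $(\taus,\bs)$ the unique solution in $\R_+^2$ of the system \eqref{eq: sysofeq-tau}--\eqref{eq: sysofeq-b}.

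The remaining and hardest step is to turn the heuristic $\bbeta^t\to\hbbeta$ into a theorem in the absence of strong convexity. Here the plan is to combine two of the ingredients highlighted in Section \ref{sub:AMP-step}. First, the Norm Bound Condition (Theorem \ref{thm: normbound-MLE}) localizes $\hbbeta$ --- and, by a parallel argument, every iterate $\bbeta^t$ --- inside a Euclidean ball of radius $O(1)$ with probability $1-e^{-\Omega(n)}$. Second, on that ball the Likelihood Curvature Condition (Lemma \ref{lem:min-eigenvalue-bound}) gives $\frac{1}{n}\nabla^2\ell(\bbeta)\succeq c\,\Id$ for a constant $c>0$, so \eqref{eq:MLE-simpler} has a unique minimizer. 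The AMP analysis shows $\{\bbeta^t\}$ is asymptotically Cauchy, i.e.\ $\limsup_{n\to\infty}\|\bbeta^{t+1}-\bbeta^t\|\to 0$ as $t\to\infty$; any limit point hence satisfies the stationarity condition \eqref{eq: betaesteq}, which by restricted strong convexity must be $\hbbeta$. The same curvature bound converts the vanishing of the normalized gradient along the iteration into the vanishing of $\|\bbeta^t-\hbbeta\|$, yielding $\lim_{t\to\infty}\limsup_{n\to\infty}\|\bbeta^t-\hbbeta\|^2=0$ almost surely; since $\|\bbeta^t\|^2\to\tau_t^2\to\taus^2$, this gives $\|\hbbeta\|^2\to\taus^2$ a.s.

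I expect the main difficulty to lie in this last step and in re-deriving the AMP apparatus itself: the score $\rho'$ is only $O(|t|)$ rather than bounded, so the conditioning (Bolthausen-type) and polynomial-approximation steps of the AMP proof must be revisited to accommodate the unbounded but sub-Gaussian nonlinearity $\rho'(\prox_{c\rho}(Z))$ permitted by Condition 2 of Section \ref{sec:rho}, and one must check that the a priori norm bound and the curvature lemma interlock cleanly with the iteration (in particular that the iterates do not leave the good ball). The verification that $\Nu$ admits a unique attracting fixed point for the logistic and probit links --- Conditions 5 and 6 --- is itself substantial and is exactly what Section \ref{subsec: SE} is designed to supply.
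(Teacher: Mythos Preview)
Your overall architecture---AMP trajectory plus state evolution, with the Norm Bound Condition and the Likelihood Curvature Condition replacing strong convexity---matches the paper's. A few places where the paper is tighter or differs:

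\emph{Initialize SE at the fixed point, don't prove convergence.} You propose to show that the recursion $\tau_{t+1}^2=\Nu(\tau_t^2)$ is monotone and converges to $\taus^2$. The paper never does this: it simply initializes $\tau_0=\taus$ (which exists by the fixed-point Condition~6 and Lemma~\ref{lemma: varmap}), so $\tau_t\equiv\taus$, $b_t\equiv\bs$ for all $t$. This avoids proving anything about monotonicity or global attractivity of $\Nu$, which the paper does not establish and which may not be straightforward. Uniqueness of $(\taus,\bs)$ is then obtained \emph{a posteriori}, as a corollary of Theorem~\ref{thm: conv} and uniqueness of the MLE.

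\emph{Bounding $\|\bbeta^t\|$.} You localize the iterates ``by a parallel argument'' to Theorem~\ref{thm: normbound-MLE}. That theorem is about the MLE, not the AMP iterates, and there is no direct analogue for $\bbeta^t$. The paper instead uses the SE identity $\lim_{n\to\infty}\|\hbbeta^t\|^2=\taus^2$ (inherited from \cite[Theorem~3.4]{donoho2013high}) to keep $\|\hbbeta^t\|$ bounded; the norm-bound theorem is used only for $\hbbeta$ itself.

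\emph{From Cauchy to $\hbbeta$.} Your ``limit point'' argument is not well-posed here: $\bbeta^t\in\R^{p(n)}$ lives in spaces whose dimension grows with $n$, and the relevant limits are $n\to\infty$ first and $t\to\infty$ second, so there is no ambient space in which to take a subsequential limit. The paper's route, which you also mention, is the correct one and is worth making the sole argument: Taylor plus Lemma~\ref{lem:min-eigenvalue-bound} give
\[
\|\hbbeta-\hbbeta^t\|\;\le\;\frac{2}{\omega(\|\hbbeta\|)\,\omega(\|\hbbeta^t\|)}\,\Big\|\tfrac{1}{n}\nabla\ell(\hbbeta^t)\Big\|,
\]
and then the AMP update identities $\prox_{\bs\rho}(\etab^{t-1})=\bX\hbbeta^t+\etab^{t-1}-\etab^t$ and $\Psi(z;b)=b\rho'(\prox_{b\rho}(z))$ let one write $\nabla\ell(\hbbeta^t)$ explicitly in terms of $\hbbeta^t-\hbbeta^{t-1}$ and $\etab^{t-1}-\etab^t$, both of which vanish by the asymptotic Cauchy property. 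This algebraic identity is the concrete bridge you are missing between ``iterates are Cauchy'' and ``gradient at the iterate is small.''

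\emph{No re-derivation of AMP needed.} You anticipate having to rework the Bolthausen/conditioning machinery for unbounded $\rho'$. The paper sidesteps this: it invokes \cite[Theorem~3.4]{donoho2013high} verbatim for the SE law of the iterates (the pseudo-Lipschitz framework there already accommodates $\Psi$), and the sub-Gaussian condition on $\rho'(\prox_{c\rho}(Z))$ in Section~\ref{sec:rho} is what makes this legal. The genuinely new work is entirely in the convergence step (Theorem~\ref{thm: conv}), not in SE.
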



 Theorem \ref{theorem:AMP} is derived by invoking the AMP machinery
 \cite{bayati2011dynamics,bayati2012lasso,javanmard2013state}.  The
 high-level idea is the following: in order to study $\hbbeta$, one
 introduces an iterative algorithm (called AMP) where a sequence of
 iterates $\hbbeta^t$ is formed at each time $t$. The algorithm is
 constructed so that the iterates asymptotically converge to the MLE
 in the sense that
\begin{equation}\label{eq:convres}
  \lim_{t \rightarrow \infty}\lim_{n\rightarrow \infty} 
  \|\hbbeta^t -\hbbeta \|^2=_{\text{a.s.}}0. 
  \end{equation}
  On the other hand, the asymptotic behavior (asymptotic in $n$) of
  $\hbbeta^t$ for each $t$ can be described accurately by a scalar
  sequence $\{\tau_t\}$---called {\em state evolution}
  (SE)---following certain update equations
  \cite{bayati2011dynamics}. This, in turn, provides a
  characterization of the $\ell_2$ loss of $\hat{\bm{\beta}}$.
%
%

Further, in order to prove Theorem \ref{thm: MLE-norm}, one still needs to justify 
\begin{itemize} 
  \item[(a)] the existence of a solution to the system of equations  \eqref{eq: sysofeq-tau} and \eqref{eq: sysofeq-b}, 
  \item[(b)] and the existence of a fixed point for the iterative map governing the SE sequence updates.
\end{itemize}
We will elaborate on these steps in the rest of this section.

%



\subsection{State evolution}\label{subsec: SE}

We begin with the SE sequence $\{\tau_t\}$ introduced in \cite{donoho2013high}. Starting from some initial point $\tau_0$, we produce two sequences $\{b_t\}$ and $\{\tau_{t}\}$ following a two-step procedure. 
\begin{itemize}
	\item For  $t=0,1, \ldots$: 
	\begin{itemize}
		\item Set $b_t$ to be the solution in $b$ to 
		\begin{align}
			\kappa & = \E\big[\Psi'(\tau_tZ; b)\big]; \label{eq: SE2b}
		\end{align}
		\item Set $\tau_{t+1}$ to be
		\begin{align}
			\tau_{t+1}^2 & = \frac{1}{\kappa} \E\big[(\Psi^2(\tau_tZ; b_t))\big]. \label{eq: SE2a}
		\end{align}
	\end{itemize}
\end{itemize}
Suppose that for given any $\tau>0$, the solution in $b$ to (\ref{eq:
  SE2b}) with $\tau_t=\tau$ exists and is unique, then one can denote
the solution as $b(\tau)$, which in turn allows one to write the
sequence $\{\tau_t\}$ as
\[
  \tau_{t+1}^2 = \Nu(\tau_t^2)
\]
with the variance map
\begin{equation}
	\label{eq: varmapone}
	\Nu(\tau^2) = \frac{1}{\kappa} \E \left[\Psi^2(\tau Z ;b(\tau))\right] .
\end{equation}
As a result, if there exists a fixed point $\taus$ obeying $\Nu(\taus^2)=\taus^2$ and if we start with $\tau_0 = \taus$, then by induction,
\[
\tau_t \equiv \taus \quad \text{and} \quad b_t \equiv \bs:=b(\taus), \qquad t = 0,1,\ldots
\]
Notably, $(\taus,\bs)$ solves the system of equations \eqref{eq: sysofeq-tau} and \eqref{eq: sysofeq-b}. We shall work with this choice of initial condition throughout our proof. 

The preceding arguments hold under two conditions: (i) the solution to
\eqref{eq: sysofeq-b} exists and is unique for any $\tau_t>0$; (ii)
the variance map (\ref{eq: varmapone}) admits a fixed point. To verify
these two conditions, we make two observations.
\begin{itemize}
	\item Condition (i) holds if one can show that the function
\begin{equation}
G(b):=\mathbb{E}\left[\Psi'(\tau Z;b)\right], \qquad b>0 
	\label{eq:defn-Gb}
\end{equation}
is strictly monotone for any given $\tau>0$, and that $\lim_{b \rightarrow 0}G(b) < \kappa < \lim_{b \rightarrow \infty}G(b)$. 

%
%
\item Since $\Nu(\cdot)$ is a continuous function, Condition (ii)
  becomes self-evident once we show that $\Nu(0)>0$ and that there
  exists $\tau>0$ obeying $\Nu(\tau^2) < \tau^2$. The behavior of the
  variance map is illustrated in Figure \ref{fig: varmap} for the logistic
  and probit regression when $\kappa =0.3$. One can in fact observe
  that the fixed point is unique. For other values of $\kappa$, the
  variance map shows the same behavior.
\end{itemize}

\begin{figure}
\centering
\begin{subfigure}{.5\textwidth}
  \centering
  \includegraphics[scale=0.4,keepaspectratio]{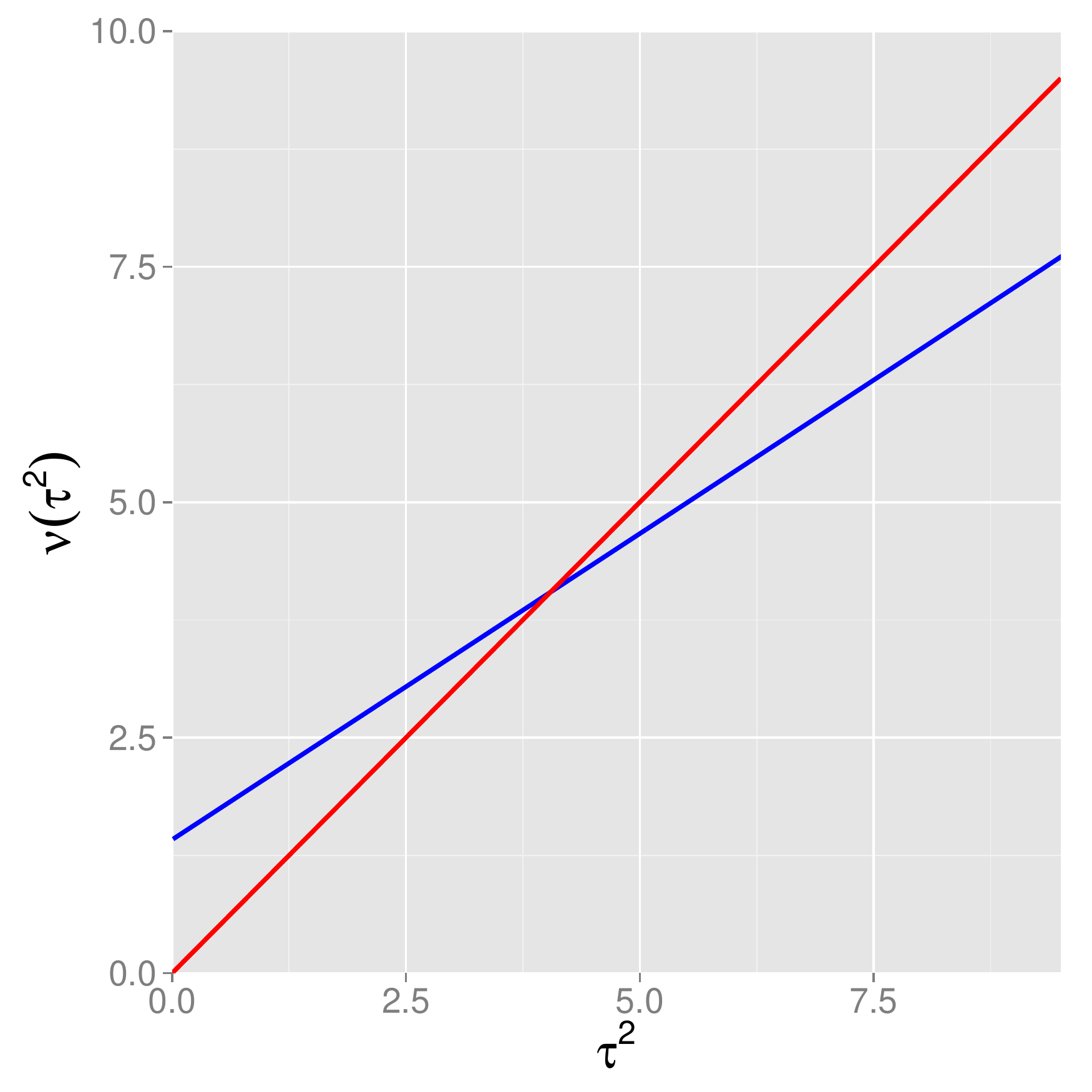}
  \caption{logistic regression}
\end{subfigure}%
\begin{subfigure}{.5\textwidth}
  \centering
  \includegraphics[scale=0.4,keepaspectratio]{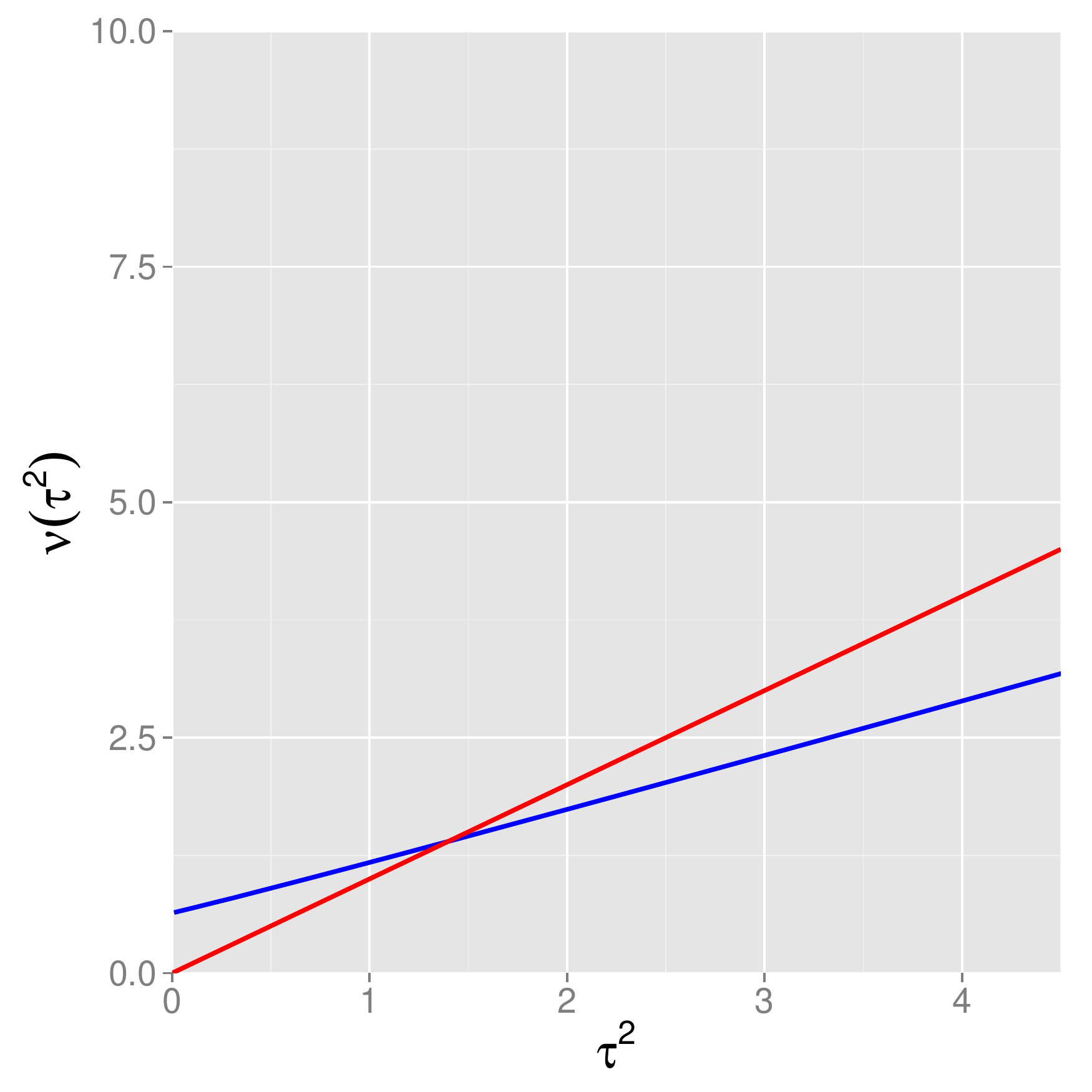}
  \caption{probit regression}
\end{subfigure}
	\caption{The variance map for both the logistic and the probit models when $\kappa = 0.3$:  (blue line) variance map $\Nu(\tau^2)$ as a function of $\tau^2$; (red line) diagonal.  }
\label{fig: varmap}
\end{figure}

In fact, the aforementioned properties can be proved for a certain class of effective links, as summarized in the following lemmas. In particular, they can be shown for the logistic and the probit models.

\begin{lemma}
\label{lemma: monotonicity}
Suppose the effective link $\rho$ satisfies the following two properties: 
\begin{enumerate}[label=(\alph*)]
\item $\rho'$ is log-concave.
\item For any fixed $\tau >0$ and any fixed $z$, $b \rho''(\prox_{b \rho}(\tau z)) \rightarrow \infty$ when $b \rightarrow \infty$.
\end{enumerate}
Then for any $\tau>0$, the function $G(b)$ defined in (\ref{eq:defn-Gb})
is an increasing function in $b$ ($b>0$), and the equation $$G(b)=\kappa$$
has a unique positive solution. 
\end{lemma}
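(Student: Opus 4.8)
The plan is to analyze the function $G(b) = \mathbb{E}[\Psi'(\tau Z; b)]$ by first obtaining a clean closed-form expression for $\Psi'$ in terms of the proximal operator, then use the log-concavity of $\rho'$ to show that the integrand is monotone in $b$ pointwise (or at least in expectation), and finally pin down the limiting behavior of $G$ as $b \to 0$ and $b \to \infty$ to locate the unique root of $G(b) = \kappa$.

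First I would differentiate the defining relation for the proximal operator. Writing $x = \prox_{b\rho}(z)$, the stationarity condition $b\rho'(x) + x - z = 0$ gives, upon differentiating in $z$, that $\partial_z \prox_{b\rho}(z) = 1/(1 + b\rho''(x))$. Combined with the identity \eqref{eq:proxrelations}, namely $\Psi(z;b) = z - \prox_{b\rho}(z)$, this yields
\[
\Psi'(z;b) = 1 - \frac{1}{1 + b\rho''(\prox_{b\rho}(z))} = \frac{b\rho''(\prox_{b\rho}(z))}{1 + b\rho''(\prox_{b\rho}(z))}.
\]
So $G(b) = \mathbb{E}\big[ \tfrac{b\rho''(\prox_{b\rho}(\tau Z))}{1 + b\rho''(\prox_{b\rho}(\tau Z))} \big]$. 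Since $u \mapsto u/(1+u)$ is increasing on $[0,\infty)$, monotonicity of $G$ will follow if I can show that for each fixed $z$ the quantity $b \mapsto b\rho''(\prox_{b\rho}(\tau z))$ is nondecreasing in $b$. This is where property (a) enters: log-concavity of $\rho'$ means $\rho''/\rho'$ is nonincreasing, and I would use this together with the fact that $\prox_{b\rho}(z)$ is itself monotone in $b$ (larger $b$ pushes the prox toward the minimizer of $\rho$, i.e.\ $\prox_{b\rho}(z)$ decreases in $b$ since $\rho' > 0$) to control how $\rho''(\prox_{b\rho}(z))$ changes. The cleanest route is probably to write $v(b) := b\rho''(x(b))$ with $x(b) = \prox_{b\rho}(z)$, differentiate: $v'(b) = \rho''(x) + b\rho'''(x) x'(b)$, and use $x'(b) = -\rho'(x)/(1+b\rho''(x))$ (obtained by differentiating the stationarity condition in $b$), so that
\[
v'(b) = \rho''(x)\left(1 - \frac{b\rho'''(x)\rho'(x)}{\rho''(x)(1+b\rho''(x))}\right).
\]
Log-concavity of $\rho'$ gives $\rho'''\rho' \le (\rho'')^2$, hence $\frac{b\rho'''(x)\rho'(x)}{\rho''(x)(1+b\rho''(x))} \le \frac{b\rho''(x)}{1+b\rho''(x)} < 1$, so $v'(b) > 0$. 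This establishes that $G$ is strictly increasing.

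For the endpoints: as $b \to 0$, $\prox_{b\rho}(z) \to z$ and $b\rho''(\prox_{b\rho}(z)) \to 0$ (using $\sup_t\rho'' < \infty$ from the conditions in Section \ref{sec:rho}), so by dominated convergence $G(b) \to 0 < \kappa$. As $b \to \infty$, property (b) guarantees $b\rho''(\prox_{b\rho}(\tau z)) \to \infty$ for every fixed $z$, so the integrand tends to $1$ pointwise, and dominated convergence gives $G(b) \to 1 > \kappa$ (recall $\kappa < 1/2 < 1$). By the intermediate value theorem and strict monotonicity, $G(b) = \kappa$ has a unique positive solution. I expect the main obstacle to be the pointwise monotonicity argument for $v(b) = b\rho''(\prox_{b\rho}(z))$ --- specifically, justifying the differentiation of the prox in $b$ rigorously and handling the sign of $\rho'''$ (which need not be signed) via the log-concavity inequality $\rho'''\rho' \le (\rho'')^2$; everything else is routine dominated-convergence bookkeeping using the regularity assumptions on $\rho$ already in force.
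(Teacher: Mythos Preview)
Your proof is correct and takes a genuinely different route from the paper's. Both arguments ultimately hinge on the same algebraic consequence of log-concavity of $\rho'$, namely
\[
\rho''(x) + b\bigl(\rho''(x)\bigr)^{2} - b\,\rho'(x)\rho'''(x) \;>\; 0,
\]
but they arrive there by different paths. The paper first integrates by parts to write $G(b) = -\tfrac{1}{\tau}\int \Psi(\tau z;b)\,\phi'(z)\,\mathrm{d}z$, then differentiates under the integral in $b$ using $\partial_b \Psi(\tau z;b) = \rho'(\prox_{b\rho}(\tau z))/\bigl(1+b\rho''(\prox_{b\rho}(\tau z))\bigr)$, and finally exploits the oddness of $\phi'$ together with the fact that $h(z):=\rho'(z)/(1+b\rho''(z))$ and $\prox_{b\rho}(z)$ are both increasing in $z$ to conclude $G'(b)>0$. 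Your argument instead fixes $z$ and shows directly that $b\mapsto \Psi'(\tau z;b)$ is increasing in $b$, via the computation $v'(b)>0$ with $v(b)=b\rho''(\prox_{b\rho}(\tau z))$; monotonicity of $G$ then follows immediately by integrating a pointwise monotone family. Your route is more direct, avoids the integration-by-parts step and the symmetry argument on $\phi'$, and in fact establishes the stronger pointwise statement that $\Psi'(\cdot;b)$ is monotone in $b$. The endpoint analysis ($G(0^+)=0$, $G(\infty)=1$ via dominated convergence, using boundedness of $\rho''$ and assumption (b)) is identical in both proofs.
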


\begin{proof} See Appendix \ref{sub:Proof-Lemma-monotonicity}.
\end{proof}


\begin{lemma}
\label{lemma: varmap} 
Suppose that $0<\kappa<1/2$ and that $\rho=\log(1+e^{t})$ or $\rho=-\log \Phi(-t)$. Then 
\begin{itemize}
\item[(i)] $\mathcal{V}(0)>0$;
\item[(ii)]  $\mathcal{V}(\tau^{2})<\tau^{2}$ for some  sufficiently
large $\tau^2$.
\end{itemize}
\end{lemma}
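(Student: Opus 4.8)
The plan is to verify the two facts by directly analyzing the variance map $\Nu(\tau^2) = \kappa^{-1}\E[\Psi^2(\tau Z; b(\tau))]$, where $b(\tau)$ is the unique solution (guaranteed by Lemma \ref{lemma: monotonicity}, which applies since $\rho' = e^t/(1+e^t)$ is log-concave in the logistic case and $\rho' = \phi/\Phi(-t)$... actually $\rho'(t) = -\phi(t)/\Phi(-t)$ is log-concave in the probit case, and the growth condition (b) can be checked directly) to $\E[\Psi'(\tau Z; b)] = \kappa$.

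\textbf{Part (i).} First I would treat $\tau = 0$. When $\tau = 0$, the random variable $\tau Z$ is identically $0$, so $b(0)$ solves $\Psi'(0; b) = \kappa$, and $\Nu(0) = \kappa^{-1}\Psi^2(0; b(0))$. The key point is that $\Psi(0;b) = b\rho'(\prox_{b\rho}(0)) \neq 0$: indeed $\prox_{b\rho}(0) = \arg\min_x\{b\rho(x) + x^2/2\}$ is the unique $x$ with $b\rho'(x) + x = 0$, and since $\rho' > 0$ everywhere (because $\rho$ is strictly increasing: $\rho'(t) = \mu(t) \in (0,1)$ for logistic, and similarly positive for probit), this forces $\prox_{b\rho}(0) < 0$ and hence $\Psi(0;b) = -\prox_{b\rho}(0) > 0$. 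Therefore $\Nu(0) = \kappa^{-1}(\prox_{b(0)\rho}(0))^2 > 0$, provided $b(0)$ is itself strictly positive and finite, which follows from Lemma \ref{lemma: monotonicity} applied with $\tau \to 0$ (one should check the limiting behavior $\lim_{b\to 0} G(b) = 0 < \kappa < 1/2 \le \lim_{b\to\infty} G(b)$ persists at $\tau = 0$, using $\Psi'(0;b) = b\rho''(\prox_{b\rho}(0))/(1 + b\rho''(\prox_{b\rho}(0)))$ which ranges over $(0,1)$).

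\textbf{Part (ii).} For the second claim I would show that $\Nu(\tau^2)/\tau^2 \to$ some limit strictly less than $1$ (or at least that the ratio dips below $1$) as $\tau^2 \to \infty$. The mechanism: as $\tau \to \infty$, the first equation $\E[\Psi'(\tau Z; b)] = \kappa$ forces $b = b(\tau)$ to behave in a controlled way, and one analyzes $\E[\Psi^2(\tau Z; b(\tau))]$ using the relation $\Psi(z;b) + \prox_{b\rho}(z) = z$ together with the crude bound $|\rho'| = O(|t|)$ from the assumptions in Section \ref{sec:rho} (indeed for logistic $|\rho'| \le 1$ is bounded, and for probit $|\rho'(t)| = O(|t|)$). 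Since $\Psi(z;b) = b\rho'(\prox_{b\rho}(z))$, boundedness (logistic) or linear growth (probit) of $\rho'$ gives $\Psi^2(\tau Z; b) \le b^2 \rho'(\cdot)^2$, and one shows $b(\tau)$ grows slowly enough — more precisely one shows $\E[\Psi^2(\tau Z;b(\tau))] = o(\tau^2)$, or shows the sharper asymptotic $\Nu(\tau^2)/\tau^2 \to c_\infty$ with $c_\infty < 1$, exploiting $\kappa < 1/2$ at the crucial step (this is where the hypothesis $\kappa < 1/2$ enters — in the logistic case, heuristically, a large-$\tau$ expansion shows $\Nu(\tau^2)/\tau^2 \to 2\kappa < 1$, mirroring the classical Fisher information calculation). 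I would carry this out by substituting the explicit effective links and performing a Gaussian-integral asymptotic analysis: split the expectation over $\{|\tau Z|$ large$\}$ versus $\{|\tau Z| = O(1)\}$, and on each region estimate $\prox_{b\rho}(\tau z)$ and hence $\Psi(\tau z; b)$.

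\textbf{Main obstacle.} The delicate step is Part (ii): extracting a quantitative large-$\tau$ asymptotic for the pair $(b(\tau), \Nu(\tau^2))$ and showing the ratio drops below $1$ using $\kappa < 1/2$. Part (i) is essentially a soft argument (positivity of a proximal point), but Part (ii) requires pinning down how fast $b(\tau)$ scales as a function of $\tau$ from the implicit equation $\E[\Psi'(\tau Z; b)] = \kappa$, and then feeding that back into the second moment — and doing so separately for the logistic and probit links, since their tail behaviors differ (bounded $\rho'$ versus linearly growing $\rho'$). I expect the probit case to need a more careful tail analysis because of the unbounded derivative, whereas the logistic case should follow from dominated convergence once the scaling of $b(\tau)$ is identified.
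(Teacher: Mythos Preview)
Your Part (i) argument is essentially the paper's: both observe that $\Psi(0;b(0)) = b(0)\rho'(\prox_{b(0)\rho}(0)) \neq 0$ because $\rho' > 0$ everywhere, giving $\Nu(0) = \kappa^{-1}\Psi(0;b(0))^2 > 0$. The paper additionally records the explicit value $b(0) = \kappa/[\rho''(c)(1-\kappa)]$ with $c = \prox_{b(0)\rho}(0)$, but the positivity argument is the same.

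For Part (ii) your overall plan is also what the paper does (the full computation is relegated to the supplement, but the Remark immediately following the lemma records the limiting values). However, you have the two models interchanged, and this matters for the execution. It is the \emph{probit} case in which $b(\tau)$ stays bounded, $b(\tau)\to 2\kappa/(1-2\kappa)$, and $\Nu(\tau^2)/\tau^2 \to 2\kappa < 1$; here the mechanism is that $\rho'(t)\sim -t$ as $t\to -\infty$, so $\prox_{b\rho}(\tau z)\sim \tau z/(1+b)$ and $\Psi(\tau z;b)\sim b\tau z/(1+b)$ scale linearly in $\tau z$, which makes the rescaled second moment converge cleanly. In the \emph{logistic} case $\rho'$ is bounded, so $\Psi(\tau z;b)\le b$ and one must have $b(\tau)\to\infty$; the paper shows $b(\tau)/\tau \to x:=\Phi^{-1}(\kappa+\tfrac12)$ and obtains the more intricate limit
\[
\lim_{\tau\to\infty}\frac{\Nu(\tau^2)}{\tau^2}
= \frac{x^2\,\mathbb{P}\{Z>x\} + \E[Z^2\bm{1}_{\{0<Z<x\}}]}{\mathbb{P}\{0<Z<x\}},
\]
and one then checks this is $<1$ for $x>0$ (equivalently $\kappa\in(0,1/2)$). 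So your dominated-convergence heuristic and your guess of the limit $2\kappa$ both attach to the wrong link; the logistic analysis requires first pinning down the linear growth rate $b(\tau)/\tau\to x$ from the equation $\E[\Psi'(\tau Z;b)]=\kappa$ and then a threshold-type calculation, while the probit case is the one where $b(\tau)$ is asymptotically constant.
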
 
\begin{proof} See Appendix \ref{sub:Proof-Lemma-var-map} and the
  supplemental material \cite{LRTsupp2017}.  
\end{proof}
\begin{remark} A byproduct of the proof is that the following relations hold for any constant $0 < \kappa < 1/2$: 
\begin{itemize}
\item In the logistic case,
\[
\begin{cases}
\lim_{\tau\rightarrow\infty}\frac{\mathcal{V}\left(\tau^{2}\right)}{\tau^{2}} & =\left.\frac{x^{2}\mathbb{P}\left\{ Z>x\right\} +\mathbb{E}\left[Z^{2}\bm{1}_{\left\{ 0<Z<x\right\} }\right]}{\mathbb{P}\left\{ 0<Z<x\right\} }\right|_{x=\Phi^{-1}(\kappa+0.5)};\\
	\lim_{\tau\rightarrow\infty}\frac{b({\tau})}{\tau} & =\Phi^{-1}(\kappa+0.5).
\end{cases}
\]

\item In the probit case,
\begin{align}
	\lim_{\tau \rightarrow \infty} b(\tau) = \frac{2\kappa}{1-2 \kappa}   \qquad \text{and} \qquad \lim_{\tau \rightarrow \infty} \frac{\Nu(\tau^2)}{\tau^2}  = 2 \kappa. 
\end{align}

\end{itemize}
\end{remark}
\begin{remark}
	Lemma \ref{lemma: varmap} is proved for the two special effective link functions, the logistic and the probit cases. However, the proof sheds light on general conditions on the effective link that suffice for the lemma to hold. Such general sufficient conditions are also discussed in the supplemental material \cite{LRTsupp2017}.
\end{remark}

\subsection{AMP recursion}
In this section, we construct the AMP trajectory tracked by two sequences $\{\hbbeta^t(n)\in \R^p\}$ and $\{\etab^t (n) \in \R^n\}$  for $t\geq 0$. Going forward we suppress the dependence on $n$ to simplify presentation. Picking $\hbbeta^0$ such that
\[
	\lim_{n\rightarrow \infty} \| \hbbeta^0 \|^2 = \tau_0^2 = \taus^2 
\]
and taking $\etab^{-1} = \bzero$ and $b_{-1} = 0$, the AMP path is obtained via Algorithm \ref{alg:AMP}, which is adapted from the algorithm in \cite[Section 2.2]{donoho2013high}.
\begin{algorithm}[H]
\caption{Approximate message passing.\label{alg:AMP}}
\begin{tabular}{>{\raggedright}p{1\textwidth}}
\textbf{For $t=0,1,\cdots$:}
\begin{enumerate}
\item Set 
\begin{equation}
\bm{\eta}^{t}=\bm{X}\hat{\bm{\beta}}^{t}+\Psi\left(\bm{\eta}^{t-1};b_{t-1}\right);\label{eq:AMP-eta}
\end{equation}
\vspace{-2em}
\item Let $b_t$ be the solution to
\begin{equation}
\kappa = \E \left[\Psi'(\tau_t Z; b) \right],
\end{equation}
where $\tau_t$ is the SE sequence value at that time.
\item Set 
\begin{equation}
\bm{\hat{\beta}}^{t+1}=\hat{\bm{\beta}}^{t}-\frac{1}{p}\bm{X}^{\top} \Psi\left(\bm{\eta}^{t};b_{t}\right).\label{eq:AMP-beta}
\end{equation}
\end{enumerate}
Here, $\Psi(\cdot)$  is applied in an entrywise manner, and $\Psi'(.,.)$ denotes derivative w.r.t the first variable.
\tabularnewline
\end{tabular}
\end{algorithm}

As asserted by  \cite{donoho2013high}, the SE sequence $\{\tau_t\}$ introduced in  Section \ref{subsec: SE} proves useful as it 
 offers a formal procedure for predicting operating characteristics of the AMP iterates at any fixed iteration. In particular it assigns predictions to two types of observables: observables which are functions of the $\hbbeta^t$ sequence and those which are functions of $\bm{\eta}^t$. 
 Repeating identical argument as in \cite[Theorem 3.4]{donoho2013high}, we obtain
\begin{equation}
  \lim_{n\rightarrow \infty} \| \hbbeta^t \|^2 =_{\text{a.s.}} {\tau_t^2} \equiv {\taus^2}, \qquad t = 0,1,\ldots.
  \label{eq: AMSE}
\end{equation}

\subsection{AMP converges to the MLE}\label{subsec: convergence}
We are now in position to show that the AMP iterates $\{\hbbeta^t\}$
converge to the MLE in the large $n$ and $t$ limit. Before continuing,
we state below two properties that are satisfied under our assumptions. 
\begin{itemize}
  \item The MLE $\hat{\bm{\beta}}$ obeys
  \begin{equation} \label{eq:beta-norm-bound}
	  \lim_{n \rightarrow \infty}\| \hat{\bm{\beta}} \|< \infty 
  \end{equation}
  almost surely. 
\item And there exists some non-increasing continuous function
  $0<\omega\left(\cdot\right)<1$ independent of $n$ such that
\begin{equation} 
  \label{eq: L-min-evalue}
  \mathbb{P}\left\{ 
  \frac{1}{n} \nabla^{2}\Lcal\left(\bm{\beta}\right)\succeq\omega\left(\|\bm{\beta}\|\right)\cdot\bm{I},\text{ }\forall\bm{\beta}\right\} \geq1-c_1 e^{-c_2 n}.
\end{equation} 
\end{itemize}
In fact, the norm bound \eqref{eq:beta-norm-bound} follows from
Theorem \ref{thm: normbound-MLE} together with Borel-Cantelli, while
the likelihood curvature condition (\ref{eq: L-min-evalue}) is an
immediate consequence of Lemma \ref{lem:min-eigenvalue-bound}. With
this in place, we have:
\begin{theorem}\label{thm: conv}
	Suppose (\ref{eq:beta-norm-bound}) and (\ref{eq: L-min-evalue}) hold. Let $(\taus,\bs)$ be a solution to the system \eqref{eq: sysofeq-tau} and \eqref{eq: sysofeq-b}, and assume that $\lim_{n \rightarrow \infty} \| \hbbeta^0\|^2 =\taus^2$. Then the AMP trajectory as defined in Algorithm \ref{alg:AMP} obeys
\[
	 \lim_{t \rightarrow \infty} \lim_{n \rightarrow \infty} \| \hbbeta^t - \hbbeta\| =_{\rm{a.s.}} 0. 
\]
\end{theorem}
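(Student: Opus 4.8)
The plan is to show that the AMP iterates $\hbbeta^t$ converge (in the $n\to\infty$ then $t\to\infty$ sense) to the MLE $\hbbeta$ by using the optimality condition for $\hbbeta$ together with the fixed-point structure of the AMP recursion and the (generalized) strong convexity supplied by \eqref{eq: L-min-evalue}. The key quantity to track is the gradient of the negative log-likelihood evaluated at the AMP iterates. First, I would rewrite the AMP update \eqref{eq:AMP-beta} to exhibit it as an approximate version of a gradient/proximal step for $\Lcal$. Using the relation $\Psi(z;b)+\prox_{b\rho}(z)=z$ from \eqref{eq:proxrelations}, one sees that $\Psi(\bm\eta^t;b_t)$ plays the role of the residual $\rho'$ evaluated at the ``effective'' linear predictors, so that $\tfrac1p\bX^\top\Psi(\bm\eta^t;b_t)$ is close to $\tfrac1p\nabla\Lcal$ of a nearby point. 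The goal is to prove that $\nabla\Lcal(\hbbeta^t)\to\bm 0$ in an appropriate normalized sense as $t\to\infty$ after $n\to\infty$, and then invoke strong convexity to conclude $\|\hbbeta^t-\hbbeta\|\to0$.

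The concrete steps I would carry out, in order: (1) Use the state-evolution consequence \eqref{eq: AMSE}, $\lim_{n\to\infty}\|\hbbeta^t\|^2 =_{\mathrm{a.s.}}\taus^2$ for every $t$, so that all iterates stay in a fixed bounded ball almost surely; combined with Lemma \ref{lemma: singval} this controls $\|\bX\hbbeta^t\|$ and related quantities uniformly in $t$. (2) Show that consecutive AMP iterates get close: establish $\lim_{t\to\infty}\lim_{n\to\infty}\|\hbbeta^{t+1}-\hbbeta^t\|=_{\mathrm{a.s.}}0$ and $\lim_{t\to\infty}\lim_{n\to\infty}\tfrac1n\|\bm\eta^t-\bm\eta^{t-1}\|^2=_{\mathrm{a.s.}}0$. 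This is the Cauchy-type property of AMP at its fixed point, and one proves it by feeding the pseudo-Lipschitz SE predictions (the analogue of \cite[Theorem 3.4]{donoho2013high}) into the recursion: since $\tau_t\equiv\taus$ and $b_t\equiv\bs$ are stationary, the ``movement'' of the iterates must vanish in the large-$t$ limit. (3) Translate the near-stationarity of the recursion into a near-stationarity of $\Lcal$: using \eqref{eq:AMP-eta}, \eqref{eq:AMP-beta} and the Moreau/prox identities, show that $\tfrac1n\|\nabla\Lcal(\hbbeta^t)\|$ is upper bounded by a quantity controlled by $\|\hbbeta^{t+1}-\hbbeta^t\|$ and $\tfrac1{\sqrt n}\|\bm\eta^t-\bm\eta^{t-1}\|$, hence tends to $0$ as $t\to\infty$ after $n\to\infty$. (4) Finally, apply the likelihood curvature condition \eqref{eq: L-min-evalue}: on the high-probability event there, and since both $\hbbeta^t$ and $\hbbeta$ lie in a fixed ball, we get
\[
\omega(R)\,\|\hbbeta^t-\hbbeta\|^2 \;\le\; \frac1n\big(\nabla\Lcal(\hbbeta^t)-\nabla\Lcal(\hbbeta)\big)^\top(\hbbeta^t-\hbbeta) \;=\; \frac1n\,\nabla\Lcal(\hbbeta^t)^\top(\hbbeta^t-\hbbeta),
\]
where $R$ is a uniform bound on the norms and we used $\nabla\Lcal(\hbbeta)=\bm 0$; Cauchy--Schwarz then gives $\|\hbbeta^t-\hbbeta\|\le \omega(R)^{-1}\tfrac1n\|\nabla\Lcal(\hbbeta^t)\|\cdot\tfrac1{?}$ — more precisely $\|\hbbeta^t-\hbbeta\| \le \omega(R)^{-1}\,\tfrac1n\|\nabla\Lcal(\hbbeta^t)\|$ after absorbing the $\|\hbbeta^t-\hbbeta\|$ factor — and the right side vanishes by step (3). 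Borel--Cantelli upgrades the high-probability statements to almost-sure ones.

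I expect the main obstacle to be step (3): carefully bookkeeping the AMP ``Onsager'' correction terms so that near-stationarity of $(\hbbeta^t,\bm\eta^t)$ genuinely certifies that $\nabla\Lcal(\hbbeta^t)$ is small, uniformly in $n$. This requires exploiting the identity $\Psi(\cdot;b)=\prox_{(b\rho)^*}(\cdot)$ to relate $\Psi(\bm\eta^t;b_t)$ to $\rho'$ evaluated at $\bX\hbbeta^{t}$ (plus a vanishing error), and controlling the discrepancy using the uniform boundedness from step (1), the slow-growth assumptions on $\rho'$ in Section \ref{sec:rho}, and the operator-norm bound on $\bX^\top\bX$ from Lemma \ref{lemma: singval}. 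The interchange of the $n\to\infty$ and $t\to\infty$ limits must be handled in the same order as in \cite{donoho2013high} (first $n$, then $t$), so no uniformity in $t$ of the $n$-limits is needed — which is what makes the argument go through despite the lack of global strong convexity of $\rho$.
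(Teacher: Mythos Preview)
Your proposal is correct and follows essentially the same route as the paper: first establish the Cauchy property of consecutive AMP iterates (the paper cites \cite[Lemma 6.9]{donoho2013high} for this), then use the prox identities to show $\tfrac1n\|\nabla\Lcal(\hbbeta^t)\|$ is controlled by $\|\hbbeta^t-\hbbeta^{t-1}\|$ and $\tfrac1{\sqrt n}\|\bm\eta^t-\bm\eta^{t-1}\|$, and finally invoke the curvature condition \eqref{eq: L-min-evalue} together with the norm bounds \eqref{eq:beta-norm-bound} and \eqref{eq: AMSE}. The only cosmetic difference is that the paper derives the strong-convexity inequality via a second-order Taylor expansion of $\Lcal$ (obtaining $\|\hbbeta-\hbbeta^t\|\le \tfrac{2}{\omega(\|\hbbeta\|)\omega(\|\hbbeta^t\|)}\,\tfrac1n\|\nabla\Lcal(\hbbeta^t)\|$), whereas you use the equivalent monotone-operator form; and the paper makes your step (3) explicit by writing $\prox_{\bs\rho}(\bm\eta^{t-1})=\bX\hbbeta^t+\bm\eta^{t-1}-\bm\eta^t$ and $\tfrac{p}{\bs}(\hbbeta^t-\hbbeta^{t-1})=-\bX^\top\rho'(\bX\hbbeta^t+\bm\eta^{t-1}-\bm\eta^t)$, then bounding $\|\nabla\Lcal(\hbbeta^t)\|\le \tfrac{p}{\bs}\|\hbbeta^t-\hbbeta^{t-1}\|+\|\bX\|\,\|\rho''\|_\infty\,\|\bm\eta^{t-1}-\bm\eta^t\|$ --- exactly the kind of bookkeeping you anticipated.
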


Taken collectively, Theorem \ref{thm: conv} and Eqn.~(\ref{eq: AMSE}) imply that 
\begin{equation}
	  \lim_{n \rightarrow \infty} \| \hbbeta\| =_{\rm{a.s.}}  \lim_{t \rightarrow \infty} \lim_{n \rightarrow \infty} \| \hbbeta^t\| =_{\rm{a.s.}} \taus, 
\end{equation}
thus establishing Theorem \ref{theorem:AMP}. In addition, an upshot of these theorems is a uniqueness result:  
\begin{corollary}
	The solution to the system of equations \eqref{eq: sysofeq-tau} and \eqref{eq: sysofeq-b} is unique.
\end{corollary}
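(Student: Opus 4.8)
The plan is to exploit the fact that the AMP construction of Section \ref{subsec: SE}--\ref{subsec: convergence}, together with Eqn.~\eqref{eq: AMSE} and Theorem \ref{thm: conv}, was carried out for an \emph{arbitrary} solution $(\taus,\bs)\in\R_+^2$ of the system \eqref{eq: sysofeq-tau}--\eqref{eq: sysofeq-b}: such a solution enters the argument only through the choice of initialization $\lim_{n\rightarrow\infty}\|\hbbeta^0\|^2=\taus^2$, while the hypotheses \eqref{eq:beta-norm-bound} and \eqref{eq: L-min-evalue} of Theorem \ref{thm: conv} refer only to the MLE. Since the MLE $\hbbeta$ is a fixed random object that does not know which solution we used to seed the recursion, its almost-sure $\ell_2$ limit must come out the same whichever solution we pick, and this is what forces uniqueness.

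Concretely, suppose $(\taus,\bs)$ and $(\taus',\bs')$ are two solutions in $\R_+^2$. First I would run Algorithm \ref{alg:AMP} with $\lim_{n\rightarrow\infty}\|\hbbeta^0\|^2=\taus^2$. Because $\taus$ is a fixed point of the variance map $\Nu(\cdot)$ in \eqref{eq: varmapone}, the SE sequence stays constant at $\tau_t\equiv\taus$, $b_t\equiv\bs$, so Eqn.~\eqref{eq: AMSE} gives $\lim_{n\rightarrow\infty}\|\hbbeta^t\|^2=_{\mathrm{a.s.}}\taus^2$ for every $t$. Theorem \ref{thm: conv} then yields $\lim_{t\rightarrow\infty}\lim_{n\rightarrow\infty}\|\hbbeta^t-\hbbeta\|=_{\mathrm{a.s.}}0$, and combining the two displays gives $\lim_{n\rightarrow\infty}\|\hbbeta\|^2=_{\mathrm{a.s.}}\taus^2$. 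Repeating verbatim with initialization $\lim_{n\rightarrow\infty}\|\hbbeta^0\|^2=\taus'^2$ gives $\lim_{n\rightarrow\infty}\|\hbbeta\|^2=_{\mathrm{a.s.}}\taus'^2$. Hence $\taus^2=\taus'^2$, and since both are positive, $\taus=\taus'$. With $\taus$ thus pinned down, equation \eqref{eq: sysofeq-b} becomes $\kappa=\E[\Psi'(\taus Z;b)]$, which by condition (5) of Section \ref{sec:rho}---verified for the logistic and probit links in Lemma \ref{lemma: monotonicity}---has a unique positive solution; therefore $\bs=b(\taus)=b(\taus')=\bs'$, and $(\taus,\bs)=(\taus',\bs')$.

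The only point requiring care, and effectively the crux of the argument, is the observation in the first step: one must make sure that everything feeding into $\lim_{n\rightarrow\infty}\|\hbbeta\|^2=_{\mathrm{a.s.}}\taus^2$---the SE recursion being a constant sequence, the identity \eqref{eq: AMSE}, and the convergence $\hbbeta^t\to\hbbeta$ of Theorem \ref{thm: conv}---holds for \emph{each} candidate solution separately and not merely for one canonical choice. Once that is granted, no further analytic work is needed: the AMP bridge between an SE fixed point and the (unique) MLE collapses any two fixed points onto one another, and all of the genuine difficulty has already been absorbed into Theorems \ref{thm: normbound-MLE}, \ref{theorem:AMP}, and \ref{thm: conv}.
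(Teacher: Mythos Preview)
Your proof is correct and follows essentially the same approach as the paper's: run the AMP machinery from an arbitrary solution $(\taus,\bs)$ to obtain $\lim_{n\to\infty}\|\hbbeta\|^2=_{\mathrm{a.s.}}\taus^2$, observe that the (unique, strongly convex) MLE does not depend on this choice so $\taus$ is forced, and then invoke the monotonicity of $G(b)=\E[\Psi'(\taus Z;b)]$ (Lemma~\ref{lemma: monotonicity}) to pin down $\bs$. The paper's proof is terser but the logic is identical; your explicit emphasis on checking that Theorem~\ref{thm: conv} and \eqref{eq: AMSE} apply for \emph{each} candidate solution is a useful clarification of the one point that deserves care.
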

\begin{proof}
	When the AMP trajectory $\hbbeta^t$ is started with the initial condition from Theorem \ref{thm: conv}, $\lim_{n\rightarrow \infty} \| \hbbeta \|^2 =_{\text{a.s.}}\taus^2$. This holds for any $\taus$ such that $(\taus,\bs)$ is a solution to \eqref{eq: sysofeq-tau} and \eqref{eq: sysofeq-b}. However, since the MLE problem is strongly convex and hence admits a unique solution 
	$\hbbeta$, this implies that $\taus$ must be unique, which together with the monotonicity of $G(\cdot)$ (cf.~(\ref{eq:defn-Gb})) implies that $\bs$ is unique as well. 
\end{proof}

\begin{proof}[Proof of Theorem \ref{thm: conv}]

To begin with, repeating the arguments in \cite[Lemma 6.9]{donoho2013high} we reach
\begin{align}
\lim_{t \rightarrow \infty} \lim_{n \rightarrow \infty}  \| \hbbeta^{t+1} - \hbbeta^t\|^2 & = _{\rm{a.s.}} 0; \label{eq: beta-eta-diff1} \\
\lim_{t \rightarrow \infty} \lim_{n \rightarrow \infty}  \frac{1}{n}\| \etab^{t+1} - \etab^t\|^2 & = _{\rm{a.s.}} 0.\label{eq: beta-eta-diff}
\end{align}

To show that the AMP iterates converge to the MLE, we shall analyze the log-likelihood function. Recall from Taylor's theorem that 
\[
 \Lcal (\hbbeta) 
  = \Lcal (\hbbeta^t)+\left\langle \nabla\Lcal (\hbbeta ^t),\hbbeta -\hbbeta^t \right\rangle +\frac{1}{2} \left(\hbbeta -\hbbeta^{t} \right)^{\tp} \nabla^{2} \Lcal \left(\hbbeta^{t}+\lambda(\hbbeta-\hbbeta^{t})\right)\left(\hbbeta-\hbbeta^{t}\right)
\]
holds for some $0<\lambda<1$. To deal with the quadratic term, we would like to control the Hessian of the likelihood at a point between $\hbbeta$ and $\hbbeta^t$. 
Invoking the likelihood curvature condition (\ref{eq: L-min-evalue}), one has
\begin{equation}
	\Lcal(\hbbeta^t) \geq \Lcal(\hbbeta) \geq \Lcal (\hbbeta^t)+\left\langle \nabla\Lcal (\hbbeta^t), \hbbeta- \hbbeta^t \right\rangle +\frac{1}{2}{n\omega\Big(\max\left\{ \|\hat{\bm{\beta}}\|,\|\hat{\bm{\beta}}^{t}\|\right\} \Big)}\Vert \hbbeta-\hbbeta^t \Vert ^{2}
\label{eq:strong-convexity-1}
\end{equation}
with high probability. Apply Cauchy-Schwarz to yield that with exponentially high probability,
\[
 \|\hbbeta-\hbbeta^{t}\|
 \leq\frac{2}{\omega\big(\max\left\{ \|\hat{\bm{\beta}}\|,\|\hat{\bm{\beta}}^{t}\|\right\} \big)}\Big\Vert \frac{1}{n} \nabla\Lcal (\hbbeta^{t})\Big\Vert 
 \leq\frac{2}{\omega\big( \|\hat{\bm{\beta}}\| \big)\omega\big( \|\hat{\bm{\beta}}^{t}\| \big)}\Big\Vert
 \frac{1}{n} \nabla\Lcal (\hbbeta^{t})\Big\Vert ,
\]
where the last inequality follows since $0<\omega(\cdot)<1$ and $\omega(\cdot)$ is non-decreasing. 

It remains to control $\Vert \nabla\Lcal (\hbbeta^{t}) \Vert $. The
identity $\Psi(z;b_{*})=z-\mathsf{prox}_{b_{*}\rho}(z)$ and
\eqref{eq:AMP-eta} give
\begin{equation}
  \prox_{\bs\rho}\left(\etab^{t-1}\right) = \bX \hbbeta^{t}+\etab^{t-1}-\etab^{t}.
\end{equation}
In addition, substituting $\Psi\left(z;b\right)=b\rho'(\prox_{\rho b}(z))$
into \eqref{eq:AMP-beta} yields
\begin{eqnarray*}
  \frac{p}{\bs }(\hbbeta^{t}-\hbbeta^{t-1}) 
    =  -\bX^{\tp}\rho'\left(\prox_{\bs \rho}(\etab^{t-1})\right) 
    =  -\bX^{\tp}\rho'\left(\bX\hbbeta^{t}+\etab^{t-1}-\etab^{t}\right).
\end{eqnarray*}
We are now ready to bound $\|\nabla\Lcal(\hat{\bm{\beta}}^{t})\|$.
Recalling that 
\[
	\nabla\Lcal (\hbbeta^{t})=\bX^{\tp}\rho'(\bX^{\tp}\hbbeta^{t}) = \bX^{\tp}\rho'\left(\bX \hbbeta^{t}+\etab^{t-1}-\etab^{t}\right) + \bX^{\tp} \left( \rho'(\bX^{\tp}\hbbeta^{t}) - \rho' \left(\bX \hbbeta^{t}+\etab^{t-1}-\etab^{t}\right) \right)
\]
and that $\sup_{z}\rho''(z)<\infty$, we have 
\begin{eqnarray*}
  \big\Vert \nabla\Lcal(\hbbeta^{t})\big\Vert  & \leq & \left\Vert -\bX^{\tp}\rho'\left(\bX \hbbeta^{t}+\etab^{t-1}-\etab^{t}\right)\right\Vert +\|\bX\|\left|\rho'\left(\bX\hbbeta^{t}+\etab^{t-1}-\etab^{t}\right)-\rho' (\bX\hbbeta^{t} )\right|\\
 & \leq & \frac{p}{\bs  }\|\hbbeta^{t}-\hbbeta^{t-1}\|+\|\bX\|\left(\sup_{z}\rho''(z)\right)\|\etab^{t-1}-\etab^{t}\|.
\end{eqnarray*}
%
This establishes that with probability at least $1-c_1e^{-c_2n}$,
\begin{equation}
  \|\hbbeta-\hbbeta^{t}\|\leq\frac{2}{\omega\left( \|\hat{\bm{\beta}}\| \right)\omega\left( \|\hat{\bm{\beta}}^{t}\| \right)}\left\{ \frac{p}{\bs n} \|\hbbeta^{t}-\hbbeta^{t-1}\|+ \frac{1}{n}\left(\sup_{z}\rho''(z)\right)\|\bX\|\|\etab^{t-1}-\etab^{t}\|\right\} .
  \label{eq:theta-error-UB-1}
\end{equation}
%

Using \eqref{eq: xnorm} together with Borel-Cantelli yields $\lim_{n\rightarrow \infty} \| \bm{X} \|/\sqrt{n}<\infty$ almost surely. Further,  it follows from (\ref{eq: AMSE}) that $\lim_{n \rightarrow \infty} \|\hbbeta^t \| $ is finite almost surely as $\taus < \infty.$ These taken together with  (\ref{eq:beta-norm-bound}), 
(\ref{eq: beta-eta-diff1}) and (\ref{eq: beta-eta-diff}) yield 
\begin{equation}
  \lim_{t\rightarrow \infty}\lim_{n \rightarrow \infty}\|\hbbeta-\hbbeta^{t}\| =_{\text{a.s.}} 0
\end{equation}
as claimed.  
\end{proof}

\section{Likelihood ratio analysis}
\label{sec: LRT}

This section presents the analytical details for Section \ref{sub:step2-LRT}, which relates the log-likelihood ratio statistic $\Lambda_i$ with $\hat{\beta}_i$.  
Recall from \eqref{eq:LRT-simplified} that the LLR statistic for testing $\beta_1=0$ vs.~$\beta_1 \neq 0$ is given by 
\begin{align}
\label{eq:LLR-2-terms}
 \Lambda_1 =  \frac{1}{2} \left(\tbX\tbbeta - \bX \hbbeta\right)^  \tp \bD_{ \hbbeta} \left(\tbX \tbbeta - \bX \hbbeta\right) + \frac{1}{6} \sum_{i=1}^n \rho'''(\gamma_i) \left(\tbX_i ^ \tp \tbbeta - \bX_i ^ \tp \hbbeta \right)^3,
 \end{align}
 where 
 \begin{equation}
\label{defn:Dbeta}
\bD_{\hbbeta}
:=
\left[\begin{array}{ccc}
\rho''\left(\bX_{1}^{\top}{\hbbeta}\right)\\
 & \ddots\\
 &  & \rho''\left(\bX_{n}^{\top}{{\hbbeta}}\right)
\end{array}\right]
\end{equation}
and $\gamma_i$ lies between $\bX_i^{\tp}\hbbeta$ and $\tbX_i^{\tp}\tbbeta$. 
The asymptotic  distribution of $\Lambda_1$ claimed in Theorem \ref{thm: Lambda-marginal} immediately follows from the result below, whose proof is the subject of the rest of this section.
\begin{theorem}
\label{thm:mainthm}
Let $(\taus,\bs)$ be the unique solution to the system of equations \eqref{eq: sysofeq-tau} and \eqref{eq: sysofeq-b}, and define
\begin{equation}
	\label{eq:defn-talpha}
	\tbG = \frac{1}{n} \tbX^{\tp} \Dtbeta \tbX  \qquad \text{and} \qquad \talpha= \frac{1}{n}\tr(\tbG^{-1}) .
\end{equation}
Suppose $p/n \rightarrow \kappa \in (0,1/2)$ 
. Then
\begin{itemize}
\item[(a)] the log-likelihood ratio statistic obeys
\begin{equation}
\label{eq:lL-beta1}
2 \Lambda_1 - p \hat{\beta}_1^2/\talpha ~\convP~ 0;
\end{equation}
\item[(b)] and the scalar $\talpha$ converges,
\begin{equation}
\label{eq:alpha-convergence}
\talpha ~\convP~ \bs. 
\end{equation}
\end{itemize}
	
\end{theorem}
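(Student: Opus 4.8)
\noindent\emph{Proof strategy.} The linear term in \eqref{eq:LLR-2-terms} has already been eliminated, so it remains to control the quadratic form $\tfrac12(\tbX\tbbeta-\bX\hbbeta)^{\top}\Dhbeta(\tbX\tbbeta-\bX\hbbeta)$ and the cubic remainder $\tfrac16\sum_{i=1}^{n}\rho'''(\gamma_i)(\tbX_i^{\top}\tbbeta-\bX_i^{\top}\hbbeta)^{3}$. The plan has four parts: (i) a leave-one-predictor-out expansion that relates $\hbbeta$ to the zero-padded vector $(0,\tbbeta^{\top})^{\top}\in\R^{p}$ and expresses each $\bX_i^{\top}\hbbeta-\tbX_i^{\top}\tbbeta$ in terms of $\hat{\beta}_1$; (ii) showing the cubic remainder is $o_P(1)$; (iii) reducing the quadratic form to $\hat{\beta}_1^{2}D$ for an explicit scalar $D=D(\bX)$ and proving $D=p/\talpha+o_P(p)$, which, since $\hat{\beta}_1^{2}=O_P(1/p)$ by Corollary \ref{cor: marginal}, yields part (a); (iv) identifying $\lim\talpha$ with $\bs$ via the state-evolution equation \eqref{eq: sysofeq-b}, which yields part (b). Throughout, the lack of strong convexity of $\rho$ is offset by the Norm Bound Condition (Theorem \ref{thm: normbound-MLE}), the Likelihood Curvature Condition (Lemma \ref{lem:min-eigenvalue-bound} and \eqref{eq: L-min-evalue}), and the least-singular-value estimates of Lemma \ref{lem:eigen-min-S}, which in particular guarantee $\lambda_{\min}(\tbG)\gtrsim1$ and $\max_i\|\bX_i\|\lesssim\sqrt{p}$ with high probability.

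\smallskip
\noindent\emph{Step (i).} Write $\hbbeta^{\top}=(\hat{\beta}_1,\bu^{\top})$ with $\bu\in\R^{p-1}$, put $\bw:=\bu-\tbbeta$, and set $\Delta_i:=\bX_i^{\top}\hbbeta-\tbX_i^{\top}\tbbeta=X_{i1}\hat{\beta}_1+\tbX_i^{\top}\bw$. Subtracting the stationarity condition $\sum_i\rho'(\tbX_i^{\top}\tbbeta)\tbX_i=\bm 0$ for $\tbbeta$ from the last $p-1$ coordinates of the stationarity condition $\sum_i\rho'(\bX_i^{\top}\hbbeta)\bX_i=\bm 0$ for $\hbbeta$, and Taylor-expanding $\rho'$ to first order about $\tbX_i^{\top}\tbbeta$, one obtains to leading order
\[
\bw~\approx~-\hat{\beta}_1\,\tbG^{-1}\Big(\tfrac1n\tbX^{\top}\Dtbeta\Xcone\Big)~=:~-\hat{\beta}_1\,\bb,\qquad\text{hence}\qquad\Delta_i~\approx~\hat{\beta}_1\big(X_{i1}-\tbX_i^{\top}\bb\big).
\]
The discarded second-order term is negligible once one knows $\big\|\hbbeta-(0,\tbbeta^{\top})^{\top}\big\|=O_P(n^{-1/2})$ (obtained by comparing $\Lcal(\hbbeta)\le\Lcal\big((0,\tbbeta^{\top})^{\top}\big)$ with the curvature bound \eqref{eq: L-min-evalue}), together with $\|\bb\|=O_P(1)$ (from $\lambda_{\min}(\tbG)\gtrsim1$ and $\big\|\tfrac1n\tbX^{\top}\Dtbeta\Xcone\big\|=O_P(1)$, using that $\Xcone$ is independent of $\tbX$) and $\max_i|\Delta_i|=o_P(1)$.

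\smallskip
\noindent\emph{Steps (ii)--(iii) and part (a).} For the cubic remainder, $\big|\tfrac16\sum_i\rho'''(\gamma_i)\Delta_i^{3}\big|\le\tfrac16\big(\sup_t|\rho'''(t)|\big)\big(\max_i|\Delta_i|\big)\sum_i\Delta_i^{2}$, and $\sum_i\Delta_i^{2}=\|\bX\hbbeta-\tbX\tbbeta\|^{2}=O_P(1)$ while $\max_i|\Delta_i|=o_P(1)$, so it is $o_P(1)$. For the quadratic form, replace $\rho''(\bX_i^{\top}\hbbeta)$ by $\rho''(\tbX_i^{\top}\tbbeta)$ (the error is at most $\sup|\rho'''|\cdot\max_i|\Delta_i|\cdot\sum_i\Delta_i^{2}=o_P(1)$) and insert the expansion of $\Delta_i$:
\[
(\tbX\tbbeta-\bX\hbbeta)^{\top}\Dhbeta(\tbX\tbbeta-\bX\hbbeta)~=~\hat{\beta}_1^{2}\,(\Xcone-\tbX\bb)^{\top}\Dtbeta(\Xcone-\tbX\bb)+o_P(1).
\]
Since $\tbX^{\top}\Dtbeta(\Xcone-\tbX\bb)=\bm 0$ by the definition of $\bb$, the quantity $D:=(\Xcone-\tbX\bb)^{\top}\Dtbeta(\Xcone-\tbX\bb)$ equals $\Xcone^{\top}\Dtbeta^{1/2}\bm P^{\perp}\Dtbeta^{1/2}\Xcone$, where $\bm P^{\perp}:=\Id_n-\Dtbeta^{1/2}\tbX(\tbX^{\top}\Dtbeta\tbX)^{-1}\tbX^{\top}\Dtbeta^{1/2}$; equivalently, by the Schur-complement formula, $D=1/\big[(\bX^{\top}\Dtbeta\bX)^{-1}\big]_{11}$. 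Because $\Xcone\sim\dnorm(\bm 0,\Id_n)$ is independent of $\tbX$, conditioning on $\tbX$ and invoking the Hanson--Wright inequality gives $D=\tr(\bm P^{\perp}\Dtbeta)\,(1+o_P(1))$. On the other hand, $\Dhbeta\approx\Dtbeta$ entrywise gives $\big[(\bX^{\top}\Dtbeta\bX)^{-1}\big]_{11}=\big[(\bX^{\top}\Dhbeta\bX)^{-1}\big]_{11}+o_P(1/p)$; by exchangeability of the $p$ coordinates in $\bX^{\top}\Dhbeta\bX$ and concentration of the diagonal of its inverse, $\big[(\bX^{\top}\Dhbeta\bX)^{-1}\big]_{11}=\tfrac1p\tr\big((\bX^{\top}\Dhbeta\bX)^{-1}\big)+o_P(1/p)=\tfrac1{np}\tr(\bG^{-1})+o_P(1/p)=\tfrac1{np}\tr(\tbG^{-1})+o_P(1/p)=\talpha/p+o_P(1/p)$, where $\bG=\tfrac1n\bX^{\top}\Dhbeta\bX$ and we used that bordering a $p\times p$ matrix by one row and column changes the trace of its inverse by $O(1)$. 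Hence $D=p/\talpha+o_P(p)$, and therefore $2\Lambda_1-p\hat{\beta}_1^{2}/\talpha=\hat{\beta}_1^{2}\big(D-p/\talpha\big)+o_P(1)\convP0$, which is part (a).

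\smallskip
\noindent\emph{Step (iv) and the main obstacle.} It remains to show $\talpha=\tfrac1n\tr(\tbG^{-1})\convP\bs$ (here $\tbG=\tfrac1n\sum_i a_i\tbX_i\tbX_i^{\top}$ with $a_i:=\rho''(\tbX_i^{\top}\tbbeta)$). The difficulty is that the weights $a_i$ depend on the very matrix $\tbX$ whose resolvent defines $\talpha$; this is resolved by a leave-one-observation-out device. First, Theorem \ref{theorem:AMP} applied to the reduced model and to each leave-one-sample-out reduced model gives $\|\tbbeta\|\to\taus$ and $\|\tbbetaimin\|\to\taus$ almost surely, and the AMP state-evolution characterization shows that the empirical distribution of $\{\tbX_i^{\top}\tbbeta\}_{1\le i\le n}$ converges to that of $\prox_{\bs\rho}(\taus Z)$ with $Z\sim\dnorm(0,1)$; hence the empirical distribution of $\{a_i\}$ converges to that of $\rho''\big(\prox_{\bs\rho}(\taus Z)\big)$. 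Second, replacing $\tbbeta$ by $\tbbetaimin$ inside $\tbG$ makes the deflated matrix independent of $\tbX_i$, and a Sherman--Morrison plus concentration argument (using $\tbX_i^{\top}\bm M^{-1}\tbX_i\approx\tr(\bm M^{-1})$ for $\bm M$ independent of $\tbX_i$) yields $\tbX_i^{\top}\tbG^{-1}\tbX_i\approx n\talpha/(1+a_i\talpha)$. Plugging this into $p-1=\tr(\tbG^{-1}\tbG)=\tfrac1n\sum_i a_i\tbX_i^{\top}\tbG^{-1}\tbX_i$ and letting $n\to\infty$ gives
\[
\kappa~=~\E\!\left[\frac{\talpha\,\rho''\big(\prox_{\bs\rho}(\taus Z)\big)}{1+\talpha\,\rho''\big(\prox_{\bs\rho}(\taus Z)\big)}\right].
\]
Since $\Psi'(z;b)=b\rho''(\prox_{b\rho}(z))/\big(1+b\rho''(\prox_{b\rho}(z))\big)$, equation \eqref{eq: sysofeq-b} states exactly this identity with $\talpha$ replaced by $\bs$; as the map $b\mapsto\E\big[b\rho''(\prox_{\bs\rho}(\taus Z))/(1+b\rho''(\prox_{\bs\rho}(\taus Z)))\big]$ is strictly increasing, its root is unique, forcing $\talpha\convP\bs$. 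The two technically heaviest pieces are this random-matrix analysis---where the leave-one-out decoupling of the $a_i$ from $\tbX$ is essential---and the error propagation in Step (i), where pushing $o_P$-bounds through the merely convex (not strongly convex) $\rho$ is what makes the Norm Bound and Likelihood Curvature conditions indispensable.
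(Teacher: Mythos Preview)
Your strategy for part (a) matches the paper's: the paper constructs the same leave-one-predictor-out surrogate $\tbb$ (your correction vector is the paper's $\tbG^{-1}\bw$ with $\bw=\tfrac1n\tbX^{\top}\Dtbeta\Xcone$), establishes $\|\hbbeta-\tbb\|\lesssim n^{-1+o(1)}$ and $\sup_i|\bX_i^{\top}\tbb-\tbX_i^{\top}\tbbeta|\lesssim n^{-1/2+o(1)}$ (Theorem \ref{thm: hbbetabdiff}), and handles the cubic and quadratic pieces as you describe. One minor divergence: where you reach $D=p/\talpha+o_P(p)$ via the Schur-complement identity $D=1/\big[(\bX^{\top}\Dtbeta\bX)^{-1}\big]_{11}$, exchangeability, and concentration of the diagonal of the full-model inverse, the paper works directly with the projection $\bH=\Id-\tfrac1n\Dtbeta^{1/2}\tbX\tbG^{-1}\tbX^{\top}\Dtbeta^{1/2}$ and the trace identity $\tr(\Id-\bH)=p-1$ to produce $\big|\tfrac{p-1}{n}-\talpha\,\tfrac{D}{n}\big|\lesssim n^{-1/2+o(1)}$ (Lemma \ref{lemma: xinapprox}). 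Your route is valid, but the concentration-of-diagonal step you invoke would itself require the same leave-one-out plus Hanson--Wright machinery.

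For part (b) you take a genuinely different path. You import from AMP state evolution that the empirical law of $\tbX_i^{\top}\tbbeta$ is asymptotically that of $\prox_{\bs\rho}(\taus Z)$, which fixes the prox parameter at $\bs$ in your self-consistency equation and leaves only an elementary monotonicity (in the outer variable) to force $\talpha=\bs$. The paper does \emph{not} use this AMP output---it relies only on $\|\tbbetaimin\|\to\taus$. Instead it proves $\tbX_i^{\top}\tbbeta\approx\prox_{\tq_i\rho}(\tbX_i^{\top}\tbbetaimin)$ with $\tq_i\approx\talpha$ (Lemmas \ref{lemma: hb}--\ref{lemma: proxclose}), so the prox parameter becomes the \emph{unknown} $\talpha$; since $\tbX_i^{\top}\tbbetaimin$ is conditionally Gaussian with variance $\|\tbbetaimin\|^2\to\taus^2$, the limiting equation reads $\kappa=\E[\Psi'(\taus Z;\talpha)]$ (Propositions \ref{prop:delta-0}--\ref{prop: Deltax}), and uniqueness then comes from Lemma \ref{lemma: monotonicity}. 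Your route is shorter if the full $\eta$-observable state evolution from Donoho--Montanari is taken as given; the paper's route is more self-contained (only the norm limit from Theorem \ref{theorem:AMP} is needed) but pays for this with the additional prox relation and the log-concavity-based monotonicity argument.
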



\subsection{More notations and preliminaries} 
 
 Before proceeding, we introduce some notations that will be used throughout.  
 For any matrix $\bX$, denote by $X_{ij}$ and $\bX_{\cdot j}$ its $(i,j)$-th entry and $j$th column, respectively. 
 We denote an analogue $\br = \{r_i\}_{1\leq i\leq n}$
 (resp.~$\brt = \{\tilde{r}_i\}_{1\leq i\leq n} $) of residuals in the
 full (resp.~reduced) model by 
\begin{align}
\label{defn:residual}
  r_i := - \rho'\big(\bX_i^{\tp}\hbbeta\big) 
  \qquad \text{and} \qquad \tilde{r}_i :=  - \rho'\big(\tbX_i^{\tp}\tbbeta\big). 
\end{align}
As in (\ref{defn:Dbeta}),  set
\begin{equation}
\label{defn:Dbeta-tilde}
\bD_{\tbbeta}
:=
\left[\begin{array}{ccc}
\rho''\big(\tbX_{1}^{\top}{\tbbeta}\big)\\
 & \ddots\\
 &  & \rho''\big(\tbX_{n}^{\top}{{\tbbeta}}\big)
\end{array}\right] 
\quad \text{and} \quad
\Dinterm:= \left[\begin{array}{ccc}
\rho''\left(\gamma_1^*\right)\\
 & \ddots\\
 &  & \rho''(\gamma_n^*),
\end{array}\right] ,
\end{equation}
where $\gamma_i^*$ is between $\bX_i^{\tp}\hbbeta$ and $\bX_i^{\tp} \tbb$, and $\tbb$ is to be defined later in Section \ref{sub:defn-tbb}. 
Further, as in \eqref{eq:defn-talpha}, introduce the Gram matrices 
\begin{align}
\bG := \frac{1}{n} \bX^{\tp}\Dhbeta \bX
\quad
\text{and} \quad
\Ginterm = \frac{1}{n} \bX^{\tp} \Dinterm \bX.
\label{eq: defGtilde}
\end{align}
Let $\tbGi$ denote the version of $\tbG$ without the term corresponding to the $i{\text{th}}$ observation, that is, 
\begin{equation}
	\label{eq: defGi}
	\tbGi= \frac{1}{n} \sum\nolimits_{j: j \neq i} \rho''(\tbX_j^{\tp} \tbbeta) \tbX_j \tbX_j^{\tp}. 
 \end{equation}
%
 Additionally, let $\hbbetaimin$ be the MLE when the $i{\text{th}}$ observation is dropped and let $\bGimin$ be the corresponding Gram matrix,  
 \begin{align}\label{eq: bGimin} 
 \bGimin & = \frac{1}{n} \sum\nolimits_{j: j \neq i} \rho''(\bX_j^{\tp}\hbbetaimin)\bX_j \bX_j^{\tp}.
 \end{align}
Further, let  $\tbbetaimin$ be the MLE when the first predictor and $i$th  observation  are  removed, i.e. 
\[
	\tbbetaimin := \arg \min_{\bm{\beta}\in \mathbb{R}^{p-1}}~ \sum\nolimits_{j:j\neq i} \rho(   \tilde{\bm{X}}_j^{\top} \bm{\beta} ).
\]
Below $\tbGimin$ is the corresponding version of $\tbG$, 
\begin{equation}
	\label{eq: tbGimin}
	\tbGimin = \frac{1}{n} \sum\nolimits_{j: j \neq i} \rho''(\tbX_j^{\tp}\tbbetaimin)\tbX_j \tbX_j^{\tp}.  
\end{equation}
For these different versions of $\bG$, their least eigenvalues are all bounded away from 0, as asserted by the following lemma. 
\begin{lemma}
\label{lemma: eigenvalue}
There exist some absolute constants $\lambda_{\mathrm{lb}}, C, c >0 $ such that
	\[ 
		\opP(\lambda_{\min} (\bG) > \lambda_{\mathrm{lb}}) \geq 1-Ce^{-cn}. 
	\]
Moreover, the same result holds for $\tbG$, $\Ginterm$, $\tbGi$, $\bGimin$ and $\tbGimin$ for all $i \in [n]$.  
\end{lemma}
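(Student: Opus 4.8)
The plan is to funnel all six matrices through a single mechanism. Each has the shape $\frac1n\sum_{j\in T}\rho''(c_j)\,\bv_j\bv_j^\top$, where the $\bv_j$ are the i.i.d.\ Gaussian rows of the full design $\bX\in\R^{n\times p}$ or of the reduced design $\tbX\in\R^{n\times(p-1)}$, the index set $T$ is $[n]$ or $[n]\setminus\{i\}$, and the scalars $c_j$ are inner products $\bv_j^\top\bm{a}$ against an \emph{anchor} $\bm{a}$ that is $O(1)$ almost surely --- $\bm{a}=\hbbeta$ for $\bG$, $\bm{a}=\tbbeta$ for $\tbG$ and $\tbGi$, $\bm{a}=\hbbetaimin$ for $\bGimin$, $\bm{a}=\tbbetaimin$ for $\tbGimin$ --- or, for $\Ginterm$, numbers $c_j=\gamma_j^*$ sandwiched between $\bX_j^\top\hbbeta$ and $\bX_j^\top\tbb$. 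For $\bG$ and $\tbG$ this is immediate from the Likelihood Curvature Condition: $\bG=\frac1n\nabla^2\ell(\hbbeta)\succeq\omega(\|\hbbeta\|)\bm{I}$ and $\tbG=\frac1n\nabla^2\tilde{\ell}(\tbbeta)\succeq\omega(\|\tbbeta\|)\bm{I}$ by Lemma~\ref{lem:min-eigenvalue-bound}, and since $\omega$ is positive and non-increasing while $\|\hbbeta\|,\|\tbbeta\|\le C_0$ almost surely by Theorem~\ref{thm: normbound-MLE} and Borel--Cantelli, both are $\succeq\omega(C_0)\bm{I}$ with probability $1-Ce^{-cn}$. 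The other four matrices have mismatched second-derivative arguments and/or a deleted observation, so I would re-run the argument underlying Lemma~\ref{lem:min-eigenvalue-bound}.

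For that I would first fix a sufficiently small constant $\epsilon_0>0$ and record three high-probability events. (1) By Lemma~\ref{lem:eigen-min-S} together with the monotonicity of $\lambda_{\min}$ under enlarging the index set, there is $c_0>0$ with $\lambda_{\min}\big(\frac1n\sum_{j\in T}\bv_j\bv_j^\top\big)\ge c_0$ for every $T$ with $|T|\ge(1-2\epsilon_0)n$ --- simultaneously for the $\bX$-rows and for the $\tbX$-rows, the positivity of $c_0$ being guaranteed by $p/n\to\kappa<1/2<1$. (2) By Lemma~\ref{lemma: singval}, $\|\bX\|^2\le 9n$ and $\|\tbX\|^2\le 9n$. (3) The anchors are all $O(1)$: $\|\hbbeta\|,\|\tbbeta\|\le C_0$ as above, $\|\tbb\|\le C_0$ by its construction in Section~\ref{sub:defn-tbb}, and, applying Theorem~\ref{thm: normbound-MLE} to the sample with observation $i$ removed and taking a union bound over $i\in[n]$ (which only rescales the exponent), $\max_i\|\hbbetaimin\|\le C_0$ and $\max_i\|\tbbetaimin\|\le C_0$. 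Each event has probability $1-Ce^{-cn}$.

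The core step is then: on the intersection of these events, choose a constant $K=K(\epsilon_0,C_0)$ with $9C_0^2/K^2<\tfrac12\epsilon_0$ and set $S:=\{j:|\bv_j^\top\bm{a}|\le K\}$ --- for $\Ginterm$ take $S:=\{j:|\bX_j^\top\hbbeta|\le K\text{ and }|\bX_j^\top\tbb|\le K\}$, which forces $|\gamma_j^*|\le K$ on $S$. Since $\sum_j(\bv_j^\top\bm{a})^2\le 9n\|\bm{a}\|^2\le 9nC_0^2$ we get $|S^c|\le 9nC_0^2/K^2<\epsilon_0 n$ (for $\Ginterm$ each of the two bad sets is $<\tfrac12\epsilon_0 n$), hence $|S|\ge(1-\epsilon_0)n$ and, after possibly deleting the index $i$, $|S\cap T|\ge(1-2\epsilon_0)n$. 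With $\mu:=\inf_{|z|\le K}\rho''(z)>0$ (positive since $\rho''>0$ is continuous), dropping the nonnegative terms indexed outside $S$ gives
\[
\tfrac1n\sum\nolimits_{j\in T}\rho''(c_j)\,\bv_j\bv_j^\top\ \succeq\ \mu\,\tfrac1n\sum\nolimits_{j\in S\cap T}\bv_j\bv_j^\top\ \succeq\ \mu c_0\,\bm{I}.
\]
Hence every one of $\bG,\tbG,\Ginterm,\tbGi,\bGimin,\tbGimin$ is $\succeq\lambda_{\mathrm{lb}}\bm{I}$ with $\lambda_{\mathrm{lb}}:=\min\{\omega(C_0),\mu c_0\}>0$ on an event of probability $1-Ce^{-cn}$; a final union bound over the $O(n)$ matrices of leave-one-out type preserves this, establishing the lemma.

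I expect the only genuinely delicate point to be the uniform-in-$i$ control of the leave-one-out objects: one must verify that the deleted-sample MLEs $\hbbetaimin,\tbbetaimin$ are $O(1)$ simultaneously over all $i$ (Theorem~\ref{thm: normbound-MLE} on the $(n-1)$-observation design plus a union bound), and that removing the $i$th row from the small-ball estimate still leaves enough rows for Lemma~\ref{lem:eigen-min-S}; both are absorbed by the slack $\epsilon_0$. The one genuinely external input is $\|\tbb\|=O(1)$, which is not yet available and must be imported from Section~\ref{sub:defn-tbb}.
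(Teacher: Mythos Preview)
Your proof is correct and follows essentially the same route as the paper, which simply cites Lemma~\ref{lemma: singval}, Lemma~\ref{lem:min-eigenvalue-bound}, and Theorem~\ref{thm: normbound-MLE}; your write-up spells out the details those citations implicitly require (re-running the curvature argument with a deleted observation and/or a different anchor, then union-bounding over $i$). Your flag that $\|\tbb\|=O(1)$ is a forward reference needed for $\Ginterm$ is accurate---the paper's one-line proof glosses over exactly this point.
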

\begin{proof}This result follows directly from Lemma \ref{lemma: singval}, Lemma \ref{lem:min-eigenvalue-bound}, and Theorem \ref{thm: normbound-MLE}. 
\end{proof}

Throughout the rest of this section, we restrict ourselves (for any
given $n$) to the following event: 
\begin{align}
	\A_n & := \{ \lambda _{\min} (\tbG) > \lambda_{\mathrm{lb}} \} 
   ~\cap~ \{ \lambda_{\min}(\bG)> \lambda_{\mathrm{lb}}\} 
   ~\cap~ \{ \lambda_{\min}(\Ginterm) > \lambda_{\mathrm{lb}} \} \nonumber \\
   & ~\cap~ \{\cap_{i=1}^n \lambda_{\min}(\tbGi) > \lambda_{\mathrm{lb}} \} ~\cap~ \{ \cap_{i=1}^n \lambda_{\min}(\tbGimin) > \lambda_{\mathrm{lb}} \} ~\cap~ \{\cap_{i=1}^n \lambda_{\min}(\bGimin) > \lambda_{\mathrm{lb}} \}.
	\label{eq:defnAn}
 \end{align}
By Lemma \ref{lemma: eigenvalue}, $\A_n $ arises with exponentially high probability, i.e. 
\begin{equation}
  \label{eq: Anprob}
  \opP(\A_n ) \geq 1- \exp(-\Omega(n)).
\end{equation}

\subsection{A surrogate for the MLE}
\label{sub:defn-tbb}
In view of (\ref{eq:LLR-2-terms}), the main step in controlling $\Lambda_1$ consists of  characterizing the differences $\bX \hbbeta - \tbX \tbbeta $ or $ \hbbeta - \left[\begin{array}{c}
0\\
\tbbeta\\
\end{array}\right] $. 
Since the definition of $\hbbeta$ is implicit and not amenable to
direct analysis, we approximate $\hbbeta$ by a more amenable surrogate
$\tbb$, an idea introduced in
\cite{el2013robust,el2015impact,karoui2013asymptotic}.  We collect
some properties of the surrogate which will prove valuable in the
subsequent analysis.
%
%

To begin with, our surrogate is 
\begin{equation}
\label{eq: defbtilde}
\tbb = 
\begin{bmatrix}
0 \\ \tbbeta \end{bmatrix} 
+ \tb_1 \begin{bmatrix} 1 \\ -\tbG^{-1}  \bw \end{bmatrix},
\end{equation}
where $\tbG$ is defined in (\ref{eq: defGtilde}),
\begin{align}  
  \bw & := \frac{1}{n} \sum\nolimits_{i=1}^n \rho''(\tbX_i^{\tp} \tbbeta)X_{i1} \tbX_i = \frac{1}{n} \tbX^{\tp}\Dtbeta \Xcone , \label{eq: defw}
 \end{align}
 and $\tilde{b}_1$ is a scalar to be specified later. This vector is constructed in the hope that
 \begin{equation}
	 \hbbeta  ~\approx~ \tbb,   \qquad \text{or equivalently,} \qquad \begin{cases} \hat{\beta}_1  ~\approx~ \tilde{b}_1 ,  \\  \hbbeta_{2:p} - \tbbeta ~\approx~ - \tilde{b}_1  \tbG^{-1}  \bw,  \end{cases} 
  \label{eq:approx-tb-hbbeta}
 \end{equation}
where $\hbbeta_{2:p}$ contains the $2{\text{nd}}$ through $p{\text{th}}$ components of $\hbbeta$. 

Before specifying $\tb_1$, 
we shall first shed some insights into the remaining terms in $\tbb$.   By definition,  
%
%
%
\[
\nabla^2 \Lcal \left( 
\begin{bmatrix}
	0 \\ 
	\tbbeta  
\end{bmatrix}
 \right)
 = \bX^{\tp}\bD_{\tbbeta} \bX = \begin{bmatrix}
 \Xcone ^{\tp} \Dtbeta \Xcone & \Xcone^{\tp} \Dtbeta \tbX \\ 
 \tbX^{\tp} \Dtbeta \Xcone & \tbX ^ {\tp} \Dtbeta \tbX 
 \end{bmatrix}
= \begin{bmatrix}
	\Xcone ^{\tp} \Dtbeta \Xcone & n\bm{w}^{\top} \\ 
	n\bm{w} & n\tilde{\bm{G}} 
\end{bmatrix}.
\]
Employing the first-order approximation of $\nabla \Lcal(\cdot)$ gives
\begin{equation}
   \nabla^2 \Lcal \left( 
\begin{bmatrix}
	0 \\ 
	\tbbeta  
\end{bmatrix}
 \right)  
 \left( \hat{\bm{\beta}}  - 
 \begin{bmatrix}
	0 \\ 
	\tbbeta  
\end{bmatrix}
  \right)
  \approx \nabla \Lcal(\hbbeta) - \nabla \Lcal\left(\begin{bmatrix}
	0 \\ 
	\tbbeta  
\end{bmatrix}\right). 
	\label{eq:approx-L2}
\end{equation}
Suppose $\hbbeta_{2:p}$ is  well approximated by $\tbbeta$. Then all but the first coordinates of  $\nabla \Lcal(\tbbeta)$ and $\nabla \Lcal\left(\begin{bmatrix}
	0 \\ 
	\hbbeta  
\end{bmatrix}\right)$ are also very close to each other. Therefore, taking the $2{\text{nd}}$ through $p{\text{th}}$ components of (\ref{eq:approx-L2}) and approximating them by zero give
\[
  \left[\bm{w}, \tilde{\bm{G}} \right] 
 \left( \hat{\bm{\beta}}  - 
 \begin{bmatrix}
	0 \\ 
	\tbbeta  
\end{bmatrix}
  \right)
  \approx \bm{0}. 
\]
This together with a little algebra yields
\[
	\hat{\bm{\beta}}_{2:p} - \tilde{\bm{\beta}} ~\approx~ - \hat{\beta}_1 \tilde{\bm{G}}^{-1} \bm{w} ~\approx~ - \tilde{b}_1 \tilde{\bm{G}}^{-1} \bm{w}, 
\]
which coincides with (\ref{eq:approx-tb-hbbeta}).  
In fact, for all but the $1{\text{st}}$ entries, $\tbb$ is constructed by moving $\tbbeta$ one-step in the direction   which takes it closest to $\hbbeta$.

%
 
Next, we come to discussing the scalar $\tb_1$. Introduce the projection
matrix
\begin{equation}
\label{eq: defH}
\bH := \Id - \frac{1}{n} \Dtbeta^{1/2} \tbX \tbG^{-1} \tbX^{\tp} \Dtbeta ^{1/2}  ,
\end{equation}
and define  $\tb_1$ as 
\begin{equation}
\label{eq: deftb1}
\tb_1 := \frac{\Xcone^{\tp} \brt}{\Xcone^{\tp} \Dtbeta^{1/2} \bH \Dtbeta^{1/2} \Xcone},
\end{equation}
where $\brt$ comes from (\ref{defn:residual}). In fact, the expression
$\tb_1$ is obtained through similar (but slightly more complicated)
first-order approximation as for $\tbb_{2:p}$, in order to make sure that 
$b_1 \approx \hat{\beta}_1$; see \cite[Pages
14560-14561]{el2013robust} for a detailed description.


We now formally justify  that the surrogate $\tbb$ and the MLE $\hbbeta$  are  close to each other. 
\begin{theorem}\label{thm: hbbetabdiff}
The MLE $\hbbeta$ and the surrote $\tbb$  \eqref{eq: defbtilde} obey  
\begin{equation} 
\label{eq: hbbetabdiff}
	\|\hbbeta - \tbb \| ~\lesssim~ {n^{-1+o(1)}}, 
\end{equation}
\begin{equation}\label{eq: tb1anal}
	|\tb_1| ~\lesssim~ n^{-1/2+o(1)} ,
\end{equation}
and
\begin{equation}
	\sup_{1\leq i\leq n}|\bX_i^{\tp}\tbb - \tbX_i^{\tp}\tbbeta | ~\lesssim~ n^{-1/2+o(1)} 
\end{equation}
with probability tending to one as $n\rightarrow \infty$.  
\end{theorem}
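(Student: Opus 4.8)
Following the surrogate / leave-one-out strategy of \cite{el2013robust,karoui2013asymptotic,el2015impact}, the idea is to show that $\tbb$ is an \emph{approximate stationary point} of $\Lcal$, i.e.\ that $\|\nabla\Lcal(\tbb)\|$ is tiny, and then to convert this into a bound on $\|\hbbeta-\tbb\|$ via the Likelihood Curvature Condition (Lemma~\ref{lem:min-eigenvalue-bound}). Throughout I would condition on the event $\A_n$ of \eqref{eq:defnAn} intersected with the standard Gaussian-design events of Lemmas~\ref{lemma: singval} and \ref{lem:eigen-min-S} and the Norm Bound Condition (Theorem~\ref{thm: normbound-MLE}), so that simultaneously $\|\bX\|\le 3\sqrt n$, $\sup_i\|\bX_i\|\le 2\sqrt p$, $\sup_{i,j}|X_{ij}|\lesssim\sqrt{\log n}$, $\|\hbbeta\|,\|\tbbeta\|=O(1)$, and every Gram matrix in \eqref{eq:defnAn} has least eigenvalue $\ge\lambda_{\mathrm{lb}}$; this event has probability $1-o(1)$. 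A structural fact I would use repeatedly is that, after the reductions of Section~\ref{sub:step2-LRT}, the columns of $\bX$ are independent, so $\Xcone$ is independent of $\tbX$ and hence of $\tbbeta,\brt,\Dtbeta,\tbG,\bH$; in particular, conditionally on $\tbX$, one has $\Xcone^{\top}\brt\sim\dnorm(0,\|\brt\|^2)$ exactly, and $\Xcone^{\top}\Dtbeta^{1/2}\bH\Dtbeta^{1/2}\Xcone$ is a Gaussian quadratic form concentrating around $\tr(\bH\Dtbeta)$.

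\emph{Step 1: uniform control of the reduced model, and the size of $\tb_1$.} First I would run the leave-one-predictor-and-observation-out perturbation argument of \cite{karoui2013asymptotic,el2015impact}, adapted to our non-strongly-convex $\rho$ using Lemmas~\ref{lem:eigen-min-S}--\ref{lem:min-eigenvalue-bound}, to show $\|\tbbeta-\tbbetaimin\|\lesssim n^{-1/2+o(1)}$ for every $i$, and the analogous bound $\|\tbG^{-1}\bw-\tbGi^{-1}\bw_{[-i]}\|\lesssim n^{-1/2+o(1)}$ for the leave-$i$-out versions. Since $\tbbetaimin$ and $\tbGi^{-1}\bw_{[-i]}$ are independent of $\tbX_i$ and have $O(1)$ norm, Gaussian concentration then gives the uniform bounds $\sup_i|\tbX_i^{\top}\tbbeta|\lesssim n^{o(1)}$ and $\sup_i|\tbX_i^{\top}\tbG^{-1}\bw|\lesssim n^{o(1)}$. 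For $\tb_1$ in \eqref{eq: deftb1}: the numerator obeys $|\Xcone^{\top}\brt|\lesssim\|\brt\|\sqrt{\log n}\lesssim n^{1/2+o(1)}$ (using $\|\brt\|^2=\sum_i\rho'(\tbX_i^{\top}\tbbeta)^2\lesssim n^{1+o(1)}$, which follows from the assumptions on $\rho'$ in Section~\ref{sec:rho}), while the denominator concentrates around $\tr(\bH\Dtbeta)$, which is $\gtrsim n$; the last lower bound is obtained by restricting to a subset $S\subseteq[n]$ of size $(1-\epsilon)n$ on which $|\tbX_i^{\top}\tbbeta|$ is bounded, hence $\rho''(\tbX_i^{\top}\tbbeta)$ is bounded below, combined with a restricted-eigenvalue estimate in the spirit of Lemma~\ref{lem:eigen-min-S}. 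Thus $|\tb_1|\lesssim n^{-1/2+o(1)}$, the second assertion; the third then follows from $\bX_i^{\top}\tbb-\tbX_i^{\top}\tbbeta=\tb_1\bigl(X_{i1}-\tbX_i^{\top}\tbG^{-1}\bw\bigr)$ together with $\sup_i|X_{i1}|,\ \sup_i|\tbX_i^{\top}\tbG^{-1}\bw|\lesssim n^{o(1)}$.

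\emph{Step 2: Taylor expansion and conclusion.} Writing $\bu:=\begin{bmatrix}1\\-\tbG^{-1}\bw\end{bmatrix}$ so that $\tbb=\begin{bmatrix}0\\\tbbeta\end{bmatrix}+\tb_1\bu$, a second-order expansion of $\nabla\Lcal$ gives
\begin{multline*}
\nabla\Lcal(\tbb)=\nabla\Lcal\!\left(\begin{bmatrix}0\\\tbbeta\end{bmatrix}\right)+\tb_1\,\nabla^2\Lcal\!\left(\begin{bmatrix}0\\\tbbeta\end{bmatrix}\right)\bu\\
+\frac{\tb_1^2}{2}\sum_{i=1}^n\rho'''(\xi_i)\,(\bX_i^{\top}\bu)^2\,\bX_i,
\end{multline*}
with $\xi_i$ between $\tbX_i^{\top}\tbbeta$ and $\bX_i^{\top}\tbb$. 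By optimality of $\tbbeta$, all but the first coordinate of the first term vanish and its first coordinate is $-\Xcone^{\top}\brt$; from the block form of $\nabla^2\Lcal(\begin{bmatrix}0\\\tbbeta\end{bmatrix})$ recorded in Section~\ref{sub:defn-tbb} and $\bw=\tfrac1n\tbX^{\top}\Dtbeta\Xcone$, a short computation shows $\nabla^2\Lcal(\begin{bmatrix}0\\\tbbeta\end{bmatrix})\bu$ has vanishing lower block and first coordinate $\Xcone^{\top}\Dtbeta\Xcone-n\bw^{\top}\tbG^{-1}\bw=\Xcone^{\top}\Dtbeta^{1/2}\bH\Dtbeta^{1/2}\Xcone$. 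Hence the two linear terms cancel \emph{exactly} by the choice \eqref{eq: deftb1} of $\tb_1$, so $\nabla\Lcal(\tbb)$ equals the quadratic remainder, and writing it as $\tfrac{\tb_1^2}{2}\bX^{\top}\ba$ with $a_i=\rho'''(\xi_i)(\bX_i^{\top}\bu)^2$ gives $\|\nabla\Lcal(\tbb)\|\le\tfrac12(\sup_z|\rho'''(z)|)\,\tb_1^2\,\|\bX\|\,\bigl(\sum_i(\bX_i^{\top}\bu)^4\bigr)^{1/2}$. Since $\|\bu\|=O(1)$ gives $\sum_i(\bX_i^{\top}\bu)^2=\|\bX\bu\|^2\lesssim n$ and $\sup_i|\bX_i^{\top}\bu|\lesssim n^{o(1)}$ gives $\sum_i(\bX_i^{\top}\bu)^4\lesssim n^{1+o(1)}$, we obtain $\|\nabla\Lcal(\tbb)\|\lesssim n^{-1+o(1)}\cdot\sqrt n\cdot n^{1/2+o(1)}=n^{o(1)}$. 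Finally, expanding $\nabla\Lcal$ along the segment from $\tbb$ to $\hbbeta$, using $\nabla\Lcal(\hbbeta)=\bzero$ and the curvature bound $\tfrac1n\nabla^2\Lcal(\cdot)\succeq c\,\Id$ of Lemma~\ref{lem:min-eigenvalue-bound} (valid on that segment since $\|\hbbeta\|,\|\tbb\|=O(1)$), yields $\|\hbbeta-\tbb\|\le\frac{1}{cn}\|\nabla\Lcal(\tbb)\|\lesssim n^{-1+o(1)}$, the first assertion.

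\emph{Main obstacle.} The crux is Step 1: decoupling the privileged column $\Xcone$ and each privileged row $\tbX_i$ from the remaining reduced-model randomness through a careful leave-one-out perturbation analysis, and --- because $\rho''$ is not bounded away from zero --- substituting restricted-subset / restricted-eigenvalue arguments (Lemmas~\ref{lem:eigen-min-S} and \ref{lem:min-eigenvalue-bound}) for strong convexity, both to propagate the perturbation bounds and to lower-bound $\tr(\bH\Dtbeta)$. These are exactly the points where the analyses of \cite{karoui2013asymptotic,el2015impact} do not transfer verbatim and where the Norm Bound and Likelihood Curvature Conditions carry the essential weight.
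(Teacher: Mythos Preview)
Your proposal is correct and follows essentially the same route as the paper: show that $\tbb$ is an approximate stationary point of $\Lcal$ by a Taylor expansion in which the first-order terms cancel exactly by the construction of $\tb_1$, then convert the gradient bound into a norm bound via the curvature condition; the paper's Lemma~\ref{lemma: nablal} is exactly your Step~2 cancellation written in mean-value form $\sum_i[\rho''(\gamma_i^*)-\rho''(\tbX_i^{\top}\tbbeta)]\bX_i\bX_i^{\top}(\tbb-[0;\tbbeta])$ instead of the explicit $\rho'''$ remainder.

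There is one place where your Step~1 is more elaborate than necessary. To control $\sup_i|\tbX_i^{\top}\tbG^{-1}\bw|$ you propose a leave-one-observation-out comparison $\tbG^{-1}\bw\approx\tbGi^{-1}\bw_{[-i]}$ to decouple $\tbX_i$. The paper avoids this entirely (Lemma~\ref{lemma: approx}): since $\bw=\frac{1}{n}\tbX^{\top}\Dtbeta\Xcone$ is \emph{linear} in $\Xcone$, the scalar $\tbX_i^{\top}\tbG^{-1}\bw$ is, conditionally on $\tbX$, a mean-zero Gaussian in $\Xcone$ with variance $\frac{1}{n^2}\tbX_i^{\top}\tbG^{-1}\tbX^{\top}\Dtbeta^2\tbX\tbG^{-1}\tbX_i\lesssim\frac{1}{n}\|\tbX_i\|^2=O(1)$, and a union bound gives the $n^{o(1)}$ uniform control directly. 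This also makes $\sup_i|\tbX_i^{\top}\tbbeta|\lesssim n^{o(1)}$ unnecessary here: the paper bounds $\|\brt\|^2=\sum_i\rho'(\tbX_i^{\top}\tbbeta)^2\lesssim\tbbeta^{\top}\tbX^{\top}\tbX\tbbeta\lesssim n$ straight from $|\rho'(x)|=O(|x|)$ and $\|\tbbeta\|=O(1)$. For the denominator, your restricted-subset sketch does work (using that $\bH$ is an orthogonal projection, so $H_{ii}\in[0,1]$ and $\sum_i H_{ii}=n-p+1$), but the paper instead proves the sharper identity $\tr(\Dtbeta^{1/2}\bH\Dtbeta^{1/2})\approx(p-1)/\talpha$ via Sherman--Morrison--Woodbury (Lemma~\ref{lemma: xinapprox}), which it needs anyway for Theorem~\ref{thm:mainthm}(a).
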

\begin{proof} See Section \ref{sec:Proof-outline-Thm-hbbetadiff}. \end{proof}

%
The global accuracy \eqref{eq: hbbetabdiff} immediately leads to a coordinate-wise approximation result between $\hat{\beta}_1$ and $\tb_1$.

\begin{corollary}
\label{cor: one-coord}
With probability tending to one as $n\rightarrow \infty$, 
\begin{equation}
\label{eq: one-coord}
	\sqrt{n}|\tb_1 - \hat{\beta}_1| ~\lesssim~  n^{-1/2+o(1)}  .
\end{equation}
%
\end{corollary}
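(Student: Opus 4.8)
The plan is to obtain \eqref{eq: one-coord} as an immediate consequence of the global $\ell_2$ bound in Theorem \ref{thm: hbbetabdiff}. The key observation is purely structural: by the definition \eqref{eq: defbtilde} of the surrogate,
\[
\tbb = \begin{bmatrix} 0 \\ \tbbeta \end{bmatrix} + \tb_1 \begin{bmatrix} 1 \\ -\tbG^{-1}\bw \end{bmatrix},
\]
so that the first coordinate of $\tbb$ equals $\tb_1$ exactly, while the first coordinate of $\hbbeta$ is $\hat{\beta}_1$. Hence
\[
|\tb_1 - \hat{\beta}_1| = \big|\big(\tbb - \hbbeta\big)_1\big| \leq \|\tbb - \hbbeta\|.
\]

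First I would invoke Theorem \ref{thm: hbbetabdiff}, which guarantees that on an event of probability tending to one, $\|\hbbeta - \tbb\| \lesssim n^{-1+o(1)}$. Combining this with the display above and multiplying through by $\sqrt{n}$ gives
\[
\sqrt{n}\,|\tb_1 - \hat{\beta}_1| \leq \sqrt{n}\,\|\hbbeta - \tbb\| \lesssim \sqrt{n}\cdot n^{-1+o(1)} = n^{-1/2+o(1)}
\]
on the same event, which is precisely the claimed bound.

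There is no genuine obstacle at this stage: the entire substance of the argument—establishing the coordinate-level and global accuracy of the surrogate, i.e.\ the $n^{-1+o(1)}$ rate for $\|\hbbeta - \tbb\|$ together with the auxiliary bounds $|\tb_1|\lesssim n^{-1/2+o(1)}$ and $\sup_i |\bX_i^\tp \tbb - \tbX_i^\tp \tbbeta| \lesssim n^{-1/2+o(1)}$—is carried out in the proof of Theorem \ref{thm: hbbetabdiff}, deferred to Section \ref{sec:Proof-outline-Thm-hbbetadiff}. The role of Corollary \ref{cor: one-coord} is simply to record the consequence we actually need downstream: since $\sqrt{p}\,\hat{\beta}_1$ converges in distribution to $\dnorm(0,\taus^2)$ by Corollary \ref{cor: marginal}, and $\sqrt{n}(\tb_1 - \hat{\beta}_1) \to 0$ in probability, Slutsky's theorem transfers the same limiting law to $\sqrt{p}\,\tb_1$, allowing the more tractable surrogate $\tb_1$ to replace $\hat{\beta}_1$ in the subsequent analysis of $\Lambda_1$.
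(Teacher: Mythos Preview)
Your proposal is correct and matches the paper's approach exactly: the paper states that the global accuracy bound \eqref{eq: hbbetabdiff} ``immediately leads to a coordinate-wise approximation result between $\hat\beta_1$ and $\tb_1$,'' which is precisely your observation that $\tb_1$ is the first coordinate of $\tbb$, so $|\tb_1-\hat\beta_1|\le\|\tbb-\hbbeta\|\lesssim n^{-1+o(1)}$.
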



Another consequence from Theorem \ref{thm: hbbetabdiff} is that the value $\bX_i^{\tp}\hbbeta$ in the full model and its counterpart $\tbX_i^{\tp}\tbbeta$ in the reduced model are uniformly close.

\begin{corollary}\label{cor: fitdiff}
The values $\bX_i^{\tp}\hbbeta$ and $\tbX_i^{\tp}\tbbeta$ are uniformly close in the sense that
\begin{equation}
	\sup_{1\leq i\leq n} \big| \bX_i^{\tp}\hbbeta - \tbX_i^{\tp}\tbbeta \big| ~\lesssim~ n^{- 1/2 +o(1)} 
\end{equation}
holds with probability approaching one as $n\rightarrow \infty$. 
\end{corollary}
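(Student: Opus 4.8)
The plan is to reduce everything to Theorem \ref{thm: hbbetabdiff} by passing through the surrogate $\tbb$. Fix $i$ and decompose
\[
\bX_i^{\tp}\hbbeta - \tbX_i^{\tp}\tbbeta = \bX_i^{\tp}\big(\hbbeta - \tbb\big) + \big(\bX_i^{\tp}\tbb - \tbX_i^{\tp}\tbbeta\big),
\]
so that by the triangle inequality
\[
\sup_{1\leq i\leq n} \big| \bX_i^{\tp}\hbbeta - \tbX_i^{\tp}\tbbeta \big| ~\leq~ \Big(\sup_{1\leq i\leq n}\|\bX_i\|\Big)\,\|\hbbeta - \tbb\| ~+~ \sup_{1\leq i\leq n}\big|\bX_i^{\tp}\tbb - \tbX_i^{\tp}\tbbeta\big|.
\]
The second term on the right-hand side is exactly the last estimate in Theorem \ref{thm: hbbetabdiff}, which is $n^{-1/2+o(1)}$ with probability tending to one.

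For the first term I would invoke Lemma \ref{lemma: singval}, which gives $\sup_{i}\|\bX_i\| \leq 2\sqrt{p}$ on an event of probability at least $1-2n\exp(-(\sqrt p-1)^2/2)$, together with the global accuracy bound $\|\hbbeta-\tbb\|\lesssim n^{-1+o(1)}$ from Theorem \ref{thm: hbbetabdiff}. Since $p \asymp n$ under the scaling \eqref{defn:kappa}, the product $\sqrt p \cdot n^{-1+o(1)}$ is $n^{-1/2+o(1)}$. Intersecting this event with the one underlying Theorem \ref{thm: hbbetabdiff} and adding the two bounds yields the claim with probability approaching one.

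There is essentially no obstacle here; the corollary is a direct consequence of Theorem \ref{thm: hbbetabdiff}. The one point worth flagging is that the crude bound $|\bX_i^{\tp}(\hbbeta-\tbb)|\leq \|\bX_i\|\,\|\hbbeta-\tbb\|$ costs a factor of $\sqrt p \asymp \sqrt n$ relative to a coordinatewise estimate, so one genuinely needs the \emph{sharp} rate $\|\hbbeta-\tbb\| = n^{-1+o(1)}$ (not merely $\|\hbbeta-\tbb\|=o(1)$) in order to arrive at the target rate $n^{-1/2+o(1)}$ after multiplying by $\sqrt p$. This is precisely what Theorem \ref{thm: hbbetabdiff} delivers, so all the substantive work has already been carried out there.
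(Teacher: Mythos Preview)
Your proof is correct and follows essentially the same approach as the paper: the same decomposition through the surrogate $\tbb$, the same use of Theorem~\ref{thm: hbbetabdiff} for both pieces, and Cauchy--Schwarz together with Lemma~\ref{lemma: singval} to handle $\bX_i^{\tp}(\hbbeta-\tbb)$. Your explicit remark that the sharp rate $\|\hbbeta-\tbb\|\lesssim n^{-1+o(1)}$ is genuinely needed (not merely $o(1)$) is a nice point the paper leaves implicit.
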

\begin{proof}
Note that 
\begin{align*}
	\sup_{1\leq i\leq n} \big|\bX_i^{\tp}\hbbeta - \tbX_i ^{\tp} \tbbeta \big|  & ~\leq~ \sup_{1\leq i\leq n} \big| \bX_i^{\tp} (\hbbeta -\tbb) \big| + \sup_{1\leq i\leq n}  \big| \bX_i^{\tp} \tbb - \tbX_i^{\tp}\tbbeta \big|.
\end{align*}
The second term in the right-hans side is upper bounded by $n^{- 1/2 +o(1)}$ with probability $1-o(1)$ according to Theorem \ref{thm: hbbetabdiff}. 
	Invoking Lemma \ref{lemma: singval} and Theorem \ref{thm: hbbetabdiff} and applying Cauchy-Schwarz inequality yield that the first term is $O(n^{-1/2+o(1)})$ with probability $1-o(1)$. This  establishes the claim. 
\end{proof}

\subsection{Analysis of the likelihood-ratio statistic}
\label{sub:analysis-LRT}

We are now positioned to use our surrogate $\tbb$ to analyze the likelihood-ratio statistic. In this subsection we establish Theorem \ref{thm:mainthm}(a). The proof for Theorem \ref{thm:mainthm}(b) is deferred to Appendix \ref{sec: appendix8}. 

	Recall from (\ref{eq:LRT-simplified}) that 
\begin{align*}
	2\Lambda_1 =   (\tbX\tbbeta - \bX \hbbeta)^  \tp \bD_{ \hbbeta} (\tbX \tbbeta - \bX \hbbeta) + \underset{:=I_3}{\underbrace{\frac{1}{3} \sum_{i=1}^n \rho'''(\gamma_i) (\tbX_i ^ \tp \tbbeta - \bX_i ^ \tp \hbbeta )^3}}.
 \end{align*}
To begin with, Corollary \ref{cor: fitdiff} together with the assumption $\sup_z\rho'''(z)<\infty$ implies that 
\[
	I_3 ~ \lesssim ~ n^{-1/2+o(1)}
\]
 with probability $1-o(1)$. Hence, $I_3$ converges to zero in probability.  
	
Reorganize the quadratic term as follows:
\begin{align}
(\tbX \tbbeta - \bX \hbbeta)^{\tp} \Dhbeta (\tbX\tbbeta - \bX \hbbeta)  & = \sum_i \rho''(\bX_i^{\tp}\hbbeta) \left(\bX_i ^{\tp}\hbbeta - \tbX_i^{\tp}\tbbeta \right)^2 \nonumber \\
& =\sum_i\rho''(\bX_i^{\tp}\hbbeta) \left[\bX_i^{\tp} (\hbbeta -\tbb) + (\bX_i^{\tp}\tbb -\tbX_i^{\tp}\tbbeta) \right]^2 \nonumber\\
& =  \sum_i\rho''(\bX_i^{\tp}\hbbeta)(\bX_i^{\tp} (\hbbeta -\tbb))^2
+2\sum_i\rho''(\bX_i^{\tp}\hbbeta)\bX_i^{\tp} (\hbbeta -\tbb)(\bX_i^{\tp}\tbb -\tbX_i^{\tp}\tbbeta) \nonumber\\
   & \qquad\qquad + \sum_i\rho''(\bX_i^{\tp}\hbbeta) \big(\bX_i^{\tp}\tbb -\tbX_i^{\tp}\tbbeta \big)^2 .
   \label{eq: Qform}
\end{align}
We control each of the three terms in the right-hand side of (\ref{eq: Qform}). 
\begin{itemize}
  \item Since $\sup_{z}\rho''(z) < \infty$, the first term in (\ref{eq: Qform}) is bounded by 
\begin{align*}
\sum\nolimits_i\rho''(\bX_i^{\tp}\hbbeta)(\bX_i^{\tp} (\hbbeta -\tbb))^2
~\lesssim ~
	||\tbbeta -\tbb ||^2 \left\|\sum\nolimits_i \bX_i\bX_i^{\tp} \right\| ~\lesssim~  {n^{-1+o(1)}}
\end{align*}
with probability $1-o(1)$, by an application of Theorem \ref{thm: hbbetabdiff} and Lemma \ref{lemma: singval}. 
  \item From the definition of $\tbb$, the second term can be upper bounded by
\begin{align*}
 2 \sum_i\rho''(\bX_i^{\tp}\hbbeta) (\hbbeta -\tbb)^{\tp} \bX_i \bX_i^{\tp}\tb_1  \begin{bmatrix} 1\\  -\tbG^{-1} \bw \end{bmatrix}  
	 & \leq  |\tb_1| \cdot\|\hbbeta -\tbb \| \cdot \left\|\sum\nolimits_i \bX_i\bX_i^{\tp} \right\| \cdot  \sqrt{1+\bw^{\tp}\tbG^{-2}\bw }  \\
	 & \lesssim  n^{-\frac{1}{2}+o(1)}
\end{align*}
with probability $1-o(1)$, where the last line follows from a combination of 
Theorem \ref{thm: hbbetabdiff},  Lemma \ref{lemma: singval} and the following lemma. 
%
\begin{lemma}
\label{lemma: svd}
Let $\tbG$ and $\bw$ be as defined in \eqref{eq: defGtilde} and \eqref{eq: defw}, respectively. Then  
\begin{equation}\label{eq: svd}
	\opP\left(\bw^{\tp} \tbG^{-2} \bw \lesssim 1 \right) ~\geq~ 1- \exp(-\Omega(n)).
\end{equation}
\end{lemma}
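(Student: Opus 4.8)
\emph{Proof strategy.} The plan is to condition on the reduced--model design matrix $\tbX$ and exploit the fact that, under the i.i.d.\ Gaussian design with $\bSigma=\Id$, the first covariate column $\Xcone$ is independent of $\tbX$. In this subsection $\tilde{y}_i\equiv -1$, so the reduced MLE solves $\tbbeta=\arg\min_{\bbeta}\sum_i\rho(\tbX_i^{\tp}\bbeta)$ and is a measurable function of $\tbX$ alone; hence $\Dtbeta$, $\tbG$ and the ``coefficient matrix'' $\tbX^{\tp}\Dtbeta$ are all functions of $\tbX$, while $\Xcone\sim\dnorm(\bm{0},\Id_n)$ remains a standard Gaussian after conditioning on $\tbX$. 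Writing $\bw=\frac1n\tbX^{\tp}\Dtbeta\Xcone$, this means that conditionally on $\tbX$ we have $\|\bw\|^2=\Xcone^{\tp}\bM\Xcone$ with $\bM:=\frac1{n^2}\Dtbeta\tbX\tbX^{\tp}\Dtbeta\succeq\bm{0}$.

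First I would reduce the bound to a statement about $\|\bw\|$. On the event $\A_n$ of \eqref{eq:defnAn} one has $\lambda_{\min}(\tbG)\geq\lambda_{\mathrm{lb}}$, so $\tbG^{-2}\preceq\lambda_{\mathrm{lb}}^{-2}\Id$ and therefore $\bw^{\tp}\tbG^{-2}\bw\leq\lambda_{\mathrm{lb}}^{-2}\|\bw\|^2$; moreover $\opP(\A_n)\geq 1-\exp(-\Omega(n))$ by Lemma \ref{lemma: eigenvalue}. It thus suffices to show $\|\bw\|^2\lesssim 1$ with probability $1-\exp(-\Omega(n))$.

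Next I would estimate the norms of $\bM$ on the high--probability event from Lemma \ref{lemma: singval} (i.e.\ $\|\tbX^{\tp}\tbX\|\leq 9n$), using $C_\rho:=\sup_t\rho''(t)<\infty$ from Section \ref{sec:rho}: one gets $\|\bM\|\leq\frac1{n^2}\|\Dtbeta\|^2\|\tbX\tbX^{\tp}\|\leq 9C_\rho^2/n$, $\tr(\bM)=\frac1{n^2}\tr(\Dtbeta^2\tbX\tbX^{\tp})\leq\frac1{n^2}\|\Dtbeta\|^2\tr(\tbX\tbX^{\tp})\leq 9C_\rho^2 p/n=O(1)$, and hence $\|\bM\|_{\mathrm F}^2\leq\|\bM\|\tr(\bM)=O(1/n)$. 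Applying a standard concentration inequality for Gaussian quadratic forms (Hanson--Wright) to $\Xcone^{\tp}\bM\Xcone$ at deviation level $t=\tr(\bM)$ then yields
\[
\opP\big(\|\bw\|^2>2\tr(\bM)\,\big|\,\tbX\big)\leq\exp\Big(-c\min\big\{t^2/\|\bM\|_{\mathrm F}^2,\ t/\|\bM\|\big\}\Big)\leq\exp(-\Omega(n)).
\]
Integrating out $\tbX$, intersecting with the $\tbX$--measurable good events of Lemmas \ref{lemma: singval} and \ref{lemma: eigenvalue}, and combining with $\bw^{\tp}\tbG^{-2}\bw\leq\lambda_{\mathrm{lb}}^{-2}\|\bw\|^2$ gives $\bw^{\tp}\tbG^{-2}\bw\lesssim 1$ with probability $1-\exp(-\Omega(n))$.

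There is no serious obstacle here; the one point requiring care is the conditioning, namely verifying that $\tbbeta$ (hence $\Dtbeta$, $\tbG$ and $\tbX^{\tp}\Dtbeta$) does not depend on $\Xcone$ — which is immediate because the reduced MLE is defined purely in terms of $\tbX$. Given this, the tower property cleanly combines the conditional Hanson--Wright bound with the unconditional control of $\tbX$.
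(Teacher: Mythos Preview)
Your argument is correct and complete in substance, but it takes a somewhat different route from the paper's. The paper does not separate off $\|\bw\|^2$ and then invoke Hanson--Wright; instead it bounds $\bw^{\tp}\tbG^{-2}\bw$ directly and deterministically on $\A_n$. Writing the SVD $\frac{1}{\sqrt{n}}\Dtbeta^{1/2}\tbX=\bU\bSigma\bV^{\tp}$, one has $\tbG=\bV\bSigma^2\bV^{\tp}$ and hence $\frac{1}{n}\Dtbeta^{1/2}\tbX\tbG^{-2}\tbX^{\tp}\Dtbeta^{1/2}=\bU\bSigma^{-2}\bU^{\tp}\preceq\lambda_{\mathrm{lb}}^{-1}\Id$. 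This gives
\[
\bw^{\tp}\tbG^{-2}\bw=\tfrac{1}{n^2}\Xcone^{\tp}\Dtbeta\tbX\,\tbG^{-2}\,\tbX^{\tp}\Dtbeta\Xcone
\;\leq\;\tfrac{\|\Xcone\|^2}{n}\,\big\|\tfrac{1}{n}\Dtbeta\tbX\tbG^{-2}\tbX^{\tp}\Dtbeta\big\|
\;\lesssim\;\tfrac{\|\Xcone\|^2}{n},
\]
and then one only needs $\|\Xcone\|^2\lesssim n$ from Lemma~\ref{lemma: singval}. No conditioning, no independence of $\Xcone$ from $\tbX$, and no Hanson--Wright are used. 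Your approach exploits the independence structure more explicitly and is more ``probabilistic''; the paper's is a purely linear-algebraic bound. Both land at the same place.

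Two small remarks on your write-up. First, Hanson--Wright is overkill here: once you have $\|\bM\|\leq 9C_\rho^2/n$, the crude bound $\|\bw\|^2=\Xcone^{\tp}\bM\Xcone\leq\|\bM\|\,\|\Xcone\|^2\lesssim 1$ already follows from Lemma~\ref{lemma: singval} with the same $\exp(-\Omega(n))$ probability, and this is essentially the paper's argument transplanted to $\|\bw\|^2$. Second, if you do keep Hanson--Wright, take the deviation level $t$ to be a \emph{fixed} constant (e.g.\ the deterministic upper bound $9C_\rho^2 p/n$ on $\tr(\bM)$) rather than the random $\tr(\bM)$ itself; as written, if $\tr(\bM)$ happened to be tiny on some realization of $\tbX$, the exponent $\min\{t^2/\|\bM\|_{\mathrm F}^2,\,t/\|\bM\|\}$ would not visibly be $\Omega(n)$. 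With $t$ fixed the bound is clean.
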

	\begin{proof} See Appendix \ref{sub:Proof-of-Lemmas-svd}.  \end{proof}

  \item The third term in \eqref{eq: Qform} can be decomposed as
\begin{align}
&\sum_i\rho''(\bX_i^{\tp}\hbbeta)(\bX_i^{\tp}\tbb -\tbX_i^{\tp}\tbbeta))^2  \nonumber\\
&\quad = \sum_i \left(\rho''(\bX_i^{\tp}\hbbeta)-\rho''(\tbX_i^{\tp}\tbbeta) \right) (\bX_i^{\tp}\tbb -\tbX_i^{\tp}\tbbeta))^2 + \sum_i\rho''(\tbX_i^{\tp}\tbbeta)(\bX_i^{\tp}\tbb -\tbX_i^{\tp}\tbbeta)^2 \nonumber\\
	& \quad = \sum_i \rho'''(\tilde{\gamma}_i)(\bX_i^{\tp}\hbbeta - \tbX_i^{\tp}\tbbeta) \left(\bX_i^{\tp}\tbb -\tbX_i^{\tp}\tbbeta \right)^2 + \sum_i\rho''(\tbX_i^{\tp}\tbbeta) \left(\bX_i^{\tp}\tbb -\tbX_i^{\tp}\tbbeta \right)^2  \label{eq:3rd-term-2-parts}
\end{align}
for some $\tilde{\gamma}_i$ between $\bX_i^{\tp}\hat{\bm{\beta}}$ and $\tbX_{i}^{\tp} \tbbeta$. From Theorem \ref{thm: hbbetabdiff} and Corollary \ref{cor: fitdiff}, 
the first term in \eqref{eq:3rd-term-2-parts} is $O(n^{-1/2+o(1)})$
with probability $1-o(1)$.  Hence, the only remaining term is the
second.
\end{itemize}
In summary, we have 
\begin{align}
\label{eq: LRTappinterm}
	2\Lambda_1 - \underset{= \bv^{\tp}\bX^{\tp}\Dtbeta \bX \bv}{\underbrace{\sum_i\rho''(\tbX_i^{\tp}\tbbeta) \left(\bX_i^{\tp}\tbb -\tbX_i^{\tp}\tbbeta\right)^2}} ~~\convP~ 0,
\end{align}
where $\bv := \tb_1 \begin{bmatrix}1 \\ -\tbG^{-1}\bw \end{bmatrix}$ according to (\ref{eq: defbtilde}). On simplification, the quadratic form reduces to 
\begin{align*}
\bm{v}^{\top}\bm{X}^{\top}\bm{D}_{\tilde{\bm{\beta}}}\bm{X}\bm{v} & =\tilde{b}_{1}^{2}\left(\bm{X}_{\cdot1}-\tilde{\bm{X}}\tilde{\bm{G}}^{-1}\bm{w}\right)^{\top}\bm{D}_{\tilde{\bm{\beta}}}\left(\bm{X}_{\cdot1}-\tilde{\bm{X}}\tilde{\bm{G}}^{-1}\bm{w}\right)\\
 & =\tilde{b}_{1}^{2}\left(\bm{X}_{\cdot1}^{\top}\bm{D}_{\tilde{\bm{\beta}}}\bm{X}_{\cdot1}-2\bm{X}_{\cdot1}^{\top}\bm{D}_{\tilde{\bm{\beta}}}\tilde{\bm{X}}\tilde{\bm{G}}^{-1}\bm{w}+\bm{w}^{\top}\tilde{\bm{G}}^{-1}\tilde{\bm{X}}^{\top}\bm{D}_{\tilde{\bm{\beta}}}\tilde{\bm{X}}\tilde{\bm{G}}^{-1}\bm{w}\right)\\
 & =\tilde{b}_{1}^{2}\left(\bm{X}_{\cdot1}^{\top}\bm{D}_{\tilde{\bm{\beta}}}\bm{X}_{\cdot1}-n\bm{w}^{\top}\tilde{\bm{G}}^{-1}\bm{w}\right)\\
	& =n\tilde{b}_{1}^{2}\Bigg( \underset{:=\xi}{\underbrace{ \frac{1}{n}\bm{X}_{\cdot1}^{\top}\bm{D}_{\tilde{\bm{\beta}}}^{1/2}\bm{H}\bm{D}_{\tilde{\bm{\beta}}}^{1/2}\bm{X}_{\cdot1} }} \Bigg),
\end{align*}
recalling the definitions \eqref{eq: defGtilde}, \eqref{eq: defw}, and \eqref{eq: defH}. 
Hence, the log-likelihood ratio $2\Lambda_1$ simplifies to $n \tb_1^2 \xi + o_P(1)$ on $\A_n$. 

Finally,  rewrite $\bm{v}^{\top}\bm{X}^{\top}\bm{D}_{\tilde{\bm{\beta}}}\bm{X}\bm{v}$ as $n(\tb_1^2 -\hat{\beta}_1^2)\xi+n\hat{\beta}_1^2 \xi$. To analyze the first term, note that 
 \begin{align}\label{eq: oneapprox}
 n|\tb_1^2-\hat{\beta}_1^2|& =n|\tb_1-\hat{\beta}_1| \cdot |\tb_1+\hat{\beta}_1| 
	 \leq  n|\tb_1-\hat{\beta_1}|^2 +2n|\tb_1| \cdot |\tb_1-\hat{\beta}_1| \lesssim n^{-\frac{1}{2}+o(1)}
 \end{align}
with probability $1-o(1)$ in view of Theorem \ref{thm: hbbetabdiff} and Corollary \ref{cor: one-coord}. It remains to analyze $\xi$. Recognize that   $\Xcone$ is independent of $\Dtbeta^{1/2}\bH \Dtbeta^{1/2}$. 
Applying the Hanson-Wright inequality \cite{hanson1971bound,rudelson2013hanson} and the Sherman-Morrison-Woodbury formula (e.g.~\cite{hager1989updating}) leads to the following lemma:
\begin{lemma}\label{lemma: xinapprox}
Let $\talpha = \frac{1}{n} \tr (\tbG^{-1})$, where $\tbG=\frac{1}{n}\tbX^{\tp} \Dtbeta \tbX$. Then one has 
\begin{equation}
  \left|\frac{p-1}{n} -  \talpha \frac{1}{n}  \Xcone^{\tp} \Dtbeta ^{1/2} \bH \Dtbeta^{1/2} \Xcone \right| \lesssim n^{-1/2+o(1)}
\end{equation}
with probability approaching one as $n\rightarrow \infty$. 
\end{lemma}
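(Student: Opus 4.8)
The plan is to exploit the independence between the first column $\Xcone$ and everything built from the reduced design $\tbX$, and then to recast the quantity of interest as a self-consistent identity in $\talpha$ alone. Throughout, abbreviate $d_i := \rho''(\tbX_i^{\tp}\tbbeta)$, so that $\Dtbeta = \mathrm{diag}(d_1,\dots,d_n)$ and $\tbG = \frac1n\sum_{i=1}^n d_i\tbX_i\tbX_i^{\tp}$, write $M := \Dtbeta^{1/2}\bH\Dtbeta^{1/2}$, and set $\xi := \frac1n\Xcone^{\tp}M\Xcone$. Since $\tbX$ consists of the last $p-1$ columns of $\bX$ while $\Xcone$ is the first, and $\tbbeta$ (hence $\Dtbeta$, $\bH$, and $M$) is a function of $\tbX$ only, the vector $\Xcone\sim\dnorm(\bzero,\Id_n)$ is independent of $M$. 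Moreover $\bH$ is an orthogonal projection and $\sup_z\rho''(z)<\infty$ (Section~\ref{sec:rho}), so $\|M\|\le\sup_z\rho''(z)\lesssim 1$ and $\|M\|_F\le\sqrt n\,\|M\|\lesssim\sqrt n$ deterministically. Conditioning on $\tbX$ and applying the Hanson--Wright inequality then gives $\opP\{|\xi-\frac1n\tr(M)|>n^{-1/2+\epsilon}\}\le 2\exp(-c\,n^{2\epsilon})$ for any fixed $\epsilon>0$; that is, $\xi=\frac1n\tr(M)+O(n^{-1/2+o(1)})$ with probability $1-o(1)$.

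Next I would show $\talpha\cdot\frac1n\tr(M) = \frac{p-1}{n}+O(n^{-1/2+o(1)})$. Writing $u_i := \frac1n\tbX_i^{\tp}\tbG^{-1}\tbX_i$, computing the $i$th diagonal entry of $\bH$ gives the exact identities $\frac1n\tr(M) = \frac1n\sum_i d_i(1-d_iu_i)$ and $\frac{p-1}{n} = \frac1n\tr(\tbG^{-1}\tbG) = \frac1n\sum_i d_iu_i$. Since $\tbG = \tbGi+\frac{d_i}{n}\tbX_i\tbX_i^{\tp}$, the Sherman--Morrison--Woodbury identity yields $u_i = \frac{w_i}{1+d_iw_i}$ with $w_i := \frac1n\tbX_i^{\tp}\tbGi^{-1}\tbX_i>0$ (on $\A_n$), so $1-d_iu_i = (1+d_iw_i)^{-1}$ and therefore
\[
 \frac{p-1}{n} - \talpha\cdot\frac1n\tr(M) = \frac1n\sum_{i=1}^n \frac{d_i\,(w_i-\talpha)}{1+d_iw_i}, \qquad\text{whence}\qquad \Bigl|\frac{p-1}{n} - \talpha\cdot\frac1n\tr(M)\Bigr| \le \bigl(\sup_z\rho''(z)\bigr)\max_{1\le i\le n}|w_i-\talpha|,
\]
using $d_i>0$ and $1+d_iw_i\ge 1$. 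On $\A_n$ (which holds with probability $1-o(1)$ by Lemma~\ref{lemma: eigenvalue}) one has $\|\tbG^{-1}\|,\|\tbGi^{-1}\|\lesssim 1$ and hence $\talpha\le (p-1)/(n\lambda_{\mathrm{lb}})\lesssim 1$, so it remains only to bound $\max_i|w_i-\talpha|$.

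To control $w_i$ I would pass to the leave-one-sample-out Gram matrix $\tbGimin$, which depends only on $\{\tbX_j\}_{j\ne i}$ and is therefore independent of $\tbX_i$. Working on $\A_n\cap\{\sup_i\|\tbX_i\|\le 2\sqrt p\}\cap\{\|\tbX^{\tp}\tbX\|\le 9n\}$ (Lemmas~\ref{lemma: singval} and \ref{lemma: eigenvalue}), four estimates combine, uniformly in $i$, to $|w_i-\talpha|\lesssim n^{-1/2+o(1)}$: (i) the rank-one update $\tbG=\tbGi+\frac{d_i}{n}\tbX_i\tbX_i^{\tp}$ gives $|\frac1n\tr(\tbG^{-1})-\frac1n\tr(\tbGi^{-1})|\lesssim 1/n$; (ii) the leave-one-sample-out fit-stability estimate $\max_{j\ne i}|\tbX_j^{\tp}(\tbbeta-\tbbetaimin)|\lesssim n^{-1/2+o(1)}$ (proved exactly as the surrogate and fit bounds in Theorem~\ref{thm: hbbetabdiff} and Corollary~\ref{cor: fitdiff}), together with Lipschitzness of $\rho''$ and $\|\frac1n\tbX^{\tp}\tbX\|\lesssim 1$, gives $\|\tbGi-\tbGimin\|\lesssim n^{-1/2+o(1)}$ and hence $\|\tbGi^{-1}-\tbGimin^{-1}\|\lesssim n^{-1/2+o(1)}$; (iii) consequently $|w_i-\frac1n\tbX_i^{\tp}\tbGimin^{-1}\tbX_i|\le\frac1n\|\tbX_i\|^2\|\tbGi^{-1}-\tbGimin^{-1}\|\lesssim n^{-1/2+o(1)}$ and, summing eigenvalues, $|\frac1n\tr(\tbGi^{-1})-\frac1n\tr(\tbGimin^{-1})|\lesssim n^{-1/2+o(1)}$; (iv) by Hanson--Wright conditionally on $\tbGimin$ (using $\|\tbGimin^{-1}\|\lesssim 1$, $\|\tbGimin^{-1}\|_F\lesssim\sqrt p$), $|\frac1n\tbX_i^{\tp}\tbGimin^{-1}\tbX_i-\frac1n\tr(\tbGimin^{-1})|\lesssim n^{-1/2+o(1)}$. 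A union bound over $i\in[n]$ costs at most $n\exp(-\Omega(n^{2\epsilon}))=o(1)$, so chaining (i)--(iv) yields $\max_i|w_i-\talpha|\lesssim n^{-1/2+o(1)}$ with probability $1-o(1)$.

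Combining the two pieces, $|\frac{p-1}{n}-\talpha\,\xi|\le\talpha|\xi-\frac1n\tr(M)|+|\talpha\frac1n\tr(M)-\frac{p-1}{n}|\lesssim n^{-1/2+o(1)}$ with probability $1-o(1)$, which is the claim. The main obstacle is step (ii)--(iii): decoupling the row $\tbX_i$ from the data-dependent weight matrix $\Dtbeta$ so that Hanson--Wright can be applied, which requires invoking (or separately establishing) the leave-one-sample-out stability of the reduced MLE and propagating it through the inverse Gram matrices while keeping every bound uniform in $i$ after the union bound; once that is arranged, the Hanson--Wright applications and the algebraic identity for $\frac{p-1}{n}-\talpha\frac1n\tr(M)$ are routine.
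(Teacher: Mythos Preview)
Your proposal is correct and follows essentially the same route as the paper: a Hanson--Wright step for $\Xcone$ against the independent matrix $M=\Dtbeta^{1/2}\bH\Dtbeta^{1/2}$, the exact algebraic identity $\frac{p-1}{n}-\talpha\cdot\frac1n\tr(M)=\frac1n\sum_i d_i(w_i-\talpha)/(1+d_iw_i)$ obtained via Sherman--Morrison--Woodbury, and then the four-step chain (i)--(iv) to show $\max_i|w_i-\talpha|\lesssim n^{-1/2+o(1)}$; this is precisely the paper's Proposition~\ref{prop: etai} and Lemmas~\ref{lemma: quadtraceapprox}--\ref{lemma: tracediff}. One small pointer correction: the leave-one-\emph{sample}-out stability in your step~(ii) is not the content of Theorem~\ref{thm: hbbetabdiff}/Corollary~\ref{cor: fitdiff} (those are leave-one-\emph{predictor}-out), but rather of Lemma~\ref{lemma: hb} and Lemma~\ref{lemma: fitdiff1}, though the surrogate-construction technique is indeed parallel.
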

\begin{proof} See Appendix \ref{sec:Proof-ofLemma-xinapprox}. \end{proof}
%
 
\noindent In addition, if one can show that $\talpha$ is bounded away from zero with probability $1-o(1)$, then it is seen from Lemma \ref{lemma: xinapprox}  that
\begin{equation}\label{eq: xieq}
\xi - \frac{p}{n\tilde{\alpha}} ~\convP~ 0. 
\end{equation}
To justify the above claim, we observe that since $\rho''$ is bounded,  
$\lambda_{\max}(\tbG) \lesssim \lambda_{\max}(\tbX^{\tp}\tbX)/n \lesssim 1$
with exponentially high probability (Lemma \ref{lemma: singval}). This yields 
\[
	\talpha ~=~  \tr (\tbG^{-1}) / n ~\gtrsim~ p/n 
\]
with probability $1-o(1)$. On the other hand, on $\A_n$ one has
\[
	\talpha ~\leq~ p/(n \lambda_{\min}(\tbG)) ~\lesssim~ p/n.
\] 
Hence, it follows that $\xi = \Omega(1)$ with probability $1-o(1)$. Putting this together with \eqref{eq: oneapprox} gives the approximation 
\begin{equation}\label{eq: smallquadform}
\bv^{\tp} \bX^{\tp} \Dtbeta \bX \bv = n\hat{\beta}_1^2 \xi + o(1). 
\end{equation}
Taken collectively \eqref{eq: LRTappinterm}, \eqref{eq: xieq} and \eqref{eq: smallquadform} yields the desired result
 \[
   2\Lambda_1  - p\hat{\beta}_1^2/\talpha ~\convP~ 0.
 \]
 





\subsection{Proof of Theorem \ref{thm: hbbetabdiff}} 
\label{sec:Proof-outline-Thm-hbbetadiff}
This subsection  outlines the main steps for the proof of Theorem \ref{thm: hbbetabdiff}. 
To begin with, we shall express the difference $\hbbeta - \tbb$ in terms of the gradient of the negative log-likelihood function. Note that $\nabla \Lcal (\hbbeta) = \bm{0}$, and hence
\begin{align*}
\nabla \Lcal (\tbb) =  \nabla \Lcal (\tbb) - \nabla \Lcal (\hbbeta) & = \sum\nolimits_{i=1}^n \bX_i [\rho'(\bX_i ^{\tp} \tbb) - \rho'(\bX_i ' \hbbeta)] \\
& =  \sum\nolimits_{i=1}^n  \rho''(\gamma_i^{*}) \bX_i \bX_i^{\tp} (\tbb - \hbbeta), 
\end{align*}
where $\gamma_i^{*}$ is  between $\bX_i ^{\tp} \hbbeta$ and $\bX_i^{\tp} \tbb $. Recalling the notation introduced in \eqref{eq: defGtilde}, this can be rearranged as 
\begin{equation*}
\tbb - \hbbeta  =\frac{1}{n} \Ginterm^{-1} \nabla \Lcal(\tbb). 
\end{equation*}
Hence, on $\A_n$, this yields 
\begin{equation}
\label{eq: diffnabla}
\|\hbbeta-\tbb \| \leq  \frac{\|\nabla\Lcal(\tbb)\|}{\lambda_{\mathrm{lb}} n} . 
\end{equation}

The next step involves expressing $\nabla \Lcal (\tbb)$ in terms of the difference $\tbb - \begin{bmatrix}0 \\ \tbbeta \end{bmatrix}$.
%
\begin{lemma}
\label{lemma: nablal}
On the event $\A_n$ \eqref{eq:defnAn}, the negative log-likelihood evaluated at the surrogate $\tbb$  obeys 
\[\nabla \Lcal(\tbb) =  \sum_{i=1}^n \big[\rho''(\gamma_i^{*}) - \rho''(\tbX_i^{\tp} \tbbeta)\big] \bX_i \bX_i^{\tp} \left(\tbb - \left[\begin{array}{c}
0\\
\tilde{\bm{\beta}}
\end{array}\right]\right), \]
where $\gamma_i^{*}$ is some quantity between $\bX_i^{\tp} \tbb$ and $\tbX_i^{\tp} \tbbeta$.
\end{lemma}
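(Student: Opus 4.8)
The plan is a direct computation that exploits the block structure of the Hessian $\bX^\tp\Dtbeta\bX$ together with the tailored definition of the scalar $\tb_1$.

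First I would recall that under the working convention $\ty_i\equiv-1$ the gradient is $\nabla\Lcal(\bbeta)=\sum_{i=1}^n\rho'(\bX_i^\tp\bbeta)\bX_i$, and write $\bbeta_0:=\begin{bmatrix}0\\\tbbeta\end{bmatrix}\in\R^p$, so that $\bX_i^\tp\bbeta_0=\tbX_i^\tp\tbbeta$ and $\bX_i^\tp(\tbb-\bbeta_0)=\bX_i^\tp\tbb-\tbX_i^\tp\tbbeta$. Applying the mean-value form of Taylor's theorem to $\rho'$ at each index $i$ produces a point $\gamma_i^*$ between $\bX_i^\tp\tbb$ and $\tbX_i^\tp\tbbeta$ with $\rho'(\bX_i^\tp\tbb)=\rho'(\tbX_i^\tp\tbbeta)+\rho''(\gamma_i^*)\bX_i^\tp(\tbb-\bbeta_0)$; summing against $\bX_i$ gives
\begin{equation}
\nabla\Lcal(\tbb)=\sum_{i=1}^n\rho'(\tbX_i^\tp\tbbeta)\bX_i+\sum_{i=1}^n\rho''(\gamma_i^*)\bX_i\bX_i^\tp(\tbb-\bbeta_0).
\end{equation}
So the lemma reduces to the algebraic identity $\sum_i\rho'(\tbX_i^\tp\tbbeta)\bX_i=-\bX^\tp\Dtbeta\bX(\tbb-\bbeta_0)$, i.e.\ to computing both sides explicitly and checking they match.

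For the left side I would split $\bX_i=\begin{bmatrix}X_{i1}\\\tbX_i\end{bmatrix}$: the last $p-1$ coordinates are $\sum_i\rho'(\tbX_i^\tp\tbbeta)\tbX_i=\nabla\Ltilde(\tbbeta)=\bzero$ by the first-order optimality condition for the reduced-model MLE $\tbbeta$, and the first coordinate is $\sum_i\rho'(\tbX_i^\tp\tbbeta)X_{i1}=-\Xcone^\tp\brt$ by the definition of $\brt$; hence the left side equals $\begin{bmatrix}-\Xcone^\tp\brt\\\bzero\end{bmatrix}$. For the right side I would insert the block form $\bX^\tp\Dtbeta\bX=\begin{bmatrix}\Xcone^\tp\Dtbeta\Xcone&n\bw^\tp\\n\bw&n\tbG\end{bmatrix}$ (already recorded) and $\tbb-\bbeta_0=\tb_1\begin{bmatrix}1\\-\tbG^{-1}\bw\end{bmatrix}$, which is well-defined on $\A_n$ since $\tbG$ is then invertible. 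Multiplying out, the bottom block is $\tb_1(n\bw-n\tbG\tbG^{-1}\bw)=\bzero$ automatically, while the top entry is $\tb_1\big(\Xcone^\tp\Dtbeta\Xcone-n\bw^\tp\tbG^{-1}\bw\big)$. Using $n\tbG=\tbX^\tp\Dtbeta\tbX$ and $n\bw=\tbX^\tp\Dtbeta\Xcone$, this parenthesis is exactly $\Xcone^\tp\Dtbeta^{1/2}\bH\Dtbeta^{1/2}\Xcone$ with $\bH$ as in \eqref{eq: defH}, so by the definition \eqref{eq: deftb1} of $\tb_1$ the top entry is $\Xcone^\tp\brt$. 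Thus $\bX^\tp\Dtbeta\bX(\tbb-\bbeta_0)=\begin{bmatrix}\Xcone^\tp\brt\\\bzero\end{bmatrix}=-\sum_i\rho'(\tbX_i^\tp\tbbeta)\bX_i$, and substituting back into the Taylor identity yields $\nabla\Lcal(\tbb)=\sum_i[\rho''(\gamma_i^*)-\rho''(\tbX_i^\tp\tbbeta)]\bX_i\bX_i^\tp(\tbb-\bbeta_0)$.

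There is no real analytic difficulty here; the computation is driven entirely by linear algebra. The only points that need attention are: invoking $\A_n$ to justify that $\tbG^{-1}$ exists and that the denominator $\Xcone^\tp\Dtbeta^{1/2}\bH\Dtbeta^{1/2}\Xcone$ defining $\tb_1$ is nonzero (here $\bH$ is the orthogonal projection onto $\mathrm{col}(\Dtbeta^{1/2}\tbX)^\perp$, $\Dtbeta^{1/2}$ is invertible since $\rho''>0$, and $\Xcone$ is independent of $\tbX$, so $\Dtbeta^{1/2}\Xcone\notin\mathrm{col}(\Dtbeta^{1/2}\tbX)$ almost surely), and keeping track that the $\gamma_i^*$ relevant to this lemma is the one coming from expanding $\rho'(\bX_i^\tp\tbb)$ about $\tbX_i^\tp\tbbeta$. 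In short, $\tb_1$ was reverse-engineered precisely so that $\bX^\tp\Dtbeta\bX(\tbb-\bbeta_0)$ reproduces the residual vector $\begin{bmatrix}\Xcone^\tp\brt\\\bzero\end{bmatrix}$, and the lemma is essentially the statement that this design works.
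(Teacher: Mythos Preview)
Your proof is correct and is precisely the computation the paper has in mind: the paper omits the details and simply cites \cite[Proposition 3.11]{el2015impact}, but the underlying argument there is exactly the block-Hessian calculation you carry out---Taylor-expand $\rho'(\bX_i^\tp\tbb)$ about $\tbX_i^\tp\tbbeta$, use the optimality of $\tbbeta$ to kill the last $p-1$ coordinates of $\sum_i\rho'(\tbX_i^\tp\tbbeta)\bX_i$, and then verify that the definition of $\tb_1$ makes the first coordinate cancel as well. There is nothing to add.
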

\begin{proof} 
	The proof follows exactly the same argument as in the proof of \cite[Proposition 3.11]{el2015impact}, and is thus omitted.
\end{proof}
%
The point of expressing $\nabla \Lcal (\tbb)$ in this way is that the difference $\tbb - \left[\begin{array}{c}
0\\
\tilde{\bm{\beta}}
\end{array}\right]$ is known explicitly from the definition of $\tbb$. 
Invoking Lemma \ref{lemma: nablal} and the definition \eqref{eq: defbtilde}  allows one to further upper bound (\ref{eq: diffnabla}) as
\begin{align}
\|\hbbeta - \tbb \| 
	~\lesssim~ 
	\frac{1}{n}\left\Vert \nabla\ell(\tilde{\bm{b}})\right\Vert ~&\lesssim~ 
	\sup_{i}\left|\rho''(\gamma_{i}^{*})-\rho''(\tilde{\bm{X}}_{i}^{\top}\tilde{\bm{\beta}})\right|\left\Vert \frac{1}{n}\sum_{i=1}^{n}\bm{X}_{i}\bm{X}_{i}^{\top}\right\Vert \left\Vert \tilde{\bm{b}}-\left[\begin{array}{c}
0\\
\tilde{\bm{\beta}}
\end{array}\right]\right\Vert  \nonumber \\
~&\lesssim~  \sup_i \left|\bX_i ^{\tp} \tbb - \tbX_i^{\tp} \tbbeta \right|  | \rho'''|_{\infty} \left \|\frac{1}{n} \sum\nolimits_{i=1}^{n} \bX_i \bX_i ^{\tp} \right \| \cdot |\tb_1 |   \sqrt{1+ \bw ^{\tp} \tbG^{-2} \bw} \nonumber \\
	& \lesssim~ |\tb_1 |  \sup_i \left|\bX_i ^{\tp} \tbb - \tbX_i^{\tp} \tbbeta \right|  \label{eq: u.b.1}
\end{align}
with probability at least $1-\exp(-\Omega(n))$. The 
last inequality here comes from 
 our assumption that $\sup_z|\rho'''(z) | < \infty$ together with Lemmas \ref{lemma: singval} and \ref{lemma: svd}.

In order to bound \eqref{eq: u.b.1}, we first make use of the definition of $\tbb$ to reach 
\begin{equation}
  \label{eq: diff}
  \sup_i \left|\bX_i^{\tp} \tbb - \tbX_i^{\tp} \tbbeta \right| =| \tb_1| \sup_i|X_{i1} - \tbX_i^{\tp} \tbG^{-1} \bw|.
\end{equation}
The following lemma provides an upper bound on $\sup_i|X_{i1} - \tbX_i^{\tp} \tbG^{-1} \bw|$. 

\begin{lemma}
\label{lemma: approx}
With $\tbG$ and $\bw$ as defined in \eqref{eq: defGtilde} and \eqref{eq: defw}, 
\begin{equation}
	\label{eq: approx}
	\opP \left( \sup_{1\leq i\leq n} \big|X_{i1} - \tbX_i ^{\tp} \tbG^{-1} \bw \big| \leq n^{o(1)} \right) \geq 1- o(1). 
\end{equation}
\end{lemma}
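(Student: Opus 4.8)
The plan is to prove Lemma \ref{lemma: approx} via a leave-one-out decoupling argument. The quantity $X_{i1} - \tbX_i^\top\tbG^{-1}\bw$ is exactly the $i$th residual of the $\Dtbeta$-weighted regression of the first column $\Xcone$ onto the remaining columns $\tbX$, so its size is governed by how close $\tbG^{-1}\bw$ is to a version that does not see the $i$th observation. The key structural fact, valid under the global null, is that $\tbbeta$ — and hence $\tbG$ and $\bw$'s dependence on $\tbbeta$ — is a function of $\tbX=\bX_{[-1]}$ only, and is therefore independent of $\Xcone$.

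\emph{Step 1 (rank-one reduction).} Put $c_i := \rho''(\tbX_i^\top\tbbeta)/n\ge 0$ and $\bw_{(i)} := \tfrac1n\sum_{j\neq i}\rho''(\tbX_j^\top\tbbeta)X_{j1}\tbX_j$, so that $\tbG = \tbGi + c_i\tbX_i\tbX_i^\top$ and $\bw = \bw_{(i)} + c_iX_{i1}\tbX_i$. A Sherman--Morrison computation yields the exact identity
\[
X_{i1} - \tbX_i^\top\tbG^{-1}\bw ~=~ \frac{X_{i1} - \tbX_i^\top\tbGi^{-1}\bw_{(i)}}{1 + c_i\,\tbX_i^\top\tbGi^{-1}\tbX_i},
\]
and since the denominator is $\ge 1$ on $\A_n$, it suffices to bound $\sup_i\bigl|X_{i1} - \tbX_i^\top\tbGi^{-1}\bw_{(i)}\bigr|$.

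\emph{Step 2 (full decoupling).} Since $\tbGi$ and $\bw_{(i)}$ still depend on $\tbX_i$ through $\tbbeta$, I would swap $\tbbeta$ for the leave-$i$-out estimate $\tbbetaimin$, introducing $\bw_{[-i]} := \tfrac1n\sum_{j\neq i}\rho''(\tbX_j^\top\tbbetaimin)X_{j1}\tbX_j$, and write
\[
\bigl|X_{i1}-\tbX_i^\top\tbGi^{-1}\bw_{(i)}\bigr| ~\le~ \bigl|X_{i1}-\tbX_i^\top\tbGimin^{-1}\bw_{[-i]}\bigr| + \|\tbX_i\|\,\bigl\|\tbGi^{-1}\bw_{(i)} - \tbGimin^{-1}\bw_{[-i]}\bigr\|.
\]
For the second term: on $\A_n$ one has $\|\tbGi^{-1}\|,\|\tbGimin^{-1}\|\lesssim 1$; moreover $\|\bw_{(i)}\|\lesssim 1$ with probability $1-o(1)$ (the $X_{j1}$ are mean-zero and independent of $\tbX$, so a second-moment bound plus concentration, combined with Lemma \ref{lemma: singval}, gives this); and $\|\tbGi-\tbGimin\|\lesssim\sup_{j\neq i}|\tbX_j^\top(\tbbeta-\tbbetaimin)|$ and $\|\bw_{(i)}-\bw_{[-i]}\|\lesssim\sup_{j\neq i}|\tbX_j^\top(\tbbeta-\tbbetaimin)|$ by $\sup_z|\rho'''(z)|<\infty$ and Lemma \ref{lemma: singval}. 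A leave-one-out stability estimate — analogous to Theorem \ref{thm: hbbetabdiff} and obtained by the same bootstrap argument as in \cite{karoui2013asymptotic,el2015impact} (expand the reduced-model gradient at $\tbbeta$, whose value is $-\rho'(\tbX_i^\top\tbbeta)\tbX_i$ after dropping the $i$th term, invert the Hessian which is $\succeq\lambda_{\mathrm{lb}}n\,\bm{I}$ on $\A_n$, and iterate using that $\tbX_i^\top\tbbetaimin$ is sub-Gaussian because $\tbbetaimin\perp\tbX_i$) — gives $\|\tbbeta-\tbbetaimin\|\lesssim n^{-1/2+o(1)}$ and $\sup_j|\tbX_j^\top(\tbbeta-\tbbetaimin)|\lesssim n^{-1/2+o(1)}$ uniformly over $i$. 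Since $\sup_i\|\tbX_i\|\lesssim\sqrt p\lesssim\sqrt n$, the second term is $\lesssim n^{o(1)}$.

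\emph{Step 3 (the decoupled term) and conclusion.} Now $\tbbetaimin$, $\tbGimin$ and $\bw_{[-i]}$ are functions of $\{\tbX_j,X_{j1}\}_{j\neq i}$ only, so $v_i:=\tbGimin^{-1}\bw_{[-i]}$ is independent of $(\tbX_i,X_{i1})$; conditionally on $v_i$ we get $X_{i1}-\tbX_i^\top v_i\sim\mathcal N(0,1+\|v_i\|^2)$, and on $\A_n$, $\|v_i\|\le\|\tbGimin^{-1}\|\,\|\bw_{[-i]}\|\lesssim 1$ with probability $1-o(1)$ (same second-moment argument). A Gaussian tail bound and a union bound over $1\le i\le n$ then give $\sup_i|X_{i1}-\tbX_i^\top v_i|\lesssim\sqrt{\log n}=n^{o(1)}$ with probability $1-o(1)$. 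Combining Steps 1--3 on the intersection of $\A_n$ with these finitely many high-probability events proves the lemma. The main obstacle is Step 2: establishing the uniform leave-one-out stability bounds requires a delicate bootstrap, since a priori $\tbX_i^\top\tbbeta$ is only $O(\sqrt n)$ — the dependence between $\tbX_i$ and $\tbbeta$ blocks the naive sub-Gaussian estimate — so one must iterate the stability bound to upgrade this to $O(\sqrt{\log n})$ before the rest of the argument closes.
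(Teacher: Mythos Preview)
Your approach is correct but substantially more involved than necessary; the paper proves this lemma in about ten lines by conditioning on the \emph{columns} rather than the rows. You yourself state the key structural fact---that $\tbbeta$, and hence $\tbG$ and $\Dtbeta$, depend only on $\tbX$ and are therefore independent of $\Xcone$---but then you do not exploit it directly. The paper simply writes
\[
\tbX_i^{\top}\tbG^{-1}\bw ~=~ \frac{1}{n}\,\tbX_i^{\top}\tbG^{-1}\tbX^{\top}\Dtbeta\,\Xcone,
\]
observes that conditional on $\tbX$ this is a mean-zero Gaussian (linear in $\Xcone$) with variance $\tfrac{1}{n^2}\tbX_i^{\top}\tbG^{-1}\tbX^{\top}\Dtbeta^{2}\tbX\tbG^{-1}\tbX_i \lesssim \tfrac{1}{n}\|\tbX_i\|^{2}\lesssim 1$ on $\A_n$, and then invokes a Gaussian tail bound plus a union bound over $i$. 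Since $|X_{i1}|\lesssim H_n$ trivially, the triangle inequality finishes.

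Your column-by-column decoupling (Step 3, where you make $v_i$ independent of $(\tbX_i,X_{i1})$) recovers exactly this Gaussian structure, but only after Steps 1--2 strip away the dependence of $\tbbeta$ on the $i$th row. That detour forces you to invoke the leave-one-observation-out stability machinery (essentially Lemmas \ref{lemma: hb} and \ref{lemma: fitdiff1} applied to the reduced model), which the paper develops for other, genuinely harder, statements but does not need here. The trade-off: your route would generalize to situations where $\bw$ depends on $\Xcone$ nonlinearly through $\tbbeta$, but under the global null that never happens, and the paper's argument is the right one to use.
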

\begin{proof} See Appendix \ref{sub:Proof-of-Lemmas-nablal-approx}.  
\end{proof}

In view of Lemma \ref{lemma: approx},  the second term in the right-hand side of (\ref{eq: diff}) is bounded above by $n^{o(1)}$ with high probability. Thus, in both the bounds \eqref{eq: u.b.1} and \eqref{eq: diff}, it only remains to analyze the term $\tb_1$. To this end, we control the numerator and the denominator of $\tb_1$ separately. 


\begin{itemize}
	\item
Recall from the definition (\ref{eq: deftb1}) that the numerator of $\tb_1$ is given by $\Xcone^{\tp} \brt$ and that $\brt$ is independent of $\Xcone$. Thus, conditional on $\tbX$, the quantity $\Xcone^{\tp}\brt$ is distributed as a Gaussian with mean zero and variance 
\[ 
	\sigma^2 = \sum\nolimits_{i=1}^n \big(\rho'(\tbX_i^{\tp}\tbbeta) \big)^2 .  
\]
Since $|\rho'(x)| = O(|x|)$, the variance is bounded by
\begin{equation}
	\sigma^2 ~\lesssim \tbbeta^{\tp} \left( \sum\nolimits_{i=1}^n \tbX_i \tbX_i^{\tp} \right) \tbbeta ~\lesssim~  n \| \tbbeta \|^2 \lesssim n
\end{equation}
with probability at least $1-\exp(-\Omega(n)))$, a consequence from
Theorem \ref{thm: normbound-MLE} and Lemma \ref{lemma: singval}.
Therefore, with probability  $1- o(1)$, we have 
\begin{equation}
\label{eq: btilnum}
	\frac{1}{\sqrt{n}} \Xcone^{\tp} \brt ~\lesssim~ n^{o(1)}  . 
\end{equation}


\item 
	We now move on to the denominator of $\tb_1$ in (\ref{eq: deftb1}). In the discussion following Lemma \ref{lemma: xinapprox} we showed $\frac{1}{n}  \Xcone^{\tp} \Dtbeta ^{1/2} \bH \Dtbeta^{1/2} \Xcone = \Omega (1)$ with probability $1-o(1)$.  
\end{itemize}
Putting the above bounds together, we conclude  
\begin{equation}\label{eq: btilde1}
	\opP \left( |\tb_1| \lesssim n^{-\frac{1}{2}+o(1)} \right) = 1-o(1) .
\end{equation}
Substitution into \eqref{eq: u.b.1} and \eqref{eq: diff} yields
\[
	\| \hat{\bm{\beta}} - \tilde{\bm{b}} \| ~\lesssim~  {n^{-1+o(1)}} \qquad
	\text{and} \qquad \sup_i \left|\bX_i^{\tp} \tbb - \tbX_i^{\tp} \tbbeta \right| ~\lesssim~ {n^{-1/2+o(1)} }
\]
%
with probability $1-o(1)$ as claimed.


\section{Discussion}\label{sec: discussion}

In this paper, we derived the high-dimensional asymptotic distribution
of the LLR under our modelling assumptions. In particular, we showed
that the LLR is inflated vis a vis the classical Wilks' approximation
and that this inflation grows as the dimensionality $\kappa$
increases. This inflation is typical of high-dimensional problems, and
one immediate practical consequence is that it explains why
classically computed p-values are completely off since they tend to be
far too small under the null hypothesis. In contrast, we have shown in
our simulations that our new limiting distribution yields reasonably
accurate p-values in finite samples. Having said this, our work raises
a few important questions that we have not answered and we conclude
this paper with a couple of them.
\begin{itemize}
\item We expect that our results continue to hold when the covariates
  are not normally distributed, see Section \ref{sec: numerics} for
  some numerical evidence in this direction. To be more precise, we
  expect the same limiting distribution to hold when the variables are
  simply sub-Gaussian.  If this were true, then this would imply that our
  rescaled chi-square has a form of universal validity.

\item The major limitation of our work is arguably the fact that our
  limiting distribution holds under the global null; that is, under
  the assumption that all the regression coefficients vanish. It is
  unclear to us how the distribution would change in the case where
  the coefficients are not all zero. In particular, would the limiting
  distribution depend upon the unknown values of these coefficients?
  Are there assumptions under which it would not? Suppose for instance
  that we model the regression coefficients as i.i.d.~samples from the
  mixture model
  \[
(1-\epsilon) \delta_0 + \epsilon \Pi^\star,
\]
where $0 < \epsilon < 1$ is a mixture parameter, $\delta_0$ is a point
mass at zero and $\Pi^\star$ is a distribution with vanishing mass at
zero. Then what would we need to know about $\epsilon$ and $\Pi^\star$
to compute the asymptotic distribution of the LLR under the null?
\end{itemize}

\section*{Acknowledgements}

E.~C.~was partially supported by the Office of Naval Research under
grant N00014-16-1-2712, and by the Math + X Award from the Simons
Foundation. Y.~C.~and P.~S.~are grateful to Andrea
Montanari for his help in understanding AMP and
\cite{donoho2013high}. Y.~C.~thanks Kaizheng Wang and Cong Ma for helpful discussion about \cite{el2015impact}, and  P.~S.~thanks Subhabrata Sen for
several helpful discussions regarding this project. E.~C.~would like
to thank Iain Johnstone for a helpful discussion as well.

\appendix

\section{Proofs for Eigenvalue Bounds}\label{sec: appendix1}

\subsection{Proof of Lemma \ref{lem:eigen-min-S}}
\label{sec:Proof-eigen-min-S}


Fix $\epsilon \geq 0$ sufficiently small. For 
any given $S\subseteq[n]$ obeying $|S|=(1-\epsilon)n$ and $0 \leq  t \leq \sqrt{1-\epsilon} - \sqrt{p/n}$ it follows
from \cite[Corollary 5.35]{vershynin2010introduction} that
\[
\lambda_{\min}\left(\frac{1}{n}\sum_{i\in S}\bm{X}_{i}\bm{X}_{i}^{\top}\right)
<
\frac{1}{n}\left(\sqrt{|S|}-\sqrt{p}-t\sqrt{n}\right)^{2}=\left(\sqrt{1-\epsilon}-\sqrt{\frac{p}{n}}-t\right)^{2}
\]
holds with probability at most $2\exp\left(-\frac{t^{2}|S|}{2}\right)=2\exp\left(-\frac{\left(1-\epsilon\right)t^{2}n}{2}\right)$.
Taking the union bound over all possible subsets $S$ of size
$(1-\epsilon)n$ gives
\begin{align*}
\  & \mathbb{P}\left\{ \exists S\subseteq[n]\text{ with }|S|=(1-\epsilon)n\quad\text{s.t.}\quad \frac{1}{n}\lambda_{\min}\left(\sum_{i\in S}\bm{X}_{i}\bm{X}_{i}^{\top}\right)<\left(\sqrt{1-\epsilon}-\sqrt{\frac{p}{n}}-t\right)^{2}\right\} \\
 & \quad\leq~{n \choose (1-\epsilon)n}2\exp\left(-\frac{\left(1-\epsilon\right)t^{2}n}{2}\right)\\
 & \quad\leq~2\exp\left(n H\left(\epsilon\right) - \frac{\left(1-\epsilon\right)t^{2}}{2}n\right),
\end{align*}
where the last line is a consequence of the inequality ${n \choose (1-\epsilon)n}\leq e^{n H(\epsilon)}$ \cite[Example 11.1.3]{cover2012elements}.

\subsection{Proof of Lemma \ref{lem:min-eigenvalue-bound}}
\label{sec:proof-min-eigenvalue-bound}

Define
\[
S_{B}\left(\bm{\beta}\right):=\left\{ i:\text{ }|\bm{X}_{i}^{\top}\bm{\beta}|\leq B\|\bm{\beta}\|\right\} 
\]
for any $B>0$ and any $\bbeta$. Then 
\begin{eqnarray*}
\sum_{i=1}^{n}\rho''\left(\bm{X}_{i}^{\top}\bm{\beta}\right)\bm{X}_{i}\bm{X}_{i}^{\top} & \succeq & \sum_{i\in S_{B}\left(\bm{\beta}\right)}\rho''\left(\bm{X}_{i}^{\top}\bm{\beta}\right)\bm{X}_{i}\bm{X}_{i}^{\top}\succeq\inf_{z:|z|\le B\|\bbeta \|}\rho''\left(z\right)\sum_{i\in S_{B}\left(\bm{\beta}\right)}\bm{X}_{i}\bm{X}_{i}^{\top}.
\end{eqnarray*}
If one also has $|S_{B}\left(\bm{\beta}\right)|\geq(1-\epsilon)n$ (for $\epsilon \geq 0$ sufficiently small),
then this together with Lemma \ref{lem:eigen-min-S} implies that
\[
\frac{1}{n}\sum_{i=1}^{n}\rho''\left(\bm{X}_{i}^{\top}\bm{\beta}\right)\bm{X}_{i}\bm{X}_{i}^{\top}\succeq\inf_{z:|z|\le B\|\bbeta \|}\rho''\left(z\right)\left(\sqrt{1-\epsilon}-\sqrt{\frac{p}{n}}-t\right)^{2}\bm{I}
\]
with probability at least $1-2\exp\left(-\left(\frac{\left(1-\epsilon\right)t^{2}}{2} - H\left(\epsilon\right)\right)n\right)$. 

Thus if we can ensure that  with high probability, $|S_{B}\left(\bm{\beta}\right)|\geq(1-\epsilon)n$ holds simultaneously for all $\bbeta$, then we are done. From Lemma \ref{lemma: singval}
we see that
$
\frac{1}{n}\left\Vert \bm{X}^{\top}\bm{X}\right\Vert 
	\leq 9
$
with probability exceeding $1-2\exp\left(-n/2\right)$. On this event, 
\begin{equation}
	\left\Vert \bm{X}\bm{\beta}\right\Vert ^{2}\leq9n\|\bm{\beta}\|^{2}, \qquad \forall\bbeta. 
	\label{eq:XB-UB}
\end{equation}
On the other hand, the definition of $S_B(\bm{\beta})$ gives
\begin{equation}
\left\Vert \bm{X}\bm{\beta}\right\Vert ^{2}\geq\sum_{i\notin S_{B}(\bm{\beta})}\left|\bm{X}_{i}^{\top}\bm{\beta}\right|^{2}
\geq \big(n-\left|S_{B}(\bm{\beta})\right|\big)\left(B\|\bm{\beta}\|\right)^{2}=n\left(1-\frac{\left|S_{B}(\bm{\beta})\right|}{n}\right)B^{2}\|\bm{\beta}\|^2.
\label{eq:XB-LB}
\end{equation}
Taken together, (\ref{eq:XB-UB}) and (\ref{eq:XB-LB}) yield
\[
\left|S_{B}(\bm{\beta})\right|\geq\left(1-\frac{9}{B^{2}}\right)n, \qquad \forall\bbeta
\]
with probability at least $1-2\exp(-n/2)$. Therefore, with probability
$1-2\exp(-n/2)$,
$
\left|S_{3/\sqrt{\epsilon}}(\bm{\beta})\right|\geq\left(1-\epsilon\right)n
$
holds simultaneously for all $\bbeta$.  Putting the above results
together and setting $t = 2\sqrt{\frac{H(\epsilon)}{1-\epsilon}}$ give
\[
\sum_{i=1}^{n}\rho''\left(\bm{X}_{i}^{\top}\bm{\beta}\right)\bm{X}_{i}\bm{X}_{i}^{\top}\succeq\inf_{z:|z|\le\frac{3\|\bbeta \|}{\sqrt{\epsilon}}}\rho''\left(z\right)\left(\sqrt{1-\epsilon}-\sqrt{\frac{p}{n}}- 2\sqrt{\frac{H(\epsilon)}{1-\epsilon}}\right)^{2}\bm{I}
\]
simultaneously for all $\bbeta$ with probability at least $1-2\exp\left(-nH\left(\epsilon\right)\right)
-2\exp\left(-{n}/2\right)$.

\section{Proof of Lemma \ref{lemma: monotonicity}}
\label{sub:Proof-Lemma-monotonicity}

Applying an integration by parts leads to
\begin{eqnarray*}
\mathbb{E}\left[\Psi'(\tau Z;b)\right] & = & {\displaystyle \int}_{-\infty}^{\infty}\Psi'(\tau z;b)\phi(z)\mathrm{d}z=\frac{1}{\tau} \Psi(\tau z;b)\phi(z)\Big|_{-\infty}^{\infty}-\frac{1}{\tau}{\displaystyle \int}_{-\infty}^{\infty}\Psi(\tau z;b)\phi'(z)\mathrm{d}z\\
 & = & - \frac{1}{\tau} {\displaystyle \int}_{-\infty}^{\infty}\Psi(\tau z;b)\phi'(z)\mathrm{d}z
\end{eqnarray*}
with $\phi(z)=\frac{1}{\sqrt{2\pi}}\exp(-z^{2}/2)$. This reveals
that
\begin{eqnarray}
G'(b) & = & -\frac{1}{\tau}{\displaystyle \int}_{-\infty}^{\infty}\frac{\partial\Psi(\tau z;b)}{\partial b}\phi'(z)\mathrm{d}z=-\frac{1}{\tau}{\displaystyle \int}_{-\infty}^{\infty}\frac{\rho'\left(\mathsf{prox}_{b\rho}(\tau z)\right)}{1+b\rho''\left(\mathsf{prox}_{b\rho}(\tau z)\right)}\phi'(z)\mathrm{d}z\nonumber \\
 & = & \frac{1}{\tau}\int_{0}^{\infty}\left(\frac{\rho'\left(\mathrm{prox}_{b\rho}(-\tau z)\right)}{1+x\rho''\left(\mathrm{prox}_{b\rho}(-\tau z)\right)}-\frac{\rho'\left(\mathrm{prox}_{b\rho}(\tau z)\right)}{1+x\rho''\left(\mathrm{prox}_{b\rho}(\tau z)\right)}\right)\phi'(z)\mathrm{d}z,\label{eq:G-prime-integral}
\end{eqnarray}
where the second identity comes from \cite[Proposition
6.4]{donoho2013high}, and the last identity holds since
$\phi'(z)=-\phi'(-z)$.

Next, we claim that 
\begin{itemize}
\item[(a)] The function $h\left(z\right):=\frac{\rho'\left(z\right)}{1+b\rho''\left(z\right)}$
is increasing in $z$; 
\item[(b)] $\mathrm{prox}_{b\rho}(z)$ is increasing in $z$. 
\end{itemize}
These two claims imply that
\[
\frac{\rho'\left(\mathrm{prox}_{b\rho}(-\tau z)\right)}{1+b\rho''\left(\mathrm{prox}_{b\rho}(-\tau z)\right)}-\frac{\rho'\left(\mathrm{prox}_{b\rho}(\tau z)\right)}{1+b\rho''\left(\mathrm{prox}_{b\rho}(\tau z)\right)}<0,\quad\forall z>0,
\]
which combined with the fact $\phi'(z)<0$ for $z>0$ reveals 
\[
\mathrm{sign}\left(\left(\frac{\rho'\left(\mathrm{prox}_{b\rho}(-\tau z)\right)}{1+b\rho''\left(\mathrm{prox}_{b\rho}(-\tau z)\right)}-\frac{\rho'\left(\mathrm{prox}_{b\rho}(\tau z)\right)}{1+b\rho''\left(\mathrm{prox}_{b\rho}(\tau z)\right)}\right)\phi'(z)\right)=1,\quad\forall z>0.
\]
In other words, the integrand in (\ref{eq:G-prime-integral}) is positive,
which allows one to conclude that $G'(b)>0$. 

We then move on to justify (a) and (b). For the first, the derivative of
$h$ is given by
\[h'(z)= \frac{\rho''(z)+b(\rho''(z))^2 - b\rho'(z)\rho'''(z) }{\left(1+b\rho''(z)  \right)^2}. \]
Since $\rho'$ is log concave, this directly yields $(\rho'')^2 - \rho' \rho''' > 0$. As $\rho'' > 0$ and $b \geq 0$, the above implies $h'(z) > 0$ for all $z$. 
The second claim follows from $\frac{\partial\mathrm{prox}_{b\rho}(z)}{\partial z}\geq\frac{1}{1+b\|\rho''\|_{\infty}}>0$
(cf.~\cite[Equation (56)]{donoho2013high}). 

It remains to analyze the behavior of $G$ in the limits when $b \rightarrow 0$ and $b \rightarrow \infty$. From \cite[Proposition 6.4]{donoho2013high}, $G(b)$ can also be expressed as 
\[G(b) = 1 - \E\left[\frac{1}{1+b\rho''(\prox_{b \rho}(\tau Z))}  \right].\]
Since $\rho''$ is bounded and the integrand is at most $1$, the dominated convergence theorem gives  
\[\lim_{b \rightarrow 0}G(b) =0. \]
When $b \rightarrow \infty$, $b\rho''(\prox_{b \rho}(\tau z)) \rightarrow \infty$ for a fixed $z$. Again by applying the dominated convergence theorem, 
\[\lim_{b \rightarrow \infty}G(b) =1.\]
It follows that
$\lim_{b \rightarrow 0}G(b) < \kappa < \lim_{b \rightarrow
  \infty}G(b)$
and, therefore, $G(b) = \kappa$ has a unique positive solution.
\begin{remark}
Finally, we show that the logistic and the probit effective links obey the assumptions of Lemma \ref{lemma: monotonicity}. We work with a fixed $\tau >0$.
\begin{itemize}
\item A direct computation shows that $\rho'$ is log-concave for the
  logistic model. For the probit, it is well-known that the reciprocal
  of the hazard function (also known as Mills' ratio) is strictly
  log-convex \cite{baricz2008mills}.

\item To check the other condition, recall that the proximal 	mapping operator satisfies 
	\begin{equation}
		b\rho'(\prox_{b \rho}(\tau z))+\prox_{b \rho}(\tau z) = \tau z. \label{eq:b1}
	\end{equation}
		For a fixed $z $, we claim that if $b \rightarrow \infty$, $\prox_{b \rho}(\tau z) \rightarrow -\infty$. To prove this claim, we start by assuming that this is not true. Then either $\prox_{b \rho}(\tau z)$ is bounded or diverges to $\infty$. If it is bounded, the LHS above diverges to $\infty$ while the RHS is fixed, which is a contradiction. Similarly if $\prox_{b \rho}(\tau z)$ diverges to $\infty,$ the left-hand side of (\ref{eq:b1}) diverges to $\infty$ while the right-hand side is fixed, which cannot be true as well. Further, when $b \rightarrow \infty$, we must have $\prox_{b \rho}(\tau z) \rightarrow -\infty$, $b \rho'(\prox_{b \rho}(\tau z)) \rightarrow \infty$, such that the difference of these two is $\tau z$. Observe that for the logistic, $\rho''(x)=\rho'(x)(1-\rho'(x))$ and for the probit, $\rho''(x) = \rho'(x)(\rho'(x)-x)$ \cite{sampford1953some}. Hence, combining the asymptotic behavior of $\prox_{b \rho}(\tau z)$ and $b \rho'(\prox_{b \rho}(\tau z)) $, we obtain that $b\rho''(\prox_{b \rho}(\tau z))$ diverges to $\infty$ in both  models when $b \rightarrow \infty$. 
\end{itemize}
\end{remark}

\section{Proof of Lemma \ref{lemma: varmap}}
\label{sub:Proof-Lemma-var-map}



\subsection{Proof of Part (i)}

Recall from 
\cite[Proposition 6.4]{donoho2013high}
that 
\begin{eqnarray}
	\kappa & = & \mathbb{E}\left[\Psi'\left(\tau Z;\text{ }b({\tau}) \right)\right]=1-\mathbb{E}\left[\frac{1}{1+b({\tau})\rho''\big(\mathsf{prox}_{b({\tau}) \rho}\left(\tau Z\right)\big)}\right].\label{eq:SE-b-proof}
\end{eqnarray}
If we denote $c:= \prox_{b\rho}(0)$, then $b(0) $ is given by the following relation: 
\[
  1- \kappa = \frac{1}{1+b(0) \rho''(c)} \quad \implies \quad b(0) = \frac{\kappa}{\rho''(c)(1-\kappa)} > 0 
\]
as $\rho''(c)>0$ for any given $c>0$. In addition, since $\rho'(c) > 0$, we have 
\[\Nu(0) = \frac{\Psi(0,b(0))^2}{\kappa} ~\overset{(\text{a})}{=}~ \frac{b(0)^2\rho'(c)^2}{\kappa} > 0, \]
where (a) comes from (\ref{eq:defn-psi}). 


\subsection{Proof of Part (ii)}

We defer the proof of this part to the supplemental materials \cite{LRTsupp2017}.



\section{Proof of Part (ii) of Theorem \ref{thm: normbound-MLE}}
\label{sub:Proof-Theorem-normbound}

As discussed in Section \ref{subsec:Proof-of-Part-II-PT}, it suffices
to (1) construct a set $\left\{ \mathcal{B}_{i}\mid1\leq i\leq N\right\} $
that forms a cover of the cone $\mathcal{A}$ defined in (\ref{eq:defn-event-A}),
and (2) upper bound $\mathbb{P}\left\{ \left\{ \bm{X}\bm{\beta}\mid\bm{\beta}\in\mathbb{R}^{p}\right\} \cap\mathcal{B}_{i}\neq\left\{ \bm{0}\right\} \right\} $.
In what follows, we elaborate on these two steps.
\begin{itemize}
\item \textbf{Step 1.} Generate $N=\exp\left(2\epsilon^{2}p\right)$ i.i.d.~points
$\bm{z}^{(i)}\sim\mathcal{N}(\bm{0},\frac{1}{p}\bm{I}_{p})$, $1\leq i\leq N$,
and construct a collection of convex cones
\[
\mathcal{C}_{i}:=\left\{ \bm{u}\in\mathbb{R}^{p}\left|\left\langle \bm{u},\frac{\bm{z}^{(i)}}{\|\bm{z}^{(i)}\|}\right\rangle \geq\epsilon\|\bm{u}\|\right.\right\} ,\qquad1\leq i\leq N.
\]
In words, $\mathcal{C}_{i}$ consists of all directions that have
nontrivial positive correlation with $\bm{z}^{(i)}$. With high
probability, this collection
$\left\{ \mathcal{C}_{i}\mid1\leq i\leq N\right\} $ forms a cover of
$\mathbb{R}^{p}$, a fact which is an immediate consequence of the
following
lemma. \begin{lemma}\label{lem:uniform-inner-product}Consider any
  given constant $0<\epsilon<1$, and let
  $N=\exp\left(2\epsilon^{2}p\right)$.  Then there exist some positive
  universal constants $c_{5},C_{5}>0$ such that with probability
  exceeding $1-C_{5}\exp\left(-c_{5}\epsilon^{2}p\right)$,
\[
\sum_{i=1}^{N}\bm{1}_{\left\{ \left\langle \bm{x},\bm{z}^{(i)}\right\rangle \geq\epsilon\|\bm{x}\|\|\bm{z}^{(i)}\|\right\} }\geq1
\]
holds simultaneously for all $\bm{x} \in \R^p$.\end{lemma}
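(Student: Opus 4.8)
The plan is a standard $\delta$-net plus union-bound argument over the unit sphere, exploiting that $N=\exp(2\epsilon^{2}p)$ is exponentially large while a single spherical cap --- even one with aperture slightly larger than the one in the statement --- has measure at least $e^{-c\epsilon^{2}p}$ for a constant $c<2$. First I would reduce to $\bm{x}\in\mathbb{S}^{p-1}$ (the event in the lemma is homogeneous in $\bm{x}$), and fix a slightly inflated threshold $t=\epsilon+\delta$ with $\delta=\epsilon/2$, so that $t=\tfrac32\epsilon<1$ provided $\epsilon$ is below a small universal constant --- which is the only regime relevant to the application in Section \ref{subsec:Proof-of-Part-II-PT}; the complementary range where $\epsilon^{2}p$ is bounded can be absorbed by choosing $C_{5}$ large enough that the asserted bound is vacuous there.

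Next, for a fixed unit vector $\bm{u}$, I would estimate
\[
q:=\opP\left\{\left\langle \bm{u},\frac{\bm{z}^{(i)}}{\|\bm{z}^{(i)}\|}\right\rangle\geq t\right\},
\]
which is the normalized measure of a spherical cap and is independent of $i$ and $\bm{u}$. Using the representation $\bm{z}^{(i)}/\|\bm{z}^{(i)}\|\eqd \bm{g}/\|\bm{g}\|$ with $\bm{g}\sim\dnorm(\bm{0},\bm{I}_{p})$, conditioning on $\|\bm{g}\|^{2}$ being close to $p$, and invoking a Gaussian lower-tail bound, one obtains $q\gtrsim \tfrac{1}{t\sqrt{p}}\,e^{-t^{2}p/2}\geq \tfrac{c'}{\epsilon\sqrt{p}}\,e^{-\frac{9}{8}\epsilon^{2}p}$. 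Since $\tfrac98<2$, this gives $Nq\gtrsim \tfrac{1}{\epsilon\sqrt{p}}\,e^{\frac{7}{8}\epsilon^{2}p}$, i.e.\ $Nq$ is exponentially large in $\epsilon^{2}p$.

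Then I would take a $\delta$-net $\mathcal{N}$ of $\mathbb{S}^{p-1}$ with $|\mathcal{N}|\leq (1+2/\delta)^{p}=(1+4/\epsilon)^{p}$. For each fixed $\bm{u}\in\mathcal{N}$, independence of the $\bm{z}^{(i)}$ yields $\opP\{\langle \bm{u},\bm{z}^{(i)}/\|\bm{z}^{(i)}\|\rangle< t\ \forall\,i\}=(1-q)^{N}\leq e^{-Nq}$, which is \emph{doubly} exponentially small in $\epsilon^{2}p$. A union bound over $\mathcal{N}$ --- whose log-cardinality is only $O(p\log(1/\epsilon))$ --- therefore produces an event $\mathcal{E}$ with $\opP(\mathcal{E})\geq 1-C_{5}\exp(-c_{5}\epsilon^{2}p)$ on which every $\bm{u}\in\mathcal{N}$ admits some index $i$ with $\langle\bm{u},\bm{z}^{(i)}/\|\bm{z}^{(i)}\|\rangle\geq t$. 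Finally, on $\mathcal{E}$, given an arbitrary $\bm{x}\in\mathbb{S}^{p-1}$ I pick $\bm{u}\in\mathcal{N}$ with $\|\bm{x}-\bm{u}\|\leq\delta$ and the corresponding $i$; Cauchy--Schwarz gives $\langle\bm{x},\bm{z}^{(i)}/\|\bm{z}^{(i)}\|\rangle\geq t-\delta=\epsilon$, hence $\langle\bm{x},\bm{z}^{(i)}\rangle\geq\epsilon\|\bm{x}\|\|\bm{z}^{(i)}\|$, so the indicator sum in the lemma is at least $1$; rescaling then covers general $\bm{x}\neq\bm{0}$.

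I expect the only genuinely delicate point to be the cap estimate: one must lower bound $q$ for the inflated threshold $t>\epsilon$ sharply enough that $Nq$ stays exponentially large, which constrains the net scale $\delta$ to be a sufficiently small constant multiple of $\epsilon$ (the requirement $t^{2}/2<2\epsilon^{2}$ forces $t<2\epsilon$, so $\delta=\epsilon/2$ is safe). Everything after that is comfortable, since a doubly-exponentially small per-point failure probability easily absorbs the singly-exponential size of the net.
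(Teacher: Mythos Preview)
Your proposal is correct and follows essentially the same strategy as the paper: a net over the sphere plus a union bound, exploiting that the per-net-point failure probability is doubly exponentially small in $\epsilon^{2}p$ and therefore swamps the singly-exponential net cardinality. The only differences are cosmetic --- you work directly with $\bm{z}^{(i)}/\|\bm{z}^{(i)}\|$ and an $\epsilon/2$-net at the inflated threshold $t=\tfrac{3}{2}\epsilon$, whereas the paper uses the unnormalized $\bm{z}^{(i)}$ together with a separate norm-concentration step and a much finer $p^{-2}$-net, landing at $\tfrac{1}{30}\epsilon$ before rescaling.
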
 With our
family $\left\{ \mathcal{C}_{i}\mid1\leq i\leq N\right\} $ we can
introduce
\begin{equation}
\mathcal{B}_{i}:=\mathcal{C}_{i}\cap\left\{ \bm{u}\in\mathbb{R}^{n}\mid\sum_{j=1}^{n}\max\left\{ -u_{j},0\right\} \leq\epsilon\sqrt{n}\left\langle \bm{u},\frac{\bm{z}^{(i)}}{\|\bm{z}^{(i)}\|}\right\rangle \right\} ,\quad1\leq i\leq N,\label{eq:defn-Bi}
\end{equation}
which in turn forms a cover of the nonconvex cone $\mathcal{A}$
defined in (\ref{eq:defn-event-A}). To justify this, note that for any
$\bm{u}\in\mathcal{A}$, one can find $i\in\{1,\cdots,N\}$ obeying
$\bm{u}\in\mathcal{C}_{i}$, or equivalently,
$\left\langle
  \bm{u},\frac{\bm{z}^{(i)}}{\|\bm{z}^{(i)}\|}\right\rangle
\geq\epsilon\|\bm{u}\|$,
with high probability.  Combined with the membership to $\mathcal{A}$
this gives 
\[
\sum_{j=1}^{n}\max\left\{ -u_{j},0\right\} \leq\epsilon^{2}\sqrt{n}\|\bm{u}\|\leq\epsilon\sqrt{n}\left\langle \bm{u},\frac{\bm{z}^{(i)}}{\|\bm{z}^{(i)}\|}\right\rangle ,
\]
indicating that $\bm{u}$ is contained within some $\mathcal{B}_{i}$. 
\item \textbf{Step 2.} We now move on to control $\mathbb{P}\left\{ \left\{ \bm{X}\bm{\beta}\mid\bm{\beta}\in\mathbb{R}^{p}\right\} \cap\mathcal{B}_{i}\neq\left\{ \bm{0}\right\} \right\} $.
If the statistical dimensions of the two cones obey $\delta\left(\mathcal{B}_{i}\right)<n-\delta\left(\left\{ \bm{X}\bm{\beta}\mid\bm{\beta}\in\mathbb{R}^{p}\right\} \right)=n-p$,
then an application of \cite[Theorem I]{amelunxen2014living} gives
\begin{eqnarray}
\mathbb{P}\left\{ \left\{ \bm{X}\bm{\beta}\mid\bm{\beta}\in\mathbb{R}^{p}\right\} \cap\mathcal{B}_{i}\neq\left\{ \bm{0}\right\} \right\}  & \leq & 4\exp\left\{ -\frac{1}{8}\left(\frac{n-\delta\left(\left\{ \bm{X}\bm{\beta}\mid\bm{\beta}\in\mathbb{R}^{p}\right\} \right)-\delta\left(\mathcal{B}_{i}\right)}{\sqrt{n}}\right)^{2}\right\} \nonumber \\
 & \leq & 4\exp\left\{ -\frac{\left(n-p-\delta(\mathcal{B}_{i})\right)^{2}}{8n}\right\} .\label{eq:intersection-Bi}
\end{eqnarray}
It then comes down to upper bounding $\delta(\mathcal{B}_{i})$, which
is the content of the following lemma. \begin{lemma}\label{lem:stat-dim-Bi}Fix
$\epsilon>0$. When $n$ is sufficiently large, the statistical dimension
of the convex cone $\mathcal{B}_{i}$ defined in (\ref{eq:defn-Bi})
obeys
\begin{eqnarray}
\delta(\mathcal{B}_{i}) & \leq & \left(\frac{1}{2}+2\sqrt{2}\epsilon^{\frac{3}{4}}+10H(2\sqrt{\epsilon})\right)n,\label{eq:stat-dim-Bi-UB}
\end{eqnarray}
where $H(x):=-x\log x-(1-x)\log(1-x)$.\end{lemma}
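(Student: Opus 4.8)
The plan is to pass from the statistical dimension to a supremum of a Gaussian process and then extract the two constraints defining $\mathcal{B}_{i}$, with almost all of the work going into the constraint on the negative part of the candidate directions. Fix $i$, condition on $\bm{z}^{(i)}$ and set $\bm{v}:=\bm{z}^{(i)}/\|\bm{z}^{(i)}\|$; note $\mathcal{B}_{i}$ is a closed convex cone, being the intersection of two sublevel sets of convex positively--homogeneous functions. For a closed convex cone $\mathcal{K}$ and $\bm{g}\sim\mathcal{N}(\bm{0},\bm{I}_{n})$, the projection identity $\|\Pi_{\mathcal{K}}(\bm{g})\|^{2}=\langle\bm{g},\Pi_{\mathcal{K}}(\bm{g})\rangle$ gives $\|\Pi_{\mathcal{K}}(\bm{g})\|\le\big(\sup_{\bm{u}\in\mathcal{K}\cap\mathbb{S}^{n-1}}\langle\bm{g},\bm{u}\rangle\big)_{+}$, hence $\delta(\mathcal{K})\le\mathbb{E}\big[M_{\mathcal{K}}(\bm{g})_{+}^{2}\big]$ with $M_{\mathcal{K}}(\bm{g}):=\sup_{\bm{u}\in\mathcal{K}\cap\mathbb{S}^{n-1}}\langle\bm{g},\bm{u}\rangle$. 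Since every $\bm{u}\in\mathcal{B}_{i}\cap\mathbb{S}^{n-1}$ obeys $\sum_{j}\max\{-u_{j},0\}\le\epsilon\sqrt{n}\,\langle\bm{u},\bm{v}\rangle\le\epsilon\sqrt{n}$, it suffices to bound $\mathbb{E}[M(\bm{g})_{+}^{2}]$, where $M(\bm{g}):=\sup\{\langle\bm{g},\bm{u}\rangle:\|\bm{u}\|_{2}=1,\ \|\bm{u}_{-}\|_{1}\le\epsilon\sqrt{n}\}$; this quantity no longer depends on $i$ or $\bm{v}$.

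Next I would split $\bm{u}=\bm{u}_{+}-\bm{b}$ into its positive and (negated) negative parts, so $\bm{u}_{+},\bm{b}\ge\bm{0}$ have disjoint supports, $\|\bm{u}_{+}\|_{2}^{2}+\|\bm{b}\|_{2}^{2}=1$ and $\|\bm{b}\|_{1}\le\epsilon\sqrt{n}$, and write $\langle\bm{g},\bm{u}\rangle\le\langle\bm{g}_{+},\bm{u}_{+}\rangle+\langle(-\bm{g})_{+},\bm{b}\rangle\le\|\bm{g}_{+}\|_{2}+\langle(-\bm{g})_{+},\bm{b}\rangle$. The first term carries the leading order: $\|\bm{g}_{+}\|_{2}^{2}$ has mean $n/2$ and $\bm{g}\mapsto\|\bm{g}_{+}\|_{2}$ is $1$-Lipschitz, so $\|\bm{g}_{+}\|_{2}\le\sqrt{n/2}+\epsilon^{3/4}\sqrt{n}$ outside an event of probability $e^{-\Omega(\epsilon^{3/2}n)}$, which produces the $\tfrac12 n$. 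For the second term the crux is to use \emph{both} budgets on $\bm{b}$: for every $\mu\ge0$ convex duality gives
\[
  \langle(-\bm{g})_{+},\bm{b}\rangle\ \le\ \mu\|\bm{b}\|_{1}+\big\|(-\bm{g}-\mu\bm{1})_{+}\big\|_{2}\,\|\bm{b}\|_{2}\ \le\ \mu\epsilon\sqrt{n}+\big\|(-\bm{g}-\mu\bm{1})_{+}\big\|_{2},
\]
and choosing the constant $\mu\asymp\sqrt{\log(1/\epsilon)}$ makes $\mathbb{E}\|(-\bm{g}-\mu\bm{1})_{+}\|_{2}^{2}=n\,\mathbb{E}[(-g-\mu)_{+}^{2}]\lesssim n\epsilon(\log(1/\epsilon))^{-3/2}$, so that, again via $1$-Lipschitz concentration, $\langle(-\bm{g})_{+},\bm{b}\rangle=O\big(\epsilon\sqrt{n\log(1/\epsilon)}\big)=O(\epsilon^{3/4}\sqrt{n})$, using $\log(1/\epsilon)\le1/\sqrt{\epsilon}$ for all $\epsilon\in(0,1)$. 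Combining, $M(\bm{g})\le\sqrt{n/2}+O(\epsilon^{3/4}\sqrt{n})$ with probability $1-e^{-\Omega(\epsilon^{3/2}n)}$, and squaring via $(\sqrt{n/2}+x)^{2}=\tfrac12 n+\sqrt{2n}\,x+x^{2}$ bounds $M(\bm{g})^{2}$ by $\big(\tfrac12+2\sqrt{2}\,\epsilon^{3/4}\big)n\le\big(\tfrac12+2\sqrt{2}\,\epsilon^{3/4}+10H(2\sqrt{\epsilon})\big)n$ on this event, once the implicit constant in the error term is chased. On the exceptional event $M(\bm{g})_{+}\le\|\bm{g}\|$ contributes at most $\sqrt{\mathbb{E}\|\bm{g}\|^{4}}\,e^{-\Omega(\epsilon^{3/2}n)/2}=o(1)$, absorbed by the slack once $n$ is large --- which is where the hypothesis ``$n$ sufficiently large'' enters. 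Hence $\delta(\mathcal{B}_{i})=\mathbb{E}[M_{\mathcal{B}_{i}}(\bm{g})_{+}^{2}]\le\mathbb{E}[M(\bm{g})_{+}^{2}]$ satisfies \eqref{eq:stat-dim-Bi-UB}. (The entropy term $10H(2\sqrt{\epsilon})$ is there to allow a cruder treatment of $\bm{b}$: one may instead zero out the $\le 2\sqrt{\epsilon}\,n$ most negative coordinates of $\bm{u}$ and handle that block by a union bound over the $\binom{n}{2\sqrt{\epsilon}n}\le e^{nH(2\sqrt{\epsilon})}$ possible supports, the ``light'' remainder being negligible exactly as above.)

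The hard part is the quantitative bookkeeping of the negative-part estimate: one must exploit the $\ell_{1}$ budget on $\bm{b}$ so that its contribution scales like $\epsilon\sqrt{n\log(1/\epsilon)}$ rather than like the naive $\sqrt{|\mathrm{supp}\,\bm{b}|}=\Theta(\epsilon^{1/4}\sqrt{n})$ (which is far too weak), choose $\mu$ (or the truncation level) so that every logarithmic and lower-order term falls strictly under $2\sqrt{2}\,\epsilon^{3/4}+10H(2\sqrt{\epsilon})$, and leave enough slack to absorb the exceptional event for large $n$. Everything else --- the projection identity, the reduction to an $i$-free supremum, and the Gaussian concentration of the $1$-Lipschitz functionals --- is routine.
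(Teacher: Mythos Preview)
Your overall strategy is sound and genuinely different from the paper's. The paper uses the complementary identity $\delta(\mathcal{B}_i)=n-\mathbb{E}\big[\min_{\bm{u}\in\mathcal{B}_i}\|\bm{g}-\bm{u}\|^2\big]$, enlarges $\mathcal{B}_i$ to $\mathcal{D}_i=\{\bm{u}:\|\bm{u}_-\|_1\le\epsilon\sqrt{n}\|\bm{u}\|\}$, restricts to $\|\bm{u}\|\le2\|\bm{g}\|$, and lower-bounds the distance by a \emph{hard-threshold} split of the negative coordinates at level $\sqrt{\epsilon/n}\,\|\bm{g}\|$: the at most $2\sqrt{\epsilon}\,n$ ``large--negative'' indices are handled by the union bound $\mathbb{E}\big[\max_{|S|=2\sqrt{\epsilon}n}\sum_{i\in S}g_i^2\big]\le 10H(2\sqrt{\epsilon})n$, and the ``light'' remainder produces the $2\sqrt{2}\,\epsilon^{3/4}$ term via Cauchy--Schwarz. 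Your parenthetical at the end describes this route almost verbatim. By contrast you work with the Gaussian-width identity $\delta(\mathcal{B}_i)=\mathbb{E}[M_{\mathcal{B}_i}(\bm{g})_+^2]$ and control the negative part by a \emph{soft-threshold}/Lagrangian device; done right, this yields $(\tfrac12+C_0\epsilon^{3/4})n$ with no entropy term at all, so it is potentially sharper, while the paper's route delivers the stated constants with less bookkeeping.

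There is one concrete quantitative slip. With $\mu$ chosen so that $\mathbb{E}[(-g-\mu)_+^2]\asymp\epsilon(\log(1/\epsilon))^{-3/2}$ (i.e.\ $\mu=\sqrt{2\log(1/\epsilon)}$), the residual $\|(-\bm{g}-\mu\bm{1})_+\|_2$ is of order $\sqrt{n\epsilon}\,(\log(1/\epsilon))^{-3/4}$, which is \emph{not} $O(\epsilon\sqrt{n\log(1/\epsilon)})$ and not $O(\epsilon^{3/4}\sqrt{n})$ either, since $\epsilon^{-1/4}(\log(1/\epsilon))^{-3/4}\to\infty$. The repair is immediate: take $\mu=\sqrt{3\log(1/\epsilon)}$, so that $\mathbb{E}[(-g-\mu)_+^2]\asymp\epsilon^{3/2}(\log(1/\epsilon))^{-3/2}$ and the residual is $o(\epsilon^{3/4}\sqrt{n})$, while the Lagrange piece stays $\mu\epsilon\sqrt{n}\le\sqrt{3}\,\epsilon^{3/4}\sqrt{n}$. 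Squaring then gives $\delta(\mathcal{B}_i)\le(\tfrac12+C_0\epsilon^{3/4})n$ for an absolute $C_0$; since $H(2\sqrt{\epsilon})\gg\epsilon^{3/4}$ as $\epsilon\downarrow0$, the inequality \eqref{eq:stat-dim-Bi-UB} follows for small $\epsilon$, and your treatment of the exceptional event (bounding $M(\bm{g})_+\le\|\bm{g}\|$ there and using Cauchy--Schwarz) is correct and is where ``$n$ sufficiently large'' is used.
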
 Substitution into
(\ref{eq:intersection-Bi}) gives
\begin{eqnarray}
\mathbb{P}\left\{ \left\{ \bm{X}\bm{\beta}\mid\bm{\beta}\in\mathbb{R}^{p}\right\} \cap\mathcal{B}_{i}\neq\left\{ \bm{0}\right\} \right\}  & \leq & 4\exp\left\{ -\frac{\left(\left(\frac{1}{2}-2\sqrt{2}\epsilon^{\frac{3}{4}}-10H(2\sqrt{\epsilon})\right)n-p\right)^{2}}{8n}\right\} \nonumber \\
 & = & 4\exp\left\{ -\frac{1}{8}\left(\frac{1}{2}-2\sqrt{2}\epsilon^{\frac{3}{4}}-10H(2\sqrt{\epsilon})-\frac{p}{n}\right)^{2}n\right\} .
\end{eqnarray}

\end{itemize}
Finally, we prove Lemmas
\ref{lem:uniform-inner-product}-\ref{lem:stat-dim-Bi} in the next
subsections. These are the only remaining parts for the proof of
Theorem \ref{thm: normbound-MLE}.

\subsection{Proof of Lemma \ref{lem:uniform-inner-product}}

To begin with, it is seen that all $\|\bm{z}^{(i)}\|$ concentrates
around $1$. Specifically, apply \cite[Proposition 1]{hsu2012tail}
to get
\[
\mathbb{P}\left\{ \|\bm{z}^{(i)}\|^{2}>1+2\sqrt{\frac{t}{p}}+\frac{2t}{p}\right\} \leq e^{-t},
\]
and set $t=3\epsilon^{2}p$ to reach
\[
\mathbb{P}\left\{ \|\bm{z}^{(i)}\|^{2}>1+10\epsilon\right\} \text{ }\leq\text{ }\mathbb{P}\left\{ \|\bm{z}^{(i)}\|^{2}>1+2\sqrt{3}\epsilon+6\epsilon^{2}\right\} \text{ }\leq\text{ }e^{-3\epsilon^{2}p}.
\]
Taking the union bound we obtain
\begin{eqnarray}
\mathbb{P}\left\{ \exists1\leq i\leq N\text{ s.t. }\|\bm{z}^{(i)}\|^{2}>1+10\epsilon\right\}  & \leq & Ne^{-3\epsilon^{2}p}=e^{-\epsilon^{2}p}.\label{eq:zi-norm-bound}
\end{eqnarray}

Next, we note that it suffices to prove Lemma \ref{lem:uniform-inner-product}
for all unit vectors $\bm{x}$. The following lemma provides a bound
on $\left\langle \bm{z}^{(i)},\bm{x}\right\rangle $ for any fixed
unit vector $\bm{x}\in\mathbb{R}^{p}$. 

\begin{lemma}\label{lem:z-fixed-x}Consider any fixed unit vector
$\bm{x}\in\mathbb{R}^{p}$ and any given constant $0<\epsilon<1$,
and set $N=\exp\left(2\epsilon^{2}p\right)$. There exist positive
universal constants $c_{5},c_{6},C_{6}>0$ such that
\begin{align}
 & \mathbb{P}\left\{ \sum_{i=1}^{N}\bm{1}_{\left\{ \left\langle \bm{z}^{(i)},\bm{x}\right\rangle \geq\frac{1}{2}\epsilon\right\} }\leq\exp\left(\left(1-o\left(1\right)\right)\frac{7}{4}\epsilon^{2}p\right)\right\} \leq \exp\left\{ -2\exp\left(\left(1-o\left(1\right)\right)\frac{7}{4}\epsilon^{2}p\right)\right\} .\label{eq:large-delta1}
\end{align}

\end{lemma}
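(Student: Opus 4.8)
The plan is to observe that, for the \emph{fixed} unit vector $\bm{x}$, the inner products $\langle \bm{z}^{(i)},\bm{x}\rangle$ are i.i.d.\ $\dnorm(0,1/p)$ (since $\|\bm{x}\|=1$ and $\bm{z}^{(i)}\sim\dnorm(\bzero,\tfrac1p\Id_p)$), so that the counting statistic
\[
  S := \sum_{i=1}^N \bm{1}_{\left\{\langle \bm{z}^{(i)},\bm{x}\rangle \geq \frac12\epsilon\right\}}
\]
is \emph{exactly} a $\dbin(N,q)$ random variable with success probability
\[
  q := \opP\left\{\langle \bm{z}^{(i)},\bm{x}\rangle \geq \tfrac12\epsilon\right\} = \opP\left\{Z \geq \tfrac12\epsilon\sqrt{p}\right\}, \qquad Z\sim\dnorm(0,1).
\]
First I would invoke the elementary Gaussian lower tail bound $\opP\{Z\geq t\}\geq \tfrac{t}{\sqrt{2\pi}(1+t^2)}e^{-t^2/2}$ with $t=\tfrac12\epsilon\sqrt p$; since $\epsilon$ is a fixed constant and $p\to\infty$, the polynomial prefactor $\Theta\!\big(1/(\epsilon\sqrt p)\big)$ is subexponential and gets absorbed, giving $q\geq \exp\!\big(-(\tfrac18+o(1))\epsilon^2 p\big)$. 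Hence the mean $\mu := Nq = \exp(2\epsilon^2 p)\,q$ obeys $\mu \geq \exp\!\big((\tfrac{15}{8}-o(1))\epsilon^2 p\big)$.

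Next I would apply the multiplicative Chernoff lower-tail inequality for a Binomial: $\opP\{S\leq(1-\delta)\mu\}\leq \exp(-\delta^2\mu/2)$ for $0<\delta<1$, and take $\delta=\tfrac12$ to get $\opP\{S\leq \mu/2\}\leq \exp(-\mu/8)$. Because $\tfrac{15}{8}>\tfrac74$, for $p$ large we have simultaneously $\mu/2 \geq \exp\!\big((1-o(1))\tfrac74\epsilon^2 p\big)$ and $\mu/8 \geq 2\exp\!\big((1-o(1))\tfrac74\epsilon^2 p\big)$ — the constants $\tfrac12,\tfrac18$ and the spare $\tfrac18\epsilon^2 p$ of room in the exponent are all swallowed by the $o(1)$'s. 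Chaining these with monotonicity of $\{S\leq\cdot\}$ yields
\[
  \opP\left\{S \leq \exp\!\big((1-o(1))\tfrac74\epsilon^2 p\big)\right\} \;\leq\; \opP\{S\leq\mu/2\} \;\leq\; \exp(-\mu/8) \;\leq\; \exp\!\left\{-2\exp\!\big((1-o(1))\tfrac74\epsilon^2 p\big)\right\},
\]
which is exactly the claimed bound (the constants $c_5,c_6,C_6$ being implicit in the $o(1)$ terms).

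This argument is essentially routine; the only real care needed is the bookkeeping of the $o(1)$ terms inside the \emph{double} exponential — making sure that the polynomial corrections in the Gaussian tail and the constants lost in the Chernoff step are genuinely negligible against $\epsilon^2 p$, which holds since $\epsilon>0$ is fixed and $p\to\infty$. The one point I would emphasize at the outset is the strict gap $\tfrac{15}{8}-\tfrac74=\tfrac18>0$ between the true exponential growth rate of $\mu=\E[S]$ and the rate $\tfrac74$ asserted for the lower bound on $S$: this positive margin is precisely what makes both comparisons in the last display go through, and it explains the particular choice of the exponent $\tfrac74$ in the lemma.
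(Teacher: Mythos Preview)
Your proof is correct and follows essentially the same route as the paper: recognize that $S$ is $\dbin(N,q)$ with $q=1-\Phi(\tfrac12\epsilon\sqrt{p})=\exp\big(-(\tfrac18+o(1))\epsilon^2 p\big)$, so $\mu=Nq$ grows like $\exp\big((\tfrac{15}{8}-o(1))\epsilon^2 p\big)$, and then apply a binomial concentration inequality, exploiting the gap $\tfrac{15}{8}-\tfrac74>0$. The only cosmetic difference is that the paper phrases the concentration step via the complementary count $\sum_i\bm{1}_{\{\langle\bm{z}^{(i)},\bm{e}_1\rangle<\zeta\}}$ and invokes the Hoeffding-type bound \cite[Theorem~A.1.4]{alon2008probabilistic} with $t=1-\Phi(\zeta\sqrt{p})$, whereas you use the multiplicative Chernoff bound directly on $S$; the arithmetic and the resulting exponents are identical.
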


Recognizing that Lemma \ref{lem:uniform-inner-product} is a uniform
result, we need to extend Lemma \ref{lem:z-fixed-x} to all $\bm{x}$
simultaneously, which we achieve via the standard covering argument.
Specifically, one can find a set $\mathcal{C}:=\left\{ \bm{x}^{(j)}\in\mathbb{R}^{p}\mid1\leq j\leq K\right\} $
of unit vectors with cardinality $K=\left(1+2p^{2}\right)^{p}$ to
form a cover of the unit ball of resolution $p^{-2}$ \cite[Lemma 5.2]{vershynin2010introduction};
that is, for any unit vector $\bm{x}\in\mathbb{R}^{p}$, there exists
a $\bm{x}^{(j)}\in\mathcal{C}$ such that
\[
\|\bm{x}^{(j)}-\bm{x}\|\leq p^{-2}.
\]
Apply Lemma \ref{lem:z-fixed-x} and take the union bound to arrive
at
\begin{align}
\sum_{i=1}^{N}\bm{1}_{\left\{ \left\langle \bm{z}^{(i)},\bm{x}^{(j)}\right\rangle \geq\frac{1}{2}\epsilon\right\} }\geq\exp\left(\left(1-o(1)\right)\frac{7}{4}\epsilon^{2}p\right)>1, & \qquad1\leq j\leq K
\end{align}
with probability exceeding $1-K\exp\left\{ -2\exp\left(\left(1-o(1)\right)\frac{7}{4}\epsilon^{2}p\right)\right\} \geq1-\exp\left\{ -2\left(1-o\left(1\right)\right)\exp\left(\left(1-o(1)\right)\frac{7}{4}\epsilon^{2}p\right)\right\} $.
This guarantees that for each $\bm{x}^{(j)}$, one can find at least
one $\bm{z}^{(i)}$ obeying 
\[
\left\langle \bm{z}^{(i)},\bm{x}^{(j)}\right\rangle \geq\frac{1}{2}\epsilon.
\]
This result together with (\ref{eq:zi-norm-bound}) yields that with
probability exceeding $1-C\exp\left(-c\epsilon^2 p\right)$, for some universal constants $C,c>0$. 
\begin{eqnarray*}
\left\langle \bm{z}^{(i)},\bm{x}\right\rangle  \geq  \left\langle
  \bm{z}^{(i)},\bm{x}^{(j)}\right\rangle -\left\langle
  \bm{z}^{(i)},\bm{x}^{(j)}-\bm{x}\right\rangle & \geq & \left\langle \bm{z}^{(i)},\bm{x}^{(j)}\right\rangle -\|\bm{z}^{(i)}\|\cdot\|\bm{x}^{(j)}-\bm{x}\| \\
	& \geq & \frac{1}{2}\epsilon-\frac{1}{p^{2}}\|\bm{z}^{(i)}\|
          \geq   \frac{\frac{1}{2}\epsilon}{\sqrt{1+10\epsilon}}\|\bm{z}^{(i)}\|-\frac{1}{p^{2}}\|\bm{z}^{(i)}\| \\
	& \geq & \frac{1}{30}\epsilon\|\bm{z}^{(i)}\|
\end{eqnarray*}
holds simultaneously for all unit vectors $\bm{x}\in\mathbb{R}^{p}$.
Since $\epsilon>0$ can be an arbitrary constant, this concludes the
proof.  

\begin{proof}[\textbf{Proof of Lemma \ref{lem:z-fixed-x}}]Without
loss of generality, it suffices to consider $\bm{x}=\bm{e}_{1}=[1,0,\cdots,0]^{\top}$.
For any $t>0$ and any constant $\zeta>0$, it comes from \cite[Theorem A.1.4]{alon2008probabilistic}
that
\begin{align*}
 & \mathbb{P}\left\{ \frac{1}{N}\sum_{i=1}^{N}\bm{1}_{\left\{ \left\langle \bm{z}^{(i)},\bm{e}_{1}\right\rangle <\zeta\right\} }>\left(1+t\right)\Phi\left(\zeta\sqrt{p}\right)\right\} \leq \exp\left(-2t^{2}\Phi^2\left(\zeta\sqrt{p}\right)N\right).
\end{align*}
Setting $t=1-\Phi\left(\zeta\sqrt{p}\right)$ gives
\[
\mathbb{P}\left\{ \frac{1}{N}\sum_{i=1}^{N}\bm{1}_{\left\{ \left\langle \bm{z}^{(i)},\bm{e}_{1}\right\rangle <\zeta\right\} }>\left(2-\Phi\left(\zeta\sqrt{p}\right)\right)\Phi\left(\zeta\sqrt{p}\right)\right\} \leq \exp\left(-2\left(1-\Phi\left(\zeta\sqrt{p}\right)\right)^{2}\Phi^2\left(\zeta\sqrt{p}\right)N\right).
\]

Recall that for any $t>1$, one has
$(t^{-1} -t^{-3})\phi(t) \leq1-\Phi(t)\leq t^{-1} \phi(t)$ which
implies that
\[
1-\Phi\left(\zeta\sqrt{p}\right)=\exp\left(-\frac{\left(1+o\left(1\right)\right)\zeta^{2}p}{2}\right).
\]
Taking $\zeta=\frac{1}{2}\epsilon$, we arrive at
\begin{eqnarray*}
\left(2-\Phi\left(\zeta\sqrt{p}\right)\right)\Phi\left(\zeta\sqrt{p}\right) & = & 1-\exp\left(-\left(1+o\left(1\right)\right)\zeta^{2}p\right)=1-\exp\left(-\left(1+o\left(1\right)\right)\frac{1}{4}\epsilon^{2}p\right),\\
\left(1-\Phi\left(\zeta\sqrt{p}\right)\right)^{2}\Phi^2\left(\zeta\sqrt{p}\right) & = & \exp\left(-\left(1+o\left(1\right)\right)\zeta^{2}p\right)=\exp\left(-\left(1+o\left(1\right)\right)\frac{1}{4}\epsilon^{2}p\right)\gg\frac{1}{N}.
\end{eqnarray*}
This justifies that
\begin{align*}
  \mathbb{P}\left\{ \sum_{i=1}^{N}\bm{1}_{\left\{ \left\langle \bm{z}^{(i)},\bm{e}_{1}\right\rangle \geq\frac{1}{2}\epsilon\right\} }\leq N\exp\left(-\left(1+o\left(1\right)\right)\frac{1}{4}\epsilon^{2}p\right)\right\} 
 & =\mathbb{P}\left\{ \frac{1}{N}\sum_{i=1}^{N}\bm{1}_{\left\{ \left\langle \bm{z}^{(i)},\bm{e}_{1}\right\rangle <\zeta\right\} }>\left(2-\Phi\left(\zeta\sqrt{p}\right)\right)\Phi\left(\zeta\sqrt{p}\right)\right\} \\
 &  \leq \exp\left\{
   -2\exp\left(-\left(1+o\left(1\right)\right)\frac{1}{4}\epsilon^{2}p\right)N\right\}
  \\
& =\exp\left\{ -2\exp\left(\left(1-o\left(1\right)\right)\frac{7}{4}\epsilon^{2}p\right)\right\} 
\end{align*}
as claimed. 
\end{proof}

\subsection{Proof of Lemma \ref{lem:stat-dim-Bi}}

First of all, recall from the definition (\ref{eq:defn-statistical-dimension})
that
\begin{eqnarray*}
\delta(\mathcal{B}_{i}) & = & \mathbb{E}\left[\left\Vert \Pi{}_{\mathcal{B}_{i}}\left(\bm{g}\right)\right\Vert ^{2}\right]=\mathbb{E}\left[\left\Vert \bm{g}\right\Vert ^{2}-\min_{\bm{u}\in\mathcal{B}_{i}}\left\Vert \bm{g}-\bm{u}\right\Vert ^{2}\right]=n-\mathbb{E}\left[\min_{\bm{u}\in\mathcal{B}_{i}}\left\Vert \bm{g}-\bm{u}\right\Vert ^{2}\right]\\
 & \leq & n-\mathbb{E}\left[\min_{\bm{u}\in\mathcal{D}_{i}}\left\Vert \bm{g}-\bm{u}\right\Vert ^{2}\right],
\end{eqnarray*}
where $\bm{g}\sim\mathcal{N}\left(\bm{0},\bm{I}_{n}\right)$, and
$\mathcal{D}_{i}$ is a superset of $\mathcal{B}_{i}$ defined by
\begin{equation}
\mathcal{D}_{i}:=\left\{ \bm{u}\in\mathbb{R}^{n}\mid\sum\nolimits _{j=1}^{n}\max\left\{ -u_{j},0\right\} \leq\epsilon\sqrt{n}\|\bm{u}\|\right\} .\label{eq:defn-Di}
\end{equation}
Recall from the triangle inequality that
\begin{eqnarray*}
\left\Vert \bm{g}-\bm{u}\right\Vert  & \geq & \|\bm{u}\|-\|\bm{g}\|>\|\bm{g}\|=\|\bm{g}-\bm{0}\|,\qquad\forall\bm{u}:\text{ }\|\bm{u}\|>2\|\bm{g}\|.
\end{eqnarray*}
Since $\bm{0}\in\mathcal{D}_{i}$, this implies that
\[
\Big\|\arg\min_{\bm{u}\in\mathcal{D}_{i}}\|\bm{g}-\bm{u}\|\Big\|\leq2\|\bm{g}\|,
\]
revealing that
\[
\mathbb{E}\left[\min_{\bm{u}\in\mathcal{D}_{i}}\left\Vert \bm{g}-\bm{u}\right\Vert ^{2}\right]=\mathbb{E}\left[\min_{\bm{u}\in\mathcal{D}_{i},\|\bm{u}\|\leq2\|\bm{g}\|}\left\Vert \bm{g}-\bm{u}\right\Vert ^{2}\right].
\]

In what follows, it suffices to look at the set of $\bm{u}$'s within
$\mathcal{D}_{i}$ obeying $\|\bm{u}\|\leq2\|\bm{g}\|$, which verify 
\begin{equation}
\sum\nolimits _{j=1}^{n}\max\left\{ -u_{j},0\right\} \leq\epsilon\sqrt{n}\|\bm{u}\|\leq2\epsilon\sqrt{n}\|\bm{g}\|.\label{eq:u-constraint-g}
\end{equation}
It is seen that
\begin{eqnarray}
\|\bm{g}-\bm{u}\|^{2} & \geq & \sum_{i:g_{i}<0}\left(g_{i}-u_{i}\right)^{2}=\left\{ \sum_{i:g_{i}<0,u_{i}\geq0}+\sum_{i:g_{i}<0,\text{ }-\sqrt{\frac{\epsilon}{n}}\|\bm{g}\|<u_{i}<0}+\sum_{i:g_{i}<0,\text{ }u_{i}\leq-\sqrt{\frac{\epsilon}{n}}\|\bm{g}\|}\right\} \left(g_{i}-u_{i}\right)^{2}\nonumber \\
 & \geq & \sum_{i:g_{i}<0,u_{i}\geq0}g_{i}^{2}+\sum_{i:g_{i}<0,\text{ }-\sqrt{\frac{\epsilon}{n}}\|\bm{g}\|<u_{i}<0}\left(g_{i}-u_{i}\right)^{2}\nonumber \\
 & \geq & \sum_{i:g_{i}<0,u_{i}\geq0}g_{i}^{2}+\sum_{i:g_{i}<0,\text{ }-\sqrt{\frac{\epsilon}{n}}\|\bm{g}\|<u_{i}<0}\left(g_{i}^{2}-2u_{i}g_{i}\right)\nonumber \\
 & \geq & \sum_{i:g_{i}<0,\text{ }u_{i}>-\sqrt{\frac{\epsilon}{n}}\|\bm{g}\|}g_{i}^{2}-\sum_{i:g_{i}<0,\text{ }-\sqrt{\frac{\epsilon}{n}}\|\bm{g}\|<u_{i}<0}2u_{i}g_{i}.\label{eq:g-u-LB}
\end{eqnarray}

\begin{enumerate}
\item Regarding the first term of (\ref{eq:g-u-LB}), we first recognize
that
\[
\left\{ i\mid u_{i}\leq-\sqrt{\frac{\epsilon}{n}}\|\bm{g}\|\right\} \leq\frac{\sum_{i:\text{ }u_{i}<0}|u_{i}|}{\sqrt{\frac{\epsilon}{n}}\|\bm{g}\|}=\frac{\sum_{i=1}^{n}\max\left\{ -u_{i},0\right\} }{\sqrt{\frac{\epsilon}{n}}\|\bm{g}\|}\leq2\sqrt{\epsilon}n,
\]
where the last inequality follows from the constraint (\ref{eq:u-constraint-g}).
As a consequence, 
\begin{eqnarray*}
\sum_{i:g_{i}<0,\text{ }u_{i}>-\sqrt{\frac{\epsilon}{n}}\|\bm{g}\|}g_{i}^{2} & \geq & \sum_{i:g_{i}<0}g_{i}^{2}-\sum_{i:u_{i}\leq-\sqrt{\frac{\epsilon}{n}}\|\bm{g}\|}g_{i}^{2}\\
 & \geq & \sum_{i:g_{i}<0}g_{i}^{2}-\max_{S\subseteq[n]:\text{
          }|S|=2\sqrt{\epsilon}n}\sum_{i\in S}g_{i}^{2}. 
\end{eqnarray*}

\item Next, we turn to the second term of (\ref{eq:g-u-LB}), which can
be bounded by
\begin{eqnarray*}
\sum_{i:g_{i}<0,\text{ }-\sqrt{\frac{\epsilon}{n}}\|\bm{g}\|<u_{i}<0}u_{i}g_{i} & \leq & \sqrt{\left(\sum_{i:g_{i}<0,\text{ }-\sqrt{\frac{\epsilon}{n}}\|\bm{g}\|<u_{i}<0}u_{i}^{2}\right)\left(\sum_{i:g_{i}<0,\text{ }-\sqrt{\frac{\epsilon}{n}}\|\bm{g}\|<u_{i}<0}g_{i}^{2}\right)}\\
 & \leq & \sqrt{\left(\max_{i:-\sqrt{\frac{\epsilon}{n}}\|\bm{g}\|<u_{i}<0}|u_{i}|\right)\left(\sum_{i:u_{i}<0}|u_{i}|\right)\cdot\|\bm{g}\|^{2}}\\
 & \leq & \sqrt{\sqrt{\frac{\epsilon}{n}}\|\bm{g}\|\left(\sum_{i:u_{i}<0}|u_{i}|\right)\cdot\|\bm{g}\|^{2}}\leq\sqrt{2}\epsilon^{\frac{3}{4}}\|\bm{g}\|^{2},
\end{eqnarray*}
where the last inequality follows from the constraint (\ref{eq:u-constraint-g}).
\end{enumerate}
Putting the above results together, we have 
\[
\left\Vert \bm{g}-\bm{u}\right\Vert ^{2}\geq\sum_{i:g_{i}<0}g_{i}^{2}-\max_{S\subseteq[n]:\text{ }|S|=2\sqrt{\epsilon}n}\sum_{i\in S}g_{i}^{2}-2\sqrt{2}\epsilon^{\frac{3}{4}}\|\bm{g}\|^{2}
\]
for any $\bm{u}\in\mathcal{D}_{i}$ obeying $\|\bm{u}\|\leq2\|\bm{g}\|$,
whence
\begin{eqnarray}
\mathbb{E}\left[\min_{\bm{u}\in\mathcal{D}_{i}}\left\Vert \bm{g}-\bm{u}\right\Vert ^{2}\right] & \geq & \mathbb{E}\left[\sum_{i:g_{i}<0}g_{i}^{2}-\max_{S\subseteq[n]:\text{ }|S|=2\sqrt{\epsilon}n}\sum_{i\in S}g_{i}^{2}-2\sqrt{2}\epsilon^{\frac{3}{4}}\|\bm{g}\|^{2}\right]\nonumber \\
 & = & \left(\frac{1}{2}-2\sqrt{2}\epsilon^{\frac{3}{4}}\right)n-\mathbb{E}\left[\max_{S\subseteq[n]:\text{ }|S|=2\sqrt{\epsilon}n}\sum_{i\in S}g_{i}^{2}\right].\label{eq:g-u-LB2}
\end{eqnarray}

Finally, it follows from \cite[Proposition 1]{hsu2012tail} that for
any $t>2\sqrt{\epsilon}n$, 
\[
\mathbb{P}\left\{ \sum_{i\in S}g_{i}^{2}\geq5t\right\} \leq\mathbb{P}\left\{ \sum_{i\in S}g_{i}^{2}\geq |S|+2\sqrt{|S|t}+2t\right\} \leq e^{-t},
\]
which together with the union bound gives
\[
\mathbb{P}\left\{ \max_{S\subseteq[n]:\text{ }|S|=
    2\sqrt{\epsilon}n}\sum_{i\in S}g_{i}^{2}\geq5t\right\}
\leq\sum_{S\subseteq[n]:\text{
  }|S|=2\sqrt{\epsilon}n}\mathbb{P}\left\{ \sum_{i\in
    S}g_{i}^{2}\geq5t\right\} \leq\exp\left\{
  H\left(2\sqrt{\epsilon}\right)n-t\right\}. 
\]
This gives 
\begin{eqnarray*}
\mathbb{E}\left[\max_{S\subseteq[n]:\text{ }|S|=2\sqrt{\epsilon}n}\sum_{i\in S}g_{i}^{2}\right] & = & {\displaystyle \int}_{0}^{\infty}\mathbb{P}\left\{ \max_{S\subseteq[n]:\text{ }|S|= 2\sqrt{\epsilon}n}\sum_{i\in S}g_{i}^{2}\geq t\right\} \mathrm{d}t\\
 & \leq & 5H\left(2\sqrt{\epsilon}\right)n+{\displaystyle \int}_{5H\left(2\sqrt{\epsilon}\right)n}^{\infty}\exp\left\{ H\left(2\sqrt{\epsilon}\right)n-\frac{1}{5}t\right\} \mathrm{d}t\\
 & < & 10H\left(2\sqrt{\epsilon}\right)n,
\end{eqnarray*}
for any given $\epsilon >0$ with the proviso that $n$ is sufficiently
large. This combined with (\ref{eq:g-u-LB2}) yields
\begin{eqnarray}
\mathbb{E}\left[\min_{\bm{u}\in\mathcal{D}_{i}}\left\Vert \bm{g}-\bm{u}\right\Vert ^{2}\right] & \geq & \left(\frac{1}{2}-2\sqrt{2}\epsilon^{\frac{3}{4}}-10H(2\sqrt{\epsilon})\right)n
\end{eqnarray}
as claimed.

\section{Proof of Lemma \ref{lemma: svd}}
\label{sub:Proof-of-Lemmas-svd}

Throughout, we shall restrict ourselves on the event $\A_n$ as defined in \eqref{eq:defnAn}, on which $\tbG\succeq  \lambda_{\mathrm{lb}}\bm{I}$. Recalling the definitions of $\tbG$ and $\bw$ from  \eqref{eq: defGtilde} and  \eqref{eq: defw}, we see that
\begin{align}
	\bw^{\tp}\tbG^{-2}\bw &= ~\frac{1}{n^2} \Xcone^{\tp}\Dtbeta \tbX \left(\frac{1}{n} \tbX^{\tp}\Dtbeta \tbX\right)^{-2} \tbX^{\tp} \Dtbeta \Xcone  \nonumber \\ 
			      &\leq  ~ \frac{\big\| \Xcone^{\tp} \big\|^2}{n}  \left\| \frac{1}{n} \Dtbeta \tbX \left(\frac{1}{n} \tbX^{\tp}\Dtbeta \tbX\right)^{-2} \tbX^{\tp} \Dtbeta \right \|.  \label{eq:wGw-UB}
\end{align}
	If we let the singular value decomposition of $\frac{1}{\sqrt{n}}\Dtbeta^{1/2} \tbX $ be $\bU \bSigma \bV^{\tp}$, then a little algebra gives $\bSigma \succeq \sqrt{\lambda_{\mathrm{lb}} }\bm{I}$ and
	\begin{align*}
\frac{1}{n} \Dtbeta^{1/2} \tbX \left( \frac{1}{n} \tbX'\Dtbeta \tbX \right)^{-2} \tbX^{\tp} \Dtbeta^{1/2} 
	&  = \bU \bSigma^{-2} \bU^{\tp}  ~\preceq~ \lambda_{\mathrm{lb}}^{-1} \bm{I}. \ \  
\end{align*} 
Substituting this  into (\ref{eq:wGw-UB}) and using the fact $\| \Xcone \|^2 \lesssim n$ with high probability  (by Lemma \ref{lemma: singval}), we obtain
\begin{align*}
\bw^{\tp} \tbG^{-2} \bw  ~\lesssim~ \frac{1}{n\lambda_L}  \| \Xcone \|^2 \lesssim 1
\end{align*} 
	with probability at least $1-\exp(-\Omega(n))$.

\section{Proof of Lemma \ref{lemma: xinapprox}}
\label{sec:Proof-ofLemma-xinapprox}
Throughout this and the subsequent sections, we consider $H_n$ and $K_n$ to be two diverging sequences with the following properties:
 \begin{equation}\label{eq: HnKn}
  H_n=o\left(n^\epsilon\right), \ \ K_n=o\left(n^\epsilon\right), \ \ n^2\exp\left(-c_1 H_n^2\right) = o(1), \ \   n\exp\left(-c_2K_n^2\right) =o(1), 
 \end{equation}
 for any constants $c_i > 0$, $i=1,2$ and any $\epsilon>0$.  This
 lemma is an analogue of \cite[Proposition 3.18]{el2015impact}. We
 modify and adapt the proof ideas to establish the result in our
 setup.  Throughout we shall restrict ourselves to the event $\A_n$,
 on which $\tbG \succeq \lambda_{\mathrm{lb}}\bm{I}$.
	
	Due to independence between $\Xcone$ and $\{\Dtbeta, \bH\}$, one can invoke the Hanson-Wright inequality \cite[Theorem 1.1]{rudelson2013hanson} to yield 
\begin{multline*}
	 \opP\left( \left| \frac{1}{n}  \Xcone^{\tp} \Dtbeta ^{1/2} \bH \Dtbeta^{1/2} \Xcone  - \frac{1}{n} \tr \left(\Dtbeta^{1/2} \bH \Dtbeta^{1/2} \right) \right| > t ~\Bigg|~ \bH, \Dtbeta\right)  \\
		 \leq 2 \exp \left (-c \min \left \{\frac{t^2}{\frac{K^4}{n^2} \big\|\Dtbeta^{1/2}\bH \Dtbeta^{1/2} \big\|^2_{\mathrm{F}}}, \frac{t}{\frac{K^2}{n} \big\|\Dtbeta^{1/2}\bH \Dtbeta^{1/2} \big\|} \right \}   \right )  \\
	  \leq 2 \exp \left (-c \min \left \{\frac{t^2}{\frac{K^4}{n} \big\|\Dtbeta^{1/2}\bH \Dtbeta^{1/2} \big\|^2}, \frac{t}{\frac{K^2}{n} \big\|\Dtbeta^{1/2}\bH \Dtbeta^{1/2} \big\|} \right \}   \right ),
\end{multline*}
	where $\|.\|_{\mathrm{F}}$ denotes the Frobenius norm. 
Choose $t = C^2 \big\|\Dtbeta^{1/2}\bH \Dtbeta^{1/2} \big\| H_n/\sqrt{n}$ with $C>0$ a sufficiently large constant, and take $H_n$ to be as in \eqref{eq: HnKn}. Substitution into the above inequality and unconditioning give
\begin{align}\label{eq: xintraceapprox}
	& \opP\left( \left| \frac{1}{n}  \Xcone^{\tp} \Dtbeta ^{1/2} \bH \Dtbeta^{1/2} \Xcone  - \frac{1}{n} \tr \left(\Dtbeta^{1/2} \bH \Dtbeta^{1/2} \right) \right| > \frac{1}{\sqrt{n}} C^2 H_n \|\Dtbeta^{1/2}\bH \Dtbeta^{1/2} \|  \right) \nonumber \\
	& \qquad \leq ~2\exp \left(-c \min\left \{\frac{C^4 H_n^2}{K^4}, \frac{C^2 \sqrt{n}H_n}{K^2} \right \} \right) = C\exp\left(-c H_n^2  \right) = o(1),
\end{align}
for some universal constants $C, c>0$.

We are left to analyzing
$\tr\big( \Dtbeta^{1/2} \bH \Dtbeta^{1/2} \big)$. Recall from the
definition \eqref{eq: defH} of $\bH$ that
\[\Dtbeta^{1/2} \bH \Dtbeta^{1/2} = \Dtbeta - \frac{1}{n} \Dtbeta \tbX \tbG^{-1} \tbX^{\tp} \Dtbeta ,  \]
and, hence, 
\begin{equation} 
	\label{eq: trace1}
	\tr\left(\Dtbeta^{1/2}\bH \Dtbeta^{1/2}\right) = \sum_{i=1}^n \left(\rho''(\tbX_i^{\tp}\tbbeta) - \frac{\rho''(\tbX_i^{\tp}\tbbeta)^2}{n} \tbX_i^{\tp}\tbG^{-1} \tbX_i \right) .
\end{equation}
This requires us to analyze  $\tbG^{-1}$  carefully. To this end, recall that the matrix $\tbGi$  defined  in \eqref{eq: defGi} obeys
\[
	\tbGi = \tbG - \frac{1}{n} \rho''(\tbX^{\tp}\tbbeta) \tbX_i \tbX_i^{\tp } .
\]
Invoking Sherman-Morrison-Woodbury formula (e.g.~\cite{hager1989updating}), we have 
\begin{align}\label{eq: SMWapp}
\tbG^{-1} & = \tbGi^{-1} -\frac{\frac{\rho''(\tbX_i^{\tp}\tbbeta)}{n}\tbGi^{-1} \tbX_i \tbX_i^{\tp} \tbGi^{-1}}{1+\frac{\rho''(\tbX_i^{\tp}\tbbeta)}{n}\tbX_i^{\tp}\tbGi^{-1}\tbX_i} .
\end{align}
It follows that 
\[\tbX_i^{\tp}\tbG^{-1} \tbX_i  = \tbX_i^{\tp}\tbGi^{-1} \tbX_i - \frac{\frac{\rho''(\tbX_i^{\tp}\tbbeta)}{n}(\bX_i^{\tp}\tbGi^{-1} \tbX_i )^2}{1+\frac{\rho''(\tbX_i^{\tp}\tbbeta)}{n}\tbX_i^{\tp}\tbGi^{-1}\tbX_i},   \]
which implies that 
\begin{equation}\label{eq: trace2}
	\tbX_i^{\tp}\tbG^{-1} \tbX_i  = \frac{\tbX_i^{\tp}\tbGi^{-1}\tbX_i}{1+\frac{\rho''(\tbX_i^{\tp}\tbbeta)}{n}\tbX_i^{\tp}\tbGi^{-1}\tbX_i}.
\end{equation}
The relations \eqref{eq: trace1} and \eqref{eq: trace2} taken collectively reveal that 
\begin{equation}\label{eq: traceapprox1}
\frac{1}{n}\tr\left (\Dtbeta^{1/2} \bH \Dtbeta^{1/2} \right) = \frac{1}{n} \sum_{i=1}^n \frac{\rho''(\tbX_i \tbbeta)}{1+\frac{\rho''(\tbX_i^{\tp}\tbbeta)}{n}\tbX_i^{\tp}\tbGi^{-1}\tbX_i}. 
\end{equation}

We shall show that the trace above is close to  $\tr(\Id - \bH)$ up to
some factors. For this purpose we analyze the latter quantity in two
different ways. To begin with, observe that 
\begin{equation} \label{eq: traceapprox2}
\tr(\Id-\bH)= \tr\Bigg(\frac{\Dtbeta^{1/2}\tbX\tbG^{-1}\tbX^{\tp}\Dtbeta^{1/2}}{n} \Bigg) =\tr(\tbG\tbG^{-1})=p-1.
\end{equation}
On the other hand,  it directly follows  from the definition of $\bH$
and \eqref{eq: trace2} that the $i{\text{th}}$ diagonal entry of $\bH$ is given by
\[ H_{i,i} = \frac{1}{ 1+\frac{\rho''(\tbX_i^{\tp}\tbbeta)}{n}\tbX_i^{\tp}\tbGi^{-1}\tbX_i} . \]
Applying this relation, we can compute  $\tr(\Id-\bH)$  analytically
as follows:  
\begin{align}
 \tr(\Id-\bH) & =  \sum_{i} \frac{\frac{\rho''(\tbX_i^{\tp}\tbbeta)}{n}\tbX_i^{\tp}\tbGi^{-1}\tbX_i}{1+\frac{\rho''(\tbX_i^{\tp}\tbbeta)}{n}\tbX_i^{\tp}\tbGi^{-1}\tbX_i} \label{eq: triminh}\\
& =  \sum_{i}  \frac{\rho''(\tbX_i^{\tp}\tbbeta)\talpha + \frac{\rho''(\tbX_i^{\tp}\tbbeta)}{n}\tbX_i^{\tp}\tbGi^{-1}\tbX_i - \rho''(\tbX_i^{\tp}\tbbeta)\talpha}{1+\frac{\rho''(\tbX_i^{\tp}\tbbeta)}{n}\tbX_i^{\tp}\tbGi^{-1}\tbX_i} \nonumber\\
& = \sum_{i} \rho''(\tbX_i^{\tp}\tbbeta)\talpha H_{i,i}  +  \sum_{i}  \frac{\rho''(\tbX_i^{\tp}\tbbeta) \left( \frac{1}{n}\tbX_i^{\tp}\tbGi^{-1}\tbX_i -\talpha \right)}{1+\frac{\rho''(\tbX_i^{\tp}\tbbeta)}{n}\tbX_i^{\tp}\tbGi^{-1}\tbX_i} \label{eq: traceapprox3} ,
\end{align}
where  $\talpha := \frac{1}{n} \tr\left(\tbG^{-1}\right)$. 

Observe that the first quantity in the right-hand side above is simply $\talpha \tr\big(\Dtbeta^{1/2}\bH \Dtbeta^{1/2}\big)$. For simplicity, denote 
\begin{equation} \label{eq: defetai}
\eta_i = \frac{1}{n}\tbX_i^{\tp}\tbGi^{-1}\tbX_i -\talpha.
\end{equation}
Note that  $\tbGi\succ \bm{0}$ on $\A_n $ and that $\rho''>0$. Hence the denominator in the second term in \eqref{eq: traceapprox3} is greater than $1$ for all $i$. Comparing \eqref{eq: traceapprox2} and \eqref{eq: traceapprox3}, we deduce that
\begin{equation}
	\label{eq: traceapprox4}
\left|\frac{p-1}{n}  - \frac{1}{n}\tr\left(\Dtbeta^{1/2} \bH \Dtbeta^{1/2}\right) \talpha \right| ~\leq~ \sup_i|\eta_i| \cdot \frac{1}{n}\sum_i|\rho''(\tbX_i^{\tp}\tbbeta)| ~\lesssim~  \sup_i|\eta_i|
\end{equation}
on $\A_n$. It thus suffices to control $\sup_i|\eta_i|$. The above
bounds together with Lemma \eqref{eq: Anprob} and the proposition
below complete the proof.


\begin{proposition}
\label{prop: etai}
Let $\eta_i$ be as defined in \eqref{eq: defetai}. Then there exist universal constants $C_1,C_2, C_3 > 0$ such that 
\begin{align}
\opP \left(\sup_i |\eta_i| \leq \frac{C_1 K_n^2 H_n}{\sqrt{n}} \right) & \geq 1- C_2n^2\exp\left(-c_2 H_n^2\right)-C_3n\exp\left(-c_3K_n^2\right) \nonumber\\
&- \exp\left(-C_4n\left(1+o(1)\right)\right)=1-o(1)\nonumber,
\end{align}
where $K_n , H_n$ are diverging sequences as specified in \eqref{eq: HnKn} 
\end{proposition}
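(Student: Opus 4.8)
The plan is to reduce the quadratic form $\frac1n\tbX_i^{\tp}\tbGi^{-1}\tbX_i$ to one whose matrix is \emph{independent} of $\tbX_i$, apply the Hanson--Wright inequality in that decoupled form, and then use a short chain of rank-one/perturbation identities to identify $\frac1n\tr(\tbGi^{-1})$ with $\talpha=\frac1n\tr(\tbG^{-1})$. Everything is carried out on the event $\A_n$ of \eqref{eq:defnAn} (on which every Gram matrix in sight is $\succeq\lambda_{\mathrm{lb}}\Id$), intersected with the events $\{\|\tbX^{\tp}\tbX\|\le 9n\}$, $\{\sup_i\|\tbX_i\|\le 2\sqrt p\}$ of Lemma~\ref{lemma: singval} and $\{\|\tbbeta\|,\|\tbbetaimin\|=O(1)\}$ of Theorem~\ref{thm: normbound-MLE}; all of these carry probability $1-\exp(-\Omega(n))$.

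The decoupling step is the heart of the matter. For each $i$ I would replace $\tbGi$ by the genuinely $\tbX_i$-independent matrix $\tbGimin$ of \eqref{eq: tbGimin}, built from the leave-one-out reduced MLE $\tbbetaimin$. Comparing the optimality conditions $\sum_{j}\rho'(\tbX_j^{\tp}\tbbeta)\tbX_j=\bm 0$ and $\sum_{j\ne i}\rho'(\tbX_j^{\tp}\tbbetaimin)\tbX_j=\bm 0$ and Taylor-expanding yields $\tbbeta-\tbbetaimin=-\rho'(\tbX_i^{\tp}\tbbeta)\,M_i^{-1}\tbX_i$ with $M_i=\sum_{j\ne i}\rho''(\xi_j)\tbX_j\tbX_j^{\tp}\succeq\lambda_{\mathrm{lb}}n\Id$; since $|\rho'(\tbX_i^{\tp}\tbbeta)|\lesssim|\tbX_i^{\tp}\tbbeta|\lesssim K_n$ (using $\|\tbbeta\|=O(1)$ and a Gaussian tail bound) and, conditionally on $M_i$ and $\tbX_j$, the scalar $\frac1n\tbX_j^{\tp}M_i^{-1}\tbX_i$ is centered Gaussian with variance $O(1/n)$, I obtain the leave-one-out prediction-stability bound $\sup_{j\ne i}|\tbX_j^{\tp}(\tbbeta-\tbbetaimin)|\lesssim K_n^2/\sqrt n=n^{-1/2+o(1)}$ with probability $1-o(1)$. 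Writing $E_i:=\tbGi-\tbGimin=\frac1n\sum_{j\ne i}\rho'''(\zeta_j)\big(\tbX_j^{\tp}(\tbbeta-\tbbetaimin)\big)\tbX_j\tbX_j^{\tp}$, this bound with Lemma~\ref{lemma: singval} gives $\|E_i\|\lesssim n^{-1/2+o(1)}$ and $\|E_i\|_{*}\lesssim n^{1/2+o(1)}$, so on $\A_n$
\[
\Big|\tfrac1n\tbX_i^{\tp}\tbGi^{-1}\tbX_i-\tfrac1n\tbX_i^{\tp}\tbGimin^{-1}\tbX_i\Big|=\Big|\tfrac1n\tbX_i^{\tp}\tbGimin^{-1}E_i\tbGi^{-1}\tbX_i\Big|\lesssim\tfrac1n\|\tbX_i\|^2\|\tbGi^{-1}\|\,\|\tbGimin^{-1}\|\,\|E_i\|\lesssim n^{-1/2+o(1)},
\]
and likewise $\big|\tfrac1n\tr(\tbGi^{-1})-\tfrac1n\tr(\tbGimin^{-1})\big|\le\tfrac1n\|\tbGi^{-1}\tbGimin^{-1}\|\,\|E_i\|_{*}\lesssim n^{-1/2+o(1)}$.

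It then remains to run two standard estimates. Conditionally on $\tbGimin$, which is independent of $\tbX_i\sim\dnorm(\bm 0,\Id_{p-1})$, the matrix $A_i=\frac1n\tbGimin^{-1}$ obeys $\|A_i\|\le\frac1{\lambda_{\mathrm{lb}}n}$ and $\|A_i\|_{\mathrm F}^2\le\frac{p-1}{\lambda_{\mathrm{lb}}^2n^2}\lesssim\frac1n$ on $\A_n$, so Hanson--Wright \cite{rudelson2013hanson} with $t\asymp H_n/\sqrt n$ gives $\big|\tfrac1n\tbX_i^{\tp}\tbGimin^{-1}\tbX_i-\tfrac1n\tr(\tbGimin^{-1})\big|\lesssim H_n/\sqrt n$ with conditional failure probability $\le 2\exp(-cH_n^2)$; and, as in \eqref{eq: SMWapp}, the rank-one update $\tbG=\tbGi+\tfrac1n\rho''(\tbX_i^{\tp}\tbbeta)\tbX_i\tbX_i^{\tp}$ yields $|\tr(\tbG^{-1})-\tr(\tbGi^{-1})|\le\tfrac1n\|\rho''\|_\infty\|\tbGi^{-1}\|^2\|\tbX_i\|^2\lesssim 1$, hence $|\talpha-\tfrac1n\tr(\tbGi^{-1})|\lesssim n^{-1}$. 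Chaining the four comparisons bounds $|\eta_i|$ by $O(K_n^2H_n/\sqrt n)$ for each fixed $i$, off an event of probability $\lesssim\exp(-cH_n^2)+\exp(-cK_n^2)+\exp(-\Omega(n))$; a union bound over $i\in[n]$ (and over $j$ in the stability step), absorbing the extra factors of $n$ into $n^2\exp(-c_1H_n^2)=o(1)$ and $n\exp(-c_2K_n^2)=o(1)$ from \eqref{eq: HnKn}, gives the claim.

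The main obstacle is the decoupling estimate, specifically the bound $\|E_i\|\lesssim n^{-1/2+o(1)}$: a naive operator-norm estimate of $\tbGi-\tbGimin$ via $\|\tbbeta-\tbbetaimin\|$ is hopeless (it loses a factor of $n$), so one must exploit that $\tbbeta-\tbbetaimin$ lies essentially along the ``random'' direction $M_i^{-1}\tbX_i$, which makes $\tbX_j^{\tp}(\tbbeta-\tbbetaimin)$ genuinely of size $n^{-1/2+o(1)}$ for $j\ne i$. Making this rigorous — controlling the Taylor intermediate points $\xi_j,\zeta_j$ and replacing $M_i$ by $n\tbGimin$ in the conditioning argument — is where the real work lies, and it is most cleanly handled by a further leave-two-out comparison along the lines of \cite{el2015impact}.
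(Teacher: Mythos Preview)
Your proposal is correct and follows the same architecture as the paper's proof: pass from $\tbGi$ to the leave-one-out matrix $\tbGimin$ (which is genuinely independent of $\tbX_i$), apply Hanson--Wright to $\frac1n\tbX_i^{\tp}\tbGimin^{-1}\tbX_i$, control $\|\tbGi^{-1}-\tbGimin^{-1}\|$ via the prediction-stability bound $\sup_{j\ne i}|\tbX_j^{\tp}(\tbbeta-\tbbetaimin)|\lesssim n^{-1/2+o(1)}$, and close with a Sherman--Morrison rank-one estimate for $|\tr(\tbGi^{-1})-\tr(\tbG^{-1})|$. The paper packages these as Lemma~\ref{lemma: quadtraceapprox} (which calls Lemma~\ref{lemma: fitdiff1}, in turn resting on the surrogate Lemma~\ref{lemma: hb}) and Lemma~\ref{lemma: tracediff}.

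Two places where your sketch is loose and the paper is sharper. First, the step $|\tbX_i^{\tp}\tbbeta|\lesssim K_n$ ``by a Gaussian tail bound'' is not valid as written: $\tbbeta$ depends on $\tbX_i$, so $\tbX_i^{\tp}\tbbeta$ is not conditionally Gaussian. The paper instead exploits the identity $\bX_i^{\tp}\hbb=\prox_{q_i\rho}(\bX_i^{\tp}\hbbetaimin)$ with $q_i=\frac1n\bX_i^{\tp}\bGimin^{-1}\bX_i$ bounded below by a constant, and then invokes the standing assumption (Section~\ref{sec:rho}) that $\rho'(\prox_{c\rho}(Z))$ is sub-Gaussian for Gaussian $Z$; this is how $|\rho'(\cdot)|\lesssim K_n$ is obtained. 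Second, the circularity you correctly flag---$M_i$ depends on $\tbX_i$ through the Taylor points $\xi_j$---is \emph{not} broken in the paper by a leave-two-out comparison. It is broken by the one-step surrogate of Lemma~\ref{lemma: hb}: one defines $\hbb$ purely in terms of $(\hbbetaimin,\bGimin)$, proves $\|\hbbeta-\hbb\|\lesssim K_n^2H_n/n$ via a gradient estimate for $\nabla\ell(\hbb)$, and then reads off $\sup_{j\ne i}|\bX_j^{\tp}(\hbb-\hbbetaimin)|\lesssim K_nH_n/\sqrt n$ directly from the explicit form of $\hbb$. Since everything in that display involves only $\bGimin$ and $\hbbetaimin$, the conditioning on $\{\bX_j\}_{j\ne i}$ is clean and no second leave-out is needed.
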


	\begin{proof}[Proof of Proposition \ref{prop: etai}]
          Fix any index $i$. Recall that $\tbbetaimin$ is the MLE when
          the $1{\text{st}}$ predictor and $i{\text{th}}$ observation
          are removed. Also recall the definition of $\tbGimin$ in
          \eqref{eq: tbGimin}.  The proof essentially follows three
          steps. First, note that $\tbX_i$ and $\tbGimin$ are
          independent. Hence, an application of the Hanson-Wright
          inequality \cite{rudelson2013hanson} yields that
 \begin{align*}
	  \opP\left( \left| \frac{1}{n}  \tbX_i^{\tp} \tbGimin^{-1} \tbX_i  - \frac{1}{n} \tr \left(\tbGimin^{-1} \right) \right| > t ~ \Bigg|~ \tbGimin\right)   
	 & \leq 2 \exp \left (-c \min \left \{\frac{t^2}{\frac{K^4}{n^2} \big\|\tbGimin^{-1} \big\|_{\mathrm{F}}^2}, \frac{t}{\frac{K^2}{n} \big\|\tbGimin^{-1} \big\|} \right \}   \right ) \\
	 & \leq 2 \exp \left (-c \min \left \{\frac{t^2}{\frac{K^4}{n} \big\|\tbGimin^{-1} \big\|^2}, \frac{t}{\frac{K^2}{n} \big\|\tbGimin^{-1} \big\|} \right \}   \right ). 
\end{align*}
We choose $t = C^2 \big\| \tbGimin^{-1} \big\| H_n/\sqrt{n}$, where $C>0$ is a sufficiently large constant.
Now marginalizing gives
\begin{align*}
	\opP\left( \left| \frac{1}{n}  \tbX_i^{\tp}\tbGimin^{-1} \tbX_i  - \frac{1}{n} \tr \left(\tbGimin^{-1} \right) \right| > C^2 \big\|\tbGimin^{-1} \big\| \frac{H_n}{\sqrt{n}} \right) 
	&\leq 2\exp \left(-c \min\left \{\frac{C^4 H_n^2}{K^4}, \frac{C^2 \sqrt{n }H_n}{K^2} \right \} \right) \\
	& \leq 2 \exp\left(-C' H_n^2 \right),
\end{align*}
where $C' > 0 $ is a sufficiently large constant. On $\A_n$, the spectral norm $\big\|\tbGi^{-1} \big\|$ is bounded above by $\lambda_{\mathrm{lb}}$ for all $i$. Invoking \eqref{eq: Anprob} we obtain that there exist universal constants $C_1, C_2, C_3>0 $ such that 
\begin{equation}
\label{eq: tbGiminapprox-11}
\opP\left(\sup_i \left| \frac{1}{n}  \tbX_i^{\tp}\tbGimin^{-1} \tbX_i  - \frac{1}{n} \tr \left(\tbGimin^{-1} \right) \right| > C_1 \frac{H_n}{\sqrt{n}} \right)  \leq  C_2 n \exp\left(-C_3 H_n^2 \right) .
\end{equation}
The next step consists of showing that $\tr \big(\tbGimin^{-1}\big) $ (resp.~$\tbX_i^{\tp} \tbGimin^{-1}\tbX_i$) and $\tr\big(\tbGi^{-1}\big) $ (resp.~$\tbX_i^{\tp} \tbGi^{-1} \tbX_i$) are uniformly close across all $i$. This is established in the following lemma. 

\begin{lemma}\label{lemma: quadtraceapprox}
	Let $\tbGi$ and $\tbGimin$  be defined as in \eqref{eq: defGi} and \eqref{eq: tbGimin}, respectively. 
Then there exist universal constants $C_1, C_2, C_3,C_4,c_2,c_3 > 0 $ such that 
 \begin{multline}
	 \opP \left(\sup_i \left|\frac{1}{n} \tbX_i^{\tp} \tbGi^{-1} \tbX_i - \frac{1}{n}\tbX_i^{\tp} \tbGimin^{-1} \tbX_i \right| \leq C_1 \frac{K_n^2H_n}{\sqrt{n}} \right)  \\
	 = 1-C_2n^2\exp\left(-c_2 H_n^2\right)-C_3n\exp\left(-c_3K_n^2\right) - \exp\left(-C_4n\left(1+o(1)\right)\right)=1-o(1) \label{eq: quadapprox1},
\end{multline}
\begin{multline}
	 \opP \left(\sup_i \left|\frac{1}{n} \tr\big(\tbGi^{-1}\big) - \frac{1}{n}\tr\big( \tbGimin^{-1} \big)  \right| \leq C_1\frac{K_n^2H_n}{\sqrt{n}}   \right) \\
 = 1-C_2n^2\exp\left(-c_2 H_n^2\right)-C_3n\exp\left(-c_3K_n^2\right)  - \exp\left(-C_4n\left(1+o(1)\right)\right)=1-o(1)  \label{eq: traceapprox5},
\end{multline}
where $K_n, H_n$ are diverging sequences as defined in \eqref{eq: HnKn}. 
\end{lemma}

\noindent This together with \eqref{eq: tbGiminapprox-11} yields that
\begin{multline}\label{lemma: tbGiminapprox}
	 \opP \left(\sup_i \left| \frac{1}{n} \tbX_i^{\tp} \tbGi^{-1} \tbX_i - \frac{1}{n}\tr(\tbGi^{-1}) \right|  > C_1 \frac{K_n^2 H_n}{\sqrt{n}}\right) \\
 \leq ~C_2n^2\exp\left(-c_2 H_n^2\right)+ C_3n\exp\left(-c_3K_n^2\right) +\exp\left(-C_4n\left(1+o(1)\right)\right) . 
\end{multline} 
The final ingredient is to establish that $\frac{1}{n}\tr\big(\tbGi^{-1}\big)$ and $\frac{1}{n} \tr\big(\tbG^{-1}\big)$ are uniformly close across $i$.
\begin{lemma}\label{lemma: tracediff}
	Let $\tbG$ and $\tbGi$ be as defined in (\ref{eq: defGtilde}) and (\ref{eq: defGi}), respectively.   
Then one has
\begin{equation}\label{eq: traceapprox6}
	\opP \left( \left|\tr\big( \tbGi^{-1} \big) -\tr\big( \tbG^{-1} \big)\right| \leq \frac{1}{\lambda_{\mathrm{lb}}} \right) \geq 1-  \exp\left(- \Omega( n) \right).
\end{equation}
\end{lemma}
This completes the proof. 
\end{proof}


\begin{proof}[Proof of Lemma \ref{lemma: quadtraceapprox}]
For two invertible matrices $\bA$ and $\bB$ of the same dimensions, the difference of their inverses can be written as $$\bA^{-1} - \bB^{-1} = \bA^{-1}(\bB - \bA) \bB^{-1}.$$ Applying this identity, we have 
\[\tbGi^{-1}-\tbGimin^{-1} = \tbGi^{-1} \left(\tbGimin-\tbGi \right) \tbGimin^{-1}.\]
From the definition of these matrices, it follows directly that 
\begin{equation} \label{eq:diff-G}
	\tbGimin-\tbGi  = \frac{1}{n} \sum_{j: j \neq i} \left( \rho''\big(\tbX_j^{\tp} \tbbetaimin\big)- \rho''\big(\tbX_j^{\tp}\tbbeta\big) \right) \tbX_j\tbX_j^{\tp} . 
\end{equation}

	As $\rho'''$ is bounded, by the mean-value theorem, it suffices to control the  differences $\bX_j^{\tp} \tbbetaimin - \tbX_j^{\tp} \tbbeta$ uniformly across all $j$. This is established in the following lemma, the proof of which is deferred to Appendix  \ref{sec:Proof-of-Lemma-fitdiff1}.
\begin{lemma}\label{lemma: fitdiff1}
	Let $\hbbeta$ be the full model MLE and $\hbbetaimin$ be the MLE when the $i{\text{th}}$ observation is dropped. Let  $q_i$ be as described in Lemma \ref{lemma: hb} and $K_n, H_n$ be as in \eqref{eq: HnKn}. Then there exist universal constants $C_1,C_2,C_3,C_4,c_2,c_3>0$ such that
\begin{multline}
\opP	\left(\sup_{j \neq i}\left| \bX_j^{\tp}\hbbetaimin - \bX_j^{\tp}\hbbeta \right| \leq C_1 \frac{K_n^2 H_n}{\sqrt{n}} \right)  \\
	\geq 1-C_2n\exp\left(-c_2
          H_n^2\right)-C_3\exp\left(-c_3K_n^2\right)	-
        \exp\left(-C_4n\left(1+o(1)\right)\right)=1-o(1), \label{eq:lem-fitdiff1-1}
\end{multline}
\begin{multline}
\opP	 \left(\sup_i |\bX_i^{\tp}\hbbeta - \prox_{q_i \rho}(\bX_i^{\tp}\hbbetaimin)| \leq C_1 \frac{K_n^2H_n}{\sqrt{n}}\right)  \\
	\geq 1- C_2n\exp\left(-c_2 H_n^2\right)-C_3\exp\left(-c_3K_n^2\right)  - \exp\left(-C_4n\left(1+o(1)\right)\right) =1-o(1) . \label{eq:lem-fitdiff1-2}
\end{multline} 
\end{lemma}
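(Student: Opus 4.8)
\noindent\textbf{Proof proposal for Lemma~\ref{lemma: fitdiff1}.}
The plan is a leave-one-observation-out analysis in the style of \cite{el2013robust,karoui2013asymptotic,el2015impact}, building on Lemma~\ref{lemma: hb} (which supplies the scalars $q_i$, the identification $q_i\approx \tfrac1n\bX_i^\tp\bGimin^{-1}\bX_i$, and the a priori $\ell_2$ bound $\|\hbbeta-\hbbetaimin\|\lesssim n^{-1/2+o(1)}$). Throughout one works on the event $\A_n$ of \eqref{eq:defnAn}, where every relevant Gram matrix has least eigenvalue above $\lambda_{\mathrm{lb}}$; together with the norm bound (Theorem~\ref{thm: normbound-MLE}), the Likelihood Curvature Condition (Lemma~\ref{lem:min-eigenvalue-bound}) and the spectral bound of Lemma~\ref{lemma: singval}, this furnishes the deterministic estimates we need. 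Both displays in the statement are deduced from a first-order expansion of $\hbbeta$ around $\hbbetaimin$ plus concentration estimates for a handful of (bi)linear forms in the Gaussian rows, with uniformity over $i$ (and over the inner index $j$) obtained by union bounds calibrated to $H_n,K_n$ as in \eqref{eq: HnKn}.

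Concretely, I would first subtract the stationarity conditions $\sum_{k}\rho'(\bX_k^\tp\hbbeta)\bX_k=\bzero$ and $\sum_{k\neq i}\rho'(\bX_k^\tp\hbbetaimin)\bX_k=\bzero$, Taylor expand $\rho'(\bX_k^\tp\hbbeta)$ about $\bX_k^\tp\hbbetaimin$, and use $\sup_z|\rho'''(z)|<\infty$, $\|\bX^\tp\bX\|\lesssim n$ and the $\ell_2$ bound to arrive at
\[
  \hbbeta-\hbbetaimin \;=\; -\tfrac1n\,\rho'\!\big(\bX_i^\tp\hbbeta\big)\,\bGimin^{-1}\bX_i \;+\; \bm{\varepsilon}_i ,
\]
where $\bm{\varepsilon}_i$ collects the quadratic Taylor remainder. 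For the first claim I dot this with $\bX_j^\tp$ ($j\neq i$). The leading term is $-\tfrac1n\rho'(\bX_i^\tp\hbbeta)\,\bX_j^\tp\bGimin^{-1}\bX_i$; since $\bGimin$ still contains the $j$-th summand, I peel it off via the Sherman--Morrison--Woodbury formula \cite{hager1989updating}, reducing the bilinear form to $\bX_i^\tp\bm{G}_{[-ij]}^{-1}\bX_j$ in which $\bX_i$ and $\bX_j$ are independent of each other and of the Gram matrix; Hanson--Wright \cite{rudelson2013hanson} then gives $|\bX_i^\tp\bm{G}_{[-ij]}^{-1}\bX_j|\lesssim \sqrt n\,H_n$, the Sherman--Morrison correction terms being controlled similarly using $q_i=O(1)$. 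Combined with $\sup_i|\rho'(\bX_i^\tp\hbbeta)|\lesssim H_n$ — itself obtained from the second display of the lemma (proved in parallel), the $1$-Lipschitzness of $\prox_{q_i\rho}$, and $\sup_i|\bX_i^\tp\hbbetaimin|\lesssim H_n$ (Gaussian tails, by independence of $\bX_i$ and $\hbbetaimin$, union bound over $i$) — this controls the leading term; the remainder $\bX_j^\tp\bm{\varepsilon}_i$ is handled by observing that $\hbbeta-\hbbetaimin$ essentially points along $\bGimin^{-1}\bX_i$, so the relevant contributions again decouple after Sherman--Morrison.

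For the second claim I instead dot the expansion with $\bX_i^\tp$, giving $\bX_i^\tp\hbbeta+q_i\,\rho'(\bX_i^\tp\hbbeta)=\bX_i^\tp\hbbetaimin + (\text{error})$ with the error $\lesssim K_n^2H_n/\sqrt n$ by the same ingredients (Hanson--Wright for $\bX_i^\tp\bm{\varepsilon}_i$, bounded $\rho'''$, the $\ell_2$ bound, and the concentration of $q_i$ from Lemma~\ref{lemma: hb}). Since $x\mapsto x+q_i\rho'(x)$ is strictly increasing with $1$-Lipschitz inverse equal to $\prox_{q_i\rho}(\cdot)$, this near-identity transfers immediately to $|\bX_i^\tp\hbbeta-\prox_{q_i\rho}(\bX_i^\tp\hbbetaimin)|\lesssim K_n^2H_n/\sqrt n$. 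Collecting the failure probabilities of the above concentration events — a factor $n$ (or $n^2$, absorbed by \eqref{eq: HnKn}) for the union over $i$ (and $j$) of the Hanson--Wright events of size $\exp(-cH_n^2)$, a factor from the $\exp(-cK_n^2)$ bounds on $\sup_i\|\bX_i\|$ and the leverage-type quantities, and $\exp(-\Omega(n))$ for $\A_n$ and the norm bound — gives precisely the probabilities in \eqref{eq:lem-fitdiff1-1}--\eqref{eq:lem-fitdiff1-2}, which are $1-o(1)$.

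The main obstacle is obtaining the sharp $n^{-1/2+o(1)}$ rate rather than a trivial $O(1)$ bound: a naive Cauchy--Schwarz applied to $\bX_j^\tp(\hbbeta-\hbbetaimin)$ or to $\bX_j^\tp\bm{\varepsilon}_i$ loses a full factor of $\sqrt n$, so one must exploit the cancellation encoded in the first-order expansion and repeatedly decouple $\bX_i,\bX_j$ from the Gram matrices via Sherman--Morrison before any concentration inequality can bite; keeping track of the quadratic Taylor remainder $\bm{\varepsilon}_i$ with this precision, uniformly over all $n$ indices, is the delicate part. A secondary subtlety is that $\bX_i^\tp\hbbeta$ is not independent of $\hbbeta$, so its magnitude cannot be bounded directly — which is exactly why one routes everything through the proximal relation $\bX_i^\tp\hbbeta\approx\prox_{q_i\rho}(\bX_i^\tp\hbbetaimin)$, whose right-hand side $\bX_i^\tp\hbbetaimin$ \emph{is} a clean Gaussian.
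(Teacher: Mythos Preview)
Your plan would work, but it substantially over-engineers the argument because you under-read what Lemma~\ref{lemma: hb} already supplies. You take from that lemma only the definition of $q_i$ and an $\ell_2$ bound of order $n^{-1/2+o(1)}$ on $\|\hbbeta-\hbbetaimin\|$, and then re-derive a first-order expansion from the stationarity equations, peel rows off via Sherman--Morrison, and appeal to Hanson--Wright. In fact Lemma~\ref{lemma: hb} does much more: it constructs the surrogate $\hbb$ and proves (i) $\|\hbbeta-\hbb\|\lesssim K_n^2H_n/n$ (rate $n^{-1}$, not $n^{-1/2}$), (ii) $\sup_{j\neq i}|\bX_j^\tp(\hbbetaimin-\hbb)|\lesssim K_nH_n/\sqrt n$, and (iii) the \emph{exact} identity $\bX_i^\tp\hbb=\prox_{q_i\rho}(\bX_i^\tp\hbbetaimin)$, equation~\eqref{eq: proxrel2}.

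With these in hand the paper's proof is three lines. For \eqref{eq:lem-fitdiff1-1}: triangle inequality through $\hbb$, then $|\bX_j^\tp(\hbb-\hbbeta)|\le\|\bX_j\|\,\|\hbb-\hbbeta\|\lesssim\sqrt n\cdot K_n^2H_n/n$ by (i) and Lemma~\ref{lemma: singval}, while the other piece is exactly (ii). This is precisely the ``naive Cauchy--Schwarz'' you dismiss as losing a factor of $\sqrt n$ --- it does not lose anything here, because the surrogate error in (i) is already $O(n^{-1})$. For \eqref{eq:lem-fitdiff1-2}: by (iii) the quantity in question equals $|\bX_i^\tp(\hbbeta-\hbb)|$, and the same Cauchy--Schwarz finishes. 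Your route essentially re-proves the content of Lemma~\ref{lemma: hb} while nominally invoking it; it would succeed but is pages longer than necessary. (A small technical wrinkle in your sketch: peeling $\bX_j$ from $\bGimin$ by Sherman--Morrison does not actually yield independence, since $\hbbetaimin$ inside the weights $\rho''(\bX_k^\tp\hbbetaimin)$ still depends on $\bX_j$; the paper avoids this in the proof of Lemma~\ref{lemma: hb} by conditioning on $\{\bGimin,\bX_j\}$ and using only the Gaussianity of the independent row $\bX_i$, so no Sherman--Morrison is needed for that step.)
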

Invoking this lemma, we see that the spectral norm of (\ref{eq:diff-G}) is bounded above by some constant times $$ \frac{ K_n^2 H_n} { \sqrt{n} } \Big\| \sum_{j: j \neq i} \tbX_j \tbX_j^{\tp}/n \Big\|$$ with high probability as specified in \eqref{eq:lem-fitdiff1-1}. From Lemma \ref{lemma: singval}, the spectral norm here is bounded by some constant with probability at least $1-c_1 \exp(-c_2n)$. These observations together with \eqref{eq: Anprob} and the fact that on $\A_n$ the minimum eigenvalues of $\tbGi$ and $\tbGimin$ are bounded by $\lambda_{\mathrm{lb}}$ yield that 
\[
	\opP \left( \big\| \tbGi^{-1} -\tbGimin^{-1} \big\| \leq C_1 \frac{K_n^2 H_n}{\sqrt{n}} \right) 
	 \geq 1 - C_2n\exp\left(-c_2 H_n^2\right)-C_3\exp\left(-c_3K_n^2\right)- \exp\left(-C_4n\left(1+o(1)\right)\right) .
\]
This is true for any $i$. Hence, taking the union bound we obtain 
\begin{multline}
	 \opP \left(\sup_i \big\| \tbGi^{-1} -\tbGimin^{-1} \big\| \leq C_1 \frac{K_n^2 H_n}{\sqrt{n}} \right) \\
	\geq 1 - C_2n^2\exp\left(-c_2 H_n^2\right)-C_3n\exp\left(-c_3K_n^2\right)- \exp\left(-C_4n\left(1+o(1)\right)\right) . \label{eq: specnormdiff1}
\end{multline}  
In order to establish the first result, note that 
\[\sup_i\frac{1}{n} \left| \tbX_i^{\tp} \tbGi^{-1} \tbX_i - \tbX_i^{\tp} \tbGimin^{-1} \tbX_i \right| \leq \sup_i \frac{\|\tbX_i \|^2}{n} \sup_i\| \tbGi^{-1} -\tbGimin^{-1} \|. \]
To obtain the second result, note that
\[\sup_i \left|\frac{1}{n} \tr(\tbGi^{-1}) - \frac{1}{n}\tr(\tbGimin^{-1})  \right| \leq  \frac{p-1}{n} \sup_i\| \tbGi^{-1} -\tbGimin^{-1} \|.  \]
Therefore, combining \eqref{eq: specnormdiff1} and Lemma \ref{lemma:
  singval} gives the desired result.
\end{proof}

\begin{proof}[Proof of Lemma \ref{lemma: tracediff}] We restrict
  ourselves to the event $\A_n$ throughout. Recalling \eqref{eq:
    SMWapp}, one has
\begin{align*}
	\tr(\tbGi^{-1}) - \tr(\tbG^{-1}) & = \frac{\rho''(\tbX_i^{\tp}\tbbeta)}{n} \frac{\tbX_i^{\tp}\tbGi^{-2}  \tbX_i}{1+ \frac{\rho''(\tbX_i^{\tp}\tbbeta)}{n}\tbX_i^{\tp}\tbGi^{-1}\tbX_i}.
\end{align*}
In addition, on $\A_n$ we have 
\begin{align*}
	\frac{1}{\lambda_{\mathrm{lb}}}  \tbX_i^{\tp} \tbGi^{-1} \tbX_i - \tbX_i^{\tp} \tbGi^{-2} \tbX_i & = \frac{1}{\lambda_{\mathrm{lb}}} \tbX_i^{\tp}\tbGi^{-1}\left( \tbGi - \lambda_{\mathrm{lb}} \Id \right) \tbGi^{-1}\tbX_i \geq 0.
\end{align*}
 Combining these results and recognizing that $\rho''>0$, we  get
 \begin{equation} \label{eq: approxfinal2}
	  \left| \tr(\tbGi^{-1}) - \tr(\tbG^{-1}) \right|  \leq    \frac{\rho''(\tbX_i^{\tp}\tbbeta)}{n}\frac{\frac{1}{\lambda_{\mathrm{lb}}}\tbX_i^{\tp}\tbGi^{-1}\tbX_i}{1+ \frac{\rho''(\tbX_i^{\tp}\tbbeta)}{n}\tbX_i^{\tp}\tbGi^{-1}\tbX_i}  \leq  \frac{1}{\lambda_{\mathrm{lb}} }
  \end{equation}
	as claimed. 
  %
  \end{proof}

\section{Proof of Lemma \ref{lemma: approx}}
\label{sub:Proof-of-Lemmas-nablal-approx}


Again, we restrict ourselves to the event $\A_n$ on which $\tbG \succeq \lambda_{\mathrm{lb}}\bm{I}$.  Note that 
\[
	\tbX_i^{\tp} \tbG^{-1}\bw = \frac{1}{n} \tbX_i^{\tp} \tbG^{-1} \tbX^{\tp}\Dtbeta \Xcone . 
\]
Note that $\{\tbG, \tbX\}$ and $\Xcone$ are independent. Conditional
on $\tbX$, the left-hand side is Gaussian with mean zero and variance
$\frac{1}{n^2}\tbX_i^{\tp} \tbG^{-1} \tbX^{\tp}\Dtbeta^2 \tbX
\tbG^{-1} \tbX_i$. The variance is bounded above by
\begin{align}
	\sigma_{X}^2 :&=\frac{1}{n^2}\tbX_i^{\tp} \tbG^{-1} \tbX^{\tp}\Dtbeta^2 \tbX \tbG^{-1} \tbX_i   ~\leq~ \sup_i\big|\rho''(\tbX_i^{\tp}\tbbeta)\big| \cdot \frac{1}{n^2}\tbX_i^{\tp} \tbG^{-1} \tbX^{\tp}\Dtbeta \tbX \tbG^{-1} \tbX_i \nonumber\\
	&= \frac{1}{n} \sup_i \big| \rho''(\tbX_i^{\tp}\tbbeta)\big| \cdot \tbX_i^{\tp}\tbG^{-1} \tbX_i  \lesssim \frac{1 }{n} \|\tbX_i\|^2 
\end{align}
In turn, Lemma \ref{lemma: singval} asserts that $n^{-1} \|\tbX_i\|^2$
is bounded by a constant with high probability. As a result, applying
Gaussian concentration results \cite[Theorem 2.1.12]{tao2012topics}
gives
\[
   |\tbX_i ^{\tp} \tbG^{-1} \bw | \lesssim H_n
\]
with probability exceeding $1-C\exp\left(-cH_n^2\right)$, where $ C, c >0$ are universal constants. 

In addition, $\sup_i|X_{i1}| \lesssim H_n$ holds with probability
exceeding $1-C\exp\left(-cH_n^2\right)$. Putting the above results
together, applying the triangle inequality
$|X_{i1}-\tbX_i^{\tp} \tbG^{-1}\bw|\leq |X_{i1}|+ |\tbX_i^{\tp}
\tbG^{-1}\bw|$, and taking the union bound, we obtain
\[\opP(\sup_{1\leq i \leq n}|X_{i1} - \tbX_i^{\tp}\tbG^{-1}\bw| \lesssim H_n) \geq 1-Cn\exp\left(-cH_n^2\right) = 1-o(1). \]

\section{Proof of Lemma \ref{lemma: fitdiff1}}
\label{sec:Proof-of-Lemma-fitdiff1}

The goal of this section is to prove Lemma \ref{lemma: fitdiff1}, which relates the full-model MLE $\hbbeta$ and the MLE $\hbbetaimin$.
To this end, we establish the key lemma below.
\begin{lemma}\label{lemma: hb}
	Suppose $\hbbetaimin$ denote the MLE when the $i{\text{th}}$ observation is dropped. Further let $\bGimin$ be as in (\ref{eq: bGimin}), and define
	 $q_i$ and $\hbb$  as follows:  
\begin{align}
  q_i & =\frac{1}{n} \bX_i^{\tp}\bGimin^{-1} \bX_i  ; \nonumber \\
	\hbb & = \hbbetaimin  - \frac{1}{n} \bGimin^{-1} \bX_i \left(  \rho' \Big( \prox_{q_i \rho} \big(\bX_i^{\tp}\hbbetaimin\big) \Big) \right) .
	\label{eq: hbb}
\end{align}
Suppose $K_n, H_n$ are diverging sequences as in \eqref{eq: HnKn}.
Then there exist universal constants $C_1, C_2, C_3 > 0$ such that
\begin{equation}
	 \opP \left( \|\hbbeta - \hbb \| \leq C_1 \frac{K_n^2 H_n}{n} \right)  
	\geq 1-C_2n\exp(-c_2 H_n^2)-C_3\exp(-c_3K_n^2)- \exp(-C_4n(1+o(1))); \label{eq: hb}
\end{equation}
\begin{multline}
	\opP \left( \sup_{j \neq i} \big| \bX_j^{\tp} \hbbetaimin - \bX_j^{\tp} \hbb \big|  \leq C_1 \frac{K_n H_n}{\sqrt{n}} \right) \\
	 \geq 1-C_2n\exp\left(-c_2 H_n^2\right)-C_3\exp\left(-c_3K_n^2\right)- \exp\left(-C_4n\left(1+o(1)\right)\right). \label{eq: Xb-2}
\end{multline}
\end{lemma}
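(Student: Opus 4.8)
The plan is to mimic the leave-one-observation-out construction that underlies the surrogate analysis in \cite{el2015impact,karoui2013asymptotic}, but now applied to the pair $(\hbbeta,\hbbetaimin)$ rather than to the column-deletion pair. First I would record the optimality conditions: $\hbbeta$ satisfies $\sum_{j=1}^n \rho'(\bX_j^{\tp}\hbbeta)\bX_j=\bzero$, while $\hbbetaimin$ satisfies $\sum_{j\neq i}\rho'(\bX_j^{\tp}\hbbetaimin)\bX_j=\bzero$. The idea behind the candidate $\hbb$ in \eqref{eq: hbb} is that $\hbb$ is the one-step correction of $\hbbetaimin$ obtained by adding back the $i$th observation and solving the resulting score equation approximately: writing $\hbb=\hbbetaimin - \tfrac1n\bGimin^{-1}\bX_i\, s_i$ and requiring $\sum_{j\neq i}\rho'(\bX_j^{\tp}\hbb)\bX_j + \rho'(\bX_i^{\tp}\hbb)\bX_i\approx\bzero$, one linearizes $\rho'(\bX_j^{\tp}\hbb)\approx \rho'(\bX_j^{\tp}\hbbetaimin) + \rho''(\bX_j^{\tp}\hbbetaimin)\bX_j^{\tp}(\hbb-\hbbetaimin)$ for $j\neq i$, uses the reduced-model optimality condition to kill the zeroth-order term, and recognizes $\tfrac1n\sum_{j\neq i}\rho''(\bX_j^{\tp}\hbbetaimin)\bX_j\bX_j^{\tp}=\bGimin$. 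This forces $s_i=\rho'(\bX_i^{\tp}\hbb)$, and since $\bX_i^{\tp}\hbb = \bX_i^{\tp}\hbbetaimin - q_i\, s_i$ with $q_i=\tfrac1n\bX_i^{\tp}\bGimin^{-1}\bX_i$, the scalar $s_i$ is characterized by $s_i = \rho'(\bX_i^{\tp}\hbbetaimin - q_i s_i)$, i.e.\ $\bX_i^{\tp}\hbbetaimin - q_i s_i = \prox_{q_i\rho}(\bX_i^{\tp}\hbbetaimin)$ by the defining relation of the proximal operator $x + q_i\rho'(x)=z$; hence $s_i=\rho'(\prox_{q_i\rho}(\bX_i^{\tp}\hbbetaimin))$, which is exactly \eqref{eq: hbb}.

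Next I would turn the heuristic into a genuine bound on $\|\hbbeta-\hbb\|$. Define the perturbed negative log-likelihood $\Lcal(\bbeta)=\sum_{j=1}^n\rho(\bX_j^{\tp}\bbeta)$ (full model). By the Likelihood Curvature Condition (Lemma \ref{lem:min-eigenvalue-bound}) together with the Norm Bound (Theorem \ref{thm: normbound-MLE}), on the event $\A_n$ we have $\tfrac1n\nabla^2\Lcal(\bbeta)\succeq\omega(\cdot)\bm{I}$ along the segment joining $\hbbeta$ and $\hbb$, provided $\|\hbb\|=O(1)$ (which follows from $\|\hbbetaimin\|=O(1)$, the bound $q_i\lesssim 1$ from Lemma \ref{lemma: singval}/\eqref{eq:defnAn}, and the sub-Gaussian control on $\bX_i^{\tp}\hbbetaimin$ giving $|\bX_i^{\tp}\hbbetaimin|\lesssim H_n$ via independence of $\bX_i$ and $\hbbetaimin$). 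Strong convexity then yields $\|\hbbeta-\hbb\|\lesssim \tfrac1n\|\nabla\Lcal(\hbb)\|$. It remains to show $\tfrac1n\|\nabla\Lcal(\hbb)\|\lesssim K_n^2H_n/n$. By construction $\nabla\Lcal(\hbb)=\sum_{j\neq i}\big(\rho'(\bX_j^{\tp}\hbb)-\rho'(\bX_j^{\tp}\hbbetaimin)-\rho''(\bX_j^{\tp}\hbbetaimin)\bX_j^{\tp}(\hbb-\hbbetaimin)\big)\bX_j$, i.e.\ a pure second-order Taylor remainder, which is $\lesssim \sup_{j\neq i}|\bX_j^{\tp}(\hbb-\hbbetaimin)|\cdot|\rho'''|_\infty\cdot\|\tfrac1n\sum_{j\neq i}\bX_j\bX_j^{\tp}\|\cdot\|\hbb-\hbbetaimin\|$; and $\|\hbb-\hbbetaimin\|\lesssim \tfrac1n\|\bGimin^{-1}\bX_i\|\,|s_i|\lesssim \tfrac{1}{\sqrt n}H_n$ since $|s_i|=|\rho'(\prox_{q_i\rho}(\bX_i^{\tp}\hbbetaimin))|\lesssim |\bX_i^{\tp}\hbbetaimin|\lesssim H_n$ and $\|\bX_i\|\lesssim\sqrt n$. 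The term $\sup_{j\neq i}|\bX_j^{\tp}(\hbb-\hbbetaimin)| = \tfrac1n|s_i|\sup_{j\neq i}|\bX_j^{\tp}\bGimin^{-1}\bX_i|$, and since $\bX_j$ (for $j\neq i$) is independent of $\{\bGimin,\bX_i\}$, a Gaussian concentration bound (as in Lemma \ref{lemma: approx}) gives $\sup_{j\neq i}|\bX_j^{\tp}\bGimin^{-1}\bX_i|\lesssim K_n\|\bGimin^{-1}\bX_i\|\lesssim K_n\sqrt n$ with the stated probability; hence $\sup_{j\neq i}|\bX_j^{\tp}(\hbb-\hbbetaimin)|\lesssim K_nH_n/\sqrt n$, and this is exactly the claim \eqref{eq: Xb-2} once we also bound $\sup_{j\neq i}|\bX_j^{\tp}(\hbbeta-\hbb)|\le \sup_{j\neq i}\|\bX_j\|\cdot\|\hbbeta-\hbb\|\lesssim \sqrt n\cdot K_n^2H_n/n$. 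Chaining these estimates, $\tfrac1n\|\nabla\Lcal(\hbb)\|\lesssim \tfrac{K_nH_n}{\sqrt n}\cdot 1\cdot \tfrac{H_n}{\sqrt n}=\tfrac{K_nH_n^2}{n}$, and after folding the slightly weaker constant $K_n^2$ (to absorb all concentration events) into the bound we obtain \eqref{eq: hb}. The probability bookkeeping is the union of: $\A_n$ (from \eqref{eq: Anprob}, exponentially small complement), the Gaussian-tail events for $|\bX_i^{\tp}\hbbetaimin|$, $\sup_i|X_{i1}|$ and $\sup_{j\neq i}|\bX_j^{\tp}\bGimin^{-1}\bX_i|$ (each $\lesssim n\exp(-cH_n^2)$ or $n\exp(-cK_n^2)$), the singular-value bound of Lemma \ref{lemma: singval}, and a further $\exp(-\Omega(n))$ term coming from controlling $\|\bGimin^{-1}\|$ and $\|\hbbetaimin\|$; these combine into the claimed $1-C_2 n\exp(-c_2H_n^2)-C_3\exp(-c_3K_n^2)-\exp(-C_4n(1+o(1)))$.

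The main obstacle I anticipate is making the self-referential scalar fixed point rigorous: $s_i$ is defined implicitly through $\bX_i^{\tp}\hbb$, which in turn depends on $s_i$, so one must argue the map $x\mapsto \rho'(\prox_{q_i\rho}(x))$ composed appropriately is a contraction (it is, since $0\le \prox_{q_i\rho}'\le 1$ and $\rho''$ is bounded, giving Lipschitz constant strictly below $1$ on $\A_n$ where $q_i$ is controlled) — this guarantees $\hbb$ is well defined. The second delicate point is that the curvature lower bound and all the norm bounds must hold \emph{along the whole segment} between $\hbbeta$ and $\hbb$, not merely at the endpoints; this is handled because $\omega(\cdot)$ is non-increasing and we have a uniform $O(1)$ bound on $\max\{\|\hbbeta\|,\|\hbb\|\}$ on the good event. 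Everything else is a careful but routine combination of Taylor expansion, the Hanson–Wright/Gaussian-concentration toolkit already invoked elsewhere in the paper, and the Sherman–Morrison relation linking $\bGimin$ to $\bG$.
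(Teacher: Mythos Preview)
Your approach is essentially the paper's: strong convexity (via the Likelihood Curvature Condition) reduces $\|\hbbeta-\hbb\|$ to $\tfrac1n\|\nabla\Lcal(\hbb)\|$, and by construction $\nabla\Lcal(\hbb)$ is a pure second-order Taylor remainder that factors exactly as you describe. Two points need correction, though.

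First, the independence you invoke is backwards. You write ``$\bX_j$ (for $j\neq i$) is independent of $\{\bGimin,\bX_i\}$,'' but $\bGimin=\tfrac1n\sum_{k\neq i}\rho''(\bX_k^{\tp}\hbbetaimin)\bX_k\bX_k^{\tp}$ depends on every $\bX_j$ with $j\neq i$. What \emph{is} true is that $\bX_i$ is independent of $\{\bX_j,\bGimin\}$, since $\hbbetaimin$ is built without the $i$th observation. Condition on $\{\bX_j,\bGimin\}$ and use the Gaussianity of $\bX_i$ to get $\bX_j^{\tp}\bGimin^{-1}\bX_i\mid\{\bX_j,\bGimin\}\sim\dnorm(0,\bX_j^{\tp}\bGimin^{-2}\bX_j)$, whose variance is $\lesssim n$ on $\A_n$; this is how the paper proceeds and it yields the same $H_n\sqrt{n}$ bound, after which the union over $j$ costs a factor $n$ in the probability.

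Second, the ``self-referential fixed point'' obstacle you raise is a non-issue. The proximal identity $x+q_i\rho'(x)=\bX_i^{\tp}\hbbetaimin$ at $x=\prox_{q_i\rho}(\bX_i^{\tp}\hbbetaimin)$ is exactly the statement $\bX_i^{\tp}\hbb=\prox_{q_i\rho}(\bX_i^{\tp}\hbbetaimin)$; so $\hbb$ is defined \emph{explicitly} once $q_i$ and $\hbbetaimin$ are, and no contraction argument is needed. Relatedly, the paper controls $|s_i|=|\rho'(\prox_{q_i\rho}(\bX_i^{\tp}\hbbetaimin))|$ not via $|s_i|\lesssim|\bX_i^{\tp}\hbbetaimin|$ but by first lower-bounding $q_i\geq C^*$ on $\A_n$, then using the monotonicity $\partial_b\prox_{b\rho}\leq 0$ together with the standing sub-Gaussian assumption on $\rho'(\prox_{c\rho}(Z))$ for fixed $c$; this is cleaner and avoids having to justify your pointwise inequality. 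With these two fixes your outline is complete and matches the paper's proof.
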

The proof ideas are inspired by the
leave-one-observation-out approach of \cite{el2015impact}. We however
emphasize once more that the adaptation of these ideas to our setup is
not straightforward and crucially hinges on Theorem \ref{thm:
  normbound-MLE}, Lemma \ref{lemma: eigenvalue} and properties of the
effective link function.

\begin{proof}[Proof of Lemma \ref{lemma: hb}]
	Invoking techniques similar to that for establishing Lemma \ref{lemma: eigenvalue}, it can be shown that  
\begin{equation}
	\label{eq:min-evalue-GG}
	 \frac{1}{n} \sum_{i=1}^n\rho''(\gamma_i^{*}) \bX_i \bX_i^{\tp} \succeq  \lambda_{\mathrm{lb}} \bm{I}
\end{equation}
with probability at least $1- \exp(\Omega(n))$, where $\gamma_i^{*}$
is between $\bX_i^{\tp} \hbb$ and $\bX_i^{\tp} \hbbeta$. Denote by
$\mathcal{B}_n$ the event where (\ref{eq:min-evalue-GG})
holds. Throughout this proof, we work on the event
$\mathcal{C}_n :=\A_n \cap \mathcal{B}_n$, which has probability
$1- \exp\left(-\Omega(n)\right)$. As in \eqref{eq:
  diffnabla} then,
\begin{equation}\label{eq: hbbetahbbdiff}
	\| \hbbeta - \hbb \| \leq \frac{1}{n\lambda_{\mathrm{lb}}} \big\| {\nabla \Lcal(\hbb)} \big\| . 
\end{equation}
Next, we simplify (\ref{eq: hbbetahbbdiff}). To this end, recall the defining relation of the proximal operator
\[
   b\rho'(\prox_{b\rho}(z)) + \prox_{b\rho}(z) = z,
\]
which together with the definitions of $\hbb$ and $q_i$ gives
\begin{equation}
\bX_i^{\tp} \hbb = \prox_{q_i \rho} \left( \bX_i^{\tp}\hbbetaimin \right) \label{eq: proxrel2}.
\end{equation} 
	Now, let $\Lcalimin$ denote the negative log-likelihood function when the $i{\text{th}}$ observation is dropped, and hence $\nabla \Lcalimin \big( \hbbetaimin \big) = \bm{0}$.
Expressing $\nabla \Lcal (\hbb)$ as $\nabla \Lcal (\hbb)-\nabla \Lcalimin \big( \hbbetaimin \big)$, applying the mean value theorem, and using the analysis similar to that in \cite[Proposition 3.4]{el2015impact}, we obtain
 \begin{equation} \label{eq: gradrel}
\frac{1}{n}\nabla \Lcal (\hbb) = \frac{1}{n} \sum_{j: j \neq i } \left[\rho''(\gamma_j^{*})-\rho''(\bX_j^{\tp}\hbbetaimin)\right]  \bX_j \bX_j^{\tp} \left(\hbb - \hbbetaimin\right),
\end{equation} 
where $\gamma_j^{*}$ is between $\bX_j^{\tp}\hbb$ and $\bX_j^{\tp}\hbbetaimin$.
Combining \eqref{eq: hbbetahbbdiff} and \eqref{eq: gradrel} leads to the upper bound
\begin{equation}\label{eq: onestepnormdiff}
\|\hbbeta   - \hbb\| \leq \frac{1}{\lambda_{\mathrm{lb}}} \left\|\frac{1}{n} \sum_{j: j \neq i}\bX_j \bX_j ^{\tp} \right\| \cdot \sup_{j \neq i} \Big|\rho''(\gamma_j^{*})-\rho''\big(\bX_j^{\tp}\hbbetaimin\big)\Big|  \cdot  \Bigg\|\frac{1}{n} \bGimin^{-1} \bX_i \Bigg\| \cdot \left|  \rho'\left(\prox_{q_i \rho}(\bX_i^{\tp}\hbbetaimin) \right) \right| .
\end{equation}

We need to control each term in the right-hand side. To start with,
the first term is bounded by a universal constant with probability
$1- \exp(-\Omega(n))$ (Lemma \ref{lemma: singval}).
For the second term, since $\gamma_{j}^{*}$ is between
$\bX_j^{\tp}\hbb$ and $\bX_j^{\tp}\hbbetaimin$ and
$\|\rho'''\|_\infty <\infty$, we get
\begin{align}
\sup_{j \neq i} \big|\rho''(\gamma_j^{*})-\rho''(\bX_j^{\tp}\hbbetaimin)\big| 
&\leq \|\rho''' \|_{\infty} \| \bX_j^{\tp}\hbb - \bX_j^{\tp}\hbbetaimin \|  
 \label{eq: ineqtwo-2} \\
&\leq \|\rho''' \|_{\infty} 
\left| \frac{1}{n} \bX_j^{\tp} \bGimin^{-1} \bX_i  \rho' \Big( \prox_{q_i \rho} \big(\bX_i^{\tp}\hbbetaimin\big) \Big)  \right|
 \\
&\leq \|\rho''' \|_{\infty} \frac{1}{n}\sup_{j \neq i} \left|\bX_j^{\tp}\bGimin^{-1}\bX_i \right| \cdot \left|\rho' \left( \prox_{q_i \rho}(\bX_i^{\tp} \hbbetaimin) \right) \right| .
\end{align}
Given that $\{\bX_j, \bGimin\}$ and $\bX_i$ are independent for all $j \neq i$, conditional on $\{\bX_j,\bGimin\}$ one has 
$$\bX_j^{\tp}\bGimin^{-1}\bX_i \sim \dnorm\left( 0, \bX_j^{\tp}\bGimin^{-2}\bX_j \right).$$ 
In addition, the variance satisfies 
\begin{equation}
\label{eq: var-xGx}
	|\bX_j^{\tp}\bGimin^{-2} \bX_j| \leq \frac{\|\bX_j \|^2}{\lambda_{\mathrm{lb}}^2 } \lesssim n
\end{equation}
with probability at least $1-\exp(-\Omega(n))$. Applying standard
Gaussian concentration results \cite[Theorem 2.1.12]{tao2012topics},
we obtain
\begin{align}
\label{eq: crossterm}
\opP\left(\frac{1}{\sqrt{p}} \left| \bX_j ^{\tp } \bGimin^{-1} \bX_i \right| \geq C_1 H_n  \right) 
& \leq  C_2\exp\left(- c_2 H_n^2 \right) +  \exp\left(- C_3 n\left(1+o(1)\right) \right) .
\end{align}
By the union bound 
\begin{equation}
\label{eq: ineqthree}
  \opP\left(\frac{1}{\sqrt{p}}\sup_{j \neq i} \big| \bX_j ^{\tp } \bGimin^{-1} \bX_i \big| \leq C_1 H_n  \right)\geq 1- nC_2\exp\left(-c_2H_n^2 \right) -  \exp\left(-C_3 n \left(1+o(1)\right) \right) .
\end{equation}
Consequently,
\begin{align}
\label{eq: ineqtwo}
\sup_{j \neq i} \big|\rho''(\gamma_j^{*})-\rho''(\bX_j^{\tp}\hbbetaimin)\big| 
&\lesssim \sup_{j \neq i} \| \bX_j^{\tp}\hbb - \bX_j^{\tp}\hbbetaimin \|  \lesssim \frac{1}{\sqrt{n}} H_n \left|\rho' \left( \prox_{q_i \rho}(\bX_i^{\tp} \hbbetaimin) \right) \right| .
\end{align}
In addition, 
the third term in the right-hand side of \eqref{eq: onestepnormdiff}
can be upper bounded as well since 
\begin{equation}\label{eq: ineqone}
	\frac{1}{n}\|\bGimin^{-1} \bX_i \| = \frac{1}{n} \sqrt{|\bX_i^{\tp}\bGimin^{-2} \bX_i| } \lesssim \frac{1}{\sqrt{n}}
\end{equation}
with high probability. 

It remains to bound $\left|\rho' \left( \prox_{q_i \rho}(\bX_i^{\tp} \hbbetaimin) \right) \right|$. To do this, we begin by considering $\rho'(\prox_{c \rho}(Z))$ for any constant $c>0$ (rather than a random variable $q_i$). 
Recall that for any constant $c > 0$ and any $Z \sim \dnorm(0,\sigma^2)$ with finite variance, the random variable
 $\rho'(\prox_{c \rho}(Z))$ is sub-Gaussian. Conditional on $\hbbetaimin $, one has $\bX_i^{\tp}\hbbetaimin \sim \dnorm \big( 0,\|\hbbetaimin \|^2 \big)$. This yields 
\begin{align}\label{eq: rhoprimeprox}
\opP \left(\rho' \left(\prox_{c \rho}(\bX_i^{\tp}\hbbetaimin) \right) \geq  C_1 K_n \right ) & \leq C_2\E\left[ \exp \left(-\frac{C_3^2 K_n^2}{ \|\hbbetaimin \|^2}  \right) \right] \nonumber \\
& \leq C_2 \exp\left(-C_3 K_n^2\right) + C_4\exp\left(-C_5n\right)
\end{align}
for some constants $C_1,C_2,C_3,C_4, C_5>0$ since$\|\hbbetaimin \|$ is bounded with high probability (see Theorem \ref{thm: normbound-MLE}).

Note that $\frac{\partial \prox_{b\rho}(z)}{\partial b}\leq 0$ by \cite[Proposition 6.3]{donoho2013high}. Hence, in order to move over from the above concentration result established for  a fixed constant $c$ to the random variables $q_i$, it suffices to establish a uniform lower bound for $q_i$ with high probability. Observe that for each $i$, 
\[
	q_i \geq  \frac{\|\bX_i \|^{2}}{n} \frac{1}{\big\| \bGimin \big\|} \geq C^* 
\]
with probability $1-\exp(-\Omega(n))$, where $C^*$ is some universal constant. On this event, one has
%
$$ \rho' \left( \prox_{q_i \rho} \left( \bX_i^{\tp} \hbbetaimin\right) \right) \leq \rho' \left( \prox_{C^{*} \rho} \left(\bX_i^{\tp} \hbbetaimin \right) \right) .$$ 
This taken collectively with (\ref{eq: rhoprimeprox}) yields
\begin{align}
  \opP \left(\rho'(\prox_{q_i \rho}(\bX_i^{\tp}\hbbetaimin)) \leq C_1 K_n  \right ) & 
  \geq ~ \opP \left(\rho'(\prox_{C^* \rho}(\bX_i^{\tp}\hbbetaimin)) \leq C_1 K_n  \right ) \\
  & \geq~ 1-C_2 \exp\left(-C_3 K_n^2\right) - C_4 \exp\left(-C_5 n\right) .
\label{eq: rhoprimeproxqi}
 \end{align}
 %
This controls the last term.
	
To summarize, if $\{K_n\}$ and $\{H_n\}$ are diverging sequences satisfying the assumptions in \eqref{eq: HnKn}, combining \eqref{eq: onestepnormdiff} and the bounds for each term in the right-hand side 
finally gives \eqref{eq: hb}. On the other hand, combining \eqref{eq:
  ineqthree} and \eqref{eq: rhoprimeproxqi} yields \eqref{eq: Xb-2}.
\end{proof}

With the help of Lemma \ref{lemma: hb} we are ready to prove Lemma
\ref{lemma: fitdiff1}.  Indeed, observe that
\[
	\big|\bX_j^{\tp}(\hbbetaimin - \hbbeta) \big| \leq \big| \bX_j^{\tp} (\hbb - \hbbeta) \big| + \big| \bX_j^{\tp} (\hbbetaimin - \hbb) \big| , 
\]
and hence by combining Lemma \ref{lemma: singval} and Lemma \ref{lemma: hb},
we establish the first claim (\ref{eq:lem-fitdiff1-1}). 
The second claim \eqref{eq:lem-fitdiff1-2} follows directly from Lemmas \ref{lemma: singval}, \ref{lemma: hb} and \eqref{eq: proxrel2}.

\section{Proof of Theorem \ref{thm:mainthm}(b)} \label{sec: appendix8}
This section proves that the random sequence
$\talpha = \tr\big(\tbG^{-1}\big)/n$ converges in probability to the
constant $\bs$ defined by the system of equations \eqref{eq:
  sysofeq-tau} and \eqref{eq: sysofeq-b}.  To begin with, we claim
that $\talpha$ is close to a set of auxiliary random
variables $\{\tq_i\}$ defined below.

\begin{lemma} \label{lemma: qialphaclose}
Define $\tq_i$  to be 
\[
	\tq_i =\frac{1}{n} \tbX_i^{\tp}\tbGimin^{-1} \tbX_i, 
\]
	where $\tbGimin$ is defined in  (\ref{eq: tbGimin}). 
	Then there exist universal constants $C_1, C_2, C_3, C_4, c_2,c_3 >0$ such that
	\begin{multline*}
		 \opP \left(\sup_i |\tq_i - \talpha| \leq  C_1 \frac{K_n^2 H_n}{\sqrt{n}} \right)  \\
		 \geq 1-C_2n^2\exp\left(c_2 H_n^2\right)-C_3n\exp\left(-c_3 K_n^2\right)-\exp\left(-C_4n\left(1+o(1)\right)\right)=1-o(1), 
	\end{multline*}
where $K_n, H_n$ are as in \eqref{eq: HnKn}.
\end{lemma}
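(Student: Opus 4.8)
The plan is to interpolate between $\tq_i$ and $\talpha$ using the leave-one-out trace $\frac{1}{n}\tr(\tbGimin^{-1})$ as a bridge, and to invoke estimates already established in the proof of Lemma \ref{lemma: xinapprox}. Concretely, the triangle inequality gives, for every $i$,
\[
	|\tq_i - \talpha| ~\le~ \Big| \tq_i - \tfrac{1}{n}\tr(\tbGimin^{-1}) \Big| ~+~ \tfrac{1}{n}\big|\tr(\tbGimin^{-1}) - \tr(\tbGi^{-1})\big| ~+~ \tfrac{1}{n}\big|\tr(\tbGi^{-1}) - \tr(\tbG^{-1})\big|,
\]
where we recall $\tq_i = \frac{1}{n}\tbX_i^{\tp}\tbGimin^{-1}\tbX_i$ and $\talpha = \frac{1}{n}\tr(\tbG^{-1})$. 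I would bound the three terms separately and then take a union bound over $1\le i\le n$, working throughout on the event $\A_n$ (on which all the relevant Gram matrices are $\succeq \lambda_{\mathrm{lb}}\bm{I}$), which by \eqref{eq: Anprob} has probability $1-\exp(-\Omega(n))$.

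For the first term, $\tbGimin$ drops the $i$th observation and hence is independent of $\tbX_i$; conditional on $\tbGimin$, the quadratic form $\frac{1}{n}\tbX_i^{\tp}\tbGimin^{-1}\tbX_i$ has conditional mean $\frac{1}{n}\tr(\tbGimin^{-1})$, and on $\A_n$ one has $\|\tbGimin^{-1}\|\le 1/\lambda_{\mathrm{lb}}$. The Hanson--Wright inequality, applied exactly as in the derivation of \eqref{eq: tbGiminapprox-11} and followed by unconditioning and a union bound over $i$, gives $\sup_i \big|\tq_i - \frac{1}{n}\tr(\tbGimin^{-1})\big| \lesssim H_n/\sqrt{n}$ with probability at least $1 - C_2 n\exp(-c_2 H_n^2)$. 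The second term is exactly \eqref{eq: traceapprox5} of Lemma \ref{lemma: quadtraceapprox}: $\sup_i \frac{1}{n}\big|\tr(\tbGimin^{-1}) - \tr(\tbGi^{-1})\big| \lesssim K_n^2 H_n/\sqrt{n}$ with probability $1-o(1)$. For the third term, Lemma \ref{lemma: tracediff} gives $\big|\tr(\tbGi^{-1}) - \tr(\tbG^{-1})\big| \le 1/\lambda_{\mathrm{lb}}$ with probability $1 - \exp(-\Omega(n))$ for each fixed $i$; a union bound over $i$ preserves probability $1 - n\exp(-\Omega(n)) = 1-o(1)$, and after division by $n$ this term is $\lesssim 1/n = o\big(K_n^2 H_n/\sqrt{n}\big)$, hence negligible.

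Collecting the three bounds (all of which are $\lesssim K_n^2 H_n/\sqrt{n}$) and intersecting the corresponding events yields $\sup_i|\tq_i - \talpha| \le C_1 K_n^2 H_n/\sqrt{n}$ with the advertised failure probability, which is $1-o(1)$ by the defining properties \eqref{eq: HnKn} of $H_n$ and $K_n$ (in particular $n^2\exp(-c H_n^2)=o(1)$ and $n\exp(-c K_n^2)=o(1)$). I do not expect a genuinely new difficulty here: the statement is essentially a bookkeeping assembly of the concentration estimate \eqref{eq: tbGiminapprox-11}, the trace-stability Lemma \ref{lemma: quadtraceapprox}, and the rank-one trace-perturbation Lemma \ref{lemma: tracediff}. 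The one point needing care is that all three approximations must hold \emph{uniformly} in $i$, so one must verify that the union bounds over $i\in[n]$ do not spoil the $1-o(1)$ guarantee --- precisely the reason \eqref{eq: HnKn} is set up so that $n^2\exp(-c_1 H_n^2)=o(1)$ rather than merely $n\exp(-c_1 H_n^2)=o(1)$.
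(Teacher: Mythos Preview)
Your proof is correct and uses the same ingredients as the paper, just assembled in a slightly different order. The paper's proof is a one-liner: ``This result follows directly from Proposition~\ref{prop: etai} and equation~\eqref{eq: quadapprox1}.'' That is, the paper writes
\[
|\tq_i-\talpha| \le \Big|\tfrac{1}{n}\tbX_i^{\tp}\tbGimin^{-1}\tbX_i - \tfrac{1}{n}\tbX_i^{\tp}\tbGi^{-1}\tbX_i\Big| + |\eta_i|,
\]
invokes \eqref{eq: quadapprox1} for the first piece and Proposition~\ref{prop: etai} for the second. Your three-term decomposition instead routes through the trace $\frac{1}{n}\tr(\tbGimin^{-1})$ rather than the quadratic form $\frac{1}{n}\tbX_i^{\tp}\tbGi^{-1}\tbX_i$, using \eqref{eq: tbGiminapprox-11}, \eqref{eq: traceapprox5} and Lemma~\ref{lemma: tracediff} directly. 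These are precisely the building blocks from which Proposition~\ref{prop: etai} was proved, so the two arguments are essentially the same triangle inequality run through two different intermediate points; neither is more general or more elementary than the other.
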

\begin{proof}
This result follows directly from Proposition \ref{prop: etai} and equation \eqref{eq: quadapprox1}. 

\end{proof}
A consequence is that $\prox_{\tq_{i} \rho}\left( \bX_i^{\tp} \hbbetaimin \right)$ becomes close to $\prox_{\talpha \rho} \left(\bX_i ^ {\tp} \hbbetaimin \right)$.
\begin{lemma}\label{lemma: proxclose}
Let $\tq_i$ and $\talpha$ be as defined earlier. Then one has 
\begin{multline}
	 \opP \left( \sup_i\left| \prox_{\tq_{i} \rho}\left( \bX_i^{\tp} \hbbetaimin \right) - \prox_{\talpha \rho} \left(\bX_i ^ {\tp} \hbbetaimin \right) \right|  \leq C_1 \frac{K_n^3 H_n}{\sqrt{n}}\right) 
	 \\
	\geq 1- C_2n^2\exp\left(-c_2 H_n^2\right)-C_3n\exp\left(-c_3 K_n^2\right)  -\exp\left(-C_4n\left(1+o(1)\right)\right) =1-o(1),\label{eq: proxclose}
\end{multline}
where $K_n, H_n$ are as in \eqref{eq: HnKn}. 
\end{lemma}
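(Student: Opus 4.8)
\textbf{Proof proposal for Lemma \ref{lemma: proxclose}.} The plan is to view $\prox_{b\rho}(z)$ as a smooth function of the scale parameter $b$ at fixed argument $z$, and to exploit the fact that $\tq_i$ and $\talpha$ are already known to be uniformly close by Lemma \ref{lemma: qialphaclose}. Differentiating the defining relation $b\rho'(\prox_{b\rho}(z)) + \prox_{b\rho}(z) = z$ with respect to $b$ gives
\[
  \frac{\partial}{\partial b}\prox_{b\rho}(z) = -\frac{\rho'\big(\prox_{b\rho}(z)\big)}{1+b\rho''\big(\prox_{b\rho}(z)\big)},
\]
so that, since $b>0$ and $\rho''\geq 0$ (Section \ref{sec:rho}), the mean value theorem yields, for every $i$,
\[
  \big|\prox_{\tq_i\rho}(\bX_i^{\tp}\hbbetaimin) - \prox_{\talpha\rho}(\bX_i^{\tp}\hbbetaimin)\big|
  ~\leq~ |\tq_i - \talpha|\cdot\sup_{b\in I_i}\big|\rho'\big(\prox_{b\rho}(\bX_i^{\tp}\hbbetaimin)\big)\big|,
\]
where $I_i$ is the segment with endpoints $\tq_i$ and $\talpha$. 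Thus the lemma reduces to (a) a uniform-in-$i$ bound on $\sup_{b\in I_i}|\rho'(\prox_{b\rho}(\bX_i^{\tp}\hbbetaimin))|$ of order $K_n$, and then (b) combining with the bound $\sup_i|\tq_i-\talpha|\lesssim K_n^2H_n/\sqrt n$ from Lemma \ref{lemma: qialphaclose}, which produces the claimed rate $K_n^3 H_n/\sqrt n$ and the stated probability.

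For step (a) I would first establish a deterministic lower bound on the scale parameters. On $\A_n$ one has $\|\tbGimin^{-1}\|\le \lambda_{\mathrm{lb}}^{-1}$ and $\lambda_{\max}(\tbG)\lesssim 1$, while $\|\tbX_i\|^2\gtrsim n$ simultaneously for all $i$ with probability $1-e^{-\Omega(n)}$ (a $\chi^2$ lower-tail bound plus a union bound, using $p\asymp n$); consequently $\tq_i = \tfrac1n\tbX_i^{\tp}\tbGimin^{-1}\tbX_i \ge c_0$ and $\talpha = \tfrac1n\tr(\tbG^{-1})\gtrsim p/n \ge c_0$ for a universal constant $c_0>0$. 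Hence every $b\in I_i$ satisfies $b\ge c_0$. Since $\prox_{b\rho}(z)$ is nonincreasing in $b$ (\cite[Proposition 6.3]{donoho2013high}) and $\rho'$ is nondecreasing (as $\rho''>0$) and nonnegative for both links, this gives $|\rho'(\prox_{b\rho}(\bX_i^{\tp}\hbbetaimin))|\le \rho'(\prox_{c_0\rho}(\bX_i^{\tp}\hbbetaimin))$ uniformly over $b\in I_i$. It therefore remains to bound $\sup_i\rho'(\prox_{c_0\rho}(\bX_i^{\tp}\hbbetaimin))$ for the fixed constant $c_0$. Conditionally on $\hbbetaimin$ (which is independent of $\bX_i$), $\bX_i^{\tp}\hbbetaimin\sim\dnorm(0,\|\hbbetaimin\|^2)$; by the assumed sub-Gaussianity of $\rho'(\prox_{c_0\rho}(Z))$ for $Z$ Gaussian (Section \ref{sec:rho}) together with the bound $\|\hbbetaimin\|=O(1)$ from Theorem \ref{thm: normbound-MLE}, one gets $\rho'(\prox_{c_0\rho}(\bX_i^{\tp}\hbbetaimin))\lesssim K_n$ with probability $1-C\exp(-cK_n^2)-\exp(-\Omega(n))$ for each $i$; a union bound over $i$ gives the uniform statement with probability $1-Cn\exp(-cK_n^2)-\exp(-\Omega(n))$. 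This is precisely the estimate \eqref{eq: rhoprimeproxqi} argument from the proof of Lemma \ref{lemma: hb}, reused here.

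Finally I would intersect the three events — the event of Lemma \ref{lemma: qialphaclose}, the event $\{\tq_i,\talpha\ge c_0\ \forall i\}$, and the event $\{\sup_i\rho'(\prox_{c_0\rho}(\bX_i^{\tp}\hbbetaimin))\lesssim K_n\}$ — whose complements sum to $C_2 n^2\exp(-c_2H_n^2)+C_3 n\exp(-c_3K_n^2)+\exp(-C_4 n(1+o(1)))=o(1)$, and on that event multiply the two bounds to conclude $\sup_i|\prox_{\tq_i\rho}(\bX_i^{\tp}\hbbetaimin)-\prox_{\talpha\rho}(\bX_i^{\tp}\hbbetaimin)|\lesssim K_n^3H_n/\sqrt n$. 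The main obstacle is the probit case, where $\rho'$ is unbounded: one cannot use a crude supremum and must instead route through the sub-Gaussian concentration of $\rho'(\prox_{c\rho}(\cdot))$, which in turn forces the reduction (via monotonicity of the proximal map in $b$) to a \emph{deterministic} scale $c_0$ — hence the need to prove the uniform constant lower bound on $\tq_i$ and $\talpha$ before invoking the mean value theorem. The remaining steps are routine concentration and union-bound bookkeeping.
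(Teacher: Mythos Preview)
Your proposal is correct and follows essentially the same route as the paper: differentiate the defining relation of $\prox_{b\rho}$ in $b$, apply the mean value theorem, reduce via monotonicity in $b$ to a fixed deterministic scale $c_0$ obtained from a uniform lower bound on $\tq_i$ and $\talpha$, invoke the sub-Gaussian tail of $\rho'(\prox_{c_0\rho}(\cdot))$ exactly as in \eqref{eq: rhoprimeproxqi}, and combine with Lemma~\ref{lemma: qialphaclose}. One cosmetic slip: the lower bound $\tq_i\ge c_0$ needs $\lambda_{\max}(\tbGimin)\lesssim 1$ (which follows from $\|\rho''\|_\infty<\infty$ and Lemma~\ref{lemma: singval}), not $\|\tbGimin^{-1}\|\le\lambda_{\mathrm{lb}}^{-1}$ as you wrote; otherwise your argument matches the paper's.
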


The key idea behind studying
$\prox_{\talpha \rho} \left(\bX_i ^ {\tp} \hbbetaimin \right)$ is that
it is connected to a random function $\delta_n(\cdot)$ defined below,
which happens to be closely related to the equation (\ref{eq:
  sysofeq-b}). In fact, we will show that $\delta_n(\talpha)$
converges in probability to 0; the proof relies on the
connection between
$\prox_{\talpha \rho} \left(\bX_i ^ {\tp} \hbbetaimin \right)$ and the
auxiliary quantity
$\prox_{\tq_{i} \rho}\left( \bX_i^{\tp} \hbbetaimin \right)$. The
formal results is this:
\begin{proposition}
\label{prop:delta-0}
For any index $i$, let $\hbbetaimin$ be the MLE obtained on dropping the $i{\text{th}}$ observation. Define $\delta_n(x)$ to be the random function 
\begin{equation}\label{eq: deltanx}
\delta_n(x) := \frac{p}{n} - 1 + \frac{1}{n} \sum_{i=1}^n \frac{1}{1+x \rho''\left(\prox_{x \rho}\left(\bX_i^{\tp} \hbbetaimin\right)\right)} .
\end{equation}
Then one has
 $ \delta_n(\talpha) ~\convP~ 0$.  
\end{proposition}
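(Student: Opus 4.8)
The plan is to anchor $\delta_n(\talpha)$ to an \emph{exact} trace identity in the reduced model and then absorb the difference into error terms already controlled in this section. Recall from the proof of Lemma~\ref{lemma: xinapprox} that the matrix $\bH$ of \eqref{eq: defH} has $i$th diagonal entry $H_{i,i}=\bigl(1+\tfrac1n\rho''(\tbX_i^{\tp}\tbbeta)\,\tbX_i^{\tp}\tbGi^{-1}\tbX_i\bigr)^{-1}$, while $\tr(\Id-\bH)=\tr(\tbG\tbG^{-1})=p-1$ by \eqref{eq: traceapprox2}. Hence $\frac1n\sum_{i=1}^n\bigl(1+\tfrac1n\rho''(\tbX_i^{\tp}\tbbeta)\,\tbX_i^{\tp}\tbGi^{-1}\tbX_i\bigr)^{-1}=\frac1n\sum_i H_{i,i}=1-\frac{p-1}{n}$, so that
\[
  \delta_n(\talpha)=\frac1n+\frac1n\sum_{i=1}^n\left(\frac{1}{1+\talpha\,\rho''\!\bigl(\prox_{\talpha\rho}(\bX_i^{\tp}\hbbetaimin)\bigr)}-\frac{1}{1+\tfrac1n\rho''(\tbX_i^{\tp}\tbbeta)\,\tbX_i^{\tp}\tbGi^{-1}\tbX_i}\right).
\]
On the event $\A_n$ intersected with the norm-bound event of Theorem~\ref{thm: normbound-MLE} and the Hanson--Wright events invoked below, all of $\talpha$ and $\tfrac1n\tbX_i^{\tp}\tbGi^{-1}\tbX_i$ lie in a fixed bounded interval and $\rho''$ is bounded and bounded away from $0$ on the relevant compacts (Section~\ref{sec:rho}); since $x\mapsto (1+x)^{-1}$ is $1$-Lipschitz on $[0,\infty)$ and the denominators are $\ge 1$, the summand is controlled by $|\talpha-\tfrac1n\tbX_i^{\tp}\tbGi^{-1}\tbX_i|$ and $|\rho''(\prox_{\talpha\rho}(\bX_i^{\tp}\hbbetaimin))-\rho''(\tbX_i^{\tp}\tbbeta)|$. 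It therefore suffices to prove that both suprema over $i$ tend to $0$ in probability.

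The first supremum is handled directly by the lemmas of this section: \eqref{eq: quadapprox1} of Lemma~\ref{lemma: quadtraceapprox} gives $\sup_i\bigl|\tfrac1n\tbX_i^{\tp}\tbGi^{-1}\tbX_i-\tq_i\bigr|\lesssim K_n^2H_n/\sqrt n$, and Lemma~\ref{lemma: qialphaclose} gives $\sup_i|\tq_i-\talpha|\lesssim K_n^2H_n/\sqrt n$; the triangle inequality yields $\sup_i|\talpha-\tfrac1n\tbX_i^{\tp}\tbGi^{-1}\tbX_i|=o(1)$ with probability $1-o(1)$. For the second supremum, since $\rho''$ is Lipschitz ($\sup_z|\rho'''(z)|<\infty$), it is enough to show $\sup_i\bigl|\prox_{\talpha\rho}(\bX_i^{\tp}\hbbetaimin)-\tbX_i^{\tp}\tbbeta\bigr|\convP 0$. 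I would split this as $\sup_i\bigl|\prox_{\talpha\rho}(\bX_i^{\tp}\hbbetaimin)-\prox_{\tq_i\rho}(\bX_i^{\tp}\hbbetaimin)\bigr|+\sup_i\bigl|\prox_{\tq_i\rho}(\bX_i^{\tp}\hbbetaimin)-\tbX_i^{\tp}\tbbeta\bigr|$; the first term is $o(1)$ by Lemma~\ref{lemma: proxclose}. For the second I would (i) compare the full- and reduced-model leave-observation-$i$-out fits, establishing $\sup_i|\bX_i^{\tp}\hbbetaimin-\tbX_i^{\tp}\tbbetaimin|=n^{-1/2+o(1)}$ — this comes from re-running the surrogate analysis of Section~\ref{sec: LRT} (Theorem~\ref{thm: hbbetabdiff}, Lemma~\ref{lemma: approx}) inside the $(n-1)$-observation problem, using that $\bX_i$ is independent of $\hbbetaimin$ and $\tbbetaimin$ and that $|\tb_1|=n^{-1/2+o(1)}$, then taking a union bound over $i$; (ii) invoke the reduced-model analogue of Lemma~\ref{lemma: hb} applied with observation $i$ removed, which gives $\tbX_i^{\tp}\tbbeta=\prox_{\tq_i\rho}(\tbX_i^{\tp}\tbbetaimin)+n^{-1/2+o(1)}$; and (iii) combine (i)--(ii) with the $1$-Lipschitz property of $\prox_{\tq_i\rho}(\cdot)$ in its argument. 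A union bound over $i\le n$ is affordable because the failure probabilities in Lemmas~\ref{lemma: proxclose}, \ref{lemma: hb} and \ref{lemma: quadtraceapprox} are $o(1/n)$ after the choices \eqref{eq: HnKn}. Putting everything together, $\delta_n(\talpha)=\frac1n+E_n$ with $E_n\convP 0$, hence $\delta_n(\talpha)\convP 0$.

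The main obstacle is step (i): making the two reductions ``drop a predictor'' and ``drop an observation'' compatible precisely at the held-out index $i$. For indices $j\ne i$ the bound $\bX_j^{\tp}\hbbetaimin\approx\tbX_j^{\tp}\tbbetaimin$ follows from essentially the same argument as Corollary~\ref{cor: fitdiff}, but at $j=i$ the covariate $\bX_i$ is independent of both fitted MLEs, so one instead argues that the first coordinate of $\hbbetaimin$ is $O(n^{-1/2+o(1)})$, that its remaining coordinates track $\tbbetaimin$ to $O(n^{-1+o(1)})$ in $\ell_2$, and pairs these against the entries of $\bX_i$ (of size $O(\sqrt{\log n})$) and against $\tbX_i$ via a Gaussian conditioning argument. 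Keeping all of these $n^{-1/2+o(1)}$ estimates valid simultaneously over the $n$ choices of $i$ on a single high-probability event, and checking that dividing by $1+\tfrac1n\rho''(\tbX_i^{\tp}\tbbeta)\tbX_i^{\tp}\tbGi^{-1}\tbX_i\ge 1$ does not amplify them, is where the technical care is concentrated; conceptually, nothing beyond the tools of Sections~\ref{sec: MLEbounded}, \ref{sec: AMP} and \ref{sec: LRT} is needed.
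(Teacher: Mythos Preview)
Your approach is essentially the paper's: anchor to the trace identity \eqref{eq: traceapprox2}--\eqref{eq: triminh}, use the $1$-Lipschitz property of $x\mapsto(1+x)^{-1}$ on $[0,\infty)$, replace $\tfrac1n\tbX_i^{\tp}\tbGi^{-1}\tbX_i$ by $\talpha$ via $\sup_i|\eta_i|$ (Proposition~\ref{prop: etai}, which is your Lemma~\ref{lemma: quadtraceapprox}+\ref{lemma: qialphaclose}), and pass from $\rho''(\tbX_i^{\tp}\tbbeta)$ to $\rho''(\prox_{\talpha\rho}(\cdot))$ via Lemma~\ref{lemma: proxclose} and the second part of Lemma~\ref{lemma: fitdiff1}.

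The one place you diverge is your step~(i), which you flag as the ``main obstacle.'' In fact the paper never performs this step, and you do not need it either. Since $\talpha$, $\tbG$, $\tbGi$, $\tq_i$ are all reduced-model objects, the paper runs the entire argument inside the reduced model: it invokes the reduced-model analogues of Lemma~\ref{lemma: hb}/Lemma~\ref{lemma: fitdiff1}(2) (your step~(ii)) to get $\sup_i|\tbX_i^{\tp}\tbbeta-\prox_{\tq_i\rho}(\tbX_i^{\tp}\tbbetaimin)|\lesssim K_n^2H_n/\sqrt n$ directly, and then the reduced-model version of Lemma~\ref{lemma: proxclose} to swap $\tq_i$ for $\talpha$. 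There is no need to bridge to the full-model quantities $\bX_i^{\tp}\hbbetaimin$ at all; the appearance of $\bX_i^{\tp}\hbbetaimin$ in the Proposition and in the final displayed bound is a notational inconsistency---the body of the proof uses $\tbX_i^{\tp}\tbbetaimin$ throughout. So your step~(ii) is the whole story, and step~(i) (while doable by the argument you sketch) is an avoidable detour.

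Two minor remarks: you do not need $\rho''$ bounded away from zero---the denominators are already $\ge 1$; and the union-bound bookkeeping you mention (``failure probabilities $o(1/n)$'') is already built into the uniform statements of Lemmas~\ref{lemma: quadtraceapprox}, \ref{lemma: qialphaclose} and \ref{lemma: proxclose}, so no extra union bound is required.
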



Furthermore, the random function $\delta_n(x)$ converges to a deterministic function $\Delta(x)$ defined by
\begin{equation} 
	\label{eq: Deltax}
	\Delta(x) = \kappa - 1 + \E_{Z} \left[ \frac{1}{1+x \rho''(\prox_{x \rho}(\taus Z))} \right ],
\end{equation}
where $Z \sim \dnorm(0,1)$, and $\taus$ is such that $(\taus,\bs)$ is the unique solution to \eqref{eq: sysofeq-tau} and \eqref{eq: sysofeq-b}. 
\begin{proposition}
\label{prop: Deltax}
	With $\Delta(x) $ as in (\ref{eq: Deltax}),  
	$\Delta(\talpha)  \convP 0.$
\end{proposition}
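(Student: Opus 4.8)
The plan is to combine Proposition \ref{prop:delta-0} (which gives $\delta_n(\talpha) \convP 0$) with a uniform-over-$x$ comparison between the random function $\delta_n(\cdot)$ and the deterministic function $\Delta(\cdot)$, together with control on the location of $\talpha$. First I would record that, by Theorem \ref{thm:mainthm}(b), $\talpha$ is bounded away from $0$ and $\infty$ with probability $1-o(1)$ — indeed the discussion following Lemma \ref{lemma: xinapprox} already shows $\talpha \asymp p/n$ on $\A_n$, so there is a fixed compact interval $I = [c_{\mathrm{lb}}, c_{\mathrm{ub}}] \subset (0,\infty)$ with $\opP(\talpha \in I) = 1 - o(1)$. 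Thus it suffices to prove that $\sup_{x \in I} |\delta_n(x) - \Delta(x)| \convP 0$; combined with $\delta_n(\talpha) \convP 0$ and the event $\{\talpha \in I\}$, this yields $\Delta(\talpha) \convP 0$.

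To establish the uniform convergence $\sup_{x\in I}|\delta_n(x)-\Delta(x)| \convP 0$, I would proceed in two steps. Step one: for each \emph{fixed} $x \in I$, show $\delta_n(x) \convP \Delta(x)$. Writing $g_x(t) := \bigl(1 + x\rho''(\prox_{x\rho}(t))\bigr)^{-1} \in (0,1]$, we have $\delta_n(x) = \tfrac{p}{n} - 1 + \tfrac1n\sum_i g_x(\bX_i^\top \hbbetaimin)$. The clean way to handle $\hbbetaimin$ is to replace it by $\hbbeta$ using Lemma \ref{lemma: fitdiff1}: $\sup_{j\neq i}|\bX_j^\top\hbbetaimin - \bX_j^\top\hbbeta| \lesssim n^{-1/2+o(1)}$, and since $g_x$ is Lipschitz (because $\prox_{x\rho}$ is $1$-Lipschitz, $\rho''$ is bounded with bounded derivative $\rho'''$, and $x$ ranges over the compact $I$), the sum is within $o_P(1)$ of $\tfrac1n\sum_i g_x(\bX_i^\top\hbbeta)$. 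But $\bX_i^\top\hbbeta$ is \emph{not} independent of $\hbbeta$; here I would instead invoke the AMP characterization. By \eqref{eq: observ2} in the state-evolution / AMP analysis (or equivalently the fact that the empirical distribution of $\{\bX_i^\top\hbbeta\}$ converges — this is exactly what drives Theorem \ref{thm: MLE-norm} and the $\Psi$-based system of equations), $\tfrac1n\sum_i \psi(\bX_i^\top\hbbeta) \to \E[\psi(\taus Z)]$ a.s.\ for pseudo-Lipschitz $\psi$; applying this with $\psi = g_x$ gives $\tfrac1n\sum_i g_x(\bX_i^\top\hbbeta) \convP \E[g_x(\taus Z)]$, and with $\tfrac pn \to \kappa$ we get $\delta_n(x) \convP \kappa - 1 + \E[g_x(\taus Z)] = \Delta(x)$. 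Step two: upgrade pointwise to uniform convergence over the compact interval $I$. This follows from a standard equicontinuity / stochastic-equicontinuity argument: $x \mapsto \delta_n(x)$ is Lipschitz on $I$ with a Lipschitz constant that is $O(1)$ uniformly (since $\partial_x \prox_{x\rho}$, $\rho''$, $\rho'''$ are all bounded, and $\tq_i, \talpha$ etc.\ do not enter here — only the explicit $x$-dependence), and $x\mapsto\Delta(x)$ is likewise Lipschitz; a finite $\epsilon$-net on $I$ plus pointwise convergence at the net points then gives $\sup_{x\in I}|\delta_n(x)-\Delta(x)| \convP 0$.

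Finally, I would assemble the pieces: on the event $\{\talpha\in I\}\cap\{\sup_{x\in I}|\delta_n(x)-\Delta(x)|\le\epsilon\}\cap\{|\delta_n(\talpha)|\le\epsilon\}$, which has probability $1-o(1)$ for every fixed $\epsilon>0$, we have $|\Delta(\talpha)| \le |\Delta(\talpha) - \delta_n(\talpha)| + |\delta_n(\talpha)| \le 2\epsilon$; letting $\epsilon\downarrow 0$ gives $\Delta(\talpha)\convP 0$. The main obstacle I anticipate is Step one — specifically, justifying rigorously that $\tfrac1n\sum_i g_x(\bX_i^\top\hbbeta)$ concentrates on $\E[g_x(\taus Z)]$, since $g_x$ may fail to be pseudo-Lipschitz globally if $\rho''\circ\prox_{x\rho}$ is not smooth enough at infinity; in that case one truncates $g_x$ at a slowly growing level, uses the boundedness $0< g_x \le 1$ to bound the truncation error, and applies the AMP observable convergence to the truncated (bounded Lipschitz, hence pseudo-Lipschitz) function — exactly the device already used elsewhere in the paper via the $H_n, K_n$ sequences. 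The interchange with $\hbbetaimin$ and the compactness of $I$ are routine given Lemma \ref{lemma: fitdiff1} and Theorem \ref{thm: normbound-MLE}.
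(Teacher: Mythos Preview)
Your high-level scaffold (compactness of the range of $\talpha$, uniform convergence $\sup_{x\in I}|\delta_n(x)-\Delta(x)|\convP 0$, then combine with $\delta_n(\talpha)\convP 0$) matches the paper's. The gap is in your Step one, the pointwise limit.

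You propose to replace $\bX_i^\top\hbbetaimin$ by $\bX_i^\top\hbbeta$ via Lemma \ref{lemma: fitdiff1}. That lemma controls $\sup_{j\neq i}|\bX_j^\top\hbbetaimin-\bX_j^\top\hbbeta|$; the diagonal case $j=i$ is explicitly excluded, and it is \emph{not} small. Indeed the second conclusion of Lemma \ref{lemma: fitdiff1} (together with \eqref{eq: proxrel2}) says $\bX_i^\top\hbbeta \approx \prox_{q_i\rho}(\bX_i^\top\hbbetaimin)$, so
\[
\bX_i^\top\hbbetaimin-\bX_i^\top\hbbeta \;\approx\; q_i\,\rho'\bigl(\prox_{q_i\rho}(\bX_i^\top\hbbetaimin)\bigr),
\]
which is $\Theta(1)$, not $o(1)$. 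Since the $i$th summand of $\delta_n(x)$ uses precisely $\bX_i^\top\hbbetaimin$, your replacement fails for every term of the sum, and the discrepancy does not average out. The whole point of the leave-one-out construction is that $\bX_i$ is \emph{independent} of $\hbbetaimin$; switching to $\hbbeta$ destroys this and is the wrong direction. Relatedly, the AMP observable convergence you invoke (the commented-out \eqref{eq: observ2}) concerns the iterates $\eta_i^t$, not $\bX_i^\top\hbbeta$; there is no ready-made statement in the paper giving the empirical law of $\{\bX_i^\top\hbbeta\}_i$.

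The paper instead keeps $\hbbetaimin$, truncates to $\{\|\hbbetaimin\|\le M\}$ (defining $\tdelta_n$), and proves pointwise convergence in two pieces: (i) $\tdelta_n(x)-\E[\tdelta_n(x)]\convP 0$ by a second-moment computation, using a leave-\emph{two}-out estimator $\hbbetaonetwomin$ to decouple the cross term $\E[f(M_1)f(M_2)]$; (ii) $\E[\tdelta_n(x)]\to\Delta(x)$ via the conditional law $\bX_1^\top\hbbetaonemin\mid\hbbetaonemin\sim\dnorm(0,\|\hbbetaonemin\|^2)$ and $\|\hbbetaonemin\|\convP\taus$. A smaller issue: your equicontinuity claim that $\partial_x\prox_{x\rho}$ is bounded is also false --- by \cite[Proposition 6.3]{donoho2013high} it equals $-\rho'(\prox_{x\rho}(t))/(1+x\rho''(\prox_{x\rho}(t)))$, which grows like $|t|$; this is exactly why the paper needs the truncation $\bone_{\{\|\hbbetaimin\|\le M\}}$ before running the $\epsilon$-net argument.
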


In fact, one can easily verify that
\begin{equation}
   \Delta(x) = \kappa -  \mathbb{E}\big[\Psi'\left(\taus Z; \hspace{0.2em} x\right)\big],
\end{equation}
and hence by Lemma \ref{lemma: monotonicity}, the solution to $\Delta(x)=0$ is exactly
$\bs$. As a result, putting the above claims together, we show that $\tilde{\alpha}$ converges in probability to $\bs$.

It remains to formally prove the preceding lemmas and propositions, which is the goal of the rest of this section.

\begin{proof}[Proof of Lemma \ref{lemma: proxclose}]
	By \cite[Proposition 6.3]{donoho2013high}, one has
\[
\frac{\partial\prox_{b\rho}(z)}{\partial b}=-\left.\frac{\rho'(x)}{1+b\rho''(x)}\right|_{x=\prox_{b\rho}(z)} ,
\]
which yields 
\begin{align}
 	& \sup_i\left| \prox_{\tq_i \rho}\left( \bX_i^{\tp} \hbbetaimin \right) - \prox_{\talpha \rho}\left( \bX_i^{\tp} \hbbetaimin \right) \right| \nonumber \\
	& \qquad  = \sup_i
\left[  \left| \left. \frac{\rho'(x)}{1+q_{\talpha,i}\rho''(x)}   \right|_{x=\prox_{q_{\talpha,i}\rho}\big(\bX_i^{\tp} \hbbetaimin\big)}  \right| \cdot |\tq_i - \talpha| \right] \nonumber\\
	& \qquad \leq \sup_i \left|  \rho'\left(\prox_{q_{\talpha,i}}(\bX_i^{\tp}\hbbetaimin) \right)  \right|  \cdot \sup_i |\tq_i - \talpha |, \label{eq:UB3}
\end{align}
	where $q_{\talpha,i}$ is between $\tq_{i}$ and $\talpha$. Here, the last inequality holds since $q_{\talpha,i},  \rho''\geq 0 $.  
	
	In addition, just as in the proof of Lemma \ref{lemma: hb},
        one can show that $q_{i}$ is bounded below by some constant
        $C^{*}>0$ with probability $1- \exp(- \Omega( n) )$.  Since
        $q_{\talpha,i} \geq \min\{\tq_i,\talpha\} $, on the event
        $\sup_{i}|\tq_i - \talpha| \leq C_1 K_n^2 H_n /\sqrt{n}$,
        which happens with high probability (Lemma \ref{lemma: qialphaclose}),
        $q_{\talpha,i} \geq C_{\alpha}$ for some universal constant
        $C_\alpha >0$. Hence, by an argument similar to that 
        establishing \eqref{eq: rhoprimeproxqi}, we have
\begin{multline*}
	 \opP \left(\sup_i \left|\rho'\left( \prox_{q_{\talpha,i}} \left(\bX_i^{\tp}\hbbetaimin \right) \right) \right| \geq C_1 K_n \right)  \\
	\leq  C_2n^2 \exp\left(-c_2 H_n^2\right)+C_3n\exp\left(- c_3 K_n^2\right) +\exp\left(-C_4n\left(1+o(1)\right)\right).
\end{multline*}
This together with (\ref{eq:UB3}) and Lemma \ref{lemma: qialphaclose} concludes the proof. 
\end{proof}

\begin{proof}[Proof of Proposition \ref{prop:delta-0}]
To begin with, recall from \eqref{eq: traceapprox2} and \eqref{eq: triminh} that on $\A_n$, 
\begin{equation}
	\label{eq:p-n-2}
	\frac{p-1}{n} = \sum_{i=1}^n \frac{\frac{\rho''(\tbX_i^{\tp}\tbbeta)}{n}\tbX_i^{\tp}\tbGi^{-1}\tbX_i}{1+\frac{\rho''(\tbX_i^{\tp}\tbbeta)}{n}\tbX_i^{\tp}\tbGi^{-1}\tbX_i} = 1- \frac{1}{n} \sum_{i=1}^n \frac{1}{1+\frac{\rho''(\tbX_i^{\tp}\tbbeta)}{n}\tbX_i^{\tp}\tbGi^{-1}\tbX_i} .
\end{equation}
Using the fact that $\big| \frac{1}{1+x} - \frac{1}{1+y} \big| \leq |x-y|$ for $x, y \geq  0$, we obtain  
\begin{align*}
 &\left|\frac{1}{n} \sum_{i=1}^n \frac{1}{1+\frac{\rho''(\tbX_i^{\tp}\tbbeta)}{n}\tbX_i^{\tp}\tbGi^{-1}\tbX_i}  - \frac{1}{n} \sum_{i=1}^n \frac{1}{1+\rho''(\tbX_i^{\tp}\tbbeta) \talpha}   \right|  \\
 & \quad  \leq \frac{1}{n} \sum_{i=1}^n  \rho''(\tbX_i^{\tp}\tbbeta) \left| \frac{1}{n}\tbX_i^{\tp}\tbGi^{-1}\tbX_i - \talpha \right| 
	~\leq~ \|\rho'' \|_{\infty} \sup_i \left| \frac{1}{n}\tbX_i^{\tp}\tbGi^{-1}\tbX_i - \talpha \right|  \\
 & \quad = \|\rho'' \|_{\infty} \sup_i |\eta_i| 
	\leq C_1 \frac{K_n^2 H_n}{\sqrt{n}}, 
  \end{align*}
  with high probability (Proposition \ref{prop: etai}). This combined
  with (\ref{eq:p-n-2}) yields
\begin{multline*} 
	\opP\left( \left|\frac{p-1}{n} - 1 +\frac{1}{n} \sum_{i=1}^n \frac{1}{1+ \rho''(\tbX_i^{\tp} \tbbeta) \talpha}  \right| 
	\geq C_1 \frac{K_n^2 H_n}{\sqrt{n}} \right)  \\
	 \leq  C_2n^2 \exp\left(-c_2 H_n^2\right)+C_3n\exp\left(- c_3 K_n^2\right)
	  +\exp\left(-C_4n\left(1+o(1)\right)\right) .
\end{multline*}

The above bound concerns
$\frac{1}{n} \sum_{i=1}^n \frac{1}{1+ \rho''(\tbX_i^{\tp} \tbbeta)
  \talpha} $, and it remains to relate it to
$\frac{1}{n} \sum_{i=1}^n \frac{1}{1+ \rho''\left( \prox_{\talpha
      \rho} \left( \tbX_i^{\tp} \tbbeta \right) \right) \talpha} $.
To this end, we first get from the uniform boundedness of $\rho'''$
and Lemma \ref{lemma: fitdiff1} that
\begin{multline}
	 \opP \left(\sup_i \left| \rho''(\tbX_i^{\tp}\tbbeta) - \rho''\left( \prox_{\tq_i \rho}(\tbX_i^{\tp}\tbbetaimin) \right) \right|  \geq C_1 \frac{K_n^2 H_n}{\sqrt{n}} \right) \\
	\leq  C_2n \exp(-c_2 H_n^2)+C_3\exp(- c_3 K_n^2)
 +\exp(-C_4n(1+o(1))).\label{eq: proxapprox1}
\end{multline}
 Note that
\begin{align*}
	& \left|\frac{1}{n}\sum_{i=1}^n \frac{1}{1+ \rho''(\tbX_i^{\tp} \tbbeta) \talpha}  -\frac{1}{n}\sum_{i=1}^n\frac{1}{1+ \rho''(\prox_{\talpha \rho}( \tbX_i ^{\tp} \tbbetaimin)) \talpha} \right| \\
	& \quad\leq   |\talpha| \sup_i \left| \rho''(\tbX_i^{\tp}\tbbeta) - \rho''\left( \prox_{\talpha \rho}( \tbX_i ^{\tp} \tbbetaimin) \right) \right| \\
	& \quad \leq |\talpha| \sup_i \left\{ \left| \rho''\left(\tbX_i^{\tp}\tbbeta \right) -  \rho''\left(\prox_{\tq_i \rho}( \tbX_i ^{\tp} \tbbetaimin) \right) \right|
	+ \left| \rho'' \left( \prox_{\tq_i \rho}( \tbX_i ^{\tp} \tbbetaimin) \right) - \rho'' \left(\prox_{\talpha \rho}( \tbX_i ^{\tp} \tbbetaimin) \right) \right| \right\}.
\end{align*}
By the bound \eqref{eq: proxapprox1}, an application of 
	Lemma \ref{lemma: proxclose}, and the fact that  $\talpha \leq p/(n\lambda_{\mathrm{lb}})$ (on $\A_n$), we obtain
\begin{multline*}
	 \opP \left( \left| \frac{p}{n} -1 + \frac{1}{n} \sum_{i=1}^n  \frac{1}{1+ \rho''
	\big( \prox_{\talpha \rho}( \bX_i ^{\tp} \hbbetaimin) \big) \talpha }  \right| \geq C_1  \frac{K_n^3H_n}{\sqrt{n}} \right)  \\
 \leq   C_2n^2 \exp\big(-c_2 H_n^2\big)+C_3n\exp\big(- c_3 K_n^2\big) 
	 +\exp\big(-C_4n(1+o(1))\big).
\end{multline*} 
This establishes that $\delta_n(\talpha) \convP 0$. 
\end{proof}

\begin{proof}[Proof of Proposition \ref{prop: Deltax}]
  Note that since $0 < \alpha \leq p/(n\lambda_{\mathrm{lb}}):=B$ on
  $\A_n$, it suffices to show that
\[
	\sup_{x \in [0,B]} |\delta_n(x)-\Delta(x)| ~\convP~ 0.
\]
We do this by following three steps. Below, $M>0$ is some sufficiently
large constant.
\begin{enumerate}
\item First we truncate the random function $\delta_n(x)$ and define
\[
	\tdelta_n(x) = \frac{p}{n} - 1+\sum_{i=1}^n \frac{1}{1+ x \rho''\left( \prox_{x\rho}\left( \bX_i^{\tp}\hbbetaimin\bone_{\{\|\hbbetaimin \| \leq M\}} \right) \right)} .
\]
The first step is to show that $\sup_{x \in [0,B]} \left| \tdelta_n(x) - \delta_n(x) \right| \convP 0.$ We stress that this truncation does not arise in \cite{el2015impact}, and we keep track of the truncation throughout the rest of the proof.
\item Show that $\sup_{x \in [0,B]} \left| \tdelta_n(x) - \E \big[\tdelta_n(x)\big] \right| \convP 0$. 
\item Show that $\sup_{x \in [0,B]} \left| \E \big[ \tdelta_n(x) \big] - \Delta(x) \right| \convP 0$. 
\end{enumerate}

To argue about the first step, observe that
$|\frac{1}{1+y} - \frac{1}{1+z}| \leq |y-z|$ for any $y,z>0$ and that
$\big| \frac{\partial\prox_{c \rho}(x)}{\partial x} \big| \leq 1$
\cite[Proposition 6.3]{donoho2013high}. Then 
	\[ 
		|\delta_n(x) - \tdelta_n(x)| \leq |x| \cdot \|\rho''' \|_{\infty} \cdot \sup_i \left|\bX_i^{\tp}\hbbetaimin -\bX_i^{\tp}\hbbetaimin  \bone_{\{\|\hbbetaimin \|\leq M\}} \right| .
	\]
	For a sufficiently large constant $M>0$,
        $\opP(\|\hbbetaimin \|\geq M) \leq \exp(-\Omega(n))$ by Theorem
        \ref{thm: normbound-MLE}.  Hence, for any $\epsilon > 0$,
\begin{align}
	\opP \left(\sup_{x \in [0,B]} |\delta_n(x) - \tdelta_n(x)| \geq \epsilon \right) &\leq \opP\left(\sup_i \left|\bX_i^{\tp}\hbbetaimin \bone_{\{\|\hbbetaimin \| \geq M\}} \right| \geq \frac{\epsilon}{B \|\rho''' \|_{\infty}} \right)  \nonumber \\
	& \leq \sum_{i=1}^{n}\opP\left( \|\hbbetaimin \| \geq M\right) ~=~ o(1),
\end{align}
establishing Step 1. 

To argue about Step 2, note that for any $x$ and $z$,
\begin{align*} 
	\left| \tdelta_n(x) - \tdelta_n(z) \right| & \leq  \frac{1}{n}\sum_{i=1}^{n}\left|\frac{1}{1+x \rho''\left(\prox_{x \rho}\left(\bX_i^{\tp}\hbbetaimin\bone_{\{\|\hbbetaimin\| \leq M\}}\right)\right)}-\frac{1}{1+z \rho''\left(\prox_{z \rho}\left(\bX_i^{\tp}\hbbetaimin\bone_{\{\|\hbbetaimin\| \leq M\}}\right)\right)}   \right| \\ 
	& \leq  \frac{1}{n}\sum_{i=1}^{n} \left|x \rho''\left(\prox_{x \rho}\left(\bX_i^{\tp}\hbbetaimin\bone_{\{\|\hbbetaimin\| \leq M\}}\right)\right) - z\rho''\left(\prox_{z\rho}\left(\bX_i^{\tp}\hbbetaimin\bone_{\{\|\hbbetaimin\| \leq M\}}\right)\right) \right| \\
	& \leq \frac{1}{n}\sum_{i=1}^{n} \left(\|\rho'' \|_{\infty} |x-z| + |z| \cdot \|\rho''' \|_{\infty} \left| \left. \frac{\rho'(x)}{1+\tilde{z} \rho''(x)} \right|_{x = \prox_{\tilde{z}\rho} \big( \bX_i^{\tp}\hbbetaimin \bone_{\{\|\hbbetaimin\| \leq M\}} \big)} \right|  |x-z| \right) \\
	& \leq |x-z| \left(\|\rho'' \|_{\infty } + |z| \cdot \|\rho''' \|_{\infty} \frac{1}{n} \sum_{i=1}^n \left| \rho'\left(\prox_{\tilde{z}\rho}\left(\bX_i^{\tp}\hbbetaimin\bone_{\{\|\hbbetaimin\| \leq M\}}\right)\right) \right| \right) ,
\end{align*}
where $\tilde{z} \in (x,z).$ 
	Setting $$Y_n  := \|\rho'' \|_{\infty } + B \|\rho''' \|_{\infty} \frac{1}{n} \sum\nolimits_{i=1}^n \left|\rho'\left( \prox_{\tilde{z}\rho}\left(\bX_i^{\tp}\hbbetaimin\bone_{\{\|\hbbetaimin\| \leq M\}}\right) \right) \right| ,$$ then for any $\epsilon, \eta >0$ we have  
\begin{align}\label{eq: deltan}
	\opP\left(\sup_{x, z \in (0,B], |x-z| \leq \eta} |\tdelta_n(x) - \tdelta_n(z)| \geq \epsilon  \right) ~\leq~ 
	\opP\left(Y_n \geq \frac{\epsilon}{\eta} \right)  
	  ~\leq~  \frac{\eta}{\epsilon} \E [Y_n]  ~\leq~ \eta C_1(\epsilon),
\end{align} 
where $C_1(\epsilon)$ is some function independent of $n$. The
inequality \eqref{eq: deltan} is an analogue of \cite[Lemma
3.24]{el2015impact}. We remark that the truncation is particularly
important here in guaranteeing that $\E[Y_n] < \infty$.



	Set $$G_n(x) := \E\big[\tdelta_n(x)\big],$$  and observe that 
\[
	\big|G_n(x) - G_n(z)\big| \leq |x-z| \left(\|\rho'' \|_{\infty } + |z| \|\rho''' \|_{\infty} \E \left[ \left| \rho' \left(\prox_{\tilde{z}\rho}\left(\bX_i^{\tp}\hbbetaimin\bone_{ \{\|\hbbetaimin \| \leq M \}}\right) \right) \right| \right] \right) .
\]
A similarly inequality applies to $\Delta(x)$ in which 
$\bX_i^{\tp}\hbbetaimin \bone_{\{\|\hbbetaimin \| \leq M\}}$ is replaced
by $\taus Z$. In either case, 
\begin{equation}
	\sup_{x, z \in (0,B], |x-z| \leq \eta} |G_n(x) - G_n(z)| \leq C_2 \eta \qquad \text{and} \qquad \sup_{x, z \in (0,B], |x-z| \leq \eta} |\Delta(x) - \Delta(z)| \leq C_3\eta 
\end{equation}
for any $\eta > 0$.

%

For any $\epsilon' > 0$, set $K = \max\{C_1(\epsilon'), C_2 \}
$. Next, divide $[0,B]$ into finitely many segments
$$[0,x_1),~[x_1,x_2), ~\hdots, ~[x_{K-1},x_K:=B]$$ such that the
length of each segment is $\eta/K$ for any $\eta > 0$. Then for every
$x\in[0,B]$, there exists $l$ such that $|x - x_l| \leq \eta/K$.
As a result, for any $x \in [0,B]$,
\begin{align*}
\sup_{x \in (0,B]} \left|\tdelta_n(x) - G_n(x) \right| & ~\leq~ \eta+ \sup_{x,x_l \in (0,B], |x - x_l|\leq \eta/K} \left|\tdelta_n(x) - \tdelta_n(x_l)\right| + \sup_{1 \leq l \leq k} \left|\tdelta_n(x_l) - G_n(x_l) \right| .
\end{align*}
Now fix $\delta > 0, \epsilon > 0$.  Applying the above inequality gives 
 \begin{align}
 \opP \left(\sup_{x \in (0,B]}|\tdelta_n(x) - G_n(x)| \geq \delta \right) & \leq \opP \left( \sup_{x,x_l \in (0,B], |x - x_l|\leq \eta/K}|\tdelta_n(x) - \tdelta_n(x_l)|  \geq \frac{\delta - \eta}{2}\right) \nonumber\\ 
 & \qquad\qquad + \opP \left( \sup_{1 \leq l \leq k}|\tdelta_n(x_l) - G_n(x_l)| \geq \frac{\delta - \eta }{2} \right) \nonumber\\
	 & \leq \frac{\eta}{K} C_1\left(\frac{\delta-\eta}{2} \right) + \opP \left( \sup_{1 \leq l \leq k}|\tdelta_n(x_l) - G_n(x_l)| \geq \frac{\delta - \eta }{2} \right). \label{eq:delta-UB-1}
\end{align}
Choose
$\eta < \min\{\epsilon/2,\delta \}, K = \max\{C_1 (
\frac{\delta-\eta}{2} ) ,C_2\}$. Then the first term in the right-hand
side is at most $\epsilon/2$. Furthermore, suppose one can establish
that for any fixed $x$,
\begin{equation}
	|\tdelta_n(x)- G_n(x)| ~\convP ~0.  \label{eq: toshow}
\end{equation}
Since the second term in the right-hand side is a supremum over
finitely many points, there exists an integer $N$ such that for all
$ n \geq N$, the second term is less than or equal to
$\epsilon/2$. Hence, for all $n \geq N$, right-hand side is at most
$\epsilon$, which proves Step $2$.
Thus, it remains to prove (\ref{eq: toshow}) for any fixed $x\geq
0$. We do this after the analysis of Step 3.

We argue about Step 3 in a similar fashion and letting
$K = \max \{C_2, C_3 \}$, divide $(0,B]$ into segments of length
$\eta/K$. For every $x \in (0,B]$ there exists $x_l$ such that
$|x-x_l |\leq \eta/K$. Then
\begin{align*}
 |G_n(x) -\Delta(x)| & \leq~ |G_n(x) - G_n(x_l)| + | G_n(x_l) - \Delta(x_l) | + |\Delta(x_l) - \Delta(x)| \\
 & \leq~ 2\eta + |G_n(x_l) - \Delta(x_l)| ,
\end{align*}
\begin{align*}
 \implies \qquad \sup_{[0,B]} | G_n(x) - \Delta(x)| & \leq 2\eta + \sup_{1 \leq l \leq k} |G_n(x_l) - \Delta(x_l)|. 
 \end{align*}
 Hence, it suffices to show that for any fixed $x$,
 $|G_n(x) - \Delta(x)| \rightarrow 0.$ To this end, observe that for
 any fixed $x$, 
 \begin{multline*}
   |G_n(x) - \Delta(x)| \\
   \leq \left|\frac{p}{n} - \kappa \right| + \left|\E_{\bX} \left[ \frac{1}{1+x \rho''\left(\prox_{x \rho}\left( \bX_{1}^{\tp}\hbbetaonemin \bone_{ \{  \| \hbbetaonemin\| \leq M\}  }\right)\right)} \right]  - \E_{Z} \left[ \frac{1}{1+x \rho''\left(\prox_{x \rho}(\taus Z)\right)} \right]  \right|. \end{multline*}
Additionally, 
$$\bX_1^{\tp } \hbbetaonemin \bone_{\{\| \hbbetaonemin\| \leq M \}} = \|\hbbetaonemin \| \bone_{\{\|\hbbetaonemin \| \leq M \}} \tilde{Z},$$ 
where $\tilde{Z} \sim \dnorm(0,1)$. Since
$\|\hbbetaonemin \|\bone_{ \{\|\hbbetaonemin \| \leq M \}} \convP
\taus$, by Slutsky's theorem,
$\bX_1^{\tp } \hbbetaonemin \bone_{ \{\| \hbbetaonemin\| \leq M \} }$
converges weakly to $\taus \tilde{Z}$. Since $t \mapsto 
1/(1+x\rho"(\prox_{x\rho}(t)))$ is bounded, one directly
gets $$G_n(x) -\Delta(x) ~\convP~ 0$$ for every $x$.


Finally, we establish \eqref{eq: toshow}. To this end, we will prove instead that for any given $x$, $$\delta_n(x) -G_n(x) ~\convLtwo~ 0 .$$ Define 
\[
	M_i := \bX_i^{\tp}\hbbetaimin\bone_{ \{\|\hbbetaimin \| \leq M \} } \qquad \text{and} \qquad f(M_i) := \frac{1}{1+x\rho''\left(\prox_{x\rho}(M_i)\right)} - \E\left[ \frac{1}{1+x\rho''\left(\prox_{x\rho}(M_i)\right)}\right]. 
\]
Then $\delta_n(x)-G_n(x)= \frac{1}{n} \sum_{i=1}^n f(M_i)$. Hence, for any $x \in [0,B]$,  
\begin{align*}
	\mathsf{Var}[ \delta_n(x) ]  & = \frac{1}{n^2} \sum_{i=1}^n \E \big[ f^2(M_i) \big] + \frac{1}{n^2} \sum_{i \neq j} \E\big[ f(M_i)f(M_j) \big]\\
	& = \frac{\E\big[ f^2(M_1) \big]}{n} + \frac{n(n-1)}{n^2} \E\big[ f(M_1)f(M_2) \big].
\end{align*} 
The first term in the right-hand side is at most $1/n$. Hence, it
suffices to show that $\E\big[ f(M_1)f(M_2) \big] \rightarrow 0.$ Let
$\hbbetaonetwomin$ be the MLE when the $1{\text{st}}$ and
$2{\text{nd}}$ observations are dropped. Define
\begin{align*}
	&\bGonetwomin  :=  \frac{1}{n} \sum_{j \neq 1,2} \rho''\left(\bX_j^{\tp}\hbbetaonetwomin\right)\bX_j \bX_j^{\tp}, \qquad  q_2 := \frac{1}{n} \bX_2^{\tp}\bGonetwomin^{-1} \bX_2, \\
	&\qquad \hbbonemin := \hbbetaonetwomin  + \frac{1}{n} \bGonetwomin^{-1} \bX_2 \left(- \rho' \left(\prox_{q_2 \rho}\left(\bX_2^{\tp}\hbbetaonetwomin\right)\right) \right). 
 \end{align*}
By an application of Lemma \ref{lemma: hb},
\begin{equation}\label{eq: beta1min}
\opP \left( \big\| \hbbetaonemin - \hbbonemin \big\| \geq  C_1 \frac{K_n^2 H_n}{n}  \right) \leq C_2n\exp\left(-c_2 H_n^2\right)-C_3\exp\left(-c_3K_n^2\right)- \exp\left(-C_4n(1+o(1))\right).
\end{equation}
Also, by the triangle inequality,
\begin{align*}
\left|\bX_1^{\tp}(\hbbetaonemin - \hbbetaonetwomin)\right| & \leq \left|\bX_1^{\tp}(\hbbetaonemin - \hbbonemin)\right| + \frac{1}{n} \left| \bX_1^{\tp} \bGonetwomin^{-1} \bX_2\left(-\rho'\left(\prox_{q_2 \rho}\left(\bX_2^{\tp}\hbbetaonetwomin\right)\right)\right) \right|. 
\end{align*}
Invoking Lemma \ref{lemma: singval}, \eqref{eq: beta1min}, and an
argument similar to that leading to \eqref{eq: rhoprimeproxqi} and
\eqref{eq: crossterm}, we obtain 
\begin{multline*}
    \opP \left(\left|\bX_1^{\tp} \left(\hbbetaonemin - \hbbetaonetwomin\right)\right|  \geq C_1\frac{K_n^2 H_n}{\sqrt{n}}  \right) \\
 \leq C_2n\exp\left(-c_2 H_n^2\right)-C_3\exp\left(-c_3K_n^2\right)- \exp\left(-C_4n\left(1+o(1)\right)\right).
\end{multline*}

The event $\{\|\hbbetaonemin \| \leq M \}\cap \{ \|\hbbetaonetwomin \| \leq M \} $ occurs with probability at least $1-C\exp(-cn)$. Hence, one obtains  
\begin{multline}
	\opP \left(\left|\bX_1^{\tp} \left(\hbbetaonemin \bone_{\|\hbbetaonemin \| \leq M} - \hbbetaonetwomin  \bone_{\|\hbbetaonetwomin \| \leq M}\right)\right| \leq C_1\frac{K_n^2 H_n}{\sqrt{n}}  \right) \\
	 \qquad \geq C_2n\exp\left(-c_2 H_n^2\right)-C_3\exp\left(-c_3K_n^2\right)  - \exp\left(-C_4n\left(1+o(1)\right)\right).\label{eq: betaoneapprox}
\end{multline}

A similar statement continues to hold with $\bX_1$ replaced by $\bX_2$ and $\hbbetaonemin$ replaced by $\hbbetatwomin$. Some simple computation yields that $\|f'\|_{\infty}$ is bounded by some constant times $|x|$. By the mean value theorem and the fact that $\|f \|_{\infty} \leq 1,$
\begin{align*}
	& \left| f(M_1) f(M_2)- f\left( \bX_1^{\tp}\hbbetaonetwomin \bone_{\{\|\hbbetaonetwomin \| \leq M\}} \right) f \left( \bX_2^{\tp}\hbbetaonetwomin \bone_{\{\|\hbbetaonetwomin \| \leq M\}} \right) \right| \\
	& \qquad \leq \| f\|_{\infty} \left\{ \left|f(M_1)-f \left( \bX_1^{\tp}\hbbetaonetwomin\bone_{\{\|\hbbetaonetwomin \| \leq M\}} \right) \right| \right.\\
	& \qquad\qquad + \left. \left| f(M_2)-f \left( \bX_2^{\tp}\hbbetaonetwomin\bone_{\{\|\hbbetaonetwomin \| \leq M\}} \right) \right| \right\} \\
& \qquad \leq  C|x| \cdot
	\left| \bX_1^{\tp}\hbbetaonemin\bone_{\{\| \hbbetaonemin\| \leq M\}} -\bX_1^{\tp}\hbbetaonetwomin \bone_{\{\|\hbbetaonetwomin \| \leq M\}} \right| \\
	& \qquad\qquad +|x| \cdot \left |\bX_2^{\tp}\hbbetatwomin\bone_{\{\|\hbbetatwomin\| \leq M\}} -\bX_2^{\tp}\hbbetaonetwomin \bone_{\{\|\hbbetaonetwomin \| \leq M\}} \right|.
\end{align*}
Consequently, $$f(M_1) f(M_2)- f \left( \bX_1^{\tp}\hbbetaonetwomin\bone_{\{\|\hbbetaonetwomin \| \leq M\}} \right)f \left(\bX_2^{\tp}\hbbetaonetwomin\bone_{\{\|\hbbetaonetwomin \| \leq M\}} \right) ~\convP~ 0.$$ As $\| f\| _{\infty} \leq 1$, this implies convergence in $\mathrm{L}_1$. Thus, it simply suffices to show that 
$$  \E \left[ f\left(\bX_1^{\tp}\hbbetaonetwomin \bone_{\{\|\hbbetaonetwomin \| \leq M \}}\right)  f\left(\bX_2^{\tp}\hbbetaonetwomin\bone_{\{\|\hbbetaonetwomin \| \leq M \}}\right) \right] ~\rightarrow~ 0.  $$
Denote the design matrix on dropping the first and second row as $\bXonetwomin$. Note that conditional on $\bXonetwomin$, $\bX_1^{\tp}\hbbetaonetwomin\bone_{\{\|\hbbetaonetwomin \| \leq M\}}$ and $\bX_2^{\tp}\hbbetaonetwomin\bone_\{\|\hbbetaonetwomin \| \leq M\}$ are independent and have distribution $$\dnorm\left(0, \| \hbbetaonetwomin\|^2\bone_{\{\|\hbbetaonetwomin \| \leq M\}} \right).$$ Using this and by arguments similar to \cite[Lemma 3.23]{el2015impact}, one can show that  
\begin{multline}
	 \E\left[ e^{i \left( t\bX_1^{\tp}\hbbetaonetwomin\bone_{\{\|\hbbetaonetwomin \| \leq M\}}+w\bX_2^{\tp}\hbbetaonetwomin\bone_{\{\|\hbbetaonetwomin \| \leq M\}} \right)}\right] \\ 
	 -\E\left[e^{it\bX_1^{\tp}\hbbetaonetwomin\bone_{\{\|\hbbetaonetwomin \| \leq M\}}}\right] 
	  \E\left[e^{iw\bX_2^{\tp}\hbbetaonetwomin\bone_{\{\|\hbbetaonetwomin \| \leq M\}}}\right] ~\rightarrow~ 0.
	\label{eq: ch3}
 \end{multline}
On repeated application of the multivariate inversion theorem for obtaining densities from characteristic functions, we get that 
\begin{multline*}
	 \E\left[ f\left( \bX_1^{\tp}\hbbetaonetwomin\bone_{\{\|\hbbetaonetwomin \| \leq M\}} \right) f\left( \bX_2^{\tp}\hbbetaonetwomin\bone_{\{\|\hbbetaonetwomin \| \leq M\}} \right)   \right]  \\
	 - \E \left[f\left(\bX_1^{\tp}\hbbetaonetwomin\bone_{\{\|\hbbetaonetwomin \| \leq M\}} \right)\right]  \E \left[f\left( \bX_2^{\tp}\hbbetaonetwomin\bone_{\{\|\hbbetaonetwomin \| \leq M\}} \right)\right] ~\rightarrow~ 0.
\end{multline*}
Since $f$ is centered, this completes the proof.
\end{proof}

\bibliographystyle{plain}
\bibliography{bibfileLR}

\end{document}